\theoremstyle{plain} \newtheorem{prop}{Proposition}[section]
\theoremstyle{plain} \newtheorem{theo}[prop]{Theorem}
\theoremstyle{plain} \newtheorem{lem}[prop]{Lemma}
\theoremstyle{plain} \newtheorem{conj}[prop]{Conjecture}
\theoremstyle{plain} \newtheorem{cor}[prop]{Corollary}
\theoremstyle{definition} \newtheorem{defn}[prop]{Definition}
\theoremstyle{remark} \newtheorem{rem}[prop]{Remark}
\theoremstyle{remark} 
\numberwithin{equation}{section}
\renewcommand{\(}{\left(}
\renewcommand{\)}{\right)}
\renewcommand{\[}{\left[}
\renewcommand{\]}{\right]}
\renewcommand{\1}{\left\{}
\renewcommand{\2}{\right\}}
\newcommand{\LA}{\left\langle}
\newcommand{\RA}{\right\rangle}
\newcommand{\R}{\rightarrow}
\newcommand{\HR}{\hookrightarrow}
\newcommand{\LR}{\longrightarrow}
\newcommand{\B}{\backslash}
\newcommand{\T}{\mapsto}
\newcommand{\BIB}[1]{\bibitem[\textbf{#1}]{#1}}
\newcommand{\intl}{\int\limits}
\newcommand{\iintl}{\iint\limits}
\newcommand{\vep}{\varepsilon}
\newcommand{\FJ}{\mathcal{F}\!\!\mathcal{J}}
\newcommand{\CF}{\mathbbm{1}}
\renewcommand{\b}[1]{\mathbf{#1}}
\renewcommand{\c}[1]{\mathcal{#1}}
\renewcommand{\d}[1]{\mathbb{#1}}
\newcommand{\f}[1]{\mathfrak{#1}}
\renewcommand{\r}[1]{\mathrm{#1}}
\newcommand{\s}[1]{\mathscr{#1}}
\begin{document}

\title{\textbf{Relative trace formulae toward Bessel and Fourier-Jacobi periods of unitary groups}}
\author{Yifeng Liu
\\ Columbia University}
\date{November 23, 2010}

\maketitle

\begin{abstract}
We propose a relative trace formula approach and state the
corresponding fundamental lemma toward the global restriction
problem involving Bessel or Fourier-Jacobi periods of unitary groups
$\r{U}_n\times\r{U}_m$, extending the work of Jacquet-Rallis for
$m=n-1$ (which is a Bessel period). In particular, when $m=0$, we
recover a relative trace formula proposed by Flicker concerning
Kloosterman/Fourier integrals on quasi-split unitary groups. As
evidence for our approach, we prove the fundamental lemma for
$\r{U}_n\times\r{U}_n$ in positive characteristics.
\end{abstract}

{\small \tableofcontents}

\section{Introduction}
\label{sec1}

Recently, Jacquet and Rallis \cite{JR} propose a new approach to the
Gan-Gross-Prasad conjecture for unitary groups
$\r{U}_n\times\r{U}_{n-1}$ relating certain periods with special
$L$-values (cf. \cite{GGP}). It is based on a relative trace
formula. In this paper, we extend this approach to all kinds of
pairs $\r{U}_n\times\r{U}_m$ where $0\leq m\leq n$. If $n-m$ is odd,
the related period is a Bessel period. If $n-m$ is even, the
related period is a Fourier-Jacobi period.\\

\textbf{Notations.} We denote by $\d{C}^1$ the unit circle in the
complex line $\d{C}$. For a number field $k$, we denote by $\d{M}_k$
the set of places of $k$.

We denote by $\r{M}_{r,s}$ the affine group of $r\times s$ matrices
and $\r{M}_r=\r{M}_{r,r}$. We denote by $\b{1}_r$ the identity
matrix of rank $r$.

We denote by $|S|$ the cardinality of a finite set $S$ and by
$\CF_T$ the characteristic function of any set $T$.

For a locally compact abelian topological group or a vector space
$X$, we denote $X^{\vee}$ for its dual. For a smooth representation
$\pi$, we denote $\widetilde{\pi}$ for its contragredient representation.\\

\textbf{Acknowledgements.} The author would like to thank Shou-Wu
Zhang for proposing this question. He also thanks Herv\'{e} Jacquet
and Shou-Wu Zhang for stimulating discussions and their interest in
this work.\\

\subsection{Periods and special values of $L$-functions}
\label{sec1psv}

Let us consider a quadratic extension of number fields $k/k'$. Let
$\tau$ be the nontrivial element in $\r{Gal}(k/k')$ and $\eta$ the
corresponding quadratic character of the id\`{e}le group
$\d{A}'^{\times}$ of $k'$. Let $V$ be a (non-degenerated) hermitian
space over $k$ (with respect to $\tau$) of dimension $n$ with the
hermitian form $(-,-)$ and $W\subset V$ a subspace of dimension $m$
such that the restricted hermitian form $(-,-)|_W$ is
non-degenerate.

Let $\r{U}_n=\r{U}(V)$ and $\r{U}_m=\r{U}(W)$ be the unitary groups.
We identify $\r{U}_m$ as a subgroup of $\r{U}_n$ fixing all elements
in the orthogonal complement $W^{\perp}\subset V$ of $W$. We define
a unipotent subgroup $U'=U'_{1^r,m}\subset\r{U}_n$ (resp.
$U'=U'_{1^r,m+1}$) normalized and hence acted through conjugation by
$\r{U}_m$ when $n-m$ is even (resp. odd). We define
$H'=U'\rtimes\r{U}_m$ which is a non-reductive group viewed as a
subgroup of $\r{U}_n\times\r{U}_m$ via the embedding into the first
factor and the projection onto the second factor (see Section
\ref{sec4bmp} and \ref{sec5fmp} for the precise definitions). Let
$\pi$ (resp. $\sigma$) be an irreducible tempered representation of
$\r{U}_n(\d{A}')$ (resp. $\r{U}_m(\d{A}')$) which occurs with
multiplicity one in the space of cusp forms $\c{A}_0(\r{U}_n)$
(resp. $\c{A}_0(\r{U}_m)$). We denote by $\c{A}_{\pi}$ (resp.
$\c{A}_{\sigma}$) the unique irreducible $\pi$ (resp.
$\sigma$)-isotypic subspace in $\c{A}_0(\r{U}_n)$ (resp.
$\c{A}_0(\r{U}_m)$).\\

First, we consider the case of Bessel periods where we require that
$n-m=2r+1$ is odd. We have a generic character $\nu'$ of $U'(k')\B
U'(\d{A}')$ and extend it to a character of $H'(k')\B H'(\d{A}')$
trivially on $\r{U}_m(\d{A}')$. For $\varphi_{\pi}\in\c{A}_{\pi}$
and $\varphi_{\sigma}\in\c{A}_{\sigma}$, we define
 \begin{align*}
 \c{B}^{\nu'}_r(\varphi_{\pi},\varphi_{\sigma})=\intl_{H'(k')\B
 H'(\d{A}')}\varphi_{\pi}\otimes\varphi_{\sigma}(h')\nu'(h')^{-1}\r{d}h'
 \end{align*}
to be a Bessel period of $\pi\otimes\sigma$. It is conjectured that
there is a nonzero Bessel period of representations in the Vogan
$L$-packet of $\pi\otimes\sigma$ if and only if the central special
$L$-value $L(\frac{1}{2},\r{BC}(\pi)\times\r{BC}(\sigma))\neq 0$,
where $\r{BC}$ stands for the standard base change and the
$L$-function is the one defined by the Rankin-Selberg convolution on
general linear groups (cf. \cite{JPSS83}).

Second, we consider the case of Fourier-Jacobi periods where we
require that $n-m=2r$ is even. We have an automorphic (essentially
Weil) representation $\nu'_{\psi',\mu}$ of $H'(\d{A}')$ by choosing
a nontrivial character $\psi':k'\B\d{A}'\R\d{C}^1$ and a character
$\mu:k^{\times}\B\d{A}^{\times}\R\d{C}^{\times}$ such that
$\mu|_{\d{A}'^{\times}}=\eta$, realizing on certain space of
Bruhat-Schwartz functions $\c{S}$. For
$\varphi_{\pi}\in\c{A}_{\pi}$, $\varphi_{\sigma}\in\c{A}_{\sigma}$
and $\phi\in\c{S}$, we define
 \begin{align*}
 \FJ^{\nu'_{\psi',\mu}}_r(\varphi_{\pi},\varphi_{\sigma};\phi)=\intl_{H'(k')\B
 H'(\d{A}')}\varphi_{\pi}\otimes\varphi_{\sigma}(h')\theta(h',\phi)\r{d}h'
 \end{align*}
to be a Fourier-Jacobi period of $\pi\otimes\sigma$ (with respect to
$\mu$), where $\theta(h',\phi)$ is a certain theta series on
$H'(\d{A}')$ attached to $\phi$. It is conjectured that there is a
nonzero Fourier-Jacobi period of representations  in the Vogan
$L$-packet of $\pi\otimes\sigma$ if and only if the central special
$L$-value
$L(\frac{1}{2},\r{BC}(\pi)\times\r{BC}(\sigma)\otimes\mu^{-1})\neq
0$.\\

There has been significant progress toward this conjecture in a
series of papers of Ginzburg-Jiang-Rallis: \cite{GJR09} for unitary
groups, \cite{GJR04} for symplectic groups and \cite{GJR05} for
orthogonal groups. In all these cases, they prove one direction:
nontrivial Bessel or Fourier-Jacobi periods imply the non-vanishing
of corresponding central $L$-values. For the other direction, they
also obtain some conditional results. Their approach is to study the
residue of certain Eisenstein series and some Fourier coefficients
attached to it.

But one can ask more about the precise relation between these
periods and central special $L$-values, known as the Ichino-Ikeda
conjecture in the context of $\r{SO}_n\times\r{SO}_{n-1}$ (cf.
\cite{II10}). The advantage of the relative trace formula approach
is that it is possible to prove the explicit formula relating
$|\c{B}^{\nu'}_r(\varphi_{\pi},\varphi_{\sigma})|^2$ (or
$|\FJ^{\nu'_{\psi',\mu}}_r(\varphi_{\pi},\varphi_{\sigma};\phi)|^2$)
and the product of certain local periods of positive type, within
$L$-values as the scaling factor of these two periods. In
particular, one can prove the positivity of the corresponding
central special $L$-value, for example, as in \cite{JC01}. We will
not pursue the formulation of this explicit relation in this paper,
but instead, formulate the relative trace
formulae toward it for both periods.\\

\subsection{Relative trace formulae and fundamental lemmas}
\label{sec1rtf}

We briefly describe our relative trace formula. To be simple for the
introduction, we only do this for the case of Bessel periods, i.e.,
we will assume that $n-m=2r+1$ is odd.

Let $f_n\in\c{H}(\r{U}_n(\d{A}'))$ (resp.
$f_m\in\c{H}(\r{U}_m(\d{A}'))$) be a smooth function on
$\r{U}_n(\d{A}')$ (resp. $\r{U}_m(\d{A}')$) with compact support. We
introduce a distribution
 \begin{align*}
 \c{J}_{\pi,\sigma}(f_n\otimes f_m):=\sum\c{B}^{\nu'}_r
 (\rho(f_n)\varphi_{\pi},\rho(f_m)\varphi_{\sigma})
 \overline{\c{B}^{\nu'}_r(\varphi_{\pi},\varphi_{\sigma})}
 \end{align*}
where the sum is taken over orthonormal bases of $\c{A}_{\pi}$ and
$\c{A}_{\sigma}$ and $\rho$ denotes the action by right translation.

As usual, we associate to $f_n\otimes f_m$ a kernel function on
$\(\r{U}_n(k')\B\r{U}_n(\d{A}')\times\r{U}_m(k')\B\r{U}_m(\d{A}')\)^2$:
 \begin{align*}
 \c{K}_{f_n\otimes
 f_m}(g'_1,g'_2;g'_3,g'_4)=\sum_{\zeta'\in\r{U}_n(k')}f_n(g'^{-1}_1\zeta'
 g'_3)\sum_{\xi'\in\r{U}_m(k')}f_m(g'^{-1}_2\xi' g'_4)
 \end{align*}
and consider the following distribution which is formally the
``sum'' of $\c{J}_{\pi,\sigma}$ over all $\pi\otimes\sigma$:
 \begin{align*}
 \c{J}(f_n\otimes f_m):=\iintl_{\(H'(k')\B
 H'(\d{A}')\)^2}\c{K}_{f_n\otimes
 f_m}\(h'_1,h'_1;h'_2,h'_2\)
 \nu'(h'^{-1}_1h'_2)\r{d}h'_1\r{d}h'_2.
 \end{align*}
The above integral is not absolutely convergent in general and need
to be regularized. It turns out that the regular part of this
distribution has the following decomposition:
 \begin{align*}
 \c{J}_{\r{reg}}(f_n\otimes
 f_m)=\sum_{\zeta'\in[\r{U}_n(k')_{\r{reg}}]/\b{H}'(k')}\c{J}_{\zeta'}(f)
 \end{align*}
where $[\r{U}_n(k')_{\r{reg}}]/\b{H}'(k')$ is the set of certain
regular orbits in $\r{U}_n(k')$ which will be discussed in Section
\ref{sec4co} and $f\in\c{H}(\r{U}_n(\d{A}'))$ is obtained from
$f_n\otimes f_m$. Moreover, each summand $\c{J}_{\zeta'}$ is an
ad\`{e}lic weighted orbital integral:
 \begin{align*}
 \c{J}_{\zeta'}(f)=\intl_{\r{U}_m(\d{A}')}\iintl_{\(U'_{1^r,m+1}(\d{A'})\)^2}f(g'^{-1}u'^{-1}_1\zeta'u'_2g')
 \nu'(u'^{-1}_1u'_2)\r{d}u'_1\r{d}u'_2\r{d}g'.
 \end{align*}\\

To connect the $L$-function, one need to go to the
$\r{GL}_n\times\r{GL}_m$ side. Let $\Pi=\r{BC}(\pi)$ and
$\Sigma=\r{BC}(\sigma)$ and assume that they remain cuspidal. We
define similarly a unipotent subgroup $U_{1^r,m+1,1^r}$ of
$\r{GL}_n$, a non-reductive group $H=U_{1^r,m+1,1^r}\rtimes\r{GL}_m$
viewed as a subgroup of $\r{GL}_n\times\r{GL}_m$ and a character
$\nu$ on $H$ (see Section \ref{sec2bm} for precise definitions). For
$\varphi_{\Pi}\in\c{A}_{\Pi}$ and
$\varphi_{\Sigma}\in\c{A}_{\Sigma}$, consider the following general
linear group version of the Bessel period:
 \begin{align*}
 \c{B}^{\nu}_{r,r}(\varphi_{\Pi},\varphi_{\Sigma}):=\intl_{H(k)\B
 H(\d{A})}\varphi_{\Pi}\otimes\varphi_{\Sigma}(h)\nu(h)^{-1}\r{d}h.
 \end{align*}
We remark that the above integral is the usual Rankin-Selberg
convolution for $\r{U}_n\times\r{U}_m$ when $r=0$, but not when
$r>0$. But in fact, it is also an integral presentation of
$L(s,\Pi\times\Sigma)$.

To single out the cuspidal representations being the standard base
change from the unitary groups, we follow \cite{JR}. Saying that $n$
is odd, let
 \begin{align*}
 \c{P}_n(\varphi_{\Pi})=\intl_{Z'_n(\d{A'})\r{GL}_n(k')\B\r{GL}_n(\d{A}')}\varphi_{\Pi}(g_1)\r{d}g_1;\qquad
 \c{P}_m(\varphi_{\Sigma})=\intl_{Z'_m(\d{A'})\r{GL}_m(k')\B\r{GL}_m(\d{A}')}\varphi_{\Pi}(g_2)\eta(\det g_2)\r{d}g_2
 \end{align*}
where $Z'_?$ is the center of $\r{GL}_{?,k'}$.

Let $F_n\in\c{H}(\r{GL}_n(\d{A}))$ and
$F_m\in\c{H}(\r{GL}_m(\d{A}))$. We introduce another distribution
 \begin{align*}
 \c{J}_{\Pi,\Sigma}(F_n\otimes F_m):=\sum\c{B}^{\nu}_{r,r}(\rho(F_n)\varphi_{\Pi},\rho(F_m)\varphi_{\Sigma})
 \overline{\c{P}_n(\varphi_{\Pi})\c{P}_m(\varphi_{\Sigma})}
 \end{align*}
where the sum is taken over orthonormal bases of $\c{A}_{\Pi}$ and
$\c{A}_{\Sigma}$. Repeat the process for the unitary groups, we have
the kernel function $\c{K}_{F_n\otimes F_m}$ and the distribution
$\c{J}(F_n\otimes F_m)$ whose regular part has the following
decomposition:
 \begin{align*}
 \c{J}_{\r{reg}}(F_n\otimes
 F_m)=\sum_{\zeta\in[\r{S}_n(k')_{\r{reg}}]/\b{H}(k')}\c{J}_{\zeta}(F)
 \end{align*}
where $[\r{S}_n(k')_{\r{reg}}]/\b{H}(k')$ is the set of certain
regular orbits in the symmetric space $\r{S}_n(k')$ which will be
discussed in Section \ref{sec4co} and $F\in\c{H}(\r{S}_n(\d{A}'))$
is obtained from $F_n\otimes F_m$. Moreover, each summand
$\c{J}_{\zeta}$ is an ad\`{e}lic weighted orbital integral:
 \begin{align*}
 \c{J}_{\zeta}(F)=\intl_{\r{GL}_m(\d{A})}\intl_{U_{1^r,m+1,1^r}(\d{A'})}F(g^{-1}u^{-1}\zeta u^{\tau}g)
 \nu(u^{-1})\r{d}u\r{d}g.
 \end{align*}\\

We prove in Proposition \ref{prop4orbit} that there is a natural
bijection
 \begin{align*}
 \b{N}:[\r{S}_n(k')_{\r{reg}}]/\b{H}(k')\overset{\sim}{\LR}\coprod_{W\subset
 V}[\r{U}_n(k')_{\r{reg}}]/\b{H}'(k')
 \end{align*}
where the disjoint union is taken over all isometry classes of
$W\subset V$. Hence, we pose the global matching condition for
$\b{N}(\zeta)=\zeta'$ as $\c{J}_{\zeta}(F)=\c{J}_{\zeta'}(f)$. The
precise conjecture of smooth matching of functions is proposed as
Conjecture \ref{conj4sm}.\\

As the most important and interesting problem in this kind of trace
formula, we now discuss the corresponding fundamental lemma. Here,
again to be simple, we only state it for the unit elements in the
following cases: $n-m$ odd, $m=0$, or $n=m$.

Now let $k'$ be a non-archimedean local field and $k/k'$ an
unramified quadratic field extension. Let $\f{o}'$ (resp. $\f{o}$)
be the ring of integers of $k'$ (resp. $k$). There are only two
non-isomorphic hermitian spaces of dimension $m>0$ over $k$. Let
$\r{U}^+_m\subset\r{U}^+_n$ be the pair associate to $W^+\subset
V^+$ both with trivial discriminant and $\r{U}^-_m\subset\r{U}^-_n$
be another one. Then $W^+$ will have a self-dual $\f{o}$-lattice
$L_W$ which extends to a self-dual $\f{o}$-lattice $L_V$ of $V^+$.
The unitary group $\r{U}^+_m$ (resp. $\r{U}^+_n$) is unramified and
has a model over $\f{o}'$. The group of $\f{o}'$-points
$\r{U}^+_m(\f{o}')$ (resp. $\r{U}^+_n(\f{o}')$) is a hyperspecial
maximal subgroup of $\r{U}^+_m(k')$ (resp. $\r{U}^+_n(k')$). We also
denote by $\r{GL}_n(L_V)\cong\r{GL}_n(\f{o})$ a hyperspecial maximal
subgroup of $\r{GL}_n(k)$ and $\r{S}_n(\f{o}'):=\r{S}_n(k')\cap
K_n$.

When $n-m$ is even, we define
$\r{S}_{n,m}(k')=\r{S}_n(k')\times\r{M}_{1,m}(k')\times\r{M}_{m,1}(k')$
and $\r{U}_{n,m}(k')=\r{U}_n(k')\times\r{M}_{1,m}(k)$. There is also
a notion of regular elements in both sets and we have a natural
bijection
$\b{N}:[\r{S}_{n,m}(k')_{\r{reg}}]/\b{H}(k')\overset{\sim}{\LR}\coprod_{W\subset
 V}[\r{U}_{n,m}(k')_{\r{reg}}]/\b{H}'(k')$ (cf. Section
 \ref{sec5co}). We propose the following conjecture.

\begin{conj}[\textbf{The fundamental lemma for unit elements}]\label{conj1fl}
(1) When $n-m$ is odd or $m=0$, we have
 \begin{align*}
 &\intl_{\r{GL}_m(k')}\intl_{U_{1^r,m+1,1^r}(k)}\CF_{\r{S}_n(\f{o}')}
 (g^{-1}u^{-1}\zeta u^{\tau}g)\nu(u^{-1})\eta(\det
 g)\r{d}u\r{d}g\\
 =&\begin{cases}
 \displaystyle
 \b{t}(\zeta)\intl_{\r{U}_m^+(k')}\iintl_{\(U'_{1^r,m+1}\)^2}
 \CF_{\r{U}^+_n(\f{o}')}(g'^{-1}u'^{-1}_1\zeta^+u'_2g')\nu'(u'^{-1}_1u'_2)
 \r{d}u'_1\r{d}u'_2\r{d}g' &\b{N}(\zeta)=\zeta^+\in\r{U}^+_n(k');\\
 0 &\b{N}(\zeta)=\zeta^-\in\r{U}^-_n(k')
 \end{cases}
 \end{align*}
where $\b{t}(\zeta)\in\1\pm1\2$ is a certain transfer factor defined
in \eqref{4tfactor}. In particular, when $m=0$, the second
case of the above identity does not happen.\\
 (2) When $n=m$, we have
 \begin{align*}
 &\intl_{\r{GL}_n(k')}\CF_{\r{S}_n(\f{o}')}
 (g^{-1}\zeta g)\CF_{\r{M}_{1,n}(\f{o}')}(xg)\CF_{\r{M}_{n,1}(\f{o}')}(g^{-1}y)\eta(\det g)\r{d}g\\
 =&\begin{cases}
 \displaystyle
 \b{t}([\zeta,x,y])\intl_{\r{U}^+_n(k')}\CF_{\r{U}^+_n(\f{o}')}(g'^{-1}\zeta^+g')\CF_{\r{M}_{1,n}(\f{o})}(zg')
 \r{d}g' &\b{N}([\zeta,x,y])=[\zeta^+,z]\in\r{U}^+_{n,n}(k');\\
 0 &\b{N}([\zeta,x,y])=[\zeta^-,z]\in\r{U}^-_{n,n}(k')
 \end{cases}
 \end{align*}
where $\b{t}([\zeta,x,y])\in\1\pm1\2$ is a certain transfer factor
defined in \eqref{5tfactor}. In particular, when $m=0$, the second
case of the above identity does not happen.\\
\end{conj}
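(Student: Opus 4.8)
The plan is to deduce part (2) --- the fundamental lemma for $\r{U}_n\times\r{U}_n$ --- from the Jacquet--Rallis fundamental lemma for the pair $\r{U}_{n+1}\times\r{U}_n$, which in positive characteristic is a theorem of Z.~Yun; this is in fact the reason the statement is proved only in positive characteristic, since Yun's geometric argument does not presently extend beyond that setting. The starting observation is that the auxiliary vector data --- the pair $(x,y)$ on the general linear side and $z$ on the unitary side --- can be absorbed into one additional row and column, turning the $2n$-dimensional orbit problem of Conjecture \ref{conj1fl}(2) into a codimension-one slice of the $(2n{+}1)$-dimensional Jacquet--Rallis problem. Concretely, working in a realization of the symmetric space for which this is possible, fix a unit $d_0$ of $\f{o}'$ and embed $\r{S}_n(k')\times\r{M}_{1,n}(k')\times\r{M}_{n,1}(k')\HR\r{S}_{n+1}(k')$ by inserting $x$ and $y$ as the last row and column and $d_0$ in the corner; the image is a codimension-one subvariety, and under the diagonal $\r{GL}_n\HR\r{GL}_{n+1}$ the action $(\zeta,x,y)\mapsto(g^{-1}\zeta g,\,xg,\,g^{-1}y)$ is precisely the restriction of conjugation on $\r{S}_{n+1}$. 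Because $d_0$ is a unit of $\f{o}'$, one obtains a pointwise factorization $\CF_{\r{S}_{n+1}(\f{o}')}(g^{-1}\xi g)=\CF_{\r{S}_n(\f{o}')}(g^{-1}\zeta g)\,\CF_{\r{M}_{1,n}(\f{o}')}(xg)\,\CF_{\r{M}_{n,1}(\f{o}')}(g^{-1}y)$, so that the left-hand side of Conjecture \ref{conj1fl}(2) is literally a Jacquet--Rallis orbital integral attached to $\xi\in\r{S}_{n+1}(k')$.

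On the unitary side one proceeds in parallel. One enlarges $V^{\pm}$ by an anisotropic line $\langle e\rangle$ with $\f{o}e$ self-dual, so that $L_V\oplus\f{o}e$ is a self-dual lattice of $V^{\pm}\oplus\langle e\rangle$ and $\r{U}^{\pm}_n=\r{U}(V^{\pm})$ is the stabilizer of $e$ inside $\r{U}^{\pm}_{n+1}=\r{U}(V^{\pm}\oplus\langle e\rangle)$; the pair $(\zeta^+,z)$ is then encoded in an element $\gamma\in\r{U}^+_{n+1}(k')$ whose ``$e$-column'' returns $z$, the $\langle e\rangle$-component being pinned down by $d_0$, so that again the image is codimension one and $\CF_{\r{U}^+_{n+1}(\f{o}')}(g'^{-1}\gamma g')=\CF_{\r{U}^+_n(\f{o}')}(g'^{-1}\zeta^+ g')\,\CF_{\r{M}_{1,n}(\f{o})}(zg')$ for $g'$ in the diagonally embedded $\r{U}^+_n(k')$. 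Feeding $\xi$ and $\gamma$ into Yun's theorem, and matching the Jacquet--Rallis transfer factor with $\b{t}([\zeta,x,y])$ of \eqref{5tfactor}, then yields the identity of Conjecture \ref{conj1fl}(2); the second, vanishing case is exactly the case in which the orbit of $\xi$ matches the non-quasi-split space, where the Jacquet--Rallis fundamental lemma already forces the general linear orbital integral of the unit element to vanish. If one prefers to invoke Yun's result only in its Lie-algebra form, one first reduces Conjecture \ref{conj1fl}(2) to its infinitesimal analogue on the tangent spaces by the topological Jordan decomposition and descent to centralizers of semisimple parts --- smaller groups of the same type together with general linear factors handled by the unramified base change fundamental lemma, the smaller unitary factors by induction on $n$; this descent is harmless in positive characteristic once $p$ is assumed odd.

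The main obstacle is the bookkeeping in the two embeddings. First one must check that they are compatible with the orbit bijection $\b{N}$ of Section \ref{sec5co}: that $\b{N}([\zeta,x,y])=[\zeta^+,z]$ holds precisely when $\xi$ and $\gamma$ lie in matching Jacquet--Rallis orbits, including the dichotomy between $V^+$ and $V^-$. More delicate is the comparison of transfer factors: both $\b{t}$ of \eqref{5tfactor} and the Jacquet--Rallis factor are signs built from discriminant-type polynomials in the invariants, and adjoining the extra coordinate multiplies these discriminants by quantities depending only on $d_0$ and $\langle e,e\rangle$; the comparison therefore reduces to an explicit determinant identity, which can be turned into an equality by the choice of the unit $d_0$. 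A secondary technical point, and the place where the hypothesis that $k/k'$ is unramified is used, is the self-duality bookkeeping on the unitary side --- that $L_V\oplus\f{o}e$ remains self-dual and that the ``$e$-column'' assignment is defined over $\f{o}'$.
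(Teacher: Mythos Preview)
Your approach has a genuine gap at the very first step. The block embedding you propose,
\[
[\zeta,x,y]\;\longmapsto\;\xi=\left[\begin{array}{cc}\zeta & y\\ x & d_0\end{array}\right],
\]
does not land in $\r{S}_{n+1}(k')$. The defining relation $\xi\xi^{\tau}=\b{1}_{n+1}$ is nonlinear and does not decouple block by block: its upper-left $n\times n$ corner reads $\zeta\zeta^{\tau}+y\,x^{\tau}=\b{1}_n$, so if $\zeta\in\r{S}_n(k')$ one is forced to have $y\,x^{\tau}=0$, which fails on the regular locus. No alternative ``realization'' of the symmetric space repairs this, since any model is related to the standard one by an algebraic isomorphism preserving the quadratic nature of the defining equation. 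The same obstruction appears on the unitary side: writing $\gamma\in\r{U}^+_{n+1}(k')$ in block form with $\zeta^+$ in the $W$-corner forces the $e$-column to be determined by the $e$-row through the hermitian form, so an independent datum $z$ cannot be inserted while keeping $\zeta^+\in\r{U}^+_n(k')$. Your fallback of passing to the infinitesimal picture does not help either: the tangent space to $\r{S}_n$ at $\b{1}_n$ is $\{X\in\r{M}_n(k):X^{\tau}=-X\}$, and for such $(n{+}1)\times(n{+}1)$ matrices the off-diagonal column is again $(-\tau)$ of the off-diagonal row, so $x$ and $y$ still cannot be prescribed independently. The dimension count you give ($2n$ versus $2n{+}1$) is correct, but there is no map of varieties realizing it.

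The paper's argument avoids any such embedding. It interprets each side of the identity \emph{separately} as a (signed) count of lattices in a rank-$n$ module over the finite flat $\f{o}'$-algebra $\b{R}_a$ built from the characteristic polynomial of $\zeta$ (Propositions~\ref{prop5count} and~\ref{prop5countu}); the vector data $(x,y)$, respectively $z$, enter only through an $\b{R}_a$-linear map $\gamma_{a,b}:\b{R}_a\to\b{R}_a^{\vee}$. The resulting combinatorial identity \eqref{5compare} is then recognized, after an explicit $\f{o}'$-algebra isomorphism $R_{\widetilde a}\cong\b{R}_a$, as precisely the lattice identity Yun isolates inside his proof of the Jacquet--Rallis lemma (\cite[Corollary~2.7.2]{Yun09}). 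So your intuition that the two fundamental lemmas share a common core is correct, and is in fact the point of the paper's proof --- but the bridge passes through the invariants $(a,b)$ and the module theory of $\b{R}_a$, not through a map of the ambient spaces $\r{S}_{n,n}\hookrightarrow\r{S}_{n+1}$.
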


We have

\begin{theo}\label{theo1fl}
(1) The second case of the identity in both cases in Conjecture
\ref{conj1fl}
holds;\\
(2) When $n=m+1$, the fundamental lemma holds if
$\r{char}(k)=p>\r{max}\1 n,2\2$ or $\r{char}(k)=0$ and the residue
characteristic
is sufficiently large with respect to $n$;\\
(3) When $n\leq3,m=0$, the fundamental lemma holds if $\r{char}(k)\neq 2$;\\
(4) When $n=m$, the fundamental lemma holds if
$\r{char}(k)=p>\r{max}\1 n,2\2$.
\end{theo}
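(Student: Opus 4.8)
The plan is to treat the four assertions separately; parts (1)--(3) are comparatively soft, so I would dispose of them first. For (1), the vanishing of the second case, I would argue directly from the orbit dictionary of Section \ref{sec4co} (resp.\ Section \ref{sec5co}): if the left-hand integral were nonzero its domain would be nonempty, so some $\b{H}'$-conjugate of $\zeta$ would lie in $\r{S}_n(\f{o}')$ (with, in the $n=m$ case, the auxiliary row and column vectors integral); unwinding the definition of $\b{N}$ --- which records the isometry class of the Hermitian space cut out by $\zeta$ --- this says precisely that this Hermitian space carries a self-dual $\f{o}$-lattice, hence has trivial discriminant, so $\b{N}(\zeta)\in\r{U}_n^+(k')$, contradicting $\b{N}(\zeta)=\zeta^-$; the vectors move freely in $\r{M}_{1,n}$ and $\r{M}_{n,1}$ and impose no further discriminant condition. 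For (2), when $n=m+1$ we have $r=0$, so $U'$ and $\nu'$ disappear and $\c{B}^{\nu}_{0,0}$ is the usual Rankin--Selberg period for $\r{GL}_n\times\r{GL}_{n-1}$; the identity in Conjecture \ref{conj1fl}(1) is then exactly the Jacquet--Rallis fundamental lemma for unit elements, which in characteristic $p>\r{max}\1 n,2\2$ is Yun's theorem and which descends to residue characteristic $\gg 0$ by the Cluckers--Hales--Loeser/Gordon transfer principle for identities of orbital integrals; all that remains is the bookkeeping check that our normalizations of Haar measures, of the bijection $\b{N}$ of Proposition \ref{prop4orbit}, and of the transfer factor of \eqref{4tfactor} agree with those of \cite{JR}, which follows by comparing the invariant-theoretic parametrizations of the two sides. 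For (3), with $m=0$ and $n\le 3$ the statement is Flicker's fundamental lemma for Kloosterman/Fourier integrals on quasi-split unitary groups; $n=1$ is trivial, and for $n=2,3$ I would use $\r{char}(k)\ne 2$ to diagonalize Hermitian forms and choose convenient representatives for the finitely many families of regular orbits, then unfold both sides into classical Kloosterman-type character sums over the residue field and match them term by term.

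The main case is (4), $n=m$, again with $r=0$, so the Fourier--Jacobi datum collapses to the unramified Weil representation evaluated at the characteristic function of a self-dual lattice; that is what produces the single extra factor $\CF_{\r{M}_{1,n}(\f{o})}(zg')$ on the unitary side and the two factors $\CF_{\r{M}_{1,n}(\f{o}')}(xg)$, $\CF_{\r{M}_{n,1}(\f{o}')}(g^{-1}y)$ on the general linear side, one from each copy of the theta kernel. I would proceed in four steps. First, using the bijection $\b{N}$ of Section \ref{sec5co}, fix explicit representatives of each regular orbit on both sides. Second, perform a semisimple descent so that it suffices to treat regular semisimple $\zeta$. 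Third --- the heart of the matter --- identify, orbit by orbit, the invariant theory of $\r{GL}_n(k')$ acting on $\r{S}_n\times\r{M}_{1,n}\times\r{M}_{n,1}$ (resp.\ of $\r{U}_n(k')$ acting on $\r{U}_n\times\r{M}_{1,n}$) with the invariant theory underlying the Jacquet--Rallis fundamental lemma, interpreting the Weil-representation vector as an adjoined coordinate of a Hermitian space one dimension larger; under this identification the two orbital integrals of (4) become (weighted) Jacquet--Rallis orbital integrals. Fourth, conclude from part (2), the transfer factors matching by construction.

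The hard part will be the third step, and specifically the lattice bookkeeping it requires. The integrality condition defining the Jacquet--Rallis symmetric space couples the adjoined row, column and corner entry through the unitarity relation $g\bar g=1$, whereas in the integrals of (4) the vectors $x$, $y$ and the implicit corner are a priori constrained only to be integral; converting one lattice count into the other is not formal and requires classifying which Hermitian $\f{o}$-lattices in the enlarged space actually occur along a given orbit. This is also where the hypothesis $p>\r{max}\1 n,2\2$ re-enters: it is needed to normalize the orbit representatives (clearing the denominator $2$) and to keep the relevant characteristic polynomials separable, so that the regular semisimple locus is well behaved and the positive-characteristic methods apply. Once the matching of integrality conditions is in place the rest is immediate. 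If that matching proves too delicate in full generality, the fallback is to bypass the reduction and adapt Yun's geometric argument --- global Springer fibers and the attendant Hitchin-type moduli interpretation --- directly to the enlarged space $\r{S}_n\times\r{M}_{1,n}\times\r{M}_{n,1}$, which avoids the lattice bookkeeping at the cost of redoing the geometry, so the reduction route seems preferable.
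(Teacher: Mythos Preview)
Your argument for (1) has a genuine gap. You claim that if the left-hand orbital integral is nonzero then some $\b{H}$-translate of $\zeta$ (resp.\ $[\zeta,x,y]$) is integral, and that this forces the associated Hermitian space to carry a self-dual lattice. The first implication is fine, but the second is false: having an integral representative only gives $\r{val}(\Delta)\ge 0$, not that it is even. Concretely, for $n=m=1$ take $\zeta\in\r{S}_1(\f{o}')$, $x=\varpi$, $y=1$; then the integrand is supported on $-1\le\r{val}(g)\le 0$, so the support is nonempty, yet $\r{val}(\Delta)=\r{val}(xy)=1$ is odd and $\b{N}([\zeta,x,y])$ lands in $\r{U}^-$. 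The integral vanishes not because the support is empty but because the two shells cancel under $\eta(\det g)$. The paper's proof (Propositions \ref{prop4half} and \ref{prop5half}) makes exactly this cancellation explicit: one exhibits an involution $\zeta\mapsto w\,{}^{\r{t}}\zeta\, w$ on the domain under which the integrand is preserved while the weight $\eta(\det g)$ flips sign (via a conjugating element $h$ with $\eta(\det h)=-1$ coming from the proof of Lemma \ref{lem4zhang}/\ref{lem5zhang}), so the integral equals its own negative.

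For (4) your primary route---packaging $[\zeta,x,y]$ as a point of $\r{S}_{n+1}$ and invoking Jacquet--Rallis as a black box---does not match the paper and, as you yourself note, runs into the difficulty that the integrality constraints do not line up: in $\r{S}_{n+1}$ the blocks are coupled by $ss^{\tau}=\b{1}_{n+1}$, whereas here $\zeta\zeta^{\tau}=\b{1}_n$ holds with $x,y$ free. The paper does \emph{not} reduce to the Jacquet--Rallis fundamental lemma but to the same underlying combinatorial identity. Concretely (Section \ref{sec5nn}), one shows directly that the $\r{GL}$-side orbital integral equals $(-1)^{\r{val}(\b{T})}\sum_i(-1)^i|\b{M}_{i,a,b}|$, a signed count of $\b{R}_a$-lattices between $\b{R}_a$ and $\b{R}_a^{\vee}$ (Proposition \ref{prop5count}), and that the unitary-side integral equals $|\b{N}_{a,b}|$, the count of self-dual $\b{R}_{a,\f{o}}$-lattices in the same range (Proposition \ref{prop5countu}); the equality $\sum_i(-1)^i|\b{M}_{i,a,b}|=|\b{N}_{a,b}|$ is then precisely \cite[Corollary 2.7.2]{Yun09}. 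This bypasses the lattice-matching obstruction you identified, and is closer in spirit to your fallback than to your main plan; but note that no Hitchin-type geometry is redone---only the final combinatorial identity is quoted. Your treatment of (2) and (3) as citations to \cite{Yun09} (with Gordon's appendix), \cite{J92}, \cite{Ye89} is correct and matches the paper.
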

\begin{proof}
(1) is proved in Proposition \ref{prop4half} and Proposition
\ref{prop5half} in this paper.\\
(2) is proved by Yun in \cite{Yun09} where the transfer to
characteristic $0$ is accomplished by Gordon in the appendix.\\
(3) is proved by Jacquet in \cite{J92} when $n=3$; (essentially)
proved by Ye in \cite{Ye89} when $n=2$;
and trivial when $n=1$.\\
(4) is proved in Theorem \ref{theo5fl}.\\
\end{proof}

\begin{rem}
When $n=m+1$, the fundamental lemma is just the one proposed by
Jacquet-Rallis in \cite{JR}. When $m=0$, the fundamental lemma is
the one proposed by Flicker in \cite{Fl91}, which is the unitary
group version of the Jacquet-Ye fundamental lemma (cf. \cite{JY92}).
When $m>0$ (and $n-m$ is odd), the fundamental lemma is a kind of
hybrid of the Jacquet-Rallis fundamental lemma and the Flicker
fundamental lemma. We hope that there is a geometric method toward
this fundamental lemma, which is also a kind of hybrid of those in
\cite{Yun09} by Yun and \cite{Ngo99} by Ng\^{o}.

When $0<n-m<n$ is even, we also formulate a corresponding
fundamental lemma for Fourier-Jacobi periods. See Chapter \ref{sec5}
for details.

The fundamental lemma for all elements in the spherical Hecke
algebra when $n=3,m=0$ is also proved, by Mao in \cite{Mao93}.\\
\end{rem}

\subsection{Variants of Rankin-Selberg convolutions}
\label{sec1rsc}

As we see in the last section, we need to consider certain periods
of Bessel and Fourier-Jacobi types on the general linear groups as
well. We will generalize the notion of Bessel (resp. Fourier-Jacobi)
models and periods for $\r{GL}_n\times\r{GL}_m$ for a pair $(r,r^*)$
of nonnegative integers such that $n=m+1+r+r^*$ (resp. $n=m+r+r^*$).
When $r=r^*$, they are introduced and considered in \cite{GGP}.\\

For simplicity, let us only describe the first case. Let $(r,r^*)$
be as above, we introduce a unipotent subgroup $U_{1^r,m+1,1^{r^*}}$
of $\r{GL}_n$ and $H=U_{1^r,m+1,1^{r^*}}\rtimes\r{GL}_m$ viewed as a
subgroup of $\r{GL}_n\times\r{GL}_m$. We have a character $\nu$ of
$H$ which is automorphic if $k$ is a number field.

Let $\pi$ (resp. $\sigma$) be an irreducible cuspidal automorphic
representation of $\r{GL}_n(\d{A})$ (resp. $\r{GL}_m(\d{A})$). We
introduce the Bessel integral
 \begin{align*}
 \c{B}^{\nu}_{r,r^*}(s;\varphi_{\pi},\varphi_{\sigma}):=\intl_{H(k)\B
 H(\d{A})}\varphi_{\Pi}\otimes\varphi_{\Sigma}(h)\nu(h)^{-1}|\det h|_{\d{A}}^{s-\frac{1}{2}}\r{d}h
 \end{align*}
and the Bessel period
$\c{B}^{\nu}_{r,r^*}(\varphi_{\pi},\varphi_{\sigma}):=\c{B}^{\nu}_{r,r^*}(\frac{1}{2};\varphi_{\pi},\varphi_{\sigma})$
for $\varphi_{\pi}\in\c{A}_{\pi}$,
$\varphi_{\sigma}\in\c{A}_{\sigma}$ and $s\in\d{C}$. Then
$\c{B}^{\nu}_{r,r^*}$ is the usual Rankin-Selberg convolution on
$\r{GL}_n\times\r{GL}_m$ if and only if $r=0$.

For general $(r,r^*)$ and also for the Fourier-Jacobi integrals
$\FJ^{\nu_{\mu}}_{r,r^*}(s;\varphi_{\pi},\varphi_{\sigma};\Phi)$, we
have the following theorem. We would like to remark that the
following result is actually known by Jacquet, Piatetskii-Shapiro
and Shalika long time ago. But since it is not recorded in the
literature, we would like to write it down with proof just for the
completeness.

\begin{theo}[see Section \ref{sec2bi} and \ref{sec3fi} for
notations]\label{theo1fe} (1) The Bessel integrals are holomorphic
in $s$ and satisfy the
 following functional equation
  \begin{align*}
  \c{B}^{\nu}_{r,r^*}(s;\varphi_{\pi},\varphi_{\sigma})
  =\c{B}^{\overline{\nu}}_{r^*,r}(1-s;\rho(w_{n,m})\widetilde{\varphi_{\pi}},\widetilde{\varphi_{\sigma}}).
  \end{align*}
For $\varphi_{\pi}\in\c{A}_{\pi}$ and
$\varphi_{\sigma}\in\c{A}_{\sigma}$ such that
$W^{\psi}_{\varphi_{\pi}}=\otimes_vW_v$ and
$W^{\overline{\psi}}_{\varphi_{\sigma}}=\otimes_vW^-_v$ are
factorizable,
 \begin{align*}
 \c{B}^{\nu}_{r,r^*}(\varphi_{\pi},\varphi_{\sigma})=L(\frac{1}{2},\pi\times\sigma)
 \prod_{v\in\d{M}_k}\left.\frac{\Psi_{v,r}(s;W_v,W^-_v)}{L_v(s,\pi_v\times\sigma_v)}\right|_{s=\frac{1}{2}}
 \end{align*}
where in the last product almost all factors are $1$. In particular,
there is a nontrivial Bessel period of $\pi\otimes\sigma$ if and
only if $L(\frac{1}{2},\pi\times\sigma)\neq 0$.\\
(2) The Fourier-Jacobi integrals are holomorphic in $s$ (when $n>m$)
and satisfy the following functional equation
  \begin{align*}
  \FJ^{\nu_{\mu}}_{r,r^*}(s;\varphi_{\pi},\varphi_{\sigma};\Phi)
  =\FJ^{\overline{\nu_{\mu}}}_{r^*,r}(1-s;\rho(w_{n,m})\widetilde{\varphi_{\pi}},\widetilde{\varphi_{\sigma}};
  \widehat{\Phi}).
  \end{align*}
For $\varphi_{\pi}\in\c{A}_{\pi}$,
$\varphi_{\sigma}\in\c{A}_{\sigma}$ and
$\Phi\in\c{S}(W^{\vee}(\d{A}))$ such that
$W^{\psi}_{\varphi_{\pi}}=\otimes_vW_v$,
$W^{\overline{\psi}}_{\varphi_{\sigma}}=\otimes_vW^-_v$ and
$\Phi=\otimes\Phi_v$ are factorizable,
 \begin{align*}
 \FJ^{\nu_{\mu}}_{r,r^*}(\varphi_{\pi},\varphi_{\sigma};\Phi)=L(\frac{1}{2},\pi\times\sigma\otimes\mu^{-1})
 \prod_{v\in\d{M}_k}\left.\frac{\Psi_{v,r}(s;W_v,W^-_v\otimes\mu^{-1}_v;\Phi_v)}
 {L_v(s,\pi_v\times\sigma_v\otimes\mu_v^{-1})}\right|_{s=\frac{1}{2}}
 \end{align*}
where in the last product almost all factors are $1$. In particular,
there is a nontrivial Fourier-Jacobi period of $\pi\otimes\sigma$
for $\nu_{\mu}$ if
and only if $L(\frac{1}{2},\pi\times\sigma\otimes\mu^{-1})\neq 0$.\\
\end{theo}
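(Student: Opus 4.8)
The plan is to follow the Rankin--Selberg method of Jacquet--Piatetskii-Shapiro--Shalika, now carrying along the extra unipotent layers indexed by $(r,r^*)$. I would first record absolute convergence. Since $\varphi_\pi$ is cuspidal on $\r{GL}_n$ and $n>m$ in the Bessel case $n=m+1+r+r^*$ (and, when $n>m$, also in the Fourier--Jacobi case $n=m+r+r^*$), its restriction to the mirabolic-type subgroup sitting inside $H$ is rapidly decreasing in every direction of the copy of $\r{GL}_m$ inside $H$, including the central one once the central character is taken into account; hence $\c{B}^{\nu}_{r,r^*}(s;\varphi_\pi,\varphi_\sigma)$ and $\FJ^{\nu_\mu}_{r,r^*}(s;\varphi_\pi,\varphi_\sigma;\Phi)$ converge absolutely for all $s\in\d{C}$ and define entire functions. (In the Fourier--Jacobi case with $n=m$ the $|\det h|^{s}$ factor is no longer tamed, convergence holds only in a vertical strip, and this is exactly why holomorphy is asserted only for $n>m$.)

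Next I would unfold. Replacing $\varphi_\pi$ by its Fourier expansion along the unipotent radical of the standard parabolic of type $(1^r,m+1,1^{r^*})$ and peeling off the unipotent directions one at a time --- Piatetskii-Shapiro's argument, cuspidality annihilating every non-generic intermediate term --- collapses the period to an integral of $W^{\psi}_{\varphi_\pi}$ against $W^{\overline\psi}_{\varphi_\sigma}$ over $N_m(\d{A})\B\r{GL}_m(\d{A})$, together with a residual unipotent integration when $n-m$ is large; in the Fourier--Jacobi case one additionally unfolds the theta kernel against $\Phi$. When $W^{\psi}_{\varphi_\pi}=\otimes_v W_v$, $W^{\overline\psi}_{\varphi_\sigma}=\otimes_v W^-_v$ and $\Phi=\otimes_v\Phi_v$ are factorizable, this integral becomes an Euler product, and I would define $\Psi_{v,r}(s;W_v,W^-_v)$ (resp. $\Psi_{v,r}(s;W_v,W^-_v\otimes\mu^{-1}_v;\Phi_v)$) to be the $v$-adic factor.

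The local analysis is then the standard package for these zeta integrals: (i) absolute convergence of $\Psi_{v,r}$ for $\Re s\gg 0$; (ii) rational, resp. meromorphic, continuation; (iii) the quotient $\Psi_{v,r}(s;W_v,W^-_v)/L_v(s,\pi_v\times\sigma_v)$ is entire, obtained by showing the span of the $\Psi_{v,r}$, as the Whittaker data vary, is the fractional ideal generated by $L_v(s,\pi_v\times\sigma_v)$ --- either directly, or by carrying out the extra unipotent integrations to reduce the $(r,r^*)$-integral to the classical $r=r^*=0$ Rankin--Selberg integral; and (iv) the unramified computation $\Psi_{v,r}(s;W^\circ_v,W^{-\circ}_v)=L_v(s,\pi_v\times\sigma_v)$ for normalized spherical data, via the Casselman--Shalika formula. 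Feeding (iii)--(iv) into the Euler product yields the asserted identity, with $L(s,\pi\times\sigma)$ (resp. $L(s,\pi\times\sigma\otimes\mu^{-1})$) entire for $n>m$, hence holomorphy of the global integral; the non-vanishing statements are then immediate from the identity together with the existence of local data making each finite factor nonzero.

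For the functional equation I would argue directly at the automorphic level: the change of variables $h\mapsto w_{n,m}\,{}^{t}h^{-1}w_{n,m}^{-1}$ carries $H$ onto the corresponding group of type $(1^{r^*},m+1,1^r)$, sends $\nu$ to $\overline\nu$ and $|\det h|^{s-1/2}$ to $|\det h|^{(1-s)-1/2}$, and, combined with the fact that $g\mapsto\varphi_\pi({}^{t}g^{-1})$ lies in $\c{A}_{\widetilde\pi}$ and similarly for $\varphi_\sigma$, identifies $\c{B}^{\nu}_{r,r^*}(s;\varphi_\pi,\varphi_\sigma)$ with $\c{B}^{\overline\nu}_{r^*,r}(1-s;\rho(w_{n,m})\widetilde{\varphi_\pi},\widetilde{\varphi_\sigma})$; absolute convergence of both sides for all $s$ makes this manipulation legitimate. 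The Fourier--Jacobi case is identical, with Poisson summation on the theta kernel converting $\Phi$ into $\widehat\Phi$. I expect the genuinely technical step to be (iii)--(iv) above --- pinning down the polar behavior of the $(r,r^*)$-zeta integrals and performing the spherical evaluation in the presence of the extra unipotent integrations, and, in the Fourier--Jacobi case, in the presence of the Weil representation --- since everything else is either soft (convergence, the change of variables) or a direct appeal to the classical $\r{GL}_n\times\r{GL}_m$ local theory.
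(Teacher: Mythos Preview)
Your proposal is correct and follows essentially the same route as the paper: iterative unfolding of the unipotent integration to reduce to the classical Rankin--Selberg integral of Whittaker functions, the automorphic change of variables $g\mapsto w_{n,m}\,{}^tg^{-1}w_{n,m}$ (plus Poisson summation on the theta kernel in the Fourier--Jacobi case) for the functional equation, and an appeal to the standard local $\r{GL}_n\times\r{GL}_m$ package for the Euler product and the unramified computation. The only organizational difference is that the paper carries out the unfolding in a specific order---first applying Fourier inversion column-by-column on the $\r{Hom}(W,X)$-block (passing through a filtration $L_{r+1}\subset L_r\subset\cdots\subset L_1$ of the unipotent radical) to free up an integral over $\r{M}_{r,m}(\d{A})$, and only then invoking the Whittaker expansion inside the remaining classical Rankin--Selberg inner integral---whereas you describe the unfolding more globally as a Piatetskii-Shapiro--style Fourier expansion of $\varphi_\pi$; the two are equivalent implementations of the same mechanism.
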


\begin{rem}
The above theorem completely confirms \cite[Conjecture 24.1]{GGP}
for split unitary groups, i.e., general linear groups.\\
\end{rem}

It is clear that the Bessel (resp. Fourier-Jacobi) period defines an
element in the space of invariant functionals
$\r{Hom}_{H(\d{A})}(\pi\otimes\sigma\otimes\widetilde{\nu},\d{C})$
where $\nu$ is a character (resp. an infinite dimensional
representation) of $H(\d{A})$. It has a decomposition into spaces of
local invariant functionals. The following multiplicity one result
is a generalization of \cite[Corollary 15.3, 16.3]{GGP} for general
linear groups.

\begin{theo}\label{theo1m1}
Let $k$ be a non-archimedean local field of characteristic zero. Let
$\pi$ (resp. $\sigma$) be an irreducible admissible representation
of $\r{GL}_n$ (resp. $\r{GL}_m$). Then
$\r{dim}_{\d{C}}\r{Hom}_H(\pi\otimes\sigma\otimes\widetilde{\nu},\d{C})\leq1$.
Moreover, if $\pi$ and $\sigma$ are generic, then
$\r{dim}_{\d{C}}\r{Hom}_H(\pi\otimes\sigma\otimes\widetilde{\nu},\d{C})=1$.\\
\end{theo}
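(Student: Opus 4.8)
The plan is to reduce the statement to the known multiplicity one results for Bessel and Fourier-Jacobi models of $\r{GL}_n\times\r{GL}_m$ when $r=r^*$ (that is, \cite[Corollary 15.3, 16.3]{GGP}) together with the corresponding existence-of-models statements for generic representations. The key observation is that the subgroup $H=U_{1^r,m+1,1^{r^*}}\rtimes\r{GL}_m$ together with its character $\nu$ is, up to conjugation and unfolding, built from a Whittaker datum on a product of mirabolic-type subgroups and a ``balanced'' piece on $\r{GL}_{m+1}\times\r{GL}_m$ (or $\r{GL}_m\times\r{GL}_m$ in the Fourier-Jacobi case). So the first step is to carry out the standard unipotent induction: realize $\r{Hom}_H(\pi\otimes\sigma\otimes\widetilde\nu,\d{C})$ via a sequence of Jacquet-functor / Mackey-theory steps that strip off the ``unbalanced'' unipotent $U_{1^r}$ on the left and $U_{1^{r^*}}$ on the right, each step being governed by a Whittaker character, so that one is left with a Hom space of the form $\r{Hom}_{H_0}(\pi_0\otimes\sigma_0\otimes\widetilde{\nu_0},\d{C})$ where $(n_0,m_0)$ satisfies $n_0=m_0+1$ (Bessel) or $n_0=m_0$ (Fourier-Jacobi), $\pi_0,\sigma_0$ are the relevant derivatives/twisted Jacquet modules of $\pi,\sigma$, and $H_0$ is exactly the Bessel (resp. Fourier-Jacobi) subgroup of \cite{GGP}.

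The second step is to control these Jacquet functors. For the upper bound, one uses that twisted Jacquet modules along a maximal unipotent (i.e., the functor $r\mapsto r+1$ of passing to a ``top derivative'' in the Bernstein-Zelevinsky sense) are exact and, crucially, that the relevant Hom space does not jump: the standard argument is that $\r{Hom}_{U\rtimes\r{GL}_{n-1}}(\pi\otimes\widetilde\psi\otimes ?,\d{C})$ sits inside $\r{Hom}$ over the full group by Frobenius reciprocity, so an inequality $\dim\r{Hom}_H(\pi\otimes\sigma\otimes\widetilde\nu,\d{C})\le\dim\r{Hom}_{H_0}(\pi_0\otimes\sigma_0\otimes\widetilde{\nu_0},\d{C})$ holds; by \cite[Corollary 15.3, 16.3]{GGP} the latter is $\le1$. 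For the lower bound under genericity, one runs the same chain in reverse: if $\pi,\sigma$ are generic then all the intervening top derivatives $\pi^{(r)},\sigma^{(r^*)}$ are again generic (this is the Bernstein-Zelevinsky fact that the highest derivative of a generic representation is one-dimensional, or that partial derivatives preserve genericity appropriately), so $\pi_0$ and $\sigma_0$ are generic, and the GGP existence result gives $\dim\r{Hom}_{H_0}=1$; one then checks the nonvanishing functional produced by \cite{GGP} pulls back to a nonzero element of $\r{Hom}_H$, which is automatic since the unfolding steps are given by honest isomorphisms of the relevant coinvariant spaces.

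In more detail, the chain of reductions should be organized as follows. First peel off $U_{1^r,m+1,1^{r^*}}$ in two halves: conjugating appropriately, the left block $1^r$ together with the top-left corner of the $\r{GL}_{m+1}$ block (resp. $\r{GL}_m$ block) assembles into the unipotent radical of a standard parabolic of $\r{GL}_{m+r+1}$ (resp. $\r{GL}_{m+r}$) with the Whittaker character; a twisted-Jacquet-module computation (``exchanging roots'' as in Jacquet--Rallis or the Gan--Gross--Prasad local arguments) replaces $\pi$ by its $r$-th derivative twisted by an appropriate character. Symmetrically for $1^{r^*}$ and $\sigma$; but note $\sigma$ lives on $\r{GL}_m$ which is smaller, so one must be a little careful about the order — one first reduces the $\r{GL}_n$ side to $\r{GL}_{n-r^*}$ keeping the full $\r{GL}_m$, then reduces the $\r{GL}_m$ side to $\r{GL}_{m-?}$, arriving at the pair $(m+1,m)$ or $(m,m)$. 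After each step one invokes exactness of the (twisted) Jacquet functor to transport the Hom space. Finally apply \cite[Corollary 15.3, 16.3]{GGP}.

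The main obstacle, and the step I would budget the most care for, is the bookkeeping in the unipotent-exchange lemmas: one must verify that after conjugation the character $\nu$ restricted to the relevant unipotent subgroups is exactly a nondegenerate Whittaker character (not a degenerate one), that the two halves $U_{1^r}$ and $U_{1^{r^*}}$ can be handled independently without the left reduction spoiling the character on the right, and that at the very end the residual datum $(H_0,\nu_0)$ is literally the Bessel/Fourier-Jacobi datum of \cite{GGP} rather than a conjugate that might differ by an outer automorphism affecting $\widetilde\sigma$ versus $\sigma$. This is precisely the kind of calculation the authors say ``is actually known by Jacquet, Piatetskii-Shapiro and Shalika'' — conceptually routine but notationally heavy — so in the write-up I would set up uniform matrix coordinates once (as in Section~\ref{sec2bm}) and cite the root-exchange lemma in the form already used for Theorem~\ref{theo1fe}, rather than re-deriving it; the archimedean case is excluded by hypothesis, so Bernstein's theory of exactness of Jacquet functors and the $p$-adic GGP multiplicity-one theorem are available without modification.
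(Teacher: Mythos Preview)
Your approach is genuinely different from the paper's, and as stated it has a gap.

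The paper does \emph{not} reduce downward via Bernstein--Zelevinsky derivatives to the balanced case $r=r^*$. Instead it goes \emph{upward}: it enlarges $V$ to $V^+=V\oplus E^+$ of dimension $n+1$, picks auxiliary supercuspidal representations $\tau$ of $\r{GL}(Y)$ and $\tau^*$ of $\r{GL}(Y^*)$ (with $\dim Y=r+1$, $\dim Y^*=r^*+1$), forms the parabolically induced representation $\r{I}(\tau,\sigma,\tau^*)$ of $\r{GL}(V^+)$, and proves by a Mackey/Bruhat computation together with the Gelfand--Kazhdan description of $\tau|_{P_Y}$ that
\[
\r{Hom}_{\r{GL}(V)}\bigl(\r{I}(\tau,\sigma,\tau^*)\otimes\pi,\d{C}\bigr)\;\cong\;\r{Hom}_H\bigl(\pi\otimes\sigma\delta_W^{-1/2},\nu\bigr),
\]
provided $\tau,\tau^*$ avoid finitely many Bernstein components determined by $\pi$. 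One then twists $\tau,\tau^*$ so that the induced representation is irreducible and applies \cite{AGRS10} directly to the pair $\r{GL}_{n+1}\times\r{GL}_n$. The Fourier--Jacobi case is handled the same way, inducing from $P_{r,m,r^*}$ and reducing to the $\r{GL}_n\times\r{GL}_n$ multiplicity one of \cite[Theorem 14.1(iv)]{GGP}. Existence in the generic case is a separate (and easier) argument via the local Whittaker integrals $\Psi_r$ of the appendix.

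The gap in your plan is the target of your reduction. Peeling off one unbalanced unipotent layer by a twisted Jacquet functor produces a representation $\pi_0$ of the mirabolic $P_{n-1}$, not an irreducible admissible representation of $\r{GL}_{n-1}$; iterating, you land on a smooth $P_{n_0}$-module of possibly infinite length. The inputs to \cite[Corollary 15.3, 16.3]{GGP} are irreducible admissible representations, so you cannot invoke those corollaries on $\pi_0$. (Your Frobenius-reciprocity sentence actually gives equalities of Hom spaces at each step, not inequalities, so the problem is not that the bound is too weak but that the endpoint is outside the scope of the cited result.) Your remark about ``reducing the $\r{GL}_m$ side to $\r{GL}_{m-?}$'' compounds this: $\sigma$ lives on $\r{GL}_m$ with no unipotent acting nontrivially on it, so there is nothing to peel there. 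Finally, note that the paper explicitly says the $r=r^*$ case for $\r{GL}$ was left as an exercise in \cite{GGP}; the argument the paper gives for general $(r,r^*)$ \emph{is} the missing proof of that exercise, so reducing to it is circular unless you supply it separately.

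If you want to salvage a downward argument, you would need to replace the appeal to \cite{GGP} by a direct multiplicity statement for the mirabolic restriction (Bernstein--Zelevinsky filtration plus \cite{AGRS10} applied to each graded piece), which is doable but is essentially re-deriving the paper's Mackey computation in dual form. The paper's upward induction with auxiliary supercuspidals is cleaner precisely because it keeps everything irreducible and lands on $\r{GL}_{n+1}\times\r{GL}_n$ in one step.
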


The following is an outline of the paper.

In Chapter \ref{sec2}, we focus on the Bessel model and period on
general linear groups. We prove the Bessel part of Theorem
\ref{theo1fe} and Theorem \ref{theo1m1}. The proof of the local
multiplicity one result will occupy the first section, while we
follow the idea of \cite{GGP}. In particular, our proof includes the
case $r=r^*$ which was left as an exercise to readers in \cite{GGP}.
In the second section, as we have said, we will give a proof for the
global integral being Eulerian for the completeness of literature.

In Chapter \ref{sec3}, we focus on the Fourier-Jacobi model and
period on general linear groups. We prove the Fourier-Jacobi part of
Theorem \ref{theo1fe} and Theorem \ref{theo1m1}.

After briefly recalling the Bessel model and period for unitary
groups, we introduce the relative trace formula in Chapter
\ref{sec4}. We prove the matching of orbits in Section \ref{sec4co}
and the smooth matching of functions at split places. We formulate
the fundamental lemma and prove the easy half in \ref{sec4fl}.

Chapter \ref{sec5} is repeating the previous chapter, but for the
Fourier-Jacobi model and period. What we do more is the proof of the
full fundamental lemma for $\r{U}_n\times\r{U}_n$ in positive
characteristics, which is achieved in Section \ref{sec5nn}. The
proof is reduced to a known combinatorial identity proved in
\cite{Yun09}.

The last chapter is an appendix on integrals of local Whittaker
functions for general linear groups. We collect all the results we
need in Chapter \ref{sec2} and \ref{sec3} from existing literature.
In particular, we have to use all kinds of auxiliary local Whittaker
integrals in the theory of Rankin-Selberg convolutions.\\

\section{Bessel periods of $\r{GL}_n\times\r{GL}_m$}
\label{sec2}

\subsection{Bessel models}
\label{sec2bm}

Let $k$ be a local field and $|\;|_k$ the normalized absolute value
on $k$. Let $V$ be a $k$-vector space of dimension $n$. Suppose that
$V$ has a decomposition $V=X\oplus W\oplus E\oplus X^*$ where $W$,
$X$ and $X^*$ have dimension $m$, $r$ and $r^*$ respectively and
$E=\LA e\RA$ with $e\neq 0$, hence $n=m+r+r^*+1$. We want to
generalize the construction of the pair $(H,\nu)$ as in
\cite[Section 13]{GGP}. Let $P_{r,m+1,r^*}$ be the parabolic
subgroup of $\r{GL}(V)$ stabilizing the flag $0\subset X\subset
X\oplus W\oplus E\subset V$ and $U_{r,m+1,r^*}$ its maximal
unipotent subgroup. Then $U_{r,m+1,r^*}$ fits into the following
exact sequence:
 \begin{align*}
 \xymatrix@C=0.5cm{
   0 \ar[r] & \r{Hom}(X^*,X) \ar[r]& U_{r,m+1,r^*} \ar[r]& \r{Hom}(X^*,W\oplus E)+\r{Hom}(W\oplus E,X) \ar[r] & 0
   }.
 \end{align*}
We may write the above sequence as:
 \begin{align*}
 \xymatrix@C=0.5cm{
   0 \ar[r] & \(X^*\)^{\vee}\otimes X \ar[r]& U_{r,m+1,r^*} \ar[r]& \(X^*\)^{\vee}\otimes(W\oplus E)
   +(W^{\vee}\oplus E^{\vee})\otimes X \ar[r] & 0}.
 \end{align*}
Let $\ell_X:X\R k$ (resp. $\ell_{X^*}:k\R X^*$) be any nontrivial
homomorphism (if exists) and let $U_{X}$ (resp. $U_{X^*}$) be a
maximal unipotent subgroup of $\r{GL}(X)$ (resp. $\r{GL}(X^*)$)
stabilizing $\ell_X$ (resp. $\ell_{X^*}$). Moreover, let
 \begin{align*}
 \ell_W:(W\oplus E)+(W^{\vee}\oplus E^{\vee})\LR k
 \end{align*}
be a bilinear form which is trivial on $W+W^{\vee}$ and nontrivial
on $E$ and $E^{\vee}$. The composition of $\ell_X+\ell_{X^*}^{\vee}$
and $\ell_W$ defines a homomorphism
 \begin{align*}
 \ell:U_{r,m+1,r^*} \LR \(X^*\)^{\vee}\otimes(W\oplus E)
   +(W^{\vee}\oplus E^{\vee})\otimes X \overset{\ell_X+\ell_{X^*}^{\vee}}{\LR}
   (W\oplus E)+(W^{\vee}\oplus E^{\vee})\overset{\ell_W}{\LR} k
 \end{align*}
which is fixed by $(U_X\times U_{X^*})\times\r{GL}(W)$. Hence we can
extend $\ell$ trivially to it and define a homomorphism from
$H=U_{r,m+1,r^*}\rtimes((U_X\times U_{X^*})\times\r{GL}(W))$ to $k$.
Let $\psi:k\R\d{C}^1$ be a nontrivial character and
$\lambda:U_X\times U_{X^*}\R \d{C}^1$ a generic character which can
be viewed as a character of $H$. Let $\delta_W$ be the modulus
function of $\r{GL}(W)$ with respect to the adjoint action on
$U_{r,m+1,r^*}$, i.e., $\delta_W(g)=|\det g|_k^{r^*-r}$. Then we can
form a character
$\nu=(\psi\circ\ell)\otimes\lambda\otimes\delta_W^{-\frac{1}{2}}$ of
$H$. There is a natural embedding $\vep:H\HR\r{GL}(V)$ and a
projection $\kappa:H\R\r{GL}(W)$ which together induce an injective
morphism $(\vep,\kappa):H\HR\r{GL}(V)\times\r{GL}(W)$. Then the pair
$(H,\nu)$ is uniquely determined up to conjugacy in the group
$\r{GL}(V)\times\r{GL}(W)$ by the pair $W\subset V$ and $(r,r^*)$.
The following theorem generalizes the result in \cite{GGP}.

\begin{theo}\label{theo2bm}
Let $k$ be of characteristic zero. Let $\pi$ (resp. $\sigma$) be an
irreducible admissible representation of $\r{GL}(V)$ (resp.
$\r{GL}(W)$). Then
$\r{dim}_{\d{C}}\r{Hom}_H(\pi\otimes\sigma,\nu)\leq 1$. Moreover, if
$\pi$ and $\sigma$ are generic, then
$\r{dim}_{\d{C}}\r{Hom}_H(\pi\otimes\sigma,\nu)=1$.\\
\end{theo}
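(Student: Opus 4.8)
The plan is to follow the Gross–Prasad–Wallach strategy, reducing the multiplicity-one bound for the non-reductive pair $(H,\nu)$ inside $\r{GL}(V)\times\r{GL}(W)$ to a known multiplicity-one statement. The key observation is that $H=U_{r,m+1,r^*}\rtimes((U_X\times U_{X^*})\times\r{GL}(W))$ is built by an iterated ``Bessel descent'': starting from $\r{GL}(V)\times\r{GL}(W)$ one first restricts along the Whittaker-type characters $\lambda$ of $U_X$ and $U_{X^*}$ (Gelfand–Kazhdan / Rodier), and then one is reduced, after a Frobenius-reciprocity / Jacquet-module argument, to the mirabolic or Bessel model of a \emph{smaller} pair, namely $\r{GL}(W\oplus E)\times\r{GL}(W)$ with a genuine codimension-one Bessel (= Rankin–Selberg) datum. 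First I would set up the standard exact sequences for $U_{r,m+1,r^*}$ displayed above and use that $\ell_X,\ell_{X^*}$ are nondegenerate on the relevant pieces to ``stabilize'' the unipotent characters: a Hom-space over $H$ injects into the corresponding Hom-space for the pair in which $X$ and $X^*$ have been stripped off, via restriction of functionals and an application of the geometric lemma to control the orbits of $\r{GL}(W)$ (and the stabilizing unipotents $U_X,U_{X^*}$) acting on the additive characters of the unipotent radical. The residual datum is exactly the classical Jacquet–Piatetskii-Shapiro–Shalika local functional equation setup for $\r{GL}_{m+1}\times\r{GL}_m$, for which $\dim\r{Hom}\le 1$ is the uniqueness of Whittaker/Rankin–Selberg models, and equals $1$ for generic $\pi,\sigma$ by the theory of local zeta integrals.

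Concretely the steps, in order, are: (i) reduce to $p$-adic $k$ (the archimedean case of this type follows from the same argument with Casselman–Wallach globalization and Chevalley restriction, but I would emphasize the $p$-adic case since that is all Theorem~\ref{theo1m1} needs); (ii) realize $\r{Hom}_H(\pi\otimes\sigma,\nu)$ as $\r{Hom}_{\r{GL}(W)\ltimes U'}(\ ,\ )$ where $U'$ is the subgroup of $U_{r,m+1,r^*}$ consisting of the ``central'' block $\r{Hom}(X^*,X)$ on which $\ell$ is trivial, and show this block can be integrated away (it acts trivially through $\nu$ on the relevant Jacquet module, after a Mackey-type decomposition) so that one is left with the pair attached to $X\oplus W\oplus E\oplus X^*$ with $\r{Hom}(X^*,X)$ suppressed; (iii) peel off $U_X$ and $U_{X^*}$ one root subgroup at a time: each step uses Rodier-type uniqueness of the derivative/Kirillov model for $\r{GL}$, replacing $\pi$ (resp. $\sigma$) by a derivative and decreasing $n$ (resp. leaving $m$ fixed) until $X=X^*=0$; (iv) invoke the Jacquet–Piatetskii-Shapiro–Shalika uniqueness of the codimension-one Bessel model for $\r{GL}_{m+1}\times\r{GL}_m$ for the upper bound, and the nonvanishing of local Rankin–Selberg integrals on generic data (cf. the appendix of this paper and \cite{JPSS83}) for the lower bound in the generic case.

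The main obstacle I expect is step (iii)/(ii): controlling the orbit structure when $r=r^*$, i.e.\ making the ``peeling'' argument honest rather than an exercise — precisely the case \cite{GGP} left to the reader. The subtlety is that, unlike the codimension-one situation, the unipotent radical $U_{r,m+1,r^*}$ is two-step nilpotent (the Heisenberg-type extension with kernel $\r{Hom}(X^*,X)$), so one cannot naively restrict the character and must instead run a Mackey/geometric-lemma analysis: the additive characters of $U_{r,m+1,r^*}/\r{Hom}(X^*,X)$ lying over the fixed character on $E\oplus E^\vee$ form a homogeneous space under $(U_X\times U_{X^*})\times\r{GL}(W)$, and one needs that the stabilizer of the base-point character $\ell$ is exactly the ``next'' group in the tower, with the non-open orbits contributing nothing because the relevant Jacquet modules vanish (a cuspidal-support / ``killing'' argument à la Bernstein–Zelevinsky). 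Once the orbit bookkeeping is in place, both inequalities are formal consequences of the classical theory, so the proof is really a careful reduction rather than new analysis; I would present it as an induction on $r+r^*$, with the base case $r=r^*=0$ being exactly Jacquet–Piatetskii-Shapiro–Shalika.
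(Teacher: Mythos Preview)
Your strategy --- peel off $U_X$ and $U_{X^*}$ root by root and descend by induction on $r+r^*$ to the codimension-one pair $\r{GL}_{m+1}\times\r{GL}_m$ --- is \emph{not} the route the paper takes, and the step you yourself flag as the obstacle (step (iii)) is where the sketch does not close up. The phrase ``replacing $\pi$ by a derivative and decreasing $n$'' is not literally meaningful here: $\pi$ remains a representation of $\r{GL}(V)=\r{GL}_n$ throughout, and a single root-subgroup Jacquet functor along $U_X\subset\r{GL}(X)\subset\r{GL}(V)$ does not produce a representation of $\r{GL}_{n-1}$. One can analyze the twisted Jacquet module of $\pi$ along the whole of $U_{1^r,m+1,1^{r^*}}$, but the output lives on the Levi $\r{GL}(X)\times\r{GL}(W\oplus E)\times\r{GL}(X^*)$, not on a smaller $\r{GL}$, so the induction you describe does not iterate as written.

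The paper instead goes \emph{up} rather than down. It enlarges $V$ to $V^+=V\oplus E^+$ with $\dim V^+=n+1$, sets $Y=X\oplus\langle e+e^+\rangle$ and $Y^*=X^*\oplus\langle e-e^+\rangle$, picks irreducible supercuspidal $\tau,\tau^*$ on $\r{GL}(Y),\r{GL}(Y^*)$ in generic position relative to the Bernstein support of $\widetilde{\pi}$, and forms the parabolically induced representation $\r{I}(\tau,\sigma,\tau^*)$ of $\r{GL}(V^+)$. A Mackey analysis of $\r{I}(\tau,\sigma,\tau^*)|_{\r{GL}(V)}$ over the six $\r{GL}(V)$-orbits on $\r{GL}(V^+)/P_0$ shows that every non-open orbit contributes zero to both $\r{Hom}_{\r{GL}(V)}(-,\widetilde{\pi})$ and $\r{Ext}^1$, by the supercuspidality/Bernstein-component hypothesis; on the open orbit, the Gelfand--Kazhdan identification $\tau|_{P_Y}\cong\r{cInd}_{U_Y}^{P_Y}\lambda$ (and its analogue for $\tau^*$) collapses the compactly induced piece to $\r{cInd}_H^{\r{GL}(V)}(\sigma\delta_W^{-1/2}\otimes\widetilde{\nu})$. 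Frobenius reciprocity then gives
\[
\r{Hom}_H(\pi\otimes\sigma,\nu)\;\cong\;\r{Hom}_{\r{GL}(V)}\bigl(\r{I}(\tau,\sigma\delta_W^{1/2},\tau^*)\otimes\pi,\d{C}\bigr),
\]
and the right-hand side has dimension $\le 1$ by the codimension-one multiplicity-one theorem of Aizenbud--Gourevitch--Rallis--Schiffmann for $\r{GL}_{n+1}\times\r{GL}_n$ (after twisting $\tau,\tau^*$ to make the induced representation irreducible). The existence half for generic $\pi,\sigma$ is handled exactly as you say, via nonvanishing of the local Whittaker integrals (Corollary~\ref{6cor}(1)).

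In short: you try to \emph{shrink} the datum to $\r{GL}_{m+1}\times\r{GL}_m$ by stripping $X,X^*$; the paper \emph{enlarges} it to $\r{GL}_{n+1}\times\r{GL}_n$ by absorbing $X,X^*$ and $\sigma$ into a single induced representation on the bigger group. The paper's route is a one-shot Mackey computation plus a Bernstein-component genericity trick, with no induction on $r+r^*$ and no derivative bookkeeping; your route would require making the peeling honest, which is more work than the paper's argument.
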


The existence part is due to Corollary \ref{6cor} (1). We consider
the uniqueness part. The proof for $k$ non-archimedean is similar to
that in \cite[Section 15]{GGP} with mild modifications for general
linear groups and a general pair $(r,r^*)$. The proof for $k$
archimedean should follow similarly as in \cite{JSZ10} which we
omit.

Recall that we have $V=X\oplus W\oplus E\oplus X^*$. Let $E^+$ be a
$k$-line generated by $e^+$. Let $v_0=e+e^+$, $v^*_0=e-e^+$ and
 \begin{align*}
 Y=X\oplus\LA v_0\RA;\qquad Y^*=X^*\oplus\LA v^*_0\RA;\qquad
 V^+=V\oplus E^+.
 \end{align*}
Let $P_0$ be the parabolic subgroup of $\r{GL}(V^+)$ stabilizing the
flag $\s{F}_0:0\subset Y\subset Y\oplus W\subset V^+$ and $M_0$ its
Levi subgroup such that
$M_0\cong\r{GL}(Y)\times\r{GL}(W)\times\r{GL}(Y^*)$. The group
$\r{GL}(V)$ embeds into $\r{GL}(V^+)\times\r{GL}(V)$ diagonally. Let
$\tau$ (resp. $\tau^*$) be an irreducible supercuspidal
representation of $\r{GL}(Y)$ (resp. $\r{GL}(Y^*)$) and let
 \begin{align*}
 \r{I}\(\tau,\sigma,\tau^*\):=\r{Ind}^{\r{GL}(V^+)}_{P_0}(\tau\otimes\sigma\otimes\tau^*)
 \end{align*}
be the unnormalized (smoothly) induced representation of
$\r{GL}(V^+)$ of the representation $\tau\otimes\sigma\otimes\tau^*$
viewed as a representation of $P_0$ through the projection $P_0\R
M_0$. We have the following proposition which is similar to
\cite[Theorem 15.1]{GGP}.

 \begin{prop}
 With the notations as above and let $k$ be non-archimedean, we have
  \begin{align*}
  \r{Hom}_{\r{GL}(V)}(\r{I}\(\tau,\sigma,\tau^*\)\otimes\pi,\d{C})=\r{Hom}_H(\pi\otimes\sigma\delta_W^{-\frac{1}{2}},\nu)
  \end{align*}
 as long as $\widetilde{\pi}$ does not belong to the Bernstein component
 of $\r{GL}(V)$ associated to the data
 $(\r{GL}(Y_0)\times M,\tau\otimes\varsigma)$ and $(M^*\times\r{GL}(Y_0^*),\varsigma^*\otimes\tau^*)$
 where $M$ (resp. $M^*$) is any Levi subgroup of $\r{GL}(Z)$ (resp. $\r{GL}(Z^*)$) and $\varsigma$
 (resp. $\varsigma^*$) is any irreducible supercuspidal representation of $M$ (resp. $M^*$). Here $V=Y_0\oplus
 Z$ (resp. $V=Z^*\oplus Y_0^*$) with $\r{dim}(Y_0)=\r{dim}(Y)=r+1$ (resp. $\r{dim}(Y_0^*)=\r{dim}(Y^*)=r^*+1$).
 \end{prop}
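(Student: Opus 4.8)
The plan is to follow the strategy of Gan--Gross--Prasad \cite[Section 15]{GGP}, which reduces the computation of a branching multiplicity to a known multiplicity via the theory of derivatives and parabolic induction. The key mechanism is that the restriction of the big induced representation $\r{I}(\tau,\sigma,\tau^*)$ from $\r{GL}(V^+)$ to $\r{GL}(V)$ can be analyzed through a Mackey-theoretic filtration indexed by $\r{GL}(V)$-orbits on the flag variety $P_0\backslash\r{GL}(V^+)$, i.e., by the relative positions of a hyperplane (coming from the decomposition $V^+=V\oplus E^+$) with respect to the flag $\s{F}_0$. The first step is therefore to set up this geometric orbit decomposition explicitly: identify the (finitely many) $\r{GL}(V)$-orbits on $P_0\backslash\r{GL}(V^+)$, write down representatives, and compute the corresponding stabilizers together with the modulus characters that appear when passing between compact induction and full smooth induction.

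Second, I would analyze each orbit contribution. The heart of the matter is to show that all but one orbit contribute zero to $\r{Hom}_{\r{GL}(V)}(\r{I}(\tau,\sigma,\tau^*)\otimes\pi,\d{C})$. This is where the supercuspidality of $\tau$ and $\tau^*$ enters: on the ``bad'' orbits, the stabilizer meets a unipotent radical in such a way that the relevant Jacquet functor (a derivative in the sense of Bernstein--Zelevinsky, suitably adapted to a general pair $(r,r^*)$ rather than $r=r^*$) of a supercuspidal representation vanishes, unless we are in the precise Bernstein-component situation excluded in the hypothesis on $\widetilde\pi$. So one has to match each orbit with a specific ``twisted derivative'' of $\tau$ or $\tau^*$ along the Levi pieces $\r{GL}(Z)$, $\r{GL}(Z^*)$, and verify that the only one that survives is the open orbit, whose contribution, after unwinding stabilizers and modulus characters, is precisely $\r{Hom}_H(\pi\otimes\sigma\delta_W^{-1/2},\nu)$ — the $\delta_W^{-1/2}$ twist being exactly the bookkeeping discrepancy between $\sigma$ appearing inside $M_0$ without normalization and $\sigma$ appearing in the definition of $\nu$. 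I would organize this as: (a) open orbit gives the asserted Hom-space; (b) each closed/intermediate orbit is killed by the supercuspidality/derivative vanishing, using the Bernstein-component exclusion to rule out the exceptional overlaps.

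The main obstacle I expect is bookkeeping rather than a conceptual difficulty: for a general pair $(r,r^*)$ with $r\neq r^*$ the unipotent group $U_{r,m+1,r^*}$ and the character $\nu$ are no longer symmetric, so the orbit representatives, the generic characters $\lambda$ on $U_X\times U_{X^*}$, and the modulus factors all become asymmetric, and one must carefully track which piece of the flag ($Y$ versus $Y^*$, hence $\tau$ versus $\tau^*$) each orbit degeneration affects. Keeping the two sides of the filtration straight — and in particular identifying the two families of forbidden Bernstein data $(\r{GL}(Y_0)\times M,\tau\otimes\varsigma)$ and $(M^*\times\r{GL}(Y_0^*),\varsigma^*\otimes\tau^*)$ as precisely those degenerations where a derivative of $\tau$ (resp.\ $\tau^*$) fails to vanish — is the delicate point. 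Once the orbit-by-orbit vanishing is established, the identification of the surviving term with $\r{Hom}_H(\pi\otimes\sigma\delta_W^{-1/2},\nu)$ is a direct comparison of stabilizers with the group $H$ built in the previous subsection, so I would state that comparison as a lemma and verify it by matching generators of $U_{r,m+1,r^*}\rtimes((U_X\times U_{X^*})\times\r{GL}(W))$ with the corresponding elements of the orbit stabilizer.
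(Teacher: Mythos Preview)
Your overall strategy is correct and matches the paper's approach closely, but the mechanism you describe for killing the non-open orbits is inverted, and you omit an ingredient that is essential for the filtration argument.

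On the non-open orbits the point is \emph{not} that a Bernstein--Zelevinsky derivative of the supercuspidal $\tau$ or $\tau^*$ vanishes. In fact the opposite happens: on each of the five non-open $\r{GL}(V)$-orbits on $P_0\backslash\r{GL}(V^+)$, the stabilizer $Q_i\subset\r{GL}(V)$ contains a full Levi block isomorphic to $\r{GL}(Y_0)$ (with $\dim Y_0=r+1$) or to $\r{GL}(Y_0^*)$, so that $\tau$ (resp.\ $\tau^*$) survives intact and $\r{cInd}_{Q_i}^{\r{GL}(V)}\pi_i$ is a genuine parabolically induced representation of the form $\r{Ind}_{P}^{\r{GL}(V)}(\tau\otimes\varsigma)$ or $\r{Ind}_{P^*}^{\r{GL}(V)}(\varsigma^*\otimes\tau^*)$. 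The vanishing of $\r{Hom}_{\r{GL}(V)}(-,\widetilde\pi)$ on these pieces comes entirely from the Bernstein-component hypothesis on $\widetilde\pi$, not from any derivative of $\tau$. The Gelfand--Kazhdan input $\tau|_{P_Y}\cong\r{cInd}_{U_Y}^{P_Y}\lambda$ (and likewise for $\tau^*$) is used only on the \emph{open} orbit, where the stabilizer meets $\r{GL}(Y)\times\r{GL}(Y^*)$ in the product of mirabolics $P_Y\times P_{Y^*}$; this is what converts the open-orbit contribution into $\r{cInd}_H^{\r{GL}(V)}(\sigma\delta_W^{-1/2}\otimes\widetilde\nu)$.

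Second, you do not mention $\r{Ext}^1$ vanishing. The Mackey filtration gives short exact sequences, not direct sums, so to pass from $\r{Hom}_{\r{GL}(V)}(\Pi,\widetilde\pi)$ to $\r{Hom}_{\r{GL}(V)}(\Pi_0,\widetilde\pi)$ you must also show $\r{Ext}^1_{\r{GL}(V)}\bigl(\r{cInd}_{Q_i}^{\r{GL}(V)}\pi_i,\widetilde\pi\bigr)=0$ for each non-open $i$. This again follows from the Bernstein decomposition (no extensions between distinct components), and is precisely why the hypothesis on $\widetilde\pi$ is stated in terms of Bernstein components rather than mere $\r{Hom}$-orthogonality.
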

 \begin{proof}
 We need to calculate the restriction
 $\Pi:=\r{I}\(\tau,\sigma,\tau^*\)|_{\r{GL}(V)}$. By the Bruhat
 decomposition, there are six elements in the double coset
 $\r{GL}(V)\B\r{GL}(V^+)/P_0$ whose representatives are:
  \begin{description}
    \item[Big cell:]$g_0=\b{1}_{n+1}$.
    \item[Medium cells:]$g_1$ sends $\s{F}_0$ to $\s{F}_1:0\subset Y_1\subset Y_1\oplus
    W_1$ with $Y_1=Y$ and $E^+\subset Y_1\oplus W_1$;\\
    $g_2$ sends $\s{F}_0$ to $\s{F}_2:0\subset Y_2\subset Y_2\oplus
    W_2$ with $Y_2\subset V$ and $E^+\not\subset Y_2\oplus W_2\not\subset
    V$.
    \item[Small cells:]$g_3$ sends $\s{F}_0$ to $\s{F}_3:0\subset Y_3\subset Y_3\oplus
    W_3$ with $E^+\subset Y_3$;\\
    $g_4$ sends $\s{F}_0$ to $\s{F}_4:0\subset Y_4\subset Y_4\oplus
    W_4$ with $Y_4\subset V$ and $E^+\subset Y_4\oplus W_4$;\\
    $g_5$ sends $\s{F}_0$ to $\s{F}_5:0\subset Y_5\subset Y_5\oplus
    W_5$ with $Y_5\oplus W_5\subset V$.
  \end{description}
 Let $P_i$ ($i=0,1,2,3,4,5$) be the parabolic subgroup of $\r{GL}(V^+)$
 stabilizing $\s{F}_i$, $Q_i=P_i\cap\r{GL}(V)$ and $\pi_i=\(\tau\otimes\sigma\otimes\tau^*\)|_{Q_i}$.
 By Mackey theory, there is a filtration
 $0\subset\Pi_0\subset\Pi_1\subset\Pi_2=\Pi$ such that
  \begin{align*}
  &\Pi_0\cong\r{cInd}_{Q_0}^{\r{GL}(V)}\pi_0;\qquad\Pi_1/\Pi_0=\r{cInd}_{Q_1}^{\r{GL}(V)}\pi_1
  \oplus\r{cInd}_{Q_2}^{\r{GL}(V)}\pi_2;\\
  &\Pi_2/\Pi_1\cong\r{cInd}_{Q_3}^{\r{GL}(V)}\pi_3\oplus\r{cInd}_{Q_4}^{\r{GL}(V)}\pi_4
  \oplus\r{cInd}_{Q_5}^{\r{GL}(V)}\pi_5
  \end{align*}
 where $\r{cInd}$ means the (unnormalized smooth) induction with
 compact support. Applying the functor
 $\r{Hom}_{\r{GL}(V)}(-,\widetilde{\pi})$, we have the following
 exact sequence:
  \begin{align*}
  &\xymatrix{
    0 \ar[r] & \r{Hom}_{\r{GL}(V)}\(\bigoplus_{i=3}^5\r{cInd}_{Q_i}^{\r{GL}(V)}\pi_i,\widetilde{\pi}\)
    \ar[r] & \r{Hom}_{\r{GL}(V)}\(\Pi,\widetilde{\pi}\)
    \ar[r] & \r{Hom}_{\r{GL}(V)}\(\Pi_1,\widetilde{\pi}\) }\\
  &\xymatrix{
    \;\; \ar[r] & \r{Ext}^1_{\r{GL}(V)}\(\bigoplus_{i=3}^5\r{cInd}_{Q_i}^{\r{GL}(V)}\pi_i,\widetilde{\pi}\)
    }.
  \end{align*}
 But for $i=3,4$ (resp. $4,5$), $\r{cInd}_{Q_i}^{\r{GL}(V)}\pi_i$ is of the form
 $\r{Ind}_{P_0^*}^{\r{GL}(V)}\varsigma^*\otimes\tau^*$ (resp. $\r{Ind}_{P_0}^{\r{GL}(V)}\tau\otimes\varsigma$)
 where $P_0^*$ (resp. $P_0$) is some parabolic subgroup whose Levi is
 $M^*\times\r{GL}(Y_0^*)$ (resp. $\r{GL}(Y_0)\times M$) and
 $\varsigma^*$ (resp. $\varsigma$) is some smooth representation of $M^*$
 (resp. $M$). By our assumption on $\tau$ and $\tau^*$, the second and the fifth (last) terms
 in the above exact sequence are both zero. Hence
 $\r{Hom}_{\r{GL}(V)}(\Pi,\widetilde{\pi})\cong\r{Hom}_{\r{GL}(V)}(\Pi_1,\widetilde{\pi})$.

 Applying the same functor again, we have:
  \begin{align*}
  &\xymatrix{
    0 \ar[r] & \r{Hom}_{\r{GL}(V)}\(\bigoplus_{i=1}^2\r{cInd}_{Q_i}^{\r{GL}(V)}\pi_i,\widetilde{\pi}\)
    \ar[r] & \r{Hom}_{\r{GL}(V)}\(\Pi_1,\widetilde{\pi}\)
    \ar[r] & \r{Hom}_{\r{GL}(V)}\(\Pi_0,\widetilde{\pi}\) }\\
  &\xymatrix{
    \;\; \ar[r] & \r{Ext}^1_{\r{GL}(V)}\(\bigoplus_{i=1}^2\r{cInd}_{Q_i}^{\r{GL}(V)}\pi_i,\widetilde{\pi}\)
    }.
  \end{align*}
 For $i=1$ (resp. $2$), $\r{cInd}_{Q_i}^{\r{GL}(V)}\pi_i$ is of the form
 $\r{Ind}_{P_0^*}^{\r{GL}(V)}\varsigma^*\otimes\tau^*$ (resp. $\r{Ind}_{P_0}^{\r{GL}(V)}\tau\otimes\varsigma$).
 Again by our assumption on $\tau$ and $\tau^*$, the second and the fifth (last) terms
 in the above exact sequence are both zero. Hence
  \begin{align*}
  \r{Hom}_{\r{GL}(V)}(\Pi,\widetilde{\pi})\cong\r{Hom}_{\r{GL}(V)}(\Pi_1,\widetilde{\pi})
  \cong\r{Hom}_{\r{GL}(V)}(\Pi_0,\widetilde{\pi})
  \cong\r{Hom}_{\r{GL}(V)}(\r{cInd}_{Q_0}^{\r{GL}(V)}\pi_0,\widetilde{\pi}).
  \end{align*}

 Recall that $P_{r,m+1,r}$ is the parabolic subgroup of
 $\r{GL}(V)$ stabilizing $0\subset X\subset X\oplus W\oplus E\subset
 V$. Hence it contains $Q_0$ as a subgroup and moreover, is equal to
 $U_{r,m+1,r^*}\rtimes\(\r{GL}(X)\times\r{GL}(W)\times\r{GL}(X^*)\)$.
 The natural projection:
 $P_0\R\r{GL}(Y)\times\r{GL}(W)\times\r{GL}(Y^*)$ induces the
 following commutative diagram with exact rows and injective
 vertical arrows:
  \begin{align*}
  \xymatrix{
    0 \ar[r] & N_0 \ar[r] & P_0
    \ar[r] & \r{GL}(Y)\times\r{GL}(W)\times\r{GL}(Y^*) \ar[r] & 0 \\
    0 \ar[r] & N_0\cap Q_0 \ar[u] \ar[r] & Q_0 \ar[u]
    \ar[r] & P_Y\times\r{GL}(W)\times P_{Y^*}  \ar[u] \ar[r] & 0\\
    0 \ar[r] & N_0\cap Q_0 \ar@{=}[u] \ar[r] & U_{r,m+1,r^*} \ar[u] \ar[r]
    & \r{Hom}(\LA v_0\RA, X)\times\r{Hom}(X^*,\LA v_0^*\RA) \ar[u] \ar[r] & 0   }
  \end{align*}
 where $N_0$ is the maximal unipotent subgroup of $P_0$, $P_Y\subset\r{GL}(Y)$ is
 the mirabolic subgroup stabilizing $X$ and $P_{Y^*}\subset\r{GL}(Y^*)$ is the
 mirabolic subgroup fixing $v^*_0$. The proof of this is similar to
 \cite[Lemma 15.2]{GGP}. By the diagram, we have
  \begin{align*}
  \(\tau\otimes\sigma\otimes\tau^*\)|_{Q_0}=\tau|_{P_Y}\otimes\sigma\otimes\tau^*|_{P_{Y^*}}.
  \end{align*}
 By a result of Gelfand-Kazhdan, we have
  \begin{align*}
  \tau|_{P_Y}\cong\r{cInd}_{U_Y}^{P_Y}\lambda;\qquad\tau^*|_{P_{Y^*}}\cong\r{cInd}_{U_{Y^*}}^{P_{Y^*}}\lambda^*
  \end{align*}
 where $U_Y$ (resp. $U_{Y^*}$) is the unipotent radical of a Borel subgroup of $\r{GL}(Y)$ (resp.
 $\r{GL}(Y^*)$) satisfying $U_X\subset U_Y\subset P_Y$ (resp. $U_{X^*}\subset U_{Y^*}\subset
 P_{Y^*}$), and $\lambda$ (resp. $\lambda^*$) is a generic character
 of $U_Y$ (resp. $U_{Y^*}$).

 By our choice of unipotent radical, it is clear that, the pre-image
 of $U_Y\times\r{GL}(W)\times U_{Y^*}$ in $Q_0$ is the subgroup
  \begin{align*}
  H=U_{r,m+1,r^*}\rtimes\(U_X\times\r{GL}(W)\times U_{X^*}\)
  \end{align*}
 and the pull-back of the representation
 $\lambda\otimes\sigma\otimes\lambda^*$ is just
 $\sigma\delta_W^{-\frac{1}{2}}\otimes\widetilde{\nu}$. Hence, by induction in stages, we
 have
  \begin{align*}
  \r{cInd}_{Q_0}^{\r{GL}(V)}\pi_0\cong\r{cInd}_H^{\r{GL}(V)}\sigma\delta_W^{-\frac{1}{2}}\otimes\widetilde{\nu}
  \end{align*}
 and by Frobenius reciprocity, we conclude that
  \begin{align*}
  \r{Hom}_{\r{GL}(V)}\(\r{I}\(\tau,\sigma,\tau^*\)\otimes\pi,\d{C}\)
  \cong\r{Hom}_H\(\pi\otimes\sigma\delta_W^{-\frac{1}{2}},\nu\).
  \end{align*}
 \end{proof}

 \begin{proof}[Proof of Theorem \ref{theo2bm} for the uniqueness part when $k$ is non-archimedean]
 We choose an irreducible supercuspidal representation $\tau$ (resp. $\tau^*$) of $\r{GL}(Y)$ (resp. $\r{GL}(Y^*)$)
 satisfying the assumption in the above proposition. After twisting unramified
 characters (which still satisfy the assumption), we may
 assume that the induced representation $\r{I}\(\tau,\sigma\delta_W^{\frac{1}{2}},\tau^*\)$
 is irreducible. Then by the above proposition and \cite{AGRS10}, we have
  \begin{align*}
  \r{dim}_{\d{C}}\r{Hom}_H\(\pi\otimes\sigma,\nu\)
  =\r{dim}_{\d{C}}\r{Hom}_{\r{GL}(V)}\(\r{I}\(\tau,\sigma\delta_W^{\frac{1}{2}},\tau^*\)\otimes\pi,\d{C}\)\leq 1.
  \end{align*}
 \end{proof}

\begin{defn}
A nontrivial element in the space $\r{Hom}_H(\pi\otimes\sigma,\nu)$
is called an \emph{$(r,r^*)$-Bessel model} of $\pi\otimes\sigma$.
When $r=r^*$,
hence $n-m$ is odd, it is just the one defined in \cite{GGP}.\\
\end{defn}

\subsection{Bessel integrals, functional equations and $L$-functions}
\label{sec2bi}

In this section, we consider the global situation. Hence $k$ will be
a number field and $\d{A}$ is its ring of ad\`{e}les. Let
$|\;|_{\d{A}}=\prod_{v\in\d{M}_k}|\;|_v$. For any $v\in\d{M}_k$, we
denote $k_v$ the completion of $k$ at $v$. We denote $\f{o}$ (resp.
$\f{o}_v$) the ring of integers of $k$ (resp. $k_v$ for $v$ finite).
For any algebraic group $G$ over $k$, we denote $G_v=G(k_v)$ the
local Lie group for $v\in\d{M}_k$. If $G$ is reductive, we denote
$\c{A}(G)$ (resp. $\c{A}_0(G)$) the space of automorphic forms
(resp. cusp forms) of $G$ which is a
representation of $G(\d{A})$ by right translation $\rho$.\\

We define the pair $(H,\nu)$ as in the local case. Hence $\ell_X$,
$\ell_{X^*}$ and $\ell_W$ are defined over the number field $k$;
$\psi$ is a nontrivial character of $k\B\d{A}$; $\lambda$ is a
generic character $(U_{X}\times U_{X^*})(k)\B(U_{X}\times
U_{X^*})(\d{A})\R\d{C}^1$. Let $\pi$ (resp. $\sigma$) be an
irreducible cuspidal automorphic representation of
$\r{GL}(V)(\d{A})$ (resp. $\r{GL}(W)(\d{A})$). Then $\pi$ (resp.
$\sigma$) is isomorphic to a unique irreducible sub-representation
$\c{A}_{\pi}$ (resp. $\c{A}_{\sigma}$) of $\c{A}_0(\r{GL}(V))$
(resp. $\c{A}_0(\r{GL}(W))$).

\begin{defn}
The following absolutely convergent integral is called an
\emph{$(r,r^*)$-Bessel period} of $\pi\otimes\sigma$ (for a pair
$(H,\nu)$):
 \begin{align*}
 \c{B}^{\nu}_{r,r^*}(\varphi_{\pi},\varphi_{\sigma}):=\intl_{H(k)\B
 H(\d{A})}\varphi_{\pi}(\vep(h))\varphi_{\sigma}(\kappa(h))\nu(h)^{-1}\r{d}h,\qquad
 \varphi_\pi\in\c{A}_{\pi},\varphi_{\sigma}\in\c{A}_{\sigma}
 \end{align*}
where $\r{d}h$ is the Tamagawa measure on $H(\d{A})$. If there exist
$\varphi_{\pi}$, $\varphi_{\sigma}$ such that
$\c{B}^{\nu}_{r,r^*}(\varphi_{\pi},\varphi_{\sigma})\neq0$, then we
say $\pi\otimes\sigma$ has a nontrivial $(r,r^*)$-Bessel period.
\end{defn}

It is obvious that $\c{B}^{\nu}_{r,r^*}$ defines an element in
 \begin{align*}
 \r{Hom}_{H(\d{A})}(\pi\otimes\sigma,\nu)=\bigotimes_{v\in\d{M}_k}\r{Hom}_{H_v}(\pi_v\otimes\sigma_v,\nu_v).
 \end{align*}
Since the later space has the multiplicity one property, we expect
that the Bessel period is Eulerian. We now show that this is true.\\

We can choose a basis $\1 v_1,...,v_r\2$ of $X$ such that
\begin{itemize}
  \item The homomorphism $\ell_X:X\R k$ is given by the coefficient of $v_r$
  under the above basis;
  \item $U_X$ is the maximal unipotent subgroup of the parabolic
  subgroup $P_X$ stabilizing the complete flag
   $0\subset \LA v_1\RA\subset \LA v_1,v_2\RA\subset\cdots\subset\LA
   v_1,...,v_r\RA=X$;
  \item The generic character $\lambda|_{U_X}$ is given by
   \begin{align*}
   \lambda(u)=\psi\(u_{1,2}+u_{2,3}+\cdots+u_{r-1,r}\)
   \end{align*}
  where
   \begin{align}\label{2ux}
   u=\[
       \begin{array}{cccccc}
         1 & u_{1,2} & u_{1,3} & \cdots & u_{1,r-1} & u_{1,r} \\
          & 1 & u_{2,3} & \cdots & u_{2,r-1} & u_{2,r} \\
          &  &  1 & \cdots & u_{3,r-1} & u_{3,r}\\
          &  &    & \ddots & \vdots & \vdots \\
          &  &   & & 1 & u_{r-1,r} \\
          &  &   & &  & 1 \\
       \end{array}
     \]\in U_X(\d{A})
   \end{align}
  under the above basis.
\end{itemize}

Similarly, we can also choose a basis $\1 v^*_{r^*},...,v^*_1\2$ of
$X^*$ such that
\begin{itemize}
  \item The homomorphism $\ell_{X^*}:k\R X^*$ is given by $x\T
  cxv^*_{r^*}$ for some $c\neq0$ determined later;
  \item $U_{X^*}$ is the maximal unipotent subgroup of the parabolic
  subgroup $P_{X^*}$ stabilizing the complete flag
   $0\subset \LA v^*_{r^*}\RA\subset \LA v^*_{r^*},v^*_{r^*-1}\RA\subset\cdots\subset\LA
   v^*_{r^*},...,v^*_1\RA=X^*$;
  \item The generic character $\lambda|_{U_{X^*}}$ is given by
   \begin{align*}
   \lambda(u^*)=\psi\(u^*_{r^*,r^*-1}+u^*_{r^*-1,r^*-2}+\cdots+u^*_{2,1}\)
   \end{align*}
  where
   \begin{align}\label{2uxstar}
   u^*=\[
       \begin{array}{cccccc}
         1 & u^*_{r^*,r^*-1} & u^*_{r^*,r^*-2} & \cdots & u^*_{r^*,2} & u^*_{r^*,1} \\
          & 1 & u^*_{r^*-1,r^*-2} & \cdots &u^*_{r^*-1,2} & u^*_{r^*-1,1} \\
          &  & 1 & \cdots & u^*_{r^*-2,2} & u^*_{r^*-2,1}\\
          &  & & \ddots & \vdots & \vdots \\
          &  & &  & 1 & u^*_{2,1} \\
          &  &  & &  & 1 \\
       \end{array}
     \]\in U_{X^*}(\d{A})
   \end{align}
  under the above basis.
\end{itemize}

Moreover, we can choose a basis $\1 w_1,...,w_m\2$ of $W$ and $\1
w_0\2$ of $E$ such that the bilinear form $\ell_W:(W\oplus
E)+(W^{\vee}\oplus E^{\vee})\R k$ is given by
$\ell_W(w_i)=\ell_W(w_i^{\vee})=0$ ($1\leq i\leq m$) and
$\ell_W(w_0^{\vee})=1$, where $\1
w_1^{\vee},...,w_m^{\vee},w_0^{\vee}\2$ is the dual basis. Let
$c=\ell_W(w_0)^{-1}$.

We write elements in $\r{GL}(V)$ in the matrix form under the basis:
 \begin{align}\label{2basis}
 \1 w_1,...,w_m,v_1,...,v_r,w_0,v^*_{r^*},...,v^*_1\2
 \end{align}
and view $\r{GL}(W)$ as a subgroup of $\r{GL}(V)$. Then the image of
$H(\d{A})$ in $\r{GL}(V)(\d{A})$ consists of following matrices:
 \begin{align}\label{2huge}
 h=h\(n,n^*,b;u,u^*;g\)=
  \[
   \begin{array}{c|c|c|c}
     \text{\Large{$g$}} & \multicolumn{2}{c|}{} &
             \begin{array}{ccc}
               n^*_{1,r^*} & \cdots & n^*_{1,1} \\
               \vdots &  & \vdots \\
               n^*_{m,r^*} & \cdots & n^*_{m,1}
             \end{array} \\ \cline{1-4}
     \begin{array}{ccc}
      n_{1,1} & \cdots & n_{1,m} \\
      \vdots &  & \vdots \\
      n_{r,1} & \cdots & n_{r,m}
     \end{array} & \begin{array}{ccc}
                      & \text{\Large{$u$}} &
                   \end{array}
      &
           \begin{array}{c}
             n_{1,0} \\
             \vdots \\
             n_{r,0}
           \end{array} & \text{\Large{$b$}} \\ \cline{1-4}
     \multicolumn{2}{c|}{} & \begin{array}{c}
                                \\
                               1 \\
                                \\
                             \end{array}
      &
        \begin{array}{ccc}
          n^*_{0,r^*} & \cdots & n^*_{0,1}
        \end{array} \\ \cline{3-4}
     \multicolumn{3}{c|}{} & \begin{array}{c}
                               \\
                               \\
                              \text{\Large{$u^*$}} \\
                              \\
                              \\
                            \end{array}
   \end{array}
  \]
 \end{align}
where
 \begin{align*}
 n&=\[
 \begin{array}{cccc}
   n_{1,1} & \cdots & n_{1,m} & n_{1,0} \\
   \vdots &  & \vdots & \vdots \\
   n_{r,1} & \cdots & n_{r,m} & n_{r,0}
 \end{array}
 \]\in\r{Hom}(W\oplus E,X)(\d{A}),\\
 n^*&=\[
 \begin{array}{ccc}
   n^*_{1,r^*} & \cdots & n^*_{1,1} \\
   \vdots &  & \vdots \\
   n^*_{m,r^*} & \cdots & n^*_{m,1} \\
   n^*_{0,r^*} & \cdots & n^*_{0,1}
 \end{array}
 \]\in\r{Hom}(X^*,W\oplus E)(\d{A}),
 \end{align*}
$b\in\r{Hom}(X^*,X)(\d{A})$, $u\in U_X(\d{A})$, $u^*\in
U_{X^*}(\d{A})$ and $g\in\r{GL}(W)(\d{A})$. Hence $u$ and $u^*$ are
upper triangular matrices as in \eqref{2ux} and \eqref{2uxstar}; the
character $\nu$ on $H(\d{A})$ is given by
 \begin{align*}
 \nu(h)&=\nu\(h\(n,n^*,b;u,u^*;g\)\)\\
 &=|\det g|_{\d{A}}^\frac{r-r^*}{2}\psi\(u_{1,2}+\cdots+u_{r-1,r}+n_{r,0}+n^*_{0,r^*}+u^*_{r^*,r^*-1}+\cdots+u^*_{2,1}\).
 \end{align*}
Let $U_{1^r,m+1,1^{r^*}}=U_{r,m+1,r^*}\rtimes(U_X\times U_{X^*})$ be
a unipotent subgroup of $\r{GL}(V)$, then
 \begin{align*}
 U_{1^r,m+1,1^{r^*}}(\d{A})=\1
 \underline{u}=\underline{u}\(n,n^*,b;u,u^*\):=h\(n,n^*,l;u,u^*;\b{1}_m\)\2
 \end{align*}
and we denote $\r{d}\underline{u}$ the product measure. Then we have
$\r{d}h=|\det g|_{\d{A}}^{r-r^*}\r{d}\underline{u}\r{d}g$. Let us
simply write
 \begin{align*}
 \underline{\psi}(\underline{u})=\psi\(u_{1,2}+\cdots+u_{r-1,r}+n_{r,0}+n^*_{0,r^*}+u^*_{r^*,r^*-1}+\cdots+u^*_{2,1}\)
 \end{align*}
and identify $\r{GL}(V)$ (resp. $\r{GL}(W)$) with $\r{GL}_{n,k}$
(resp. $\r{GL}_{m,k}$) under the basis \eqref{2basis}. Then
 \begin{align}\label{2bs1}
 \c{B}^{\nu}_{r,r^*}(\varphi_{\pi},\varphi_{\sigma})=\intl_{\r{GL}_m(k)\B\r{GL}_m(\d{A})}
 \intl_{U_{1^r,m+1,1^{r^*}}(k)\B
 U_{1^r,m+1,1^{r^*}}(\d{A})}\varphi_{\pi}\(\underline{u}g\)\varphi_{\sigma}(g)|\det
 g|_{\d{A}}^{\frac{r-r^*}{2}}\overline{\underline{\psi}(\underline{u})}\r{d}\underline{u}\r{d}g.
 \end{align}
We insert an $s$-variable as
 \begin{align*}
 \c{B}^{\nu}_{r,r^*}(s;\varphi_{\pi},\varphi_{\sigma})=\intl_g\intl_{\underline{u}}
 \varphi_{\pi}\(\underline{u}g\)\varphi_{\sigma}(g)|\det
 g|_{\d{A}}^{s-\frac{1}{2}+\frac{r-r^*}{2}}\overline{\underline{\psi}(\underline{u})}\r{d}\underline{u}\r{d}g
 \end{align*}
and call it the \emph{$(r,r^*)$-Bessel integral}. We are going to
use the Fourier transform. Let
 \begin{align*}
 L_{r+1}=\1\underline{u}(n_0,n^*,b;u,u^*)\left|n_0=\[\begin{array}{cccc}
                                                     0 & \cdots & 0 & n_{1,0} \\
                                                     \vdots &  & \vdots & \vdots \\
                                                     0 & \cdots & 0 &
                                                     n_{r,0}
                                                   \end{array}\]
 \right.\2
 \end{align*}
be a subgroup of $U_{1^r,m+1,1^{r^*}}$. Let
 \begin{align*}
 L_r=\1\underline{l}=\underline{l}(l_r;n_0,n^*,b;u,u^*)|l_r=\:^{\r{t}}\[l_{1,r},...,l_{m,r}\]\2
 \end{align*}
where the matrix $\underline{l}(l_r;n_0,n^*,b;u,u^*)$ is the one
obtained from $\underline{u}(n_0,n^*,b;u,u^*)$ by adding the column
$l_r$ above the entry $n_{1,0}$ as in \eqref{2huge}. It is clear
that $L_r/L_{r+1}$ is isomorphic to $k^m$ which may be identified
with the set of column vector $l_r$. By the Fourier inverse formula
for $(k\B\d{A})^m$, we have
 \begin{align}\label{2bs2}
 \eqref{2bs1}=\intl_g\intl_{\r{M}_{r,m}(k\B\d{A})}\sum_{\epsilon_i\in k}\intl_{L_r(k)\B L_r(\d{A})}
 \varphi_{\pi}(\underline{l}\:\underline{n}g)\varphi_{\sigma}(g)|\det
 g|_{\d{A}}^{s-\frac{1}{2}+\frac{r-r^*}{2}}\overline{\psi\(\sum_{i=1}^m\epsilon_il_{i,r}\)}
 \overline{\underline{\psi}(\underline{l})}\r{d}\underline{l}\r{d}\underline{n}\r{d}g
 \end{align}
where $\underline{n}$ represents the element
$\underline{u}\(n,0,0;\b{1}_r,\b{1}_{r^*}\)$ with
 \begin{align*}
 n=\[\begin{array}{cccc}
       n_{1,1} & \cdots & n_{1,m} & 0 \\
       \vdots &  & \vdots & \vdots \\
       n_{r,1} & \cdots & n_{r,m} & 0
     \end{array}
 \]
 \end{align*}
and we view $\underline{\psi}$ as a character on $L_r(\d{A})$
through the natural quotient $L_{r+1}(\d{A})$. We also extend the
measure on $L_{r+1}(\d{A})$ to $L_r(\d{A})$ by the self-dual measure
on $\d{A}^m$. Let $\underline{\epsilon}$ be an element like
$\underline{u}(\epsilon,0,0;\b{1}_r,\b{1}_{r^*})$ where
 \begin{align*}
 \epsilon=\[\begin{array}{cccc}
              0 & \cdots & 0 & 0 \\
              \vdots &  & \vdots & \vdots \\
              0 & \cdots & 0 & 0\\
              \epsilon_1 & \cdots & \epsilon_m & 0
            \end{array}
 \],
 \end{align*}
then if we conjugate $\underline{l}$ by $\underline{\epsilon}$ from
left to right, we can incorporate $\psi\(\sum\epsilon_il_{i,r}\)$
into $\underline{\psi}(\underline{l})$ and collapse the summation
over $\epsilon_i\in k$. The result is
 \begin{align}\label{2bs3}
 \eqref{2bs2}=\intl_g\intl_{\substack{\r{M}_{r-1,m}(k\B\d{A})\\ \times\r{M}_{1,m}(\d{A})}}
 \intl_{L_r(k)\B L_r(\d{A})}\varphi_{\pi}(\underline{l}\:\underline{n}g)\varphi_{\sigma}(g)|\det
 g|_{\d{A}}^{s-\frac{1}{2}+\frac{r-r^*}{2}}
 \overline{\underline{\psi}(\underline{l})}\r{d}\underline{l}\r{d}\underline{n}\r{d}g.
 \end{align}
Similarly, we can introduce the subgroup $L_{r-1},...,L_1$ and
repeat the process $r-1$ more times. Then we finally get
 \begin{align}\label{2bs4}
 \eqref{2bs3}=\intl_g\intl_{\r{M}_{r,m}(\d{A})}\intl_{L_1(k)\B L_1(\d{A})}
 \varphi_{\pi}(\underline{l}\:\underline{n}g)\varphi_{\sigma}(g)|\det
 g|_{\d{A}}^{s-\frac{1}{2}+\frac{r-r^*}{2}}
 \overline{\underline{\psi}(\underline{l})}\r{d}\underline{l}\r{d}\underline{n}\r{d}g.
 \end{align}
Here,
 \begin{align*}
 L_1=\1\underline{l}=\underline{l}(l_1;n_0,n^*,b;u,u^*)\left|l_1=\[\begin{array}{cccc}
         l_{1,1} & \cdots & l_{1,r-1} & l_{1,r} \\
         \vdots &  & \vdots & \vdots\\
         l_{m,1} & \cdots & l_{m,r-1} & l_{m,r}
       \end{array}
 \]\right.\2
 \end{align*}
where $\underline{l}(l_1;n_0,n^*,b;u,u^*)$ is the one obtained from
$\underline{u}(n_1,n^*,b;u,u^*)$ by adding $l_1$ on the sub-row
$[u_{1,2},...,u_{1,r},n_{1,0}]$. In fact, $L_1$ is the maximal
unipotent subgroup of the (standard) parabolic subgroup stabilizing
the flag
 \begin{align*}
 0\subset W\oplus\LA v_1\RA\subset W\oplus\LA
 v_1,v_2\RA\subset\cdots\subset V.
 \end{align*}
Interchanging the order of $g$ and $\underline{n}$, we have
 \begin{align}\label{2bs5}
 \eqref{2bs4}&=\intl_{\r{M}_{r,m}(\d{A})}\intl_{g}\intl_{L_1(k)\B L_1(\d{A})}
 \varphi_{\pi}(\underline{l}g\underline{n})\varphi_{\sigma}(g)|\det
 g|_{\d{A}}^{s-\frac{1}{2}-\frac{r+r^*}{2}}
 \overline{\underline{\psi}(\underline{l})}\r{d}\underline{l}\r{d}g\r{d}\underline{n}\notag\\
 &=\intl_{\r{M}_{r,m}(\d{A})}\intl_{g}|\det
 g|_{\d{A}}^{s-\frac{1}{2}-\frac{n-m-1}{2}}\intl_{L_1(k)\B L_1(\d{A})}
 \(\rho(\underline{n})\varphi_{\pi}\)(\underline{l}g)\varphi_{\sigma}(g)
 \overline{\underline{\psi}(\underline{l})}\r{d}\underline{l}\r{d}g\r{d}\underline{n}.
 \end{align}
Now the inner double integral is just the usual Rankin-Selberg
convolution (cf. \cite{JPSS83}). The following calculation is
well-known:
 \begin{align}\label{2rs1}
 &\intl_{\r{GL}_m(k)\B\r{GL}_m(\d{A})}|\det g|_{\d{A}}^{s-\frac{n-m}{2}}\intl_{L_1(k)\B L_1(\d{A})}
 \(\rho(\underline{n})\varphi_{\pi}\)(\underline{l}g)\varphi_{\sigma}(g)
 \overline{\underline{\psi}(\underline{l})}\r{d}\underline{l}\r{d}g \notag\\
 =&\intl_{g}\sum_{\gamma\in
 U_{1^m}(k)\B\r{GL}_m(k)}W^{\psi}_{\rho(\underline{n})\varphi_{\pi}}\(\[\begin{array}{cc}
                                                            \gamma & 0 \\
                                                            0 &
                                                            \b{1}_{n-m}
                                                          \end{array}
 \]\[\begin{array}{cc}
       g & 0 \\
       0 & \b{1}_{n-m}
     \end{array}
 \]\)\varphi_{\sigma}(g)|\det g|_{\d{A}}^{s-\frac{n-m}{2}}\r{d}g \notag\\
 =&\intl_{U_{1^m}(k)\B\r{GL}_m(\d{A})}W^{\psi}_{\rho(\underline{n})\varphi_{\pi}}\(\[\begin{array}{cc}
                                                            \gamma & 0 \\
                                                            0 &
                                                            \b{1}_{n-m}
                                                          \end{array}
 \]\[\begin{array}{cc}
       g & 0 \\
       0 & \b{1}_{n-m}
     \end{array}
 \]\)\varphi_{\sigma}(g)|\det g|_{\d{A}}^{s-\frac{n-m}{2}}\r{d}g,
 \end{align}
where $U_{1^m}$ is the standard maximal unipotent subgroup of
$\r{GL}_m$ and $W^{\psi}_{?}$ is $\psi$-Whittaker function.
Factorize the integral over $U_{1^m}(\d{A})$, we have
 \begin{align}\label{2rs2}
 \eqref{2rs1}=&\intl_{U_{1^m}(\d{A})\B\r{GL}_m(\d{A})}\intl_{U_{1^m}(k)\B
 U_{1^m}(\d{A})}\notag\\
 &W^{\psi}_{\rho(\underline{n})\varphi_{\pi}}\(\[\begin{array}{cc}
                                                            u & 0 \\
                                                            0 &
                                                            \b{1}_{n-m}
                                                          \end{array}
 \]\[\begin{array}{cc}
       g & 0 \\
       0 & \b{1}_{n-m}
     \end{array}
 \]\)\varphi_{\sigma}(g)\r{d}u\:|\det
 g|_{\d{A}}^{s-\frac{n-m}{2}}\r{d}g \notag\\
 =&\intl_{U_{1^m}(\d{A})\B\r{GL}_m(\d{A})}W^{\psi}_{\rho(\underline{n})\varphi_{\pi}}\(\[\begin{array}{cc}
       g & 0 \\
       0 & \b{1}_{n-m}
     \end{array}
 \]\)\intl_{U_{1^m}(k)\B U_{1^m}(\d{A})}\psi(u)\varphi_{\sigma}(g)\r{d}u\:|\det
 g|_{\d{A}}^{s-\frac{n-m}{2}}\r{d}g \notag\\
 =&\intl_{U_{1^m}(\d{A})\B\r{GL}_m(\d{A})}W^{\psi}_{\rho(\underline{n})\varphi_{\pi}}\(\[\begin{array}{cc}
       g & 0 \\
       0 & \b{1}_{n-m}
     \end{array}
 \]\)W^{\overline{\psi}}_{\varphi_{\sigma}}(g)|\det
 g|_{\d{A}}^{s-\frac{n-m}{2}}\r{d}g \notag\\
 =&\intl_{U_{1^m}(\d{A})\B\r{GL}_m(\d{A})}W^{\psi}_{\varphi_{\pi}}\(\[\begin{array}{cc}
       g & 0 \\
       0 & \b{1}_{n-m}
     \end{array}
 \]\underline{n}\)W^{\overline{\psi}}_{\varphi_{\sigma}}(g)|\det
 g|_{\d{A}}^{s-\frac{n-m}{2}}\r{d}g.
 \end{align}
Plug \eqref{2rs2} into \eqref{2bs5}, we have
 \begin{align}\label{2bs6}
 \eqref{2bs5}=\intl_{U_{1^m}(\d{A})\B\r{GL}_m(\d{A})}\intl_{\r{M}_{r,m}(\d{A})}
 W^{\psi}_{\varphi_{\pi}}\(\[\begin{array}{ccc}
                            g & 0 & 0 \\
                            x & \b{1}_r & 0 \\
                            0 & 0 & \b{1}_{n-m-r}
                          \end{array}
 \]\)W^{\overline{\psi}}_{\varphi_{\sigma}}(g)|\det
 g|_{\d{A}}^{s-\frac{n-m}{2}}\r{d}x\r{d}g.
 \end{align}
We denote the above integral by
$\Psi_r(s;W^{\psi}_{\varphi_{\pi}},W^{\overline{\psi}}_{\varphi_{\sigma}})$
which is absolutely convergent when $\Re(s)\gg0$. If we assume that
$W^{\psi}_{\varphi_{\pi}}=\otimes_vW_v$ and
$W^{\overline{\psi}}_{\varphi_{\sigma}}=\otimes_vW^-_v$ are
factorizable and let
 \begin{align*}
 \Psi_{v,r}(s;W_v,W^-_v)
 =\intl_{U_{1^m,v}\B\r{GL}_{m,v}}\intl_{\r{M}_{r,m,v}}W_v\(\[\begin{array}{ccc}
                            g_v & 0 & 0 \\
                            x_v & \b{1}_r & 0 \\
                            0 & 0 & \b{1}_{n-m-r}
                          \end{array}
 \]\)W^-_v(g_v)|\det
 g_v|_v^{s-\frac{n-m}{2}}\r{d}x_v\r{d}g_v.
 \end{align*}
Then in summary, we have for $\Re(s)$ large,
 \begin{align*}
 \c{B}^{\nu}_{r,r^*}(s;\varphi_{\pi},\varphi_{\sigma})=
 \Psi_r(s;W^{\psi}_{\varphi_{\pi}},W^{\overline{\psi}}_{\varphi_{\sigma}})=\prod_{v\in\d{M}_k}\Psi_{v,r}(s;W_v,W^-_v)
 \end{align*}
where $W_v\in\c{W}(\pi_v,\psi_v)$ (see Section \ref{sec6} for
notations); $W^-_v\in\c{W}(\sigma_v,\overline{\psi_v})$ and for
almost all finite places $v$, $W_v$, $W^-_v$ are unramified
satisfying $W_v(\b{1}_n)=W^-_v(\b{1}_m)=1$.\\

Now we discuss the functional equation of the Bessel integrals.
First, let us introduce some Weyl elements:
 \begin{align*}
 w_1=\b{1}_1;\qquad w_n=\[\begin{array}{cc}
                             & 1 \\
                            w_{n-1} &
                          \end{array}
 \]\;\qquad
 w_{n,m}=\[\begin{array}{cc}
               \b{1}_m &  \\
                & w_{n-m}
             \end{array}
 \].
 \end{align*}
We define $\iota$ the outer automorphism of $\r{GL}_n$ and
$\r{GL}_m$ by $\iota(g)=g^{\iota}:=\:^{\r{t}}g^{-1}$. Let
$\widetilde{\varphi_{\pi}}(g)=\varphi_{\pi}(g^{\iota})=\varphi_{\pi}(w_ng^{\iota})$
and
$\widetilde{\varphi_{\sigma}}(g)=\varphi_{\sigma}(g^{\iota})=\varphi_{\sigma}(w_mg^{\iota})$.
Then
 \begin{align}\label{2fe1}
 &\c{B}^{\nu}_{r,r^*}(s;\varphi_{\pi},\varphi_{\sigma}) \notag\\
 =&\intl_{\r{GL}_m(k)\B\r{GL}_m(\d{A})}
 \intl_{U_{1^r,m+1,1^{r^*}}(k)\B
 U_{1^r,m+1,1^{r^*}}(\d{A})}\varphi_{\pi}(\underline{u}g^{\iota})\varphi_{\sigma}(g^{\iota})|\det
 g^{\iota}|_{\d{A}}^{s-\frac{1}{2}+\frac{r-r^*}{2}}
 \overline{\underline{\psi}(\underline{u})}\r{d}\underline{u}\r{d}g \notag\\
 =&\intl_{\r{GL}_m(k)\B\r{GL}_m(\d{A})}\intl_{U_{1^r,m+1,1^{r^*}}(k)\B
 U_{1^r,m+1,1^{r^*}}(\d{A})}\widetilde{\varphi_{\pi}}(\underline{u}^{\iota}g)\widetilde{\varphi_{\sigma}}(g)|\det
 g|_{\d{A}}^{-s+\frac{1}{2}+\frac{r^*-r}{2}}\overline{\underline{\psi}(\underline{u})}\r{d}\underline{u}\r{d}g\notag\\
 =&\intl_g\intl_{U_{1^r,m+1,1^{r^*}}(k)\B
 U_{1^r,m+1,1^{r^*}}(\d{A})}\(\rho(w_{n,m})\widetilde{\varphi_{\pi}}\)(w_{n,m}\underline{u}^{\iota}w_{n,m}g)
 \widetilde{\varphi_{\sigma}}(g)|\det
 g|_{\d{A}}^{-s+\frac{1}{2}+\frac{r^*-r}{2}}\overline{\underline{\psi}(\underline{u})}\r{d}\underline{u}\r{d}g.
 \end{align}
Since we have
$w_{n,m}U_{1^r,m+1,1^{r^*}}w_{n,m}=U_{1^{r^*},m+1,1^r}$ and
$\overline{\underline{\psi}(\underline{u})}=\underline{\psi}\(w_{n,m}\underline{u}^{\iota}w_{n,m}\)$,
we have
 \begin{align*}
 \eqref{2fe1}&=\intl_g\intl_{U_{1^{r^*},m+1,1^r}(k)\B
 U_{1^{r^*},m+1,1^r}(\d{A})}\(\rho(w_{n,m})\widetilde{\varphi_{\pi}}\)
 (\underline{u}g)\widetilde{\varphi_{\sigma}}(g)|\det
 g|_{\d{A}}^{1-s-\frac{1}{2}+\frac{r^*-r}{2}}\underline{\psi}(\underline{u})\r{d}\underline{u}\r{d}g\\
 &=\c{B}^{\overline{\nu}}_{r^*,r}(1-s;\rho(w_{n,m})\widetilde{\varphi_{\pi}},\widetilde{\varphi_{\sigma}}).
 \end{align*}
In summary, we have the following

 \begin{theo}
 The Bessel integrals are holomorphic in $s$ and satisfy the
 following functional equation
  \begin{align*}
  \c{B}^{\nu}_{r,r^*}(s;\varphi_{\pi},\varphi_{\sigma})
  =\c{B}^{\overline{\nu}}_{r^*,r}(1-s;\rho(w_{n,m})\widetilde{\varphi_{\pi}},\widetilde{\varphi_{\sigma}}).
  \end{align*}
 Let $\widetilde{W^{\psi}_{?}}(g)=W^{\psi}_{?}(w_ng^{\iota})\in\c{W}(\widetilde{\pi},\overline{\psi})$
 and similar for the one on $\r{GL}_m$.
 If $W^{\psi}_{\varphi_{\pi}}=\otimes_vW_v$ and $W^{\overline{\psi}}_{\varphi_{\sigma}}=\otimes_vW^-_v$ are
 factorizable, then $\widetilde{W^{\psi}_{\varphi_{\pi}}}=\otimes_v\widetilde{W_v}$ and
 $\widetilde{W^{\overline{\psi}}_{\varphi_{\sigma}}}=\otimes_v\widetilde{W^-_v}$
 are also factorizable with $\widetilde{W_v}(g)=W_v(w_ng^{\iota})$ and
 similar for $\widetilde{W^-_v}$. Then for $\Re(s)$ large,
  \begin{align*}
  \c{B}^{\nu}_{r,r^*}(s;\varphi_{\pi},\varphi_{\sigma})&=
  \Psi_r(s;W^{\psi}_{\varphi_{\pi}},W^{\overline{\psi}}_{\varphi_{\sigma}})
  =\prod_{v\in\d{M}_k}\Psi_{v,r}(s;W_v,W^-_v);\\
  \c{B}^{\overline{\nu}}_{r^*,r}(s;\rho(w_{n,m})\widetilde{\varphi_{\pi}},\widetilde{\varphi_{\sigma}})&=
  \Psi_{r^*}(s;\rho(w_{n,m})\widetilde{W^{\psi}_{\varphi_{\pi}}},\widetilde{W^{\overline{\psi}}_{\varphi_{\sigma}}})
  =\prod_{v\in\d{M}_k}\Psi_{v,r^*}(s;\rho(w_{n,m})\widetilde{W_v},\widetilde{W^-_v}).
  \end{align*}
 In particular, when $r=r^*$ (hence $n-m$ is odd), the $(r,r)$-Bessel
 integral itself has a functional equation.\\
 \end{theo}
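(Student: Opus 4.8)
The bulk of this statement has in effect already been obtained in the unfolding \eqref{2bs1}--\eqref{2fe1} above; my plan is to organize those computations into a proof and to supply the one ingredient not yet addressed, namely holomorphy in $s$. For the latter I would argue directly that the integral defining $\c{B}^{\nu}_{r,r^*}(s;\varphi_{\pi},\varphi_{\sigma})$ converges absolutely and locally uniformly for every $s\in\d{C}$, hence is entire. The quotient $H(k)\backslash H(\d{A})$ fibres over $\r{GL}_m(k)\backslash\r{GL}_m(\d{A})$ with compact unipotent fibre, on which $\nu$ is bounded, so it suffices to see that $|\varphi_{\pi}(\underline{u}g)\,\varphi_{\sigma}(g)|\,|\det g|_{\d{A}}^{\Re(s)-\frac{1}{2}+\frac{r-r^*}{2}}$ is integrable in $g$ and $\underline{u}$. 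Since $n=m+r+r^*+1>m$, the ray $|\det g|_{\d{A}}\to 0$ or $\infty$ in the embedded $\r{GL}_m$ heads toward the cusp of $\r{GL}_n$, so the cusp form $\varphi_{\pi}$ decays there faster than any power of $|\det g|_{\d{A}}$, uniformly for $s$ in compacta, while the rapid decay of $\varphi_{\pi}$ and $\varphi_{\sigma}$ toward the cusp of $\r{GL}_m$ handles the remaining directions; this is exactly the estimate already giving absolute convergence at $s=\frac{1}{2}$, now made uniform in $s$.

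For the functional equation I would simply follow the chain \eqref{2fe1}: substitute $g\mapsto g^{\iota}$ and $\underline{u}\mapsto\underline{u}^{\iota}$ in \eqref{2bs1}, use $|\det g^{\iota}|_{\d{A}}=|\det g|_{\d{A}}^{-1}$ together with the definitions of $\widetilde{\varphi_{\pi}}$ and $\widetilde{\varphi_{\sigma}}$ to rewrite the integrand as one attached to $\widetilde{\pi}\otimes\widetilde{\sigma}$ with $s$ replaced by $1-s$ and the roles of $r$ and $r^*$ exchanged, then insert $\rho(w_{n,m})$ and conjugate the unipotent variable by $w_{n,m}$, invoking $w_{n,m}U_{1^r,m+1,1^{r^*}}w_{n,m}=U_{1^{r^*},m+1,1^r}$ and $\overline{\underline{\psi}(\underline{u})}=\underline{\psi}(w_{n,m}\underline{u}^{\iota}w_{n,m})$. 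The only point needing care is that these substitutions, being $k$-isomorphisms of the relevant adelic unipotent and reductive groups, preserve the Tamagawa measures; the integrand is then precisely that of $\c{B}^{\overline{\nu}}_{r^*,r}(1-s;\rho(w_{n,m})\widetilde{\varphi_{\pi}},\widetilde{\varphi_{\sigma}})$.

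For the Euler factorization I would run the unfolding \eqref{2bs1}--\eqref{2bs6} for $\Re(s)$ large: apply the Fourier inversion formula for $(k\backslash\d{A})^m$ in $r$ successive steps along the chain $L_{r+1}\subset L_r\subset\cdots\subset L_1$ (with successive quotients isomorphic to $k^m$), at each step conjugating by a suitable $\underline{\epsilon}$ to absorb the newly produced additive character into $\underline{\psi}$ and collapse the accompanying sum over $k^m$, until the innermost integral becomes the classical Rankin--Selberg convolution \eqref{2rs1} for $\r{GL}_n\times\r{GL}_m$. Unfolding that against the $\psi$-Whittaker expansion of $\varphi_{\pi}$ and factoring the inner $U_{1^m}$-integral, as in \eqref{2rs2}, replaces $\varphi_{\pi}$ and $\varphi_{\sigma}$ by $W^{\psi}_{\varphi_{\pi}}$ and $W^{\overline{\psi}}_{\varphi_{\sigma}}$, and after interchanging the $g$- and $\underline{n}$-integrals one arrives at $\Psi_r(s;W^{\psi}_{\varphi_{\pi}},W^{\overline{\psi}}_{\varphi_{\sigma}})$, i.e.\ \eqref{2bs6}. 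If $W^{\psi}_{\varphi_{\pi}}=\otimes_vW_v$ and $W^{\overline{\psi}}_{\varphi_{\sigma}}=\otimes_vW^-_v$ are factorizable, the integrand and domain of \eqref{2bs6} factor over $v\in\d{M}_k$, giving $\Psi_r(s;\cdot)=\prod_v\Psi_{v,r}(s;W_v,W^-_v)$; the product converges for $\Re(s)\gg 0$ because at almost every $v$ the data are unramified with $W_v(\b{1}_n)=W^-_v(\b{1}_m)=1$, and the standard unramified Rankin--Selberg computation then identifies $\Psi_{v,r}(s;W_v,W^-_v)$ with $L_v(s,\pi_v\times\sigma_v)$, whose product over $v$ is $L(s,\pi\times\sigma)$. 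Running the same unfolding with $\pi$, $\sigma$, $(r,r^*)$ replaced by $\widetilde{\pi}$, $\widetilde{\sigma}$, $(r^*,r)$ — legitimate since $\widetilde{W^{\psi}_{\varphi_{\pi}}}\in\c{W}(\widetilde{\pi},\overline{\psi})$ and $\widetilde{W^{\overline{\psi}}_{\varphi_{\sigma}}}\in\c{W}(\widetilde{\sigma},\psi)$ are again factorizable with components $\widetilde{W_v}$ and $\widetilde{W^-_v}$ — yields the corresponding identities for $\c{B}^{\overline{\nu}}_{r^*,r}(s;\rho(w_{n,m})\widetilde{\varphi_{\pi}},\widetilde{\varphi_{\sigma}})$.

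The only genuinely delicate point I anticipate is the bookkeeping in the $r$-fold Fourier-inversion chain: one must check at each stage that conjugation by $\underline{\epsilon}$ shifts exactly the new character into $\underline{\psi}$ without disturbing the other coordinates of the unipotent variable, and that the product measures on $L_{j+1}(\d{A})$ extend compatibly to $L_j(\d{A})$ via the self-dual measure on the quotient. Everything else is either the classical Rankin--Selberg theory of \cite{JPSS83} (the inner integral and the unramified local computation) or, for holomorphy, a routine application of cuspidality.
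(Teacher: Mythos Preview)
Your proposal is correct and follows essentially the same approach as the paper: the theorem is stated in the paper as a summary of the computations \eqref{2bs1}--\eqref{2bs6} (the unfolding to $\Psi_r$) and \eqref{2fe1} (the functional equation via $g\mapsto g^{\iota}$ and conjugation by $w_{n,m}$), and you have simply organized those same computations into a proof. Your added justification for holomorphy via rapid decay of cusp forms is the standard argument the paper leaves implicit.
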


By Proposition \eqref{prop6jpss} and \eqref{prop6unr}, we have the
following theorem, which confirms \cite[Conjecture 24.1]{GGP} for
the Bessel periods of split unitary groups, i.e., general linear
groups.

\begin{theo}
(1) Let $\pi$ (resp. $\sigma$) be an irreducible cuspidal
automorphic representation of $\r{GL}(V)(\d{A})$ (resp.
$\r{GL}(W)(\d{A})$). For any $(r,r^*)$ such that $r+r^*=n-m-1$ and
the automorphic representation $\nu$ introduced above, we have, for
$\varphi_{\pi}\in\c{A}_{\pi}$ and
$\varphi_{\sigma}\in\c{A}_{\sigma}$ such that
$W^{\psi}_{\varphi_{\pi}}=\otimes_vW_v$ and
$W^{\overline{\psi}}_{\varphi_{\sigma}}=\otimes_vW^-_v$ are
factorizable,
 \begin{align*}
 \c{B}^{\nu}_{r,r^*}(\varphi_{\pi},\varphi_{\sigma})=L(\frac{1}{2},\pi\times\sigma)
 \prod_{v\in\d{M}_k}\left.\frac{\Psi_{v,r}(s;W_v,W^-_v)}{L_v(s,\pi_v\times\sigma_v)}\right|_{s=\frac{1}{2}}
 \end{align*}
where in the last product almost all factors are $1$, and the
$L$-functions are the ones defined by Rankin-Selberg convolutions
(cf. \cite{JPSS83}).\\
(2) There is a nontrivial Bessel period of $\pi\otimes\sigma$ if
and only if $L(\frac{1}{2},\pi\times\sigma)\neq 0$.\\
\end{theo}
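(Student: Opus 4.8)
The plan is to read the statement off the Euler factorization already obtained, feeding in the local theory of Rankin--Selberg integrals collected in the Appendix. First I would recall from the preceding theorem that for $\Re(s)$ large
\begin{align*}
\c{B}^{\nu}_{r,r^*}(s;\varphi_{\pi},\varphi_{\sigma})=\prod_{v\in\d{M}_k}\Psi_{v,r}(s;W_v,W^-_v),
\end{align*}
the product converging absolutely there. By the unramified computation (Proposition \ref{prop6unr}), at every finite $v$ where $\pi_v$, $\sigma_v$, $\psi_v$ are unramified and $W_v$, $W^-_v$ are the normalized spherical vectors one has $\Psi_{v,r}(s;W_v,W^-_v)=L_v(s,\pi_v\times\sigma_v)$; since $W^{\psi}_{\varphi_{\pi}}=\otimes_vW_v$ and $W^{\overline{\psi}}_{\varphi_{\sigma}}=\otimes_vW^-_v$ are factorizable, this occurs for all but finitely many $v$. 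For the remaining (finite and archimedean) places, Proposition \ref{prop6jpss} gives that $\Psi_{v,r}(s;W_v,W^-_v)/L_v(s,\pi_v\times\sigma_v)$ extends to an entire function of $s$ (a Laurent polynomial in $q_v^{-s}$ at a finite place). Dividing the Euler product by $L(s,\pi\times\sigma)=\prod_vL_v(s,\pi_v\times\sigma_v)$ therefore leaves a \emph{finite} product of entire functions, so that for $\Re(s)$ large
\begin{align*}
\c{B}^{\nu}_{r,r^*}(s;\varphi_{\pi},\varphi_{\sigma})=L(s,\pi\times\sigma)\prod_{v\in\d{M}_k}\frac{\Psi_{v,r}(s;W_v,W^-_v)}{L_v(s,\pi_v\times\sigma_v)}.
\end{align*}
The left side is holomorphic in $s$ by the preceding theorem and the right side is meromorphic, so the identity propagates to all of $\d{C}$; evaluating at $s=\tfrac12$ gives part (1). (One may note in passing that $L(s,\pi\times\sigma)$ is itself entire here, since $n>m$ and $\pi$, $\sigma$ are cuspidal on general linear groups.)

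For part (2), the point is that every local factor $L_v(\tfrac12,\pi_v\times\sigma_v)$ is nonzero --- it is a nonvanishing Euler factor, resp. a product of Gamma factors --- so by (1) a Bessel period $\c{B}^{\nu}_{r,r^*}(\varphi_{\pi},\varphi_{\sigma})$ with $W^{\psi}_{\varphi_{\pi}}$, $W^{\overline{\psi}}_{\varphi_{\sigma}}$ factorizable is nonzero exactly when $L(\tfrac12,\pi\times\sigma)\neq0$ and $\Psi_{v,r}(\tfrac12;W_v,W^-_v)\neq0$ at every place. If $L(\tfrac12,\pi\times\sigma)\neq0$, I would invoke the local nonvanishing packaged in Proposition \ref{prop6jpss}: at each of the finitely many ``bad'' places, and at the archimedean places, choose $W_v\in\c{W}(\pi_v,\psi_v)$ and $W^-_v\in\c{W}(\sigma_v,\overline{\psi_v})$ with $\Psi_{v,r}(\tfrac12;W_v,W^-_v)\neq0$; at the remaining places take the normalized spherical vectors; and assemble these into $\varphi_{\pi}\in\c{A}_{\pi}$ and $\varphi_{\sigma}\in\c{A}_{\sigma}$, which is legitimate because the global Whittaker model identifies $\c{A}_{\pi}$ with the restricted tensor product of the local Whittaker models (and similarly for $\sigma$). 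Then (1) produces a nonzero Bessel period. Conversely, any automorphic form is a finite sum of ones with factorizable Whittaker function, so a nonzero Bessel period is already nonzero on such a pair, whence $L(\tfrac12,\pi\times\sigma)\neq0$ by (1).

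All the genuinely local input --- holomorphy of $\Psi_{v,r}(s;W_v,W^-_v)$ after division by the local $L$-factor, nonvanishing of the local integral at a prescribed value of $s$, and the unramified evaluation --- is classical Jacquet--Piatetskii-Shapiro--Shalika for the integral with $r$ rows of extra unipotent integration ($0\leq r\leq n-m-1$), and I would simply cite it in the forms recorded in the Appendix as Propositions \ref{prop6jpss} and \ref{prop6unr}. Thus the main obstacle does not lie in this theorem --- the unfolding having already been carried out above --- but in pinning down that local input precisely; granting it, the global assembly just sketched is routine.
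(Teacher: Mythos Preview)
Your proposal is correct and follows essentially the same approach as the paper: the paper simply states that the theorem follows from Propositions \ref{prop6jpss} and \ref{prop6unr}, and you have spelled out precisely how those two propositions combine with the already-established Euler factorization to yield both parts. Your expanded argument is exactly the routine assembly the paper leaves implicit.
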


\section{Fourier-Jacobi periods of $\r{GL}_n\times\r{GL}_m$}
\label{sec3}

\subsection{Fourier-Jacobi models}
\label{sec3fm}

Let $k$ be a local field and $V$ a $k$-vector space of dimension
$n>0$. Suppose that $V$ has a decomposition $V=X\oplus W\oplus X^*$
where $W$, $X$ and $X^*$ have dimension $m$, $r$ and $r^*$
respectively. Then $n=m+r+r^*$. Let $P_{r,m,r^*}$ be the parabolic
subgroup of $\r{GL}(V)$ stabilizing the flag $0\subset X\subset
X\oplus W\subset V$ and $U_{r,m,r^*}$ its maximal unipotent
subgroup. Then $U_{r,m,r^*}$ fits into the following exact sequence:
 \begin{align*}
 \xymatrix@C=0.5cm{
   0 \ar[r] & \r{Hom}(X^*,X) \ar[r]& U_{r,m,r^*} \ar[r]& \r{Hom}(X^*,W)+\r{Hom}(W,X) \ar[r] & 0
   }.
 \end{align*}
We may write the above sequence as:
 \begin{align*}
 \xymatrix@C=0.5cm{
   0 \ar[r] & \(X^*\)^{\vee}\otimes X \ar[r]& U_{r,m,r^*} \ar[r]&
   \(X^*\)^{\vee}\otimes W+W^{\vee}\otimes X \ar[r] & 0}.
 \end{align*}
Let $\ell_X:X\R k$ (resp. $\ell_{X^*}:k\R X^*$) be any nontrivial
homomorphism (if exists) and let $U_{X}$ (resp. $U_{X^*}$) be a
maximal unipotent subgroup of $\r{GL}(X)$ (resp. $\r{GL}(X^*)$)
stabilizing $\ell_X$ (resp. $\ell_{X^*}$). Then the above exact
sequence fits into the following commutative diagram:
 \begin{align*}
 \xymatrix{
   0  \ar[r] & \(X^*\)^{\vee}\otimes X \ar[d]_{\ell_{X^*}^{\vee}\otimes\ell_X} \ar[r] & U_{r,m,r^*} \ar[d]
   \ar[r] & W^{\vee}\otimes X+\(X^*\)^{\vee}\otimes W \ar[d]_{\ell_X+\ell_{X^*}^{\vee}} \ar[r] & 0 \\
   0 \ar[r] & k \ar[r] & \r{H}(W^{\vee}+W) \ar[r] & W^{\vee}+W \ar[r] & 0   }
 \end{align*}
which is equivariant under the action of $U_X\times
U_{X^*}\times\r{GL}(W)$, where $\r{H}(W^{\vee}+W)=k+ W^{\vee}+W$ is
the Heisenberg group of $W^{\vee}+W$ whose multiplication is given
by
 \begin{align*}
 (t_1,w_1^{\vee},w_1)(t_2,w_2^{\vee},w_2)=
 \(t_1+t_2+\frac{w_1^{\vee}(w_2)-w_2^{\vee}(w_1)}{2},w_1^{\vee}+w_2^{\vee},w_1+w_2\).
 \end{align*}
Given a nontrivial character $\psi:k\R\d{C}^1$, there is a unique
infinite dimensional irreducible smooth representation
$\omega_{\psi}$ of $\r{H}(W^{\vee}+W)$ with central character
$\psi$. We choose the following model. Let $\c{S}(W^{\vee})$ be the
space of Bruhat-Schwartz functions on $W^{\vee}$. For
$\Phi\in\c{S}(W^{\vee})$, let
 \begin{align*}
 \(\omega_{\psi}(t,w^{\vee},w)\Phi\)(w^{\flat})=\psi\(t+w^{\flat}(w)+\frac{w^{\vee}(w)}{2}\)\Phi(w^{\flat}+w^{\vee})
 \end{align*}
for all $(t,w^{\vee},w)\in\r{H}(W^{\vee}+W)$. Moreover, if we choose
a character $\mu:k^{\times}\R\d{C}^{\times}$, we have a Weil
representation $\omega_{\mu}$ of $\r{GL}(W)$ on $\c{S}(W^{\vee})$ by
 \begin{align*}
 \(\omega_{\mu}(g)\Phi\)(w^{\flat})=\mu(\det g)|\det g|_k^{\frac{1}{2}}\Phi(w^{\flat}\cdot g)
 \end{align*}
where $g\in\r{GL}(W)$ acts on $W^{\vee}$ by $\(w^{\flat}\cdot
g\)w=w^{\flat}(g\cdot w)$ for all $w\in W$. They together form a
representation $\omega_{\psi,\mu}$ of $U_{r,m,r^*}\rtimes\r{GL}(W)$
through the projection $U_{r,m,r^*}\R\r{H}(W^{\vee}+W)$ and hence a
representation of $H:=U_{r,m,r^*}\rtimes(U_X\times
U_{X^*}\times\r{GL}(W))$ by extending trivially to $U_X\times
U_{X^*}$. As in the Bessel model, we choose a generic character
$\lambda:U_X\times U_{X^*}\R\d{C}^1$. Then we define the
representation
 \begin{align*}
 \nu_{\mu}=\omega_{\psi,\mu}\otimes\lambda\otimes\delta_W^{-\frac{1}{2}}
 \end{align*}
of $H$ which has the Gelfand-Kirillov dimension $m$. We also define
 \begin{align*}
 \overline{\nu_{\mu}}=\widetilde{\nu_{\mu}}\delta_W^{-1}
 =\omega_{\overline{\psi},\mu^{-1}}\otimes\overline{\lambda}\otimes\delta_W^{-\frac{1}{2}}.
 \end{align*}

As in the Bessel model, we have an injective morphism
$(\vep,\kappa):H\HR\r{GL}(V)\times\r{GL}(W)$. Then the pair
$(H,\nu_{\mu})$ is uniquely determined up to conjugacy in the group
$\r{GL}(V)\times\r{GL}(W)$ by the pair $W\subset V$, $(r,r^*)$ and
$\mu$. The following theorem generalizes the result in \cite{GGP}.

\begin{theo}\label{theo3fm}
Let $k$ be of characteristic zero and non-archimedean. Let $\pi$
(resp. $\sigma$) be an irreducible admissible representation of
$\r{GL}(V)$ (resp. $\r{GL}(W)$). Then
$\r{dim}_{\d{C}}\r{Hom}_H(\pi\otimes\sigma\otimes\widetilde{\nu_{\mu}},\d{C})\leq1$.
Moreover, if $\pi$ and $\sigma$ are generic, then
$\r{dim}_{\d{C}}\r{Hom}_H(\pi\otimes\sigma\otimes\widetilde{\nu_{\mu}},\d{C})=1$.\\
\end{theo}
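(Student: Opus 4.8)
\textbf{Proof proposal for Theorem \ref{theo3fm}.}
The plan is to mirror the strategy used for the Bessel model in Theorem \ref{theo2bm}, replacing the character $\nu$ by the representation $\nu_{\mu}$ of Gelfand--Kirillov dimension $m$ and the induced representation $\r{I}(\tau,\sigma,\tau^*)$ by a suitable induced representation whose restriction to $\r{GL}(V)$ encodes the Fourier--Jacobi datum. The existence part, when $\pi$ and $\sigma$ are generic, should follow from the analogue of Corollary \ref{6cor}(1) for Fourier--Jacobi integrals, which produces an explicit nonzero element of the Hom-space via the unramified local computation; so I would dispose of that first and focus on uniqueness. For uniqueness, the key is to reduce the Fourier--Jacobi model to a Bessel-type model (thereby to the Rankin--Selberg multiplicity one of \cite{AGRS10}) using the classical ``see-saw'' or doubling trick: the Weil representation $\omega_{\psi,\mu}$ of the Heisenberg--Jacobi group sitting over $U_{r,m,r^*}\rtimes\r{GL}(W)$ can be absorbed, at the cost of enlarging $W$ to a space $W^{\sharp}$ of dimension $m+1$, so that $\r{Hom}_H(\pi\otimes\sigma\otimes\widetilde{\nu_{\mu}},\d{C})$ becomes an ordinary Bessel Hom-space $\r{Hom}_{H^{\sharp}}(\pi\otimes\sigma^{\sharp},\nu^{\sharp})$ for a pair $(r,r^*)$ (shifted appropriately) on $\r{GL}(V)\times\r{GL}(W^{\sharp})$, where $\sigma^{\sharp}$ is built from $\sigma$ and the Weil representation attached to $\mu$.

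Concretely, I would carry out the following steps. First, realize $\omega_{\psi,\mu}$ on $\c{S}(W^{\vee})$ as in the statement and identify the triple $(U_{r,m,r^*}\rtimes\r{GL}(W),\,\omega_{\psi,\mu}\otimes\lambda)$ with the restriction to a parabolic of a Whittaker datum on a group of one higher rank; this is the ``unfolding'' of the Weil representation into a degenerate Whittaker induction, exactly as one passes between the metaplectic Shimura correspondence setups. Second, using this identification, prove an analogue of the Proposition preceding the proof of Theorem \ref{theo2bm}: for irreducible supercuspidal $\tau$ on $\r{GL}(Y)$ and $\tau^*$ on $\r{GL}(Y^*)$ (with $\dim Y=r+1$, $\dim Y^*=r^*+1$, say, with the shift dictated by the extra Heisenberg coordinate) satisfying a Bernstein-component avoidance hypothesis with respect to $\widetilde{\pi}$, there is an isomorphism
\begin{align*}
\r{Hom}_{\r{GL}(V^{\sharp})}(\r{I}(\tau,\sigma^{\sharp},\tau^*)\otimes\pi,\d{C})\cong\r{Hom}_H(\pi\otimes\sigma\otimes\widetilde{\nu_{\mu}},\d{C}),
\end{align*}
obtained by the same Bruhat-cell/Mackey filtration argument: six double cosets, the four ``medium and small'' ones contributing parabolically induced pieces from supercuspidal data on $\r{GL}(Y)$ and $\r{GL}(Y^*)$, which are killed (in $\r{Hom}$ and $\r{Ext}^1$) by the avoidance hypothesis, leaving only the big cell, which by Gelfand--Kazhdan and induction in stages collapses to the Fourier--Jacobi Hom-space. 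Third, twist $\tau,\tau^*$ by unramified characters so that $\r{I}(\tau,\sigma^{\sharp}\delta^{1/2},\tau^*)$ is irreducible, and invoke \cite{AGRS10} to conclude that the left-hand side has dimension $\le 1$, hence so does $\r{Hom}_H(\pi\otimes\sigma\otimes\widetilde{\nu_{\mu}},\d{C})$.

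The main obstacle I expect is the bookkeeping in the second step: unlike the Bessel case where $\nu$ is a genuine character and the pull-back along the big cell is the transparent $\sigma\delta_W^{-1/2}\otimes\widetilde{\nu}$, here the Heisenberg--Weil representation $\omega_{\psi,\mu}$ does not pull back to a one-dimensional object, so one must verify carefully that after enlarging $W$ to $W^{\sharp}$ the restriction of the new supercuspidal-induced representation to the relevant mirabolic, followed by the Gelfand--Kazhdan identification $\tau|_{P_Y}\cong\r{cInd}_{U_Y}^{P_Y}\lambda$, reproduces exactly the tensor product $\sigma\,\widehat\otimes\,\omega_{\psi,\mu}\otimes\widetilde{\lambda}\otimes\delta_W^{-1/2}$ and not some twist of it; the correct normalization of $\mu$, of the modulus characters $\delta_W$, $\delta_{W^{\sharp}}$, and of the central character $\psi$ of the Heisenberg group must all be tracked. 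The archimedean case is not treated: as for Theorem \ref{theo2bm}, one cites the method of \cite{JSZ10} and omits it. (Indeed the statement of Theorem \ref{theo3fm} is restricted to $k$ non-archimedean.)
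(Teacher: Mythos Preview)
Your existence argument is fine and matches the paper (Corollary \ref{6cor}(2)). Your uniqueness strategy, however, diverges from the paper's and contains a genuine gap.

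The paper does \emph{not} reduce the Fourier--Jacobi model to a Bessel model by enlarging $W$. Instead it stays on $\r{GL}(V)$ and changes the \emph{target} of the Hom: for supercuspidal $\tau$ on $\r{GL}(X)$ and $\tau^*$ on $\r{GL}(X^*)$ (dimensions $r$ and $r^*$, not $r+1$ and $r^*+1$) avoiding the relevant Bernstein components of $\widetilde{\pi}$, it proves
\begin{align*}
\r{Hom}_{\r{GL}(V)}\bigl(\r{I}(\tau,\sigma\mu^{-1}\delta_W^{1/2},\tau^*)\otimes\pi,\ \omega_{V,\psi}\bigr)\;\cong\;\r{Hom}_H(\pi\otimes\sigma,\nu_{\mu}),
\end{align*}
where $\omega_{V,\psi}$ is the Weil representation of $\r{GL}(V)$ itself. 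The filtration used does not come from a Bruhat decomposition with six cells, but from restricting $\widetilde{\omega_{V,\psi}}$ to $P_{r,m,r^*}$ via an evaluation map on Schwartz functions; this gives only a three-term exact sequence with kernel $A$ and quotient $B\oplus B^*$. The pieces $B,B^*$ are parabolically induced with $\tau$ (resp.\ $\tau^*$) on the top (resp.\ bottom) Levi block and are killed in $\r{Hom}$ and $\r{Ext}^1$ by the Bernstein-component hypothesis, while $A$ unfolds (Gelfand--Kazhdan plus induction in stages) to $\r{cInd}_H^{\r{GL}(V)}\sigma\otimes\widetilde{\nu_{\mu}}$. The final multiplicity-one input is \cite[Theorem 14.1(iv)]{GGP}, i.e.\ the $(0,0)$ Fourier--Jacobi statement $\r{dim}_{\d{C}}\r{Hom}_{\r{GL}(V)}(\Pi\otimes\pi,\omega_{V,\psi})\le 1$, not \cite{AGRS10} directly.

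The gap in your plan is the representation $\sigma^{\sharp}$: you say it is ``built from $\sigma$ and the Weil representation attached to $\mu$'', but you do not construct it, and there is no natural way to obtain an \emph{irreducible} admissible representation of $\r{GL}(W^{\sharp})=\r{GL}_{m+1}$ from this data. Without irreducibility of $\sigma^{\sharp}$ you cannot invoke Theorem \ref{theo2bm}, and without a precise $\sigma^{\sharp}$ the claimed isomorphism of Hom-spaces is only a hope. The obstacle you anticipate (tracking the Heisenberg pull-back and modulus characters) is real, but the structural issue is more serious: the Weil representation has Gelfand--Kirillov dimension $m$ and does not ``promote'' $\sigma$ by one rank; the paper's device of placing $\omega_{V,\psi}$ on the target side of the Hom is exactly what circumvents this.
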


The existence part is due to Corollary \ref{6cor} (2). We consider
the uniqueness part whose proof is similar to that in \cite[Section
16]{GGP} with mild modifications for general linear groups and a
general pair $(r,r^*)$.

Recall that we have $V=X\oplus W\oplus X^*$. Let $\tau$ (resp.
$\tau^*$) be a supercuspidal representation of $\r{GL}(X)$ (resp.
$\r{GL}(X^*)$) and let
 \begin{align*}
 \r{I}\(\tau,\sigma,\tau^*\):=\r{Ind}_{P_{r,m,r^*}}^{\r{GL}(V)}\(\tau\otimes\sigma\otimes\tau^*\)
 \end{align*}
be the induced representation. We have the following proposition
which is similar to \cite[Theorem 16.1]{GGP}.

\begin{prop}
With $\psi$ a fixed additive character of $k$ which is
non-archimedean, we have
 \begin{align*}
 \r{Hom}_{\r{GL}(V)}\(\r{I}\(\tau,\sigma\mu^{-1}\delta_W^{\frac{1}{2}},\tau^*\)\otimes\pi,\omega_{V,\psi}\)
 =\r{Hom}_H(\pi\otimes\sigma,\nu_{\mu})
 \end{align*}
as long as  $\widetilde{\pi}$ does not belong to the Bernstein
component of $\r{GL}(V)$ associated to the data $\(\r{GL}(X)\times
M^*,\tau\otimes\varsigma^*\)$ and
$\(M\times\r{GL}(X^*),\varsigma\otimes\tau^*\)$ where $M$ (resp.
$M^*$) is any Levi subgroup of $\r{GL}(X+W)$ (resp. $\r{GL}(W+X^*)$)
and $\varsigma$ (resp. $\varsigma^*$) is any irreducible
supercuspidal representation of $M$ (resp. $M^*$). Here,
$\omega_{V,\psi}$ is the Weil representation of $\r{GL}(V)$.
\end{prop}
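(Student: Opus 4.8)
The plan is to repeat the argument for the Bessel analogue in Section \ref{sec2bm} (which itself follows \cite[Theorem 15.1]{GGP}, the Fourier--Jacobi analogue in \cite{GGP} being \cite[Theorem 16.1]{GGP}), with the auxiliary doubled space there replaced by the Schr\"odinger model of the Weil representation $\omega_{V,\psi}$. Set $\sigma^{\flat}=\sigma\mu^{-1}\delta_W^{\frac12}$, so the left hand side reads $\r{Hom}_{\r{GL}(V)}\(\r{Ind}_{P_{r,m,r^*}}^{\r{GL}(V)}\(\tau\otimes\sigma^{\flat}\otimes\tau^*\)\otimes\pi,\omega_{V,\psi}\)$. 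Realizing $\omega_{V,\psi}$ on $\c{S}(V^{\vee})$, using $\widetilde{\omega_{V,\psi}}\cong\omega_{V,\overline{\psi}}$ and the standard manipulation with the Weil representation of \cite[Section 16]{GGP}, I would first rewrite this as $\r{Hom}_{\r{GL}(V)}\(\r{I}\(\tau,\sigma^{\flat},\tau^*\)\otimes\pi\otimes\omega_{V,\overline{\psi}},\d{C}\)$; then the tensor identity gives $\r{I}\(\tau,\sigma^{\flat},\tau^*\)\otimes\omega_{V,\overline{\psi}}\cong\r{Ind}_{P_{r,m,r^*}}^{\r{GL}(V)}\(\(\tau\otimes\sigma^{\flat}\otimes\tau^*\)\otimes\omega_{V,\overline{\psi}}|_{P_{r,m,r^*}}\)$, reducing everything to an understanding of $\omega_{V,\overline{\psi}}|_{P_{r,m,r^*}}$.

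Next I would decompose $\omega_{V,\overline{\psi}}|_{P_{r,m,r^*}}$ by the $P_{r,m,r^*}$-stable support filtration of $\c{S}(V^{\vee})$ along the orbits of $P_{r,m,r^*}$ on $V^{\vee}$; these orbits correspond to the flag dual to $0\subset X\subset X\oplus W\subset V$, so there are at most four graded pieces, the ``middle'' one being the functions supported on the functionals that vanish on $X$ but not on $X\oplus W$. Inducing each graded piece back up to $\r{GL}(V)$ and applying $\r{Hom}_{\r{GL}(V)}(-,\widetilde{\pi})$, the Mackey argument is formally the same as for the Bessel proposition: for every piece other than the middle one, the group-theoretic diagram (as in \cite[Lemma 15.2]{GGP}) together with the Gelfand--Kazhdan description of $\tau|_{P_X}$ and $\tau^*|_{P_{X^*}}$ exhibits the induced module in the form $\r{Ind}$ from a parabolic whose Levi is $\r{GL}(X)\times M^*$ (resp.\ $M\times\r{GL}(X^*)$), carrying the supercuspidal $\tau$ on the first factor (resp.\ $\tau^*$ on the last); since $\widetilde{\pi}$ avoids the corresponding Bernstein components, second adjointness together with the Bernstein decomposition makes both the $\r{Hom}$ and the relevant $\r{Ext}^1$ vanish, exactly as in the Bessel proof, and these pieces drop out.

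For the surviving middle piece I would use the commutative diagram displayed in Section \ref{sec3fm} relating $U_{r,m,r^*}$, the Heisenberg group $\r{H}(W^{\vee}+W)$, and the mirabolics $P_X\subset\r{GL}(X)$, $P_{X^*}\subset\r{GL}(X^*)$: a Fourier transform in the variable of the innermost orbit converts the translation action of the $\r{Hom}(X^*,W)$- and $\r{Hom}(X^*,X)$-parts of $U_{r,m,r^*}$ into the character action built into $\omega_{\psi,\mu}$, and then the Gelfand--Kazhdan isomorphisms $\tau|_{P_X}\cong\r{cInd}_{U_X}^{P_X}\lambda$, $\tau^*|_{P_{X^*}}\cong\r{cInd}_{U_{X^*}}^{P_{X^*}}\lambda^*$ let one identify this piece with $\r{cInd}_H^{\r{GL}(V)}\(\sigma\otimes\widetilde{\nu_{\mu}}\)$, the $\mu$- and $\delta_W^{\pm\frac12}$-bookkeeping being absorbed by the $\mu^{-1}\delta_W^{\frac12}$ twist in $\sigma^{\flat}$ and the $\mu\cdot|\det|^{\frac12}$ and $\delta_W^{-\frac12}$ occurring in $\nu_{\mu}$, exactly as in the Bessel case. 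Frobenius reciprocity then yields $\r{Hom}_H(\pi\otimes\sigma,\nu_{\mu})$, as required.

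The main obstacle I expect is this last identification: one must verify that, after the Fourier transform and the Gelfand--Kazhdan reductions on the two ``end'' general linear factors, the middle graded piece carries exactly the Heisenberg--Weil representation $\omega_{\psi,\mu}$ with the correct central character $\psi$ and $\mu$-twist (this is precisely where the shape of the maps $\ell_X$, $\ell_{X^*}$, $\ell_W$ building $\r{H}(W^{\vee}+W)$ is used), and that every non-middle graded piece really does land in one of the two excluded Bernstein components for every choice of intermediate Levi $M$, $M^*$, which is what the hypothesis on $\widetilde{\pi}$ is tailored to ensure. The non-archimedean hypothesis on $k$ enters through the Gelfand--Kazhdan theory and the support filtration of $\c{S}(V^{\vee})$.
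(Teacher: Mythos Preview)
Your overall strategy—tensor identity, filtration of $\omega_{V,\overline{\psi}}|_{P_{r,m,r^*}}$, Bernstein-component killing of the bad pieces, Gelfand--Kazhdan on the surviving piece, Frobenius reciprocity—is correct and is exactly the skeleton of the paper's proof. But there is a genuine gap in your identification of the surviving piece, and it stems from working in the Schr\"odinger model $\c{S}(V^{\vee})$ rather than the mixed model the paper uses.

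Concretely: on the middle stratum $\{x^{\flat}=0,\ w^{\flat}\neq 0\}$ the factor $\r{GL}(X)$ acts on the Schwartz space only through the scalar $|\det|^{1/2}$ (because $x^{\flat}=0$), so after tensoring with $\tau\otimes\sigma^{\flat}\otimes\tau^*$ and inducing, the middle piece is of the form $\r{Ind}_{P_{r,m+r^*}}^{\r{GL}(V)}\(\tau|\det|^{1/2}\boxtimes\varsigma^*\)$ and is therefore killed by your own Bernstein hypothesis on $\(\r{GL}(X)\times M^*,\tau\otimes\varsigma^*\)$, exactly like the small and zero strata. The only piece that is \emph{not} killed is the open stratum $\{x^{\flat}\neq 0\}$: there the stabilizer of a base point contains the mirabolic $P_X\subset\r{GL}(X)$ but the \emph{full} $\r{GL}(X^*)$, so after Gelfand--Kazhdan on $\tau|_{P_X}$ you end up at a subgroup of the form $\(U_X\times\r{GL}(W)\times\r{GL}(X^*)\)\ltimes N_0$ carrying the full $\tau^*$, not at $H=\(U_X\times\r{GL}(W)\times U_{X^*}\)\ltimes U_{r,m,r^*}$. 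You cannot invoke Gelfand--Kazhdan for $\tau^*$ because you never see $\tau^*|_{P_{X^*}}$, only $\tau^*$ itself.

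The paper resolves this by passing to the \emph{mixed} model on $\c{S}\(W^{\vee}\times X^*\times X^{\vee}\)$ (a partial Fourier transform of your $\c{S}(V^{\vee})$ in the $(X^*)^{\vee}$-variable). In that model the $P_{r,m,r^*}$-evaluation map $\Phi\mapsto\(\Phi|_{x^{\flat}=0},\Phi|_{x^{\sharp}=0}\)$ has kernel the functions supported on $\{x^{\flat}\neq 0,\ x^{\sharp}\neq 0\}$; the stabilizer of a point there is exactly $N_{r,m,r^*}\rtimes\(P_X\times\r{GL}(W)\times P_{X^*}\)$, so \emph{both} mirabolics appear simultaneously and the $W^{\vee}$-fiber carries $\omega_{W,\psi}$. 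The two quotient pieces are precisely $\r{Ind}_{P_{r,m+r^*}}\(\tau|\det|^{1/2}\otimes\cdots\)$ and $\r{Ind}_{P_{r+m,r^*}}\(\cdots\otimes\tau^*|\det|^{-1/2}\)$, which die. Your Fourier-transform step is the right idea, but it must be performed \emph{before} the orbit filtration (equivalently, one must refilter the open piece after the transform), not applied to the middle stratum.
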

\begin{proof}
Let $\r{Mp}(W+W^{\vee})$ be the $\d{C}^{\times}$-metaplectic cover
of the symplectic group of the symplectic space $W+W^{\vee}$. We
choose the Lagrangian $W^{\vee}$ and an additive character $\psi$,
and hence get the a model of Weil representation $\omega_{W,\psi}$.
We fix a homomorphism $\r{GL}(W)\HR\r{Mp}(W+W^{\vee})$ (by choosing
$\mu$ to be trivial) lifting the embedding
$\r{GL}(W)\HR\r{Sp}(W+W^{\vee})$ such that
 \begin{align*}
 \(\omega_{W,\psi}(g)\Phi\)(w^{\flat})=|\det
 g|_k^{\frac{1}{2}}\Phi(w^{\flat}g).
 \end{align*}
We consider another symplectic space $V_1+V_1^{\vee}$, where
$V_1=X+\(X^*\)^{\vee}+W$ and $V_1^{\vee}=W^{\vee}+X^*+X^{\vee}$. We
choose the Lagrangian $V_1^{\vee}\subset W^{\vee}$ and consider the
mixed model of the Weil representation $\omega_{V_1,\psi}$ of
$\r{Mp}(V_1+V_1^{\vee})$. This model has a realization on the space
$\c{S}\(\(X+\(X^*\)^{\vee}\)^{\vee}\)\otimes\c{S}(W^{\vee})=\c{S}(W^{\vee}+X^*+X^{\vee})$.
We choose a lifting of the embedding
$\r{GL}(V)\HR\r{Sp}(V_1+V_1^{\vee})$ such that the following diagram
commutes:
 \begin{align*}
 \xymatrix{
   \r{Mp}(W+W^{\vee}) \ar[dr] \ar@{^{(}->}[rr] && \r{Mp}(V_1+V_1^{\vee}) \ar[dr] \\
   & \r{Sp}(W+W^{\vee})  \ar@{^{(}->}[rr] && \r{Sp}(V_1+V_1^{\vee}) \\
   \r{GL}(W) \ar@{^{(}->}[uu]\ar@{^{(}->}[ur]\ar@{^{(}->}[rr] && \r{GL}(V) \ar@{^{(}->}[uu]\ar@{^{(}->}[ur]   }
 \end{align*}
The parabolic subgroup $P_{r,m,r^*}$ of $\r{GL}(V)$ is contained in
the parabolic subgroup $P(V_1,W)$ of $\r{Sp}(V_1+V_1^{\vee})$
stabilizing the flag $0\subset X+\(X^*\)^{\vee}\subset
X+\(X^*\)^{\vee}+W+W^{\vee}$. Elements in $P_{r,m,r^*}$ can be
written as $p=p(n,n^*,b;h,h^*;g)$ with $n\in\r{Hom}(W,X)$,
$n^*\in\r{Hom}(X^*,W)$, $b\in\r{Hom}(X^*,X)$, $h\in\r{GL}(X)$,
$h^*\in\r{GL}(X^*)$ and $g\in\r{GL}(W)$. We have the following
formula for the mixed model:
 \begin{align*}
 \(\omega_{V_1,\psi}(p(0,0,0;h,h^*;g))\Phi\)(w^{\flat},x^{\sharp},x^{\flat})&=
 |\det
 hgh^{*-1}|_k^{\frac{1}{2}}\Phi(w^{\flat}g,h^{*-1}x^{\sharp},x^{\flat}h);\\
 \(\omega_{V_1,\psi}(p(n,n^*,b;\b{1}_r,\b{1}_{r^*};\b{1}_m))\Phi\)(w^{\flat},x^{\sharp},x^{\flat})&=
 \psi\(x^{\flat}\(b\(x^{\sharp}\)\)+w^{\flat}(n^*(x^{\sharp}))\)
 \Phi\(w^{\flat}+n^{\vee}(x^{\flat}),x^{\sharp},x^{\flat}\)
 \end{align*}
for $(w^{\flat},x^{\sharp},x^{\flat})\in W^{\vee}+X^*+X^{\vee}$. We
denote by $\omega_{V,\psi}=\omega_{V_1,\psi}|_{\r{GL}(V)}$. Then
there is a $P_{r,m,r^*}$-equivariant map:
 \begin{align*}
 \r{ev}:\omega_{V,\psi}&\LR|\r{det}_X|_k^{\frac{1}{2}}\otimes\omega_{W+X^*,\psi}
 \bigoplus\omega_{X+W,\psi}\otimes|\r{det}_{X^*}|_k^{-\frac{1}{2}}\\
 \Phi(w^{\flat},x^{\sharp},x^{\flat})&\T\(\Phi(w^{\flat},x^{\sharp},0),\Phi(w^{\flat},0,x^{\flat})\).
 \end{align*}
The kernel of $\r{ev}$ is generated by the Schwartz functions
$\Phi_1\otimes\Phi_2\otimes\Phi_3\in\c{S}(W^{\vee})\otimes\c{S}(X^*)\otimes\c{S}(X^{\vee})$
such that $\Phi_2$ (resp. $\Phi_3$) is compactly supported on
$X^*-\{0\}$ (resp. $X^{\vee}-\1 0\2$). Since
$\widetilde{\omega_{?,\psi}}\cong\omega_{?,\overline{\psi}}$, we
have the following exact sequence of smooth
$P_{r,m,r^*}$-representations:
 \begin{align*}
 \xymatrix{
   0 \ar[r] &
   \r{cInd}_{N_{r,m,r^*}\rtimes\(P_X\times\r{GL}(W)\times
   P_{X^*}\)}^{\r{GL}(V)}|\r{det}_X|_k^{\frac{1}{2}}\otimes\widetilde{\omega_{W,\psi}}
   \otimes|\r{det}_{X^*}|_k^{-\frac{1}{2}} \ar[d]\\
     & \widetilde{\omega_{V,\psi}} \ar[d]\\
     &
     |\r{det}_X|_k^{\frac{1}{2}}\otimes\widetilde{\omega_{W+X^*,\psi}}
 \bigoplus\widetilde{\omega_{X+W,\psi}}\otimes|\r{det}_{X^*}|_k^{-\frac{1}{2}} \ar[r]&0}
 \end{align*}
where $P_X$ (resp. $P_{X^*}$) is the mirabolic subgroup of
$\r{GL}(X)$ (resp. $\r{GL}(X^*)$) stabilizing $\ell_X$ (resp.
$\ell_{X^*}^{\vee}$) and $\omega_{W,\psi}$ is a representation of
$N_{r,m,r^*}\rtimes\r{GL}(W)$ which coincides with
$\nu_{\mu}\mu^{-1}\delta_W^{\frac{1}{2}}$.

Tensoring the above sequence with
$\tau\otimes\sigma\mu^{-1}\delta_W^{\frac{1}{2}}\otimes\tau^{*}$ and
inducing to $\r{GL}(V)$, we get an exact sequence of
$\r{GL}(V)$-representations:
 \begin{align*}
 \xymatrix{
   0 \ar[r] & A \ar[r]
   & \r{I}\(\tau,\sigma\mu^{-1}\delta_W^{\frac{1}{2}},\tau^*\) \ar[r]
    & B\oplus B^*
   \ar[r] & 0 }
 \end{align*}
where
 \begin{align*}
 A&=\r{cInd}_{N_{r,m,r^*}\rtimes\(P_X\times\r{GL}(W)\times
  P_{X^*}\)}^{\r{GL}(V)}\(\tau|_{P_X}\)|\r{det}_X|_k^{\frac{1}{2}}\otimes\widetilde{\omega_{W,\psi}}
   \otimes\(\tau^*|_{P_{X^*}}\)|\r{det}_{X^*}|_k^{-\frac{1}{2}};\\
 B&=\r{Ind}_{P_{r,m+r^*}}^{\r{GL}(V)}\tau|\r{det}_X|^{\frac{1}{2}}\otimes
   \(\widetilde{\omega_{W+X^*,\psi}}\otimes\sigma\mu^{-1}\delta_W^{\frac{1}{2}}\otimes\tau^*\);\\
 B^*&=\r{Ind}_{P_{r+m,r^*}}^{\r{GL}(V)}
 \(\widetilde{\omega_{X+W,\psi}}\otimes\tau\otimes\sigma\mu^{-1}\delta_W^{\frac{1}{2}}\)
 \otimes\tau^*|\r{det}_{X^*}|^{-\frac{1}{2}}.
 \end{align*}
By our assumption on $\pi$, we have
 \begin{align*}
 \r{Hom}_{\r{GL}(V)}(B,\widetilde{\pi})=\r{Hom}_{\r{GL}(V)}(B^*,\widetilde{\pi})
 =\r{Ext}_{\r{GL}(V)}^1(B,\widetilde{\pi})=\r{Ext}_{\r{GL}(V)}^1(B^*,\widetilde{\pi})=0.
 \end{align*}
Hence
 \begin{align*}
 &\r{Hom}_{\r{GL}(V)}\(\r{I}\(\tau,\sigma\mu^{-1}\delta_W^{\frac{1}{2}},\tau^*\),\widetilde{\pi}\)\\
 =&\r{Hom}_{\r{GL}(V)}\(\r{cInd}_{N_{r,m,r^*}\rtimes\(P_X\times\r{GL}(W)\times
   P_{X^*}\)}^{\r{GL}(V)}\(\tau|_{P_X}\)|\r{det}_X|_k^{\frac{1}{2}}\otimes\widetilde{\omega_{W,\psi}}
   \otimes\(\tau^*|_{P_{X^*}}\)|\r{det}_{X^*}|_k^{-\frac{1}{2}},\widetilde{\pi}\).
 \end{align*}
Again since $\tau$ and $\tau^*$ are irreducible supercuspidal, we
have
 \begin{align*}
 \left.\tau|\r{det}_X|_k^{\frac{1}{2}}\right|_{P_X}\cong\r{cInd}_{U_X}^{\r{GL}(X)}\lambda;
 \qquad\left.\tau^*|\r{det}_{X^*}|_k^{-\frac{1}{2}}\right|_{P_{X^*}}
 \cong\r{cInd}_{U_{X^*}}^{\r{GL}(X^*)}\lambda^*
 \end{align*}
and by induction in stages, we have
 \begin{align*}
 \r{cInd}_{N_{r,m,r^*}\rtimes\(P_X\times\r{GL}(W)\times
   P_{X^*}\)}^{\r{GL}(V)}\(\tau|_{P_X}\)|\r{det}_X|_k^{\frac{1}{2}}\otimes\widetilde{\omega_{W,\psi}}
   \otimes\(\tau^*|_{P_{X^*}}\)|\r{det}_{X^*}|_k^{-\frac{1}{2}}
    =\r{cInd}_H^{\r{GL}(V)}\sigma\otimes\widetilde{\nu_{\mu}}.
 \end{align*}
Thus the proposition follows by the Frobenius reciprocity.\\
\end{proof}

 \begin{proof}[Proof of Theorem \ref{theo3fm} for the uniqueness part]
 We choose an irreducible supercuspidal representation $\tau$ (resp. $\tau^*$) of $\r{GL}(X)$ (resp. $\r{GL}(X^*)$)
 satisfying the assumption in the above proposition. After twisting unramified
 characters, we may assume that the induced representation
 $\r{I}\(\tau,\sigma\mu^{-1}\delta_W^{\frac{1}{2}},\tau^*\)$
 is irreducible. Then by the above proposition and \cite[Theorem 14.1(iv)]{GGP}, we have
  \begin{align*}
  \r{dim}_{\d{C}}\r{Hom}_H\(\pi\otimes\sigma\otimes\widetilde{\nu_{\mu}},\d{C}\)
  =\r{dim}_{\d{C}}\r{Hom}_{\r{GL}(V)}\(\r{I}\(\tau,\sigma\mu^{-1}\delta_W^{\frac{1}{2}},\tau^*\)
  \otimes\pi,\d{C}\)\leq 1.
  \end{align*}
 \end{proof}

\begin{defn}
A nontrivial element in the space
$\r{Hom}(\pi\otimes\sigma\otimes\widetilde{\nu_{\mu}},\d{C})$ is
called an \emph{$(r,r^*)$-Fourier-Jacobi model} of
$\pi\otimes\sigma$. When $r=r^*$, hence $n-m$ is even, it is just
the one defined in \cite{GGP}.\\
\end{defn}

\subsection{Fourier-Jacobi integrals, functional equations and $L$-functions}
\label{sec3fi}

Let $k$ be a number field, $(H,\nu_{\mu})$ be a pair associated with
nontrivial $\psi:k\B\d{A}\R\d{C}^1$,
$\mu:k^{\times}\B\d{A}^{\times}\R\d{C}^{\times}$, $\lambda$ a
generic character $(U_{X}\times U_{X^*})(k)\B(U_{X}\times
U_{X^*})(\d{A})\R\d{C}^1$ and
$\nu_{\mu}$ realizing on the space $\c{S}(W^{\vee}(\d{A}))$.\\

For any $\Phi\in\c{S}(W^{\vee}(\d{A}))$, we define the theta series
 \begin{align*}
 \theta_{\psi,\lambda,\mu}(h,\Phi)=\sum_{w^{\flat}\in
 W^{\vee}(k)}\lambda(h)\(\omega_{\psi,\mu}(h)\Phi\)(w^{\flat})
 \end{align*}
which is an automorphic form of $H$. Let $\pi$ (resp. $\sigma$) be
an irreducible cuspidal automorphic representation of
$\r{GL}(V)(\d{A})$ (resp. $\r{GL}(W)(\d{A})$).

\begin{defn}
If $n>m$, the following absolutely convergent integral is called an
\emph{$(r,r^*)$-Fourier-Jacobi period} of $\pi\otimes\sigma$ (for a
pair $(H,\nu_{\mu})$):
 \begin{align*}
 \FJ^{\nu_{\mu}}_{r,r^*}(\varphi_{\pi},\varphi_{\sigma};\Phi):=\intl_{H(k)\B
 H(\d{A})}\varphi_{\pi}(\vep(h))\varphi_{\sigma}(\kappa(h))
 \theta_{\overline{\psi},\overline{\lambda},\mu^{-1}}(h,\Phi)
 |\det h|_{\d{A}}^{\frac{r^*-r}{2}}\r{d}h,\quad\varphi_\pi\in\c{A}_{\pi},\varphi_{\sigma}\in\c{A}_{\sigma}.
 \end{align*}
If there exist $\varphi_{\pi}$, $\varphi_{\sigma}$ and
$\Phi\in\c{S}(W^{\vee}(\d{A}))$ such that
$\FJ^{\nu_{\mu}}_{r,r^*}(\varphi_{\pi},\varphi_{\sigma};\Phi)\neq0$,
then we say $\pi\otimes\sigma$ has a nontrivial
$(r,r^*)$-Fourier-Jacobi period. The case $m=n$ will be discussed in
Remark \ref{rem3nm}.
\end{defn}

It is obvious that $\FJ^{\nu_{\mu}}_{r,r^*}$ defines an element in
 \begin{align*}
 \r{Hom}_{H(\d{A})}(\pi\otimes\sigma\otimes\widetilde{\nu_{\mu}},\d{C})
 =\bigotimes_{v\in\d{M}_k}\r{Hom}_{H_v}(\pi_v\otimes\sigma_v\otimes\widetilde{\nu_{\mu_v}},\d{C}).
 \end{align*}
Since the later space has the multiplicity one property, we expect
that the Fourier-Jacobi period is also Eulerian. We now show that
this is true.\\

We choose basis $\1 v_1,...,v_r\2$ (resp. $\1
v^*_1,...,v^*_{r^*}\2$) for $X$ (resp. $X^*$) in the same way as in
Section \ref{sec2bi} (with $c=1$). We also choose a basis $\1
w_1,...,w_m\2$ of $W$ with dual basis $\1
w_1^{\vee},...,w_m^{\vee}\2$ of $W^{\vee}$. We identity $\r{GL}(V)$
with $\r{GL}_{n,k}$ and hence $\r{GL}(W)$ with $\r{GL}_{m,k}$ via
the basis:
 \begin{align}\label{3basis}
 \1 w_1,...,w_m,v_1,...,v_r,v^*_{r^*},...,v^*_1\2.
 \end{align}
Then the image of $H(\d{A})$ in $\r{GL}_n(\d{A})$ consists of
following matrices:
 \begin{align}\label{3huge}
 h=h(n,n^*,b;u,u^*;g)=\[\begin{array}{c|c|c}
                          \Large{\text{$g$}} &  & \begin{array}{ccc}
                                                    n^*_{1,r^*} & \cdots & n^*_{1,1} \\
                                                    \vdots &  & \vdots \\
                                                    n^*_{m,r^*} & \cdots &
                                                    n^*_{m,1}
                                                  \end{array}
                           \\ \hline
                          \begin{array}{ccc}
                            n_{1,1} & \cdots & n_{1,m} \\
                            \vdots &  & \vdots \\
                            n_{r,1} & \cdots & n_{r,m}
                          \end{array}
                           & \begin{array}{ccc}
                                & \Large{\text{$u$}} &
                             \end{array}
                            & \Large{\text{$b$}}
                           \\ \hline
                          \multicolumn{2}{c|}{} & \begin{array}{c}
                                                     \\
                                                    \Large{\text{$u^*$}} \\
                                                    \\
                                                  \end{array}
                        \end{array}
 \]
 \end{align}
where $n$, $n^*$, $b$, $u$, $u^*$ and $g$ are similar to those in
Section \ref{sec2bi}, but without entries related to $w_0$. Let
$U_{1^r,m,1^{r^*}}=U_{r,m,r^*}\rtimes(U_X\times U_{X^*})$ be a
unipotent subgroup of $\r{GL}_n$, then
 \begin{align*}
 U_{1^r,m,1^{r^*}}(\d{A})=\1
 \underline{u}=\underline{u}\(n,n^*,b;u,u^*\):=h\(n,n^*,l;u,u^*;\b{1}_m\)\2.
 \end{align*}
For $\Phi\in\c{S}(W^{\vee}(\d{A}))$, we have
 \begin{align*}
 \(\nu_{\mu}(\underline{u})\Phi\)(w^{\flat})=&\psi\(u_{1,2}+\cdots+u_{r-1,r}+b_{r,r^*}+
 u^*_{r^*,r^*-1}+\cdots+u^*_{2,1}+w^{\flat}\:^{\r{t}}[n^*_{1,r^*},...,n^*_{m,r^*}]\)\\
 &\Phi(w^{\flat}+[n_{r,1},...,n_{r,m}]).
 \end{align*}

For simplicity, we denote
$\underline{\psi}(\underline{u})=\psi(u_{1,2}+\cdots+u_{r-1,r}+b_{r,r^*}+
u^*_{r^*,r^*-1}+\cdots+u^*_{2,1})$. Then
 \begin{align*}
 \FJ^{\nu_{\mu}}_{r,r^*}(\varphi_{\pi},\varphi_{\sigma};\Phi)
 =\FJ^{\nu_{\mu}}_{r,r^*}(\frac{1}{2};\varphi_{\pi},\varphi_{\sigma};\Phi)
 \end{align*}
where
 \begin{align}\label{3fs1}
 \FJ^{\nu_{\mu}}_{r,r^*}(s;\varphi_{\pi},\varphi_{\sigma};\Phi)=&\intl_{\r{GL}_m(k)\B\r{GL}_m(\d{A})}
 \intl_{U_{1^r,m,1^{r^*}}(k)\B U_{1^r,m,1^{r^*}}(\d{A})}\notag\\
 &\varphi_{\pi}(\underline{u}g)\varphi_{\sigma}(g)
 \theta_{\overline{\psi},\overline{\lambda},\mu^{-1}}(\underline{u}g,\Phi)|\det
 g|_{\d{A}}^{s-\frac{1}{2}+\frac{r-r^*}{2}}\r{d}\underline{u}\r{d}g.
 \end{align}

There are two cases.\\

\textbf{Case 1: $r>0$.} In this case, we have
 \begin{align*}
 \theta_{\psi,\lambda,\mu}(\underline{u}g,\Phi)=\sum_{w^{\flat}\in
 W^{\vee}(k)}\lambda(\underline{u})\(\omega_{\psi,\mu}(\underline{u}g)\Phi\)(w^{\flat})=
 \sum_{n_r\in k^m}\lambda(\underline{u})\(\omega_{\psi,\mu}
 (\underline{n}_r\underline{u}g)\Phi\)(0)
 \end{align*}
where $\underline{n}_r=\underline{u}(n_r,0,0;\b{1}_r,\b{1}_{r^*})$
and
 \begin{align*}
 n_r=\[\begin{array}{ccc}
         0 & \cdots & 0 \\
         \vdots &  & \vdots \\
         0 & \cdots & 0 \\
         n_{r,1} & \cdots & n_{r,m}
       \end{array}
 \].
 \end{align*}
We define the subgroup $L_{r+1}$ of $U_{1^r,m,1^{r^*}}$ in the
similar way as before just by taking the matrix $n=0$, then we
separate $\underline{n}$ from $\underline{u}$ and summate the last
row in $\underline{n}$ over $k^m$. We get
 \begin{align}\label{3fs2}
  \eqref{3fs1}=&\intl_g\intl_{\r{M}_{r-1,m}(k\B\d{A})}\intl_{L_{r+1}(k)\B
  L_{r+1}(\d{A})}\intl_{\d{A}^m}\notag\\
  &\varphi_{\pi}(\underline{n}_r\underline{l}\:\underline{n}g)\varphi_{\sigma}(g)
  \overline{\lambda(\underline{l})}
  \(\omega_{\overline{\psi},\overline{\lambda},\mu^{-1}}(\underline{l}g)\Phi\)(\underline{n}_r)|\det
  g|_{\d{A}}^{s-\frac{1}{2}+\frac{r-r^*}{2}}\r{d}\underline{n}_r\r{d}\underline{l}\r{d}\underline{n}\r{d}g\notag\\
  =&\intl_g\intl_{\substack{\r{M}_{r-1,m}(k\B\d{A})\\
  \times\r{M}_{1,m}(\d{A})}}\intl_{L_{r+1}(k)\B
  L_{r+1}(\d{A})}\varphi_{\pi}(\underline{l}\:\underline{n}g)\varphi_{\sigma}(g)\Phi(\underline{n}_rg)
  \overline{\underline{\psi}(\underline{l})}\mu(\det g)^{-1}|\det
  g|_{\d{A}}^{s+\frac{r-r^*}{2}}\r{d}\underline{l}\r{d}\underline{n}\r{d}g
 \end{align}
where in the last line, $\underline{n}_r$ is the last row of
$\underline{n}$. If we repeat the process \eqref{2bs2},
\eqref{2bs3}, \eqref{2bs4} and \eqref{2bs5}, then
 \begin{align}\label{3fs3}
 \eqref{3fs2}&=\intl_{\r{M}_{r,m}(\d{A})}\intl_g\mu(\det g)^{-1}|\det
 g|_{\d{A}}^{s-\frac{n-m}{2}}\intl_{L_1(k)\B
 L_1(\d{A})}\(\rho(\underline{n})\varphi_{\pi}\)(\underline{l}g)\varphi_{\sigma}(g)\Phi(\underline{n}_r)
 \overline{\underline{\psi}(\underline{l})}\r{d}\underline{l}\r{d}g\r{d}\underline{n}.
 \end{align}
By the classical argument for the Rankin-Selberg convolution,
 \begin{align}\label{3fs4}
 \eqref{3fs3}=&\intl_{U_{1^m}(\d{A})\B\r{GL}_m(\d{A})}\intl_{\r{M}_{r-1,m}(\d{A})}\intl_{\r{M}_{1,m}(\d{A})}\notag\\
 &W^{\psi}_{\varphi_{\pi}}\(\[\begin{array}{cccc}
                            g & 0 & 0 & 0 \\
                            x & \b{1}_{r-1} & 0 & 0 \\
                            y & 0 & 1 & 0 \\
                            0 & 0 & 0 & \b{1}_{r^*}
                          \end{array}
 \]\)W^{\overline{\psi}}_{\varphi_{\sigma}}(g)\Phi(y) \mu(\det
 g)^{-1}|\det g|_{\d{A}}^{s-\frac{n-m}{2}}\r{d}y\r{d}x\r{d}g.
 \end{align}
If we denote \eqref{3fs4} by
$\Psi_r(s;W^{\psi}_{\varphi_{\pi}},W^{\overline{\psi}}_{\varphi_{\sigma}}\otimes\mu^{-1};\Phi)$,
then it is absolutely convergent for $\Re(s)\gg0$. Moreover, if
$W^{\psi}_{\varphi_{\pi}}=\otimes_vW_v$,
$W^{\overline{\psi}}_{\varphi_{\sigma}}=\otimes_vW^-_v$ and
$\Phi=\otimes_v\Phi_v$ are factorizable, we have
 \begin{align*}
 \FJ^{\nu_{\mu}}_{r,r^*}(s;\varphi_{\pi},\varphi_{\sigma};\Phi)=
 \Psi_r(s;W^{\psi}_{\varphi_{\pi}},W^{\overline{\psi}}_{\varphi_{\sigma}}\otimes\mu^{-1};\Phi)=
 \prod_{v\in\d{M}_k}\Psi_{v,r}(s;W_v,W^-_v\otimes\mu^{-1}_v;\Phi_v)
 \end{align*}
where
 \begin{align*}
 &\Psi_{v,r}(s;W_v,W^-_v\otimes\mu^{-1}_v;\Phi_v)=\intl_{U_{1^m,v}\B\r{GL}_{m,v}}
 \intl_{\r{M}_{r-1,m,v}}\intl_{\r{M}_{1,m,v}}\notag\\
 &W_v\(\[\begin{array}{cccc}
                            g_v & 0 & 0 & 0 \\
                            x_v & \b{1}_{r-1} & 0 & 0 \\
                            y_v & 0 & 1 & 0 \\
                            0 & 0 & 0 & \b{1}_{r^*}
                          \end{array}
 \]\)W^-_v(g_v)\Phi_v(y_v)\mu_v(\det
 g_v)^{-1}|\det g_v|_v^{s-\frac{n-m}{2}}\r{d}y_v\r{d}x_v\r{d}g_v.
 \end{align*}\\

\textbf{Case 2: $r=0$ but $r^*>0$.} Let $P_m$ be the standard
mirabolic subgroup of $\r{GL}_m$ consisting of (invertible) matrices
whose last row is $e_m=[0,...,0,1]\in k^m$. Then
 \begin{align*}
 &\theta_{\psi,\lambda,\mu}(g\underline{u},\Phi)=\sum_{w^{\flat}\in
 W^{\vee}(k)}\lambda(\underline{u})\(\omega_{\psi,\mu}(g\underline{u})\Phi\)(w^{\flat})\\
 =&\lambda(\underline{u})\mu(\det g)|\det g|_{\d{A}}^{\frac{1}{2}}\Phi(0)
 +\mu(\det g)|\det g|_{\d{A}}^{\frac{1}{2}}
 \sum_{\gamma\in P_m(k)\B\r{GL}_m(k)}\lambda(\underline{u})\(\omega_{\mu,\psi}(\underline{u})\Phi\)(e_m\gamma g)
 \end{align*}
where the first term (for
$\theta_{\overline{\psi},\overline{\lambda},\mu^{-1}}$) contributes
$0$ to
$\FJ^{\nu_{\mu}}_{0,n-m}(s;\varphi_{\pi},\varphi_{\sigma};\Phi)$
since $\varphi_{\pi}$ is a cusp form. Hence,
 \begin{align}\label{3fs5}
 &\FJ^{\nu_{\mu}}_{0,n-m}(s;\varphi_{\pi},\varphi_{\sigma};\Phi)=
 \intl_{U_{m,1^{n-m}}(k)\B
 U_{m,1^{n-m}}(\d{A})}\intl_{P_m(k)\B\r{GL}_m(\d{A})}\notag\\
 &\varphi_{\pi}(g\underline{u})\varphi_{\sigma}(g)
 \Phi(e_mg)\overline{\psi(e_mgn^*e_m^*)}\overline{\underline{\psi}(\underline{u})}
 \mu(\det g)^{-1}|\det g|_{\d{A}}^{s+\frac{n-m}{2}}\r{d}g\r{d}\underline{u}
 \end{align}
where $\underline{u}=\underline{u}(-,n^*,-;-,u^*)$ since $r=0$ and
$e^*_m=\:^{\r{t}}[1,0,...,0]$. Applying the Fourier inverse formula
to $\varphi_{\sigma}$, we have
 \begin{align}\label{3fs6}
 \eqref{3fs5}=\intl_{\underline{u}}\intl_{U_{1^m}(k)\B\r{GL}_m(\d{A})}\varphi_{\pi}(g\underline{u})
 W^{\overline{\psi}}_{\varphi_{\sigma}}(g)\Phi(e_mg)
 \overline{\psi(e_mgn^*e_m^*)}\:\overline{\underline{\psi}(\underline{u})}
 \mu(\det g)^{-1}|\det
 g|_{\d{A}}^{s+\frac{n-m}{2}}\r{d}g\r{d}\underline{u}.
 \end{align}
Factoring the inner integral through $U_{1^m}(k)\B U_{1^m}(\d{A})$
and incorporating this unipotent part into $\underline{u}$, what we
get is the integral over $U_{1^n}(k)\B
 U_{1^n}(\d{A})$ where $U_{1^n}$ is the standard maximal unipotent subgroup
of $\r{GL}_n$. Moreover, if we change the order of $g$ and $u\in
U_{1^n}(k)\B U_{1^n}(\d{A})$, all terms involving $\psi$ will form a
generic character $\underline{\psi}$ of $U_{1^n}(k)\B
U_{1^n}(\d{A})$:
 \begin{align*}
 \underline{\psi}(u)=\psi\(u_{1,2}+\cdots+
 u_{n-1,n}\).
 \end{align*}
In all,
 \begin{align}\label{3fs7}
 \eqref{3fs6}&=\intl_{U_{1^m}(\d{A})\B\r{GL}_m(\d{A})}\intl_{U_{1^n}(k)\B
 U_{1^n}(\d{A})}\varphi_{\pi}(ug)\overline{\underline{\psi}(u)}
 W^{\overline{\psi}}_{\varphi_{\sigma}}(g)\Phi(e_mg)\mu(\det g)^{-1}|\det
 g|_{\d{A}}^{s-\frac{n-m}{2}}\r{d}u\r{d}g \notag\\
 &=\intl_{U_{1^m}(\d{A})\B\r{GL}_m(\d{A})}W^{\psi}_{\varphi_{\pi}}\(\[\begin{array}{cc}
                                                                     g & 0 \\
                                                                     0 &
                                                                     \b{1}_{n-m}
                                                                   \end{array}
 \]\)
 W^{\overline{\psi}}_{\varphi_{\sigma}}(g)\Phi(e_mg)\mu(\det g)^{-1}|\det
 g|_{\d{A}}^{s-\frac{n-m}{2}}\r{d}g.
 \end{align}
If we denote \eqref{3fs7} by
$\Psi_0(s;W^{\psi}_{\varphi_{\pi}},W^{\overline{\psi}}_{\varphi_{\sigma}}\otimes\mu^{-1};\Phi)$,
then it is absolutely convergent for $\Re(s)\gg0$. Moreover, if
$W^{\psi}_{\varphi_{\pi}}=\otimes_vW_v$,
$W^{\overline{\psi}}_{\varphi_{\sigma}}=\otimes_vW^-_v$ and
$\Phi=\otimes_v\Phi_v$ are factorizable, we have
 \begin{align*}
 \FJ^{\nu_{\mu}}_{0,n-m}(s;\varphi_{\pi},\varphi_{\sigma};\Phi)
 =\Psi_0(s;W^{\psi}_{\varphi_{\pi}},W^{\overline{\psi}}_{\varphi_{\sigma}}\otimes\mu^{-1};\Phi)=
 \prod_{v\in\d{M}_k}\Psi_{v,0}(s;W_v,W^-_v\otimes\mu^{-1}_v;\Phi_v)
 \end{align*}
where
 \begin{align*}
 &\Psi_{v,0}(s;W_v,W^-_v\otimes\mu^{-1}_v;\Phi_v)\\
 =&\intl_{U_{1^m,v}\B\r{GL}_{m,v}}W_v\(\[\begin{array}{cc}
                                                                     g_v & 0 \\
                                                                     0 &
                                                                     \b{1}_{n-m}
                                                                   \end{array}
 \]\)W^-_v(g_v)\Phi_v(e_mg_v)\mu_v(\det g_v)^{-1}|\det
 g_v|_v^{s-\frac{n-m}{2}}\r{d}g_v.
 \end{align*}\\

Now we discuss the functional equations of the
\emph{$(r,r^*)$-Fourier-Jacobi integrals}
$\FJ^{\nu_{\mu}}_{r,r^*}(s;\varphi_{\pi},\varphi_{\sigma};\Phi)$.
First, there is a linear map
$\widehat{-}:\c{S}(W^{\vee}(\d{A}))\R\c{S}(W(\d{A}))$ given by
 \begin{align*}
 \widehat{\Phi}(w^{\sharp})=\intl_{W^{\vee}(\d{A})}\Phi(w^{\flat})\psi\(w^{\flat}(w^{\sharp})\)\r{d}w^{\flat}.
 \end{align*}
If we identify $W$ with $W^{\vee}$ through the basis $\1
w_1,...,w_m\2$, then $\widehat{-}$ is an endomorphism of
$\c{S}(W^{\vee}(\d{A}))$. Consider the group isomorphism
$\iota_{r^*,r}:H\R H$ given by
$\iota_{r^*,r}(\underline{u}g)=w_{n,m}\underline{u}^{\iota}w_{n,m}g^{\iota}$.
Then for any $h\in H(\d{A})$, we have the following commutative
diagram which can be checked directly:
 \begin{align}\label{3fe1}
 \xymatrix{
   \c{S}(W^{\vee}(\d{A})) \ar[d]_{\overline{\lambda}\cdot
   \omega_{\overline{\psi},\mu^{-1}}\(\iota_{r^*,r}(h)\)} \ar[r]^{\widehat{-}}
   & \c{S}(W^{\vee}(\d{A})) \ar[d]^{\lambda\cdot\omega_{\psi,\mu}(h)} \\
   \c{S}(W^{\vee}(\d{A})) \ar[r]^{\widehat{-}} & \c{S}(W^{\vee}(\d{A}))  }
 \end{align}
Then
 \begin{align}\label{3fe2}
 &\FJ^{\nu_{\mu}}_{r,r^*}(s;\varphi_{\pi},\varphi_{\sigma};\Phi) \notag\\
 =&\intl_g\intl_{\underline{u}}\varphi_{\pi}(\underline{u}g^{\iota})\varphi_{\sigma}(g^{\iota})
 \theta_{\overline{\psi},\overline{\lambda},\mu^{-1}}(\underline{u}g^{\iota},\Phi)|\det
 g|_{\d{A}}^{-s+\frac{1}{2}+\frac{r^*-r}{2}}\r{d}\underline{u}\r{d}g
 \notag\\
 =&\intl_g\intl_{\underline{u}}\(\rho(w_{n,m})\widetilde{\varphi_{\pi}}\)(w_{n,m}\underline{u}^{\iota}w_{n,m}g)
 \widetilde{\varphi_{\sigma}}(g)\theta_{\overline{\psi},\overline{\lambda},\mu^{-1}}(\underline{u}g^{\iota},\Phi)|\det
 g|_{\d{A}}^{-s+\frac{1}{2}+\frac{r^*-r}{2}}\r{d}\underline{u}\r{d}g
 \notag\\
 =&\intl_g\intl_{U_{1^{r^*},m,1^r}(k)\B U_{1^{r^*},m,1^r}(\d{A})}
 \(\rho(w_{n,m})\widetilde{\varphi_{\pi}}\)(\underline{u}g)
 \widetilde{\varphi_{\sigma}}(g)\theta_{\overline{\psi},\overline{\lambda},\mu^{-1}}
 \(\iota_{r^*,r}(\underline{u}g),\Phi\)|\det
 g|_{\d{A}}^{-s+\frac{1}{2}+\frac{r^*-r}{2}}\r{d}\underline{u}\r{d}g
 \end{align}

But by the Poisson summation formula and \eqref{3fe1},
 \begin{align*}
 \theta_{\overline{\psi},\overline{\lambda},\mu^{-1}}\(\iota_{r^*,r}(\underline{u}g),\Phi\)&=
 \sum_{w^{\flat}\in
 W^{\vee}(k)}\overline{\lambda(\underline{u})}
 \(\omega_{\overline{\psi},\mu^{-1}}\(\iota_{r^*,r}(\underline{u}g)\)\Phi\)(w^{\flat})\\
 &=\sum_{w^{\sharp}\in
 W^{\vee}(k)}{\lambda{\underline{u}}}
 \(\omega_{\overline{\psi},\mu^{-1}}\(\iota_{r^*,r}(\underline{u}g)\)\Phi\)^{\wedge}(w^{\sharp})\\
 &=\sum_{w^{\sharp}\in
 W^{\vee}(k)}\lambda{\underline{u}}
 \(\omega_{\psi,\mu}(\underline{u}g)\widehat{\Phi}\)(w^{\sharp})\\
 &=\theta_{\psi,\lambda,\mu}(\underline{u}g,\widehat{\Phi}).
 \end{align*}
Hence,
 \begin{align*}
 \eqref{3fe2}&=\intl_g\intl_{U_{1^{r^*},m,1^r}(k)\B U_{1^{r^*},m,1^r}(\d{A})}
 \(\rho(w_{n,m})\widetilde{\varphi_{\pi}}\)(\underline{u}g)
 \widetilde{\varphi_{\sigma}}(g)\theta_{\psi,\lambda,\mu}(\underline{u}g,\widehat{\Phi})|\det
 g|_{\d{A}}^{1-s-\frac{1}{2}+\frac{r^*-r}{2}}\r{d}\underline{u}\r{d}g\\
 &=\FJ^{\overline{\nu_{\mu}}}_{r^*,r}
 (1-s;\rho(w_{n,m})\widetilde{\varphi_{\pi}},\widetilde{\varphi_{\sigma}};\widehat{\Phi}).
 \end{align*}
In summary, we have the following

 \begin{theo}
 Let $n>m$, the Fourier-Jacobi integrals are holomorphic in $s$ and satisfy the
 following functional equation
  \begin{align*}
  \FJ^{\nu_{\mu}}_{r,r^*}(s;\varphi_{\pi},\varphi_{\sigma};\Phi)
  =\FJ^{\overline{\nu_{\mu}}}_{r^*,r}(1-s;\rho(w_{n,m})\widetilde{\varphi_{\pi}},\widetilde{\varphi_{\sigma}};
  \widehat{\Phi}).
  \end{align*}
 Let $\widetilde{W^{\psi}_{?}}(g)=W^{\psi}_{?}(w_ng^{\iota})\in\c{W}(\widetilde{\pi},\overline{\psi})$
 and similar for the one on $\r{GL}_m$.
 If $W^{\psi}_{\varphi_{\pi}}=\otimes_vW_v$, $W^{\overline{\psi}}_{\varphi_{\sigma}}=\otimes_vW^-_v$
 and $\Phi=\otimes_v\Phi_v$ are
 factorizable, then
 $\widetilde{W^{\psi}_{\varphi_{\pi}}}=\otimes_v\widetilde{W_v}$,
 $\widetilde{W^{\overline{\psi}}_{\varphi_{\sigma}}}=\otimes_v\widetilde{W^-_v}$
 and $\widehat{\Phi}=\otimes_v\widehat{\Phi}_v$
 are also factorizable with $\widetilde{W_v}(g)=W_v(w_ng^{\iota})$, $\widetilde{W^-_v}(g)=W^-_v(w_mg^{\iota})$
 and $\widehat{\Phi}_v=\widehat{\Phi_v}$. Then for $\Re(s)$ large,
  \begin{align*}
  \FJ^{\nu_{\mu}}_{r,r^*}(s;\varphi_{\pi},\varphi_{\sigma};\Phi)
  &=\Psi_r(s;W^{\psi}_{\varphi_{\pi}},W^{\overline{\psi}}_{\varphi_{\sigma}}\otimes\mu^{-1};\Phi)
  =\prod_{v\in\d{M}_k}\Psi_{v,r}(s;W_v,W^-_v\otimes\mu_v^{-1};\Phi_v);\\
  \FJ^{\overline{\nu_{\mu}}}_{r^*,r}
  (s;\rho(w_{n,m})\widetilde{\varphi_{\pi}},\widetilde{\varphi_{\sigma}};\widehat{\Phi})&=
  \Psi_{r^*}(s;\rho(w_{n,m})\widetilde{W^{\psi}_{\varphi_{\pi}}},
  \widetilde{W^{\overline{\psi}}_{\varphi_{\sigma}}}\otimes\mu;\widehat{\Phi})
  =\prod_{v\in\d{M}_k}\Psi_{v,r^*}(s;\rho(w_{n,m})\widetilde{W_v},\widetilde{W^-_v}\otimes\mu_v;\widehat{\Phi_v}).
  \end{align*}
 In particular, when $r=r^*$ (hence $n-m$ is even), the
 $(r,r)$-Fourier-Jacobi integral itself has a functional equation.\\
 \end{theo}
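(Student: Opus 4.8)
The plan is to read off all three assertions from the unfolding computations already carried out in this section, supplying only the analytic input and the routine local bookkeeping. Holomorphy comes first. Since $\varphi_\pi$ and $\varphi_\sigma$ are cusp forms, the product $\varphi_\pi(\underline{u}g)\varphi_\sigma(g)$ is rapidly decreasing on Siegel domains in $g\in\r{GL}_m(k)\B\r{GL}_m(\d{A})$, while $\theta_{\overline\psi,\overline\lambda,\mu^{-1}}(\underline u g,\Phi)$ and $|\det g|_{\d{A}}^{s-\frac12+\frac{r-r^*}2}$ have at most moderate growth, and the inner integral runs over the compact quotient $U_{1^r,m,1^{r^*}}(k)\B U_{1^r,m,1^{r^*}}(\d{A})$; hence the integral defining $\FJ^{\nu_\mu}_{r,r^*}(s;\varphi_\pi,\varphi_\sigma;\Phi)$ converges absolutely and locally uniformly for all $s\in\d{C}$, so it is entire. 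This is the one place where $n>m$ enters; the excluded case $n=m$ is treated in Remark \ref{rem3nm}.

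For $\Re(s)$ large I would carry out the unfolding displayed above. When $r>0$ one expands $\theta_{\overline\psi,\overline\lambda,\mu^{-1}}(\underline u g,\Phi)$ along its last variable, strips the block $\underline n$ off $\underline u$, and performs the Fourier-inversion steps \eqref{2bs2}--\eqref{2bs5} $r$ times to pass from \eqref{3fs1} to \eqref{3fs3}; the classical Rankin--Selberg manipulation of the inner double integral then yields \eqref{3fs4}, so that $\FJ^{\nu_\mu}_{r,r^*}(s;\varphi_\pi,\varphi_\sigma;\Phi)=\Psi_r(s;W^{\psi}_{\varphi_\pi},W^{\overline\psi}_{\varphi_\sigma}\otimes\mu^{-1};\Phi)$. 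When $r=0$ (hence $r^*>0$) one instead expands the theta series over $P_m(k)\B\r{GL}_m(k)$, discards the constant-term contribution by cuspidality of $\varphi_\pi$, and folds the surviving unipotent integration into the standard maximal unipotent of $\r{GL}_n$ as in \eqref{3fs5}--\eqref{3fs7}, again arriving at $\Psi_0$. Once $W^{\psi}_{\varphi_\pi}$, $W^{\overline\psi}_{\varphi_\sigma}$ and $\Phi$ are factorizable, the integrand of $\Psi_r$ is a pure tensor over $v\in\d{M}_k$, so Fubini (legitimate for $\Re(s)\gg0$) together with the unramified evaluation of $\Psi_{v,r}$ recorded in Chapter \ref{sec6}, which guarantees the convergence of $\prod_v\Psi_{v,r}$ in that range, gives $\Psi_r=\prod_v\Psi_{v,r}$. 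The companion identity for $\FJ^{\overline{\nu_\mu}}_{r^*,r}$ follows by the same two steps applied to $\widetilde{\varphi_\pi}$, $\widetilde{\varphi_\sigma}$, $\widehat\Phi$, and the asserted factorizations are immediate: $\widetilde{W_v}(g)=W_v(w_ng^{\iota})$ lies in $\c{W}(\widetilde{\pi_v},\overline{\psi_v})$, and the adelic Fourier transform of a pure tensor is a pure tensor, $\widehat\Phi=\otimes_v\widehat{\Phi_v}$.

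For the functional equation I would reproduce \eqref{3fe2}: substituting $g\mapsto g^\iota$ and $\underline u\mapsto w_{n,m}\underline u^\iota w_{n,m}$, one uses $w_{n,m}U_{1^r,m,1^{r^*}}w_{n,m}=U_{1^{r^*},m,1^r}$, the relation $\overline{\underline\psi(\underline u)}=\underline\psi(w_{n,m}\underline u^\iota w_{n,m})$ on the character part, and the definitions of $\widetilde{\varphi_\pi}$ and $\widetilde{\varphi_\sigma}$ to rewrite the remaining factors. The theta factor transforms via Poisson summation on $W^{\vee}(\d{A})$ combined with the commutative diagram \eqref{3fe1}, which converts $\theta_{\overline\psi,\overline\lambda,\mu^{-1}}(\iota_{r^*,r}(\underline u g),\Phi)$ into $\theta_{\psi,\lambda,\mu}(\underline u g,\widehat\Phi)$; reading the exponent of $|\det g|_{\d{A}}$ as $1-s-\frac12+\frac{r^*-r}2$ then exhibits the integral as $\FJ^{\overline{\nu_\mu}}_{r^*,r}(1-s;\rho(w_{n,m})\widetilde{\varphi_\pi},\widetilde{\varphi_\sigma};\widehat\Phi)$. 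Specializing to $r=r^*$ (so $n-m$ is even) yields the self-functional-equation of the single integral, and combining it with the holomorphy already proved also gives holomorphy on the right-hand side.

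I expect the only point genuinely requiring care to be the verification of \eqref{3fe1}, namely that precomposing $\omega_{\overline\psi,\mu^{-1}}$ with the involution $\iota_{r^*,r}$ and then applying the adelic Fourier transform on $\c{S}(W^{\vee}(\d{A}))$ reproduces $\omega_{\psi,\mu}$ exactly; this is where the Weil-representation twist by $\mu$ has to be tracked through the switch $r\leftrightarrow r^*$ and the choice of Lagrangian. Everything else is a faithful transcription to the present $(r,r^*)$, $\mu$-twisted setting of the classical Jacquet--Piatetskii-Shapiro--Shalika unfolding and local-functional-equation arguments for Rankin--Selberg convolutions.
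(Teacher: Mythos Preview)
Your proposal is correct and follows essentially the same approach as the paper: the theorem is stated as a summary of the computations \eqref{3fs1}--\eqref{3fs7} (unfolding to $\Psi_r$ in the two cases $r>0$ and $r=0$) and \eqref{3fe1}--\eqref{3fe2} (functional equation via the involution $\iota_{r^*,r}$, the diagram \eqref{3fe1}, and Poisson summation), with holomorphy coming from cuspidality. You have identified exactly the same ingredients and the same one nontrivial verification, namely the commutativity of \eqref{3fe1}.
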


\begin{rem}[\textbf{The case $n=m$}]\label{rem3nm}
In this case, $V=W$ and $H=\r{GL}_n$. We will see that this is
exactly the case of Rankin-Selberg convolution for
$\r{GL}_n\times\r{GL}_n$. For simplicity, we assume that
$\pi\boxtimes\sigma\otimes\mu^{-1}$ is unitary. We fix a basis $\1
v_1,...,v_n\2$ for $V$ and identify $V^{\vee}$ as the set of row
vectors of length $n$. In this case, there is no choice of $\lambda$
anymore.

For any $\Phi\in\c{S}(V^{\vee}(\d{A}))$ and any character
$\chi:k^{\times}\B\d{A}^{\times}\R\d{C}^{\times}$ such that
$\mu\cdot\chi$ is unitary, we define
 \begin{align*}
 \theta^*_{\psi,\mu}(s;g,\Phi,\chi)=|\det
 g|_{\d{A}}^{s-\frac{1}{2}}\intl_{k^{\times}\B\d{A}^{\times}}\sum_{v^{\flat}\in V^{\vee}(k)-\1
 0\2}\(\omega_{\psi,\mu}(ag)\Phi\)(v^{\flat})|a|_{\d{A}}^{n\(
 s-\frac{1}{2}\)}\chi(a)\r{d}a
 \end{align*}
which is absolutely convergent when $\Re(s)>1$. It has a meromorphic
continuation to the entire complex plane which is holomorphic at
$s=\frac{1}{2}$ (cf. \cite{JS81}). For any holomorphic point $s$, it
is in $\c{A}\(\r{GL}_{n,k}\)$ with central character $\chi^{-1}$.
Moreover,
 \begin{align*}
 \theta^*_{\psi,\mu}(\frac{1}{2};g,-,\chi):\(\nu_{\mu},\c{S}(V^{\vee}(\d{A}))\)\LR\c{A}\(\r{GL}_{n,k}\)
 \end{align*}
is $\r{GL}_n(\d{A})$-equivariant. We denote $Z_n$ the center of
$\r{GL}_n$, $\chi_{\pi}$ (resp. $\chi_{\sigma}$) the central
character of $\pi$ (resp. $\sigma$) and let
 \begin{align}\label{3rs1}
 \FJ^{\nu_{\mu}}_{0,0}(s;\varphi_{\pi},\varphi_{\sigma};\Phi)=\intl_{Z_n(\d{A})\r{GL}_n(k)\B\r{GL}_n(\d{A})}
 \varphi_{\pi}(g)\varphi_{\sigma}(g)\theta^*_{\overline{\psi},\mu^{-1}}(s;g,\Phi,\chi_{\pi}\cdot\chi_{\sigma})\r{d}g
 \end{align}
be the usual Rankin-Selberg integral (cf. \cite{JS81,JPSS83}) which
is absolutely convergent at any holomorphic point $s$. The
\emph{Fourier-Jacobi period}
$\FJ^{\nu_{\mu}}_{0,0}(\varphi_{\pi},\varphi_{\sigma};\Phi):=
\FJ^{\nu_{\mu}}_{0,0}(\frac{1}{2};\varphi_{\pi},\varphi_{\sigma};\Phi)$
defines an element in
 \begin{align*}
 \r{Hom}_{\r{GL}_n(\d{A})}(\pi\otimes\sigma\otimes\widetilde{\nu_{\mu}},\d{C})
 =\bigotimes_{v\in\d{M}_k}\r{Hom}_{H_v}(\pi_v\otimes\sigma_v\otimes\widetilde{\nu_{\mu_v}},\d{C}).
 \end{align*}
Unfolding $\theta^*_{\overline{\psi},\mu^{-1}}$, we see that
 \begin{align}\label{3rs2}
 \eqref{3rs1}&=\intl_{P_n(k)\B\r{GL}_n(\d{A})}\varphi_{\pi}(g)\varphi_{\sigma}(g)\Phi(e_ng)\mu(\det
 g)^{-1}|\det g|_{\d{A}}^s\r{d}g \notag\\
 &=\intl_{U_{1^n}(\d{A})\B\r{GL}_n(\d{A})}W^{\psi}_{\varphi_{\pi}}(g)W^{\overline{\psi}}_{\varphi_{\sigma}}(g)\Phi(e_ng)
 \mu(\det g)^{-1}|\det g|_{\d{A}}^s\r{d}g.
 \end{align}
If we denote \eqref{3rs2} by
$\Psi_0(s;W^{\psi}_{\varphi_{\pi}},W^{\overline{\psi}}_{\varphi_{\sigma}}\otimes\mu^{-1};\Phi)$,
then it is absolutely convergent for $\Re(s)\gg0$. Moreover, if
$W^{\psi}_{\varphi_{\pi}}=\otimes_vW_v$,
$W^{\overline{\psi}}_{\varphi_{\sigma}}=\otimes_vW^-_v$ and
$\Phi=\otimes_v\Phi_v$ are factorizable, we have
 \begin{align*}
 \FJ^{\nu_{\mu}}_{0,0}(s;\varphi_{\pi},\varphi_{\sigma};\Phi)
 =\Psi_0(s;W^{\psi}_{\varphi_{\pi}},W^{\overline{\psi}}_{\varphi_{\sigma}}\otimes\mu^{-1};\Phi)=
 \prod_{v\in\d{M}_k}\Psi_{v,0}(s;W_v,W^-_v\otimes\mu^{-1}_v;\Phi_v)
 \end{align*}
where
 \begin{align*}
 \Psi_{v,0}(s;W_v,W^-_v\otimes\mu^{-1}_v;\Phi_v)=
 \intl_{U_{1^n,v}\B\r{GL}_{n,v}}W_v(g_v)W^-_v(g_v)\Phi_v(e_ng_v)\mu_v(\det g_v)^{-1}|\det g_v|^s_v\r{d}g_v.
 \end{align*}
Moreover, we have the following well-known functional equation
 \begin{align*}
 \FJ^{\nu_{\mu}}_{0,0}(s;\varphi_{\pi},\varphi_{\sigma};\Phi)=
 \FJ^{\overline{\nu_{\mu}}}_{0,0}(1-s;\widetilde{\varphi_{\pi}},\widetilde{\varphi_{\sigma}};\widehat{\Phi})
 \end{align*}
and $\FJ^{\nu_{\mu}}_{0,0}(s;\varphi_{\pi},\varphi_{\sigma};\Phi)$
will have possible simple poles at $s=-\r{i}\sigma$ and
$s=1-\r{i}\sigma$ with $\sigma$ real only if
$\pi\cong\widetilde{\sigma}\otimes\mu|\det|_{\d{A}}^{\r{i}\sigma}$.\\
\end{rem}

By Proposition \eqref{prop6jpss} and \eqref{prop6unr}, we have the
following theorem, which confirms \cite[Conjecture 24.1]{GGP} for
the Fourier-Jacobi periods of split unitary groups, i.e., general
linear groups.

\begin{theo}
(1) Let $\pi$ (resp. $\sigma$) be an irreducible cuspidal
automorphic representation of $\r{GL}(V)(\d{A})$ (resp.
$\r{GL}(W)(\d{A})$) \footnote[1]{When $n=m$, to prevent the
occurrence of a pole at $s=\frac{1}{2}$, we assume that the
character $\chi_{\pi}\otimes\chi_{\sigma}\otimes\mu^{-1}$ is unitary
for simplicity.}. For any $(r,r^*)$ such that $r+r^*=n-m$ and the
representation $\nu_{\mu}$ introduced above, we have, for
$\varphi_{\pi}\in\c{A}_{\pi}$, $\varphi_{\sigma}\in\c{A}_{\sigma}$
and $\Phi\in\c{S}(W^{\vee}(\d{A}))$ such that
$W^{\psi}_{\varphi_{\pi}}=\otimes_vW_v$,
$W^{\overline{\psi}}_{\varphi_{\sigma}}=\otimes_vW^-_v$ and
$\Phi=\otimes\Phi_v$ are factorizable,
 \begin{align*}
 \FJ^{\nu_{\mu}}_{r,r^*}(\varphi_{\pi},\varphi_{\sigma};\Phi)=L(\frac{1}{2},\pi\times\sigma\otimes\mu^{-1})
 \prod_{v\in\d{M}_k}\left.\frac{\Psi_{v,r}(s;W_v,W^-_v\otimes\mu_v^{-1};\Phi_v)}
 {L_v(s,\pi_v\times\sigma_v\otimes\mu_v^{-1})}\right|_{s=\frac{1}{2}}
 \end{align*}
where in the last product almost all factors are $1$, and the
$L$-functions are the ones defined by Rankin-Selberg convolutions
(cf. \cite{JPSS83}.\\
(2) There is a nontrivial Fourier-Jacobi period of
$\pi\otimes\sigma$ for $\nu_{\mu}$ if
and only if $L(\frac{1}{2},\pi\times\sigma\otimes\mu^{-1})\neq 0$.\\
\end{theo}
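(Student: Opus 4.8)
The plan is to deduce both assertions from the Euler product expansion of the Fourier--Jacobi integral obtained earlier in this section, together with the local and global theory of Rankin--Selberg convolutions for $\r{GL}_n\times\r{GL}_m$ collected in the appendix as Propositions \eqref{prop6jpss} and \eqref{prop6unr}. Recall that the unfoldings carried out in \eqref{3fs1}--\eqref{3fs7} (and in \eqref{3rs1}--\eqref{3rs2} when $n=m$) show that, for $\Re(s)$ large and for factorizable data,
\begin{align*}
\FJ^{\nu_{\mu}}_{r,r^*}(s;\varphi_{\pi},\varphi_{\sigma};\Phi)=\Psi_r\(s;W^{\psi}_{\varphi_{\pi}},W^{\overline{\psi}}_{\varphi_{\sigma}}\otimes\mu^{-1};\Phi\)=\prod_{v\in\d{M}_k}\Psi_{v,r}\(s;W_v,W^-_v\otimes\mu^{-1}_v;\Phi_v\),
\end{align*}
and that the left-hand side is holomorphic in $s$ when $n>m$ by the functional-equation theorem established just above, and holomorphic at $s=\tfrac12$ when $n=m$ by the footnote hypothesis together with the meromorphic continuation of $\theta^{*}_{\psi,\mu}$ from \cite{JS81} (cf. Remark~\ref{rem3nm}). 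The first step is to rewrite this identity, still for $\Re(s)\gg0$, as
\begin{align*}
\FJ^{\nu_{\mu}}_{r,r^*}(s;\varphi_{\pi},\varphi_{\sigma};\Phi)=L\(s,\pi\times\sigma\otimes\mu^{-1}\)\prod_{v\in\d{M}_k}\frac{\Psi_{v,r}\(s;W_v,W^-_v\otimes\mu^{-1}_v;\Phi_v\)}{L_v\(s,\pi_v\times\sigma_v\otimes\mu^{-1}_v\)}.
\end{align*}

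To make this meaningful and continue it to $s=\tfrac12$, I would invoke the two appendix propositions. By the unramified computation of Proposition \eqref{prop6unr}, at almost all finite $v$, evaluated on the normalized spherical data, $\Psi_{v,r}(s;\ldots)$ equals $L_v(s,\pi_v\times\sigma_v\otimes\mu^{-1}_v)$, so all but finitely many factors of the product are identically $1$ and convergence is not an issue; at each of the remaining places Proposition \eqref{prop6jpss} shows that the ratio $\Psi_{v,r}(s;\ldots)/L_v(s,\pi_v\times\sigma_v\otimes\mu^{-1}_v)$ is holomorphic in $s$ (a Laurent polynomial in $q_v^{-s}$ at finite $v$, and entire at archimedean $v$). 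Since moreover $L(s,\pi\times\sigma\otimes\mu^{-1})=\prod_v L_v(s,\pi_v\times\sigma_v\otimes\mu^{-1}_v)$ is the completed Rankin--Selberg $L$-function --- entire for $n>m$, and holomorphic at $s=\tfrac12$ under the footnote hypothesis when $n=m$ --- both sides of the displayed identity are meromorphic on $\d{C}$ and agree for $\Re(s)\gg0$, hence agree everywhere. Specializing at $s=\tfrac12$, where the left-hand side is holomorphic and, by definition, equals $\FJ^{\nu_{\mu}}_{r,r^*}(\varphi_{\pi},\varphi_{\sigma};\Phi)$, yields the formula of part (1), with almost all factors of the product equal to $1$ as already observed.

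Part (2) is then formal. The ``only if'' direction is immediate from the formula in (1). For the converse, suppose $L(\tfrac12,\pi\times\sigma\otimes\mu^{-1})\neq0$. At almost all places the normalized spherical vectors make the local ratio $\Psi_{v,r}(\tfrac12;\ldots)/L_v(\tfrac12,\ldots)$ equal to $1$; at each of the finitely many remaining places, Proposition \eqref{prop6jpss} lets one choose $W_v$, $W^-_v$, $\Phi_v$ for which this ratio is nonzero at $s=\tfrac12$ (the ratio runs over $\d{C}[q_v^{s},q_v^{-s}]$, which contains the constant $1$). For the resulting factorizable $\varphi_{\pi}$, $\varphi_{\sigma}$, $\Phi$ the finite product $\prod_v\Psi_{v,r}(\tfrac12;\ldots)/L_v(\tfrac12,\ldots)$ is a product of nonzero numbers, so part (1) gives $\FJ^{\nu_{\mu}}_{r,r^*}(\varphi_{\pi},\varphi_{\sigma};\Phi)=L(\tfrac12,\pi\times\sigma\otimes\mu^{-1})\cdot(\text{nonzero})\neq0$, so there is a nontrivial Fourier--Jacobi period.

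The substantive work is the unfolding already performed in \eqref{3fs1}--\eqref{3fs7} and \eqref{3rs1}--\eqref{3rs2}, which identifies the $(r,r^*)$-Fourier--Jacobi integral with a classical Rankin--Selberg integral for $\r{GL}_n\times\r{GL}_m$ carrying one extra Bruhat--Schwartz variable; granting that identification, everything else is assembled from the work of Jacquet--Piatetskii-Shapiro--Shalika and Jacquet--Shalika via Propositions \eqref{prop6jpss} and \eqref{prop6unr}. I expect the one delicate point to be the case $n=m$: there the cuspidal theta series is replaced by the series $\theta^{*}_{\psi,\mu}$ of \cite{JS81}, so one must invoke both its meromorphic continuation and its holomorphy at $s=\tfrac12$, and it is precisely to secure the latter --- ruling out the potential poles of the Rankin--Selberg integral at $s=\pm\tfrac12$ noted in Remark~\ref{rem3nm} --- that the footnote imposes the unitarity of $\chi_{\pi}\otimes\chi_{\sigma}\otimes\mu^{-1}$.
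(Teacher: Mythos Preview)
Your proposal is correct and follows essentially the same approach as the paper: the paper's own proof is nothing more than the single sentence ``By Proposition \eqref{prop6jpss} and \eqref{prop6unr}, we have the following theorem,'' relying on the unfolding already carried out in \eqref{3fs1}--\eqref{3fs7} and Remark~\ref{rem3nm}. You have simply spelled out in detail how those two appendix propositions combine with the Euler product to yield both parts, including the careful handling of the $n=m$ case and the choice of local data for the converse in part~(2); there is nothing to add.
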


\section{A relative trace formula for $\r{U}_n\times\r{U}_m$: Bessel periods}
\label{sec4}

\subsection{Bessel models and periods}
\label{sec4bmp}

Let $k'$ be a field and $k/k'$ be a separable quadratic extension
which may be split. Let $\tau$ be the unique nontrivial involution
of $k$ fixing $k'$. We denote by $\r{Tr}$ and $\r{Nm}$ the trace and
norm of $k/k'$, respectively. Let $k^-=\{x\in k\;|\;x^{\tau}=-x\}$.
We fix a nonzero element $\jmath\in k^-$ once for all and define
$\widetilde{\r{Tr}}(x)=\jmath(x-x^{\tau})\in k'$.

If $k'$ is a local field or a number field, we denote $\eta$ the
character associated with $k/k'$ via the class field theory. Let
$\r{Her}_n(k/k')$ be the set of all $n\times n$ matrices satisfying
$\:^{\r{t}}g^{\tau}=g$ and
$\r{Her}_n(k/k')^{\times}=\r{Her}_n(k/k')\cap\r{GL}_n(k)$;
$\r{Her}^-_n(k/k')$ be the set of all $n\times n$ matrices
satisfying $\:^{\r{t}}g^{\tau}=-g$ and
$\r{Her}^-_n(k/k')^{\times}=\r{Her}^-_n(k/k')\cap\r{GL}_n(k)$. For a
vector space $X$ over $k$, we denote $X_{\tau}$ the vector space
over $k$ with the same underlying set of $X$ but the action of $k$
twisted by $\tau$. We also simply denote $X^{\vee}_{\tau}$ by
$\check{X}$. All hermitian or skew-hermitian spaces over $k$ are
defined with respect to $\tau$ and are assumed to be
non-degenerate.\\

Let us briefly recall the definition of Bessel models and periods
for unitary groups in \cite{GGP}. First, let us consider the local
situation, hence $k'$ is a local field. Let $V$ be a hermitian space
over $k$ of dimension $n$ with the hermitian form $(-,-)$ and
$W\subset V$ a subspace of dimension $m$ such that the restricted
hermitian form $(-,-)|_W$ is non-degenerate. We assume that the
orthogonal complement $W^{\perp}=E\oplus X\oplus X^*$ such that $E$
is a non-degenerate line and $X$, $X^*$ are isotropic, perpendicular
to $E$ and of dimension $r$. Hence $n=m+2r+1$. The hermitian form
restricted on $W$ (resp. $X\oplus X^*$) identifies $W$ (resp. $X^*$)
with $\check{W}$ (resp. $\check{X}$). We denote $\r{U}(V)$ (resp.
$\r{U}(W)$) the unitary group of $V$ (resp. $W$) which is a
reductive group over $k'$. Let $P'_{r,m+1}$ be the parabolic
subgroup of $\r{U}(V)$ stabilizing $X$ and $U'_{r,m+1}$ its maximal
unipotent subgroup. Then $U'_{r,m+1}$ fits into the following exact
sequence:
 \begin{align*}
 \xymatrix@C=0.5cm{
   0 \ar[r] & \bigwedge^2_{\tau}X \ar[r]& U'_{r,m+1} \ar[r]& \r{Hom}_k(W\oplus E,X) \ar[r] & 0 }
 \end{align*}
where $\bigwedge^2_{\tau}X\subset X_{\tau}\otimes
X=\r{Hom}_k(\check{X},X)$ consists of homomorphisms $b$ such that
$b^{\vee}_{\tau}=-b$. Here $b^{\vee}_{\tau}$ is just
$b^{\vee}:X^{\vee}\R X_{\tau}$ but viewed as an element in
$\r{Hom}_k(X^{\vee}_{\tau},X)$.

Let $\ell'_X:X\R k$ be any nontrivial homomorphism (if exists) and
let $U'_X$ be a maximal unipotent subgroup of $\r{GL}(X)$
stabilizing $\ell'_X$. Let $\ell'_W:k\R W\oplus E$ be a nontrivial
homomorphism such that its image is contained in $E$. Hence we have
a homomorphism
 \begin{align*}
 \ell':U'_{r,m+1}\LR \r{Hom}_k(W\oplus
 E,X)\overset{\(\ell'_W\)^{\vee}\otimes\ell'_X}{\LR}k
 \end{align*}
which is fixed by $U'_X\times\r{U}(W)$. Hence we can extend $\ell'$
trivially to it and define a homomorphism from
$H'=U'_{r,m+1}\rtimes\(U'_X\times\r{U}(W)\)$ to $k$. Let
$\psi':k'\R\d{C}^1$ be a nontrivial character and
$\lambda':U'_X\R\d{C}^{\times}$ be a generic character. We define
$\nu'=\(\psi'\circ\r{Tr}\circ\ell'\)\otimes\lambda'$ which is a
character of $H'$. We have an embedding
$(\vep,\kappa):H'\HR\r{U}(V)\times\r{U}(W)$. Then up to
$\r{U}(V)\times\r{U}(W)$-conjugacy, the pair $(H',\nu')$ is uniquely
determined by $W\subset V$.

Let $\pi$ (resp. $\sigma$) be an irreducible admissible
representation of $\r{U}(V)$ (resp. $\r{U}(W)$). A nontrivial
element in $\r{Hom}_{H'}(\pi\otimes\sigma,\nu')$ is called a
\emph{Bessel model} of $\pi\otimes\sigma$. In particular, when
$k/k'$ is split, then the Bessel model is just the $(r,r)$-Bessel
model for general linear groups introduced in Section \ref{sec2bm}.
We have the following multiplicity one result.

\begin{theo}
Let $k$ be of characteristic zero and $\pi$, $\sigma$ as above. Then
$\r{dim}_{\d{C}}\r{Hom}_{H'}(\pi\otimes\sigma,\nu')\leq1$.
\end{theo}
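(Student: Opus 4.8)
The plan is to imitate the proof of Theorem \ref{theo2bm}, transforming the $H'$-invariance problem into the basic (codimension-one) Gan--Gross--Prasad branching problem for unitary groups. When $k/k'$ is split the Bessel model coincides with the $(r,r)$-Bessel model for general linear groups, so the bound is already contained in Theorem \ref{theo2bm}; I may therefore assume that $k/k'$ is a field, and I describe the argument for $k'$ non-archimedean, the archimedean case being parallel, as in the treatment of Theorem \ref{theo2bm}.

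First I would enlarge $V$ by one dimension: fix a line $E^+=\LA e^+\RA$ with $(e^+,e^+)=-(e,e)$, set $V^+=V\perp E^+$, $v_0=e+e^+$ and $v_0^*=e-e^+$, and $Y=X\oplus\LA v_0\RA$, $Y^*=X^*\oplus\LA v_0^*\RA$. Then $Y$ and $Y^*$ are mutually dual isotropic subspaces of $V^+$ of dimension $r+1$ with $(Y\oplus Y^*)^{\perp}=W$, so the parabolic subgroup $P_0\subset\r{U}(V^+)$ stabilizing $Y$ has Levi factor $M_0\cong\r{GL}(Y)\times\r{U}(W)$. For an irreducible supercuspidal representation $\tau$ of $\r{GL}(Y)$, put $\r{I}(\tau,\sigma)=\r{Ind}_{P_0}^{\r{U}(V^+)}(\tau\otimes\sigma)$ (unnormalized smooth induction); since $\r{U}(V)$ sits inside $\r{U}(V^+)$ fixing $e^+$, the space $\r{Hom}_{\r{U}(V)}(\r{I}(\tau,\sigma)\otimes\pi,\d{C})$ governs precisely the basic $\r{U}(V)\subset\r{U}(V^+)$ restriction problem.

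The core step is the unitary analogue of the Proposition in Section \ref{sec2bm}: for $\tau$ chosen (possibly for any given $\pi$) so that $\widetilde{\pi}$ avoids the finitely many Bernstein components of $\r{U}(V)$ built from $\tau$ together with supercuspidal data on Levi subgroups of a general linear block, one has $\r{Hom}_{\r{U}(V)}(\r{I}(\tau,\sigma')\otimes\pi,\d{C})\cong\r{Hom}_{H'}(\pi\otimes\sigma,\nu')$, where $\sigma'$ denotes the twist of $\sigma$ by the modulus character forced by the unnormalized induction (playing the role that $\sigma\delta_W^{-\frac{1}{2}}$ plays in the general linear case). To prove this I would compute $\r{I}(\tau,\sigma)|_{\r{U}(V)}$ from the Bruhat decomposition of $\r{U}(V)\B\r{U}(V^+)/P_0$: the double cosets give a Mackey filtration whose non-open graded pieces restrict to representations parabolically induced from a supercuspidal representation involving $\tau$ on a general linear block, so that the corresponding $\r{Hom}$ and $\r{Ext}^1$ against $\widetilde{\pi}$ vanish under the hypothesis on $\pi$; on the open piece one identifies the restriction of $\tau\otimes\sigma$ to $Q_0=P_0\cap\r{U}(V)$ as $\tau|_{P_Y}\otimes\sigma$ for the mirabolic $P_Y\subset\r{GL}(Y)$, invokes the Gelfand--Kazhdan isomorphism $\tau|_{P_Y}\cong\r{cInd}_{U_Y}^{P_Y}\lambda'$, and finishes with induction in stages and Frobenius reciprocity. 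The only structural change from the general linear argument is that here there is a single general linear block $\r{GL}(Y)$ rather than a pair, since $X$ and $X^*$ are already paired by the hermitian form, so no second supercuspidal $\tau^*$ appears. Given the reciprocity, I would twist $\tau$ by a generic unramified character so that $\r{I}(\tau,\sigma')$ becomes irreducible while the Bernstein-avoidance hypothesis persists, and conclude
 \begin{align*}
 \r{dim}_{\d{C}}\r{Hom}_{H'}(\pi\otimes\sigma,\nu')=\r{dim}_{\d{C}}\r{Hom}_{\r{U}(V)}\(\r{I}(\tau,\sigma')\otimes\pi,\d{C}\)\leq1
 \end{align*}
by the multiplicity one theorem of \cite{AGRS10} for the pair $(\r{U}(V),\r{U}(V^+))$.

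The main obstacle is the Mackey-theoretic bookkeeping: one must enumerate the orbits of $\r{U}(V)$ on the partial flag variety $\r{U}(V^+)/P_0$ and verify that every non-open orbit contributes, after restriction, only parabolically induced pieces whose general linear cuspidal support is controlled by $\tau$, so that the relevant higher $\r{Ext}$-groups against $\widetilde{\pi}$ vanish; unlike the purely dimension-theoretic six-cell picture for general linear groups, this orbit set is governed by isometry invariants of the hermitian form and each orbit must be inspected individually. Secondary points are arranging the genericity of the twist of $\tau$ so that irreducibility of $\r{I}(\tau,\sigma')$ and the Bernstein-component avoidance hold simultaneously, and --- for $k'$ archimedean, which I do not carry out --- replacing smooth representations and $\r{Ext}$-vanishing by the Casselman--Wallach framework, exactly as in \cite{JSZ10}.
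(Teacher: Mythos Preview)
Your approach is correct and is precisely the argument of \cite[Section 15]{GGP}, which is what the paper invokes: the paper does not reprove this theorem but simply cites \cite{AGRS10} and \cite{GGP} (non-archimedean) and \cite{SZ09}, \cite{AG09}, \cite{JSZ10} (archimedean). Your sketch is a faithful unitary-group transcription of the Proposition in Section \ref{sec2bm}, and the only substantive differences you identify---a single $\r{GL}(Y)$ block in the Levi and hence a single supercuspidal $\tau$, and the need to appeal to \cite{AGRS10} for the pair $(\r{U}(V^+),\r{U}(V))$ rather than for general linear groups---are exactly the modifications needed.

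One remark on the obstacle you flag: the orbit structure of $\r{U}(V)$ on $\r{U}(V^+)/P_0$ is in fact no worse than in the general linear case. The relevant invariant of an isotropic $(r+1)$-plane $Y'\subset V^+$ is the relative position of $Y'$ with respect to the non-degenerate hyperplane $V$, governed by $\dim(Y'\cap V)$ and, when $Y'\subset V$, by whether the line $Y'^{\perp}\cap V^{\perp}$ meets $Y'$; no genuine hermitian-form isometry classes intervene beyond this, and each non-open stratum yields a parabolically induced piece with $\tau$ on the general linear factor, exactly as you anticipate. So the ``main obstacle'' is routine, and the remaining points (generic unramified twist of $\tau$ to force irreducibility, Casselman--Wallach substitutes in the archimedean case) are handled in \cite{GGP} and \cite{JSZ10} respectively.
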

\begin{proof}
If $k$ is non-archimedean, this is due to
Aizenbud-Gourevitch-Rallis-Schiffmann \cite{AGRS10} for $m=n-1$ and
to \cite[Section 15]{GGP} for general $n,m$. If $k$ is archimedean,
this is due to Sun-Zhu \cite{SZ09} and Aizenbud-Gourevitch
\cite{AG09} for $m=n-1$ and to Jiang-Sun-Zhu
\cite{JSZ10} for general $n,m$.\\
\end{proof}

Now, we discuss the global case. Let $k/k'$ be a quadratic extension
of number fields. We have the notions, $\f{o}'$, $k'_{v'}$,
$\f{o}'_{v'}$ for $v'\in\d{M}_{k'}$, and $\d{A}'$, $\psi'$ similar
to those for $k$. Let $\lambda':U'_X(k)\B U'_X(\d{A})\R\d{C}^1$ be a
generic character. Then we have the pair $(H',\nu')$ in the global
situation.

Let $\pi$ (resp. $\sigma$) be an irreducible tempered representation
of $\r{U}(V)(\d{A}')$ (resp. $\r{U}(W)(\d{A}')$) which occurs with
multiplicity one in the space $\c{A}_0(\r{U}(V))$ (resp.
$\c{A}_0(\r{U}(W))$). We denote by $\c{A}_{\pi}$ (resp.
$\c{A}_{\sigma}$) the unique irreducible $\pi$ (resp.
$\sigma$)-isotypic subspace in $\c{A}_0(\r{U}(V))$ (resp.
$\c{A}_0(\r{U}(W))$).

\begin{defn}
The following absolutely convergent integral is called a
\emph{Bessel period} of $\pi\otimes\sigma$ (for a pair $(H',\nu')$):
 \begin{align*}
 \c{B}^{\nu'}_r(\varphi_{\pi},\varphi_{\sigma}):=\intl_{H'(k')\B
 H'(\d{A}')}\varphi_{\pi}(\vep(h'))\varphi_{\sigma}(\kappa(h'))\nu'(h')^{-1}\r{d}h',\qquad
 \varphi_\pi\in\c{A}_{\pi},\varphi_{\sigma}\in\c{A}_{\sigma}.
 \end{align*}
If there exist $\varphi_{\pi}$, $\varphi_{\sigma}$ such that
$\c{B}^{\nu'}_r(\varphi_{\pi},\varphi_{\sigma})\neq0$, then we say
$\pi\otimes\sigma$ has a nontrivial Bessel period.
\end{defn}

It is obvious that $\c{B}^{\nu'}_r$ defines an element in
 \begin{align*}
 \r{Hom}_{H'(\d{A})}(\pi\otimes\sigma,\nu')=\bigotimes_{v'\in\d{M}_{k'}}
 \r{Hom}_{H'_{v'}}(\pi_{v'}\otimes\sigma_{v'},\nu'_{v'}).
 \end{align*}\\

In order to decompose the distribution $\c{J}^{\psi'}$ in the next
section. We choose a basis $\1 v_1,...,v_r\2$ of $X$ such that
\begin{itemize}
  \item The homomorphism $\ell'_X:X\R k$ is given by the coefficient of $v_r$
  under the above basis;
  \item $U'_X$ is the maximal unipotent subgroup of the parabolic
  subgroup $P'_X$ stabilizing the complete flag
  $0\subset \LA v_1\RA\subset \LA v_1,v_2\RA\subset\cdots\subset\LA
   v_1,...,v_r\RA=X$;
  \item The generic character $\lambda'$ is given by
   \begin{align*}
   \lambda'(u')=\psi'\(\widetilde{\r{Tr}}\(u'_{1,2}+u'_{2,3}+\cdots+u'_{r-1,r}\)\)
   \end{align*}
   under the above basis.
\end{itemize}
We denote by $\1\check{v}_1,...,\check{v}_r\2$ the dual basis of
$\check{X}$. We also choose a basis $\1 w_1,...,w_m\2$ of $W$ and
$\1 w_0\2$ of $E$ such that the homomorphism $\ell'_W:k\R W\oplus E$
is given by $a\T aw_0$. Let
$\beta=\[(w_i,w_j)\]_{i,j=1}^m\in\r{Her}_m(k/k')^{\times}$,
$\beta_0=(w_0,w_0)\in k'^{\times}$ and
 \begin{align}\label{4beta}
 \beta'=\[\begin{array}{cc}
           \beta &  \\
            & \beta_0
         \end{array}
 \]\in\r{Her}_{m+1}(k/k')^{\times}.
 \end{align}
We identify $\r{U}(V)$ (resp. $\r{U}(W)$) with a unitary group of
$n$ (resp. $m$) variables $\r{U}_n$ (resp. $\r{U}_m$) under the
basis $\{v_1,...,v_r,w_1,...,w_m,w_0,\check{v}_r,...,\check{v}_1\}$
and view $\r{U}_m$ as a subgroup of $\r{U}_n$. Let
$U'_{1^r,m+1}=U'_{r,m+1}\rtimes U'_X$ be a unipotent subgroup of
$\r{U}_n$. Then the image of $H'(\d{A}')$ in $\r{U}_n(\d{A}')$
consists of the matrices
$h'=h'(n',b';u';g')=\underline{u}'(n',b';u')\cdot g'$ where
$g'\in\r{U}_m(\d{A}')$. Here,
 \begin{align*}
 \underline{u}'=\underline{u}'(n',b';u')=\[\begin{array}{ccc}
                                             \b{1}_r & n' &  w_r\(b'+\frac{n'n'_{\beta'}}{2}\) \\
                                              & \b{1}_{m+1} & n'_{\beta'}\\
                                              &  & \b{1}_r
                                           \end{array}
 \]
 \[\begin{array}{ccc}
     u' &  &  \\
      &  \b{1}_{m+1} &  \\
      &  & \check{u}'
   \end{array}
 \]\in U'_{1^r,m+1}(\d{A}')
 \end{align*}
where $n'\in\r{M}_{r,m+1}(\d{A})$, $b'\in\r{Her}_r^-(\d{A}/\d{A}')$
(similarly defined as for $k/k'$), $u'\in U'_X(\d{A})$;
$n'_{\beta'}=-\beta'^{-1}\:^{\r{t}}n'^{\tau}w_r$ and
$\check{u}'=w_r\:^{\r{t}}u'^{\tau,-1}w_r$. The character $\nu'$ on
$H'(\d{A}')$ is given by
 \begin{align}\label{4nupsi}
 \nu'(h')=\nu'(h'(n',b';u';g'))=\nu'(\underline{u}')
 =\underline{\psi'}(\underline{u}'):=\psi'\(\widetilde{\r{Tr}}\(u'_{1,2}+\cdots+u'_{r-1,r}+n'_{r,m+1}\)\)
 \end{align}
Then the Bessel period
 \begin{align*}
 \c{B}^{\nu'}_r(\varphi_{\pi},\varphi_{\sigma})=\intl_{\r{U}_m(k')\B\r{U}_m(\d{A}')}
 \intl_{U'_{1^r,m+1}(k')\B
 U'_{1^r,m+1}(\d{A}')}\varphi_{\pi}(\underline{u}'g')\varphi_{\sigma}(g')\overline{\underline{\psi'}(\underline{u}')}
 \r{d}\underline{u}'\r{d}g'.
 \end{align*}\\

\subsection{Decomposition of distributions}
\label{sec4dd}

We describe the relative trace formula on the unitary pair relating
the Bessel periods. Recall that $\pi$ (resp. $\sigma$) is an
irreducible tempered representation of $\r{U}_n(\d{A}')$ (resp.
$\r{U}_m(\d{A}')$) which occurs with multiplicity one in the space
$\c{A}_0(\r{U}_n)$ (resp. $\c{A}_0(\r{U}_m)$). For simplicity, we
further assume that the standard base change $\Pi$ (resp. $\Sigma$)
of $\pi$ (resp. $\sigma$) is an irreducible cuspidal automorphic
representation of $\r{GL}_n(\d{A})$ (resp. $\r{GL}_m(\d{A})$).

Let $f_n\in\c{H}(\r{U}_n(\d{A}'))$ (resp.
$f_m\in\c{H}(\r{U}_m(\d{A}'))$) be a smooth function on
$\r{U}_n(\d{A}')$ (resp. $\r{U}_m(\d{A}')$) with compact support. We
introduce a distribution
 \begin{align*}
 \c{J}_{\pi,\sigma}(f_n\otimes f_m):=\sum\c{B}^{\nu'}_r
 (\rho(f_n)\varphi_{\pi},\rho(f_m)\varphi_{\sigma})
 \overline{\c{B}^{\nu'}_r(\varphi_{\pi},\varphi_{\sigma})}
 \end{align*}
where the sum is taken over orthonormal bases of $\c{A}_{\pi}$ and
$\c{A}_{\sigma}$. Here we use a model of $\nu'$ as \eqref{4nupsi}.

As usual, we associate to $f_n\otimes f_m$ a kernel function on
$\(\r{U}_n(k')\B\r{U}_n(\d{A}')\times\r{U}_m(k')\B\r{U}_m(\d{A}')\)^2$:
 \begin{align}\label{4ker}
 \c{K}_{f_n\otimes
 f_m}(g'_1,g'_2;g'_3,g'_4)=\sum_{\zeta'\in\r{U}_n(k')}f_n(g'^{-1}_1\zeta'
 g'_3)\sum_{\xi'\in\r{U}_m(k')}f_m(g'^{-1}_2\xi' g'_4)
 \end{align}
and consider the following distribution:
 \begin{align}\label{4dis1}
 \c{J}(f_n\otimes f_m):=\iintl_{\(H'(k')\B
 H'(\d{A}')\)^2}\c{K}_{f_n\otimes
 f_m}\(\vep(h'_1),\kappa(h'_1);\vep(h'_2),\kappa(h'_2)\)
 \nu'(h'^{-1}_1h'_2)\r{d}h'_1\r{d}h'_2.
 \end{align}
The above integral is not absolutely convergent in general and need
regularization. In order to see what we should expect for these
distributions, We avoid this problem in this paper. Plug in
\eqref{4ker}, we have
 \begin{align}\label{4dis2}
 \eqref{4dis1}=&\iintl_{h'_1,h'_2}\sum_{\zeta'\in\r{U}_n(k')}f_n(\vep(h'_1)^{-1}\zeta'
 \vep(h'_2))\sum_{\xi'\in\r{U}_m(k')}f_m(\kappa(h'_1)^{-1}\xi'\kappa(h'_2))
 \nu'(h'^{-1}_1h'_2)\r{d}h'_1\r{d}h'_2 \notag\\
 =&\iintl_{h'_1,h'_2}\sum_{\xi'\in\r{U}_m(k')}\sum_{\zeta'\in\r{U}_n(k')}f_n(\vep(h'_1)^{-1}\xi'\zeta'
 \vep(h'_2))f_m(\kappa(h'_1)^{-1}\xi'\kappa(h'_2))
 \nu'(h'^{-1}_1h'_2)\r{d}h'_1\r{d}h'_2
 \notag\\
 =&\iintl_{h'_1,h'_2}\sum_{\xi'\in H'(k')}\sum_{\zeta'\in
 U'_{1^r,m+1}(k')\B\r{U}_n(k')}f_n(\vep(\xi'h'_1)^{-1}\zeta'
 \vep(h'_2))f_m(\kappa(\xi'h'_1)^{-1}\kappa(h'_2))\nu'(h'^{-1}_1h'_2)\r{d}h'_1\r{d}h'_2\notag\\
 =&\intl_{H'(k')\B H'(\d{A}')}\intl_{H'(\d{A}')}\sum_{\zeta'\in
 U'_{1^r,m+1}(k')\B\r{U}_n(k')}f_n(\vep(h'_1)^{-1}\zeta'
 \vep(h'_2))f_m(\kappa(h'_1)^{-1}\kappa(h'_2))
 \nu'(h'^{-1}_1h'_2)\r{d}h'_1\r{d}h'_2.
 \end{align}
If we write $h'_i=\underline{u}'_ig'_i$, then
 \begin{align}\label{4dis3}
 \eqref{4dis2}=&\intl_{H'(k')\B H'(\d{A}')}\intl_{U'_{1^r,m+1}(\d{A}')}\intl_{\r{U}_m(\d{A}')}
 \sum_{\zeta'\in U'_{1^r,m+1}(k')\B\r{U}_n(k')}\notag\\
 &f_n(g'^{-1}_1\underline{u}'^{-1}_1\zeta'\vep(h'_2))f_m(g'^{-1}_1\kappa(h'_2))
 \underline{\psi'}(\underline{u}'^{-1}_1)\nu'(h'_2)\r{d}g'_1\r{d}\underline{u}'_1\r{d}h'_2
 \notag\\
 =&\intl_{H'(k')\B H'(\d{A}')}\intl_{U'_{1^r,m+1}(\d{A}')}\intl_{\r{U}_m(\d{A}')}
 \sum_{\zeta'\in U'_{1^r,m+1}(k')\B\r{U}_n(k')}\notag\\
 &f_n(g'^{-1}_1\kappa(h'_2)^{-1}\underline{u}'^{-1}_1\zeta'\vep(h'_2))f_m(g'^{-1}_1)
 \underline{\psi'}(\underline{u}'^{-1}_1)\nu'(h'_2)\r{d}g'_1\r{d}\underline{u}'_1\r{d}h'_2.
 \end{align}

Define a function $f\in\c{H}(\r{U}_n(\d{A}'))$ by
 \begin{align*}
 f(g')=\intl_{\r{U}_m(\d{A}')}f_n(g'_1g')f_m(g'_1)\r{d}g'_1.
 \end{align*}
Then
 \begin{align}\label{4dis4}
 \eqref{4dis3}=&\sum_{\zeta'\in U'_{1^r,m+1}(k')\B\r{U}_n(k')}
 \intl_{H'(k')\B H'(\d{A}')}\intl_{U'_{1^r,m+1}(\d{A}')}f(\kappa(h'_2)^{-1}\underline{u}'^{-1}_1\zeta'\vep(h'_2))
 \underline{\psi'}(\underline{u}'^{-1}_1)\nu'(h'_2)\r{d}\underline{u}'_1\r{d}h'_2 \notag\\
 =&\sum_{\zeta'\in\(U'_{1^r,m+1}(k')\B\r{U}_n(k')\)/\!/H'(k')}
 \intl_{\r{Stab}_{\zeta'}^{H'}(k')
 \B H'(\d{A}')}\intl_{U'_{1^r,m+1}(\d{A}')}\notag\\
 &f(\kappa(h'_2)^{-1}\underline{u}'^{-1}_1\zeta'\vep(h'_2))
 \underline{\psi'}(\underline{u}'^{-1}_1)\nu'(h'_2)\r{d}\underline{u}'_1\r{d}h'_2
 \end{align}
where $H'$ acts on $U'_{1^r,m+1}\B\r{U}_n$ by conjugation and
$\(U'_{1^r,m+1}(k')\B\r{U}_n(k')\)/\!/H'(k')$ is the set of
conjugacy classes of $k'$-points. We introduce a $k'$-algebraic
group
 \begin{align*}
 \b{H}':=H'\underset{\r{U}_m}{\times}H'
 \end{align*}
which acts on $\r{U}_n$ in the following way: for any $k'$-algebra
$R$,
$\b{h}'=\b{h}'(\underline{u}'_1,\underline{u}'_2;g')\in\b{H}'(R)$
with $\underline{u}'_i\in U'_{1^r,m+1}(R)$, $g'\in\r{U}_m(R)$ and
$g\in\r{U}_n(R)$, we define the right action
$[g]\b{h}':=g'^{-1}\underline{u}'^{-1}_1g\underline{u}'_2g'$. We
also denote by $[\r{U}_n(k')]/\b{H}'(k')$ the set of $k'$-orbits
under this action. We define a character (also denoted by)
$\underline{\psi'}$ of $\b{H}'(\d{A}')$ by
 \begin{align*}
 \underline{\psi'}(\b{h}')=\underline{\psi'}\(\b{h}'(\underline{u}'_1,\underline{u}'_2;g')\)
 :=\underline{\psi'}(\underline{u}'^{-1}_1\underline{u}'_2).
 \end{align*}
Then
 \begin{align}\label{4dis5}
 \eqref{4dis4}&=\sum_{\zeta'\in[\r{U}_n(k')]/\b{H}'(k')}\intl_{\r{Stab}_{\zeta'}^{\b{H}'}(k')\B\b{H}'(\d{A}')}
 f([\zeta']\b{h}')\underline{\psi'}(\b{h}')\r{d}\b{h}'\notag\\
 &=:\sum_{\zeta'\in[\r{U}_n(k')]/\b{H}'(k')}\c{J}_{\zeta'}(f)
 =:\c{J}(f)
 \end{align}
which is a decomposition of the distribution $\c{J}$ according to
the orbits.

We denote by $\r{U}_n(k')_{\r{reg}}$ the set of all regular
$k'$-elements which will be defined in Section \ref{sec4co},
Definition \ref{def4reg}. In particular, the $\b{H}'$-stabilizer
$\r{Stab}_{\zeta'}^{\b{H}'}$ is trivial for
$\zeta'\in\r{U}_n(k')_{\r{reg}}$ by Proposition \ref{prop4orbit} and
the corresponding term $\c{O}(f,\zeta'):=\c{J}_{\zeta'}(f)$ is a
weighted orbital integral. If $f=\otimes_{v'}f_{v'}$ is
factorizable, then
 \begin{align*}
 \c{O}(f,\zeta')=\prod_{v'\in\d{M}_{k'}}\c{O}(f_{v'},\zeta')
 \end{align*}
where
 \begin{align*}
 \c{O}(f_{v'},\zeta')=\intl_{\b{H}'_{v'}}f_{v'}([\zeta']\b{h}'_{v'})
 \underline{\psi'}_{v'}(\b{h}'_{v'})\r{d}\b{h}'_{v'}.
 \end{align*}
In summary,
 \begin{align*}
 \c{J}(f)=&\c{J}_{\r{reg}}(f)+\c{J}_{\r{irr}}(f)\\
 :=&\sum_{\zeta'\in[\r{U}_n(k')_{\r{reg}}]/\b{H}'(k')}\c{J}_{\zeta'}(f)+
 \c{J}_{\r{irr}}(f)\\
 =&\sum_{\zeta'\in[\r{U}_n(k')_{\r{reg}}]/\b{H}'(k')}\prod_{v'\in\d{M}_{k'}}\c{O}(f_{v'},\zeta')
 +\c{J}_{\r{irr}}(f).
 \end{align*}\\

Now we discuss the relative trace formula on the product of two
general linear groups. We identify $\r{GL}_{m,k}\subset\r{GL}_{n,k}$
with $\r{GL}(W)\subset\r{GL}(V)$ and view
$\r{GL}_{n,k'}\subset\r{Res}_{k/k'}(\r{GL}_{n,k})$ (res.
$\r{GL}_{m,k'}\subset\r{Res}_{k/k'}(\r{GL}_{m,k})$) through the
basis $\1
v_1,...,v_r,w_1,...,w_m,w_0,\check{v}_r,...,\check{v}_1\2$. Let
$Z'_n$ (resp. $Z'_m$) be the center of $\r{GL}_{n,k'}$ (resp.
$\r{GL}_{m,k'}$). We denote $\psi=\psi'\circ\widetilde{\r{Tr}}$. Let
$\Pi$ and $\Sigma$ be as above which are cuspidal. Since $\Pi$ and
$\Sigma$ are the standard base change of representations of unitary
groups, we need to introduce a period integral to single out such
representations.

Until the end of this section, we assume that $n$ is odd, hence $m$
is even. Since the other case is similar and will lead to the same
fundamental lemma, we omit it in the following discussion. As
pointed out in \cite{Fl91}, \cite{Fl92} and \cite{GJR01}, the
central character of $\Pi$ (resp. $\Sigma$) should be trivial on
$\d{A}'$ and $\Pi$ (resp. $\Sigma\otimes\eta$) should be
distinguished by $\r{GL}_{n,k'}$ (resp. $\r{GL}_{m,k'}$). Hence we
consider the following integrals as in \cite{GJR01} and \cite{JR}:
 \begin{align*}
 \c{P}_n(\varphi_{\Pi})&=\intl_{Z'_n(\d{A'})\r{GL}_n(k')\B\r{GL}_n(\d{A}')}\varphi_{\Pi}(g_1)\r{d}g_1\\
 \c{P}_m(\varphi_{\Sigma})&=\intl_{Z'_m(\d{A'})\r{GL}_m(k')\B\r{GL}_m(\d{A}')}\varphi_{\Pi}(g_2)\eta(\det g_2)\r{d}g_2
 \end{align*}
where we recall that $\eta$ is the quadratic character associated to
$k/k'$.

Let $F_n\in\c{H}(\r{GL}_n(\d{A}))$ and
$F_m\in\c{H}(\r{GL}_m(\d{A}))$. We introduce another distribution
 \begin{align*}
 \c{J}_{\Pi,\Sigma}(s;F_n\otimes F_m):=\sum\c{B}^{\nu}_{r,r}(s;\rho(F_n)\varphi_{\Pi},\rho(F_m)\varphi_{\Sigma})
 \overline{\c{P}_n(\varphi_{\Pi})\c{P}_m(\varphi_{\Sigma})}
 \end{align*}
where the sum is taken over orthonormal bases of $\c{A}_{\Pi}$ and
$\c{A}_{\Sigma}$. Here, we take a model of $\nu$ in the following
way (be cautious that since we change the coordinates, the matrix
form of the following element changes from \eqref{2huge}):
 \begin{align*}
 \nu(h)=\nu\(\underline{u}g\)=\nu\(h\(n,n^*,b;u,u^*;g\)\)=\underline{\psi}(\underline{u})
 \end{align*}
where
 \begin{align}\label{4psi}
 \underline{\psi}(\underline{u})=\psi\(u_{1,2}+\cdots+u_{r-1,r}+n_{r,0}
 +\beta_0n^*_{0,r}+u^*_{r,r-1}+\cdots+u^*_{2,1}\)
 \end{align}

We associate to $F_n\otimes F_m$ a kernel function on
$\(\r{GL}_n(k)\B\r{GL}_n(\d{A})\times\r{GL}_m(k)\B\r{GL}_m(\d{A})\)^2$
(averaged by $Z'_n\times Z'_m$):
 \begin{align*}
 \c{K}_{F_n\otimes F_m}(g_1,g_2;g_3,g_4)
 =\intl_{Z'_n(k')\B Z'_n(\d{A}')}\sum_{\zeta\in\r{GL}_n(k)}F_n(g_1^{-1}z_1\zeta
 g_3)\r{d}z_1\intl_{Z'_m(k')\B Z'_m(\d{A}')}\sum_{\xi\in\r{GL}_m(k)}F_m(g_2^{-1}z_2\xi
 g_4)\r{d}z_2
 \end{align*}
and consider the following distribution:
 \begin{align}\label{4di1}
 &\c{J}(s;F_n\otimes F_m) \notag\\
 =&\intl_{Z'_m(\d{A}')\r{GL}_m(k')\B\r{GL}_m(\d{A}')}\intl_{Z'_n(\d{A}')\r{GL}_n(k')\B\r{GL}_n(\d{A}')}
 \intl_{H(k)\B H(\d{A})}\notag\\
 &\c{K}_{F_n\otimes F_m}(\vep(h),\kappa(h);g_1,g_2)\nu(h^{-1})
 |\det h|_{\d{A}}^{s-\frac{1}{2}}\eta(\det g_2)\r{d}h\r{d}g_1\r{d}g_2 \notag\\
 =&\intl_{Z'_m(\d{A}')\r{GL}_m(k')\B\r{GL}_m(\d{A}')}\intl_{Z'_n(\d{A}')\r{GL}_n(k')\B\r{GL}_n(\d{A}')}
 \intl_{Z'_m(k')\B Z'_m(\d{A}')}\intl_{Z'_n(k')\B Z'_n(\d{A}')}
 \intl_{H(k)\B H(\d{A})} \notag\\
 &\sum_{\zeta\in\r{GL}_n(k)}F_n(\vep(h)^{-1}z_1\zeta
 g_1)\sum_{\xi\in\r{GL}_m(k)}F_m(\kappa(h)^{-1}z_2\xi g_2)\nu(h^{-1})|\det
 h|_{\d{A}}^{s-\frac{1}{2}}\eta(\det g_2)\r{d}h\r{d}z_1\r{d}z_2\r{d}g_1\r{d}g_2 \notag\\
 =&\intl_{\r{GL}_m(k')\B\r{GL}_m(\d{A}')}\intl_{\r{GL}_n(k')\B\r{GL}_n(\d{A}')}
 \intl_{U_{1^r,m+1,1^r}(k)\B H(\d{A})}\notag\\
 &\sum_{\zeta\in\r{GL}_n(k)}F_n(\vep(h)^{-1}\zeta
 g_1)F_m(\kappa(h)^{-1}g_2)\nu(h^{-1})|\det
 h|_{\d{A}}^{s-\frac{1}{2}}
 \eta(\det g_2)\r{d}h\r{d}g_1\r{d}g_2.
 \end{align}
We decompose $h=\underline{u}g$ and notice that the group $H(\d{A})$
is unimodular. Moreover, we change a variable $g\T g_2^{-1}g$. Then
 \begin{align}\label{4di2}
 \eqref{4di1}=&\intl_{g_2}\intl_{g_1}\intl_{U_{1^r,m+1,1^r}(k)\B
 U_{1^r,m+1,1^r}(\d{A})}\intl_{\r{GL}_m(\d{A})}\sum_{\zeta\in
 \r{GL}_n(k)}\notag\\
 &F_n(g^{-1}g_2^{-1}\underline{u}^{-1}\zeta
 g_1)F_m(g^{-1})\underline{\psi}(\underline{u}^{-1})
 |\det g|_{\d{A}}^{s-\frac{1}{2}}|\det g_2|_{\d{A}}^{s-\frac{1}{2}}\eta(\det
 g_2)\r{d}g\r{d}\underline{u}\r{d}g_1\r{d}g_2.
 \end{align}

Define a function $\tilde{F}_s$ on $\r{GL}_n(\d{A})$, which is
holomorphic in $s$, by
 \begin{align*}
 \tilde{F}_s(\tilde{g})=\intl_{\r{GL}_m(\d{A})}\tilde{F}_n(g^{-1}\tilde{g})F_m(g^{-1})|\det
 g|_{\d{A}}^{s-\frac{1}{2}}\r{d}g.
 \end{align*}

We also introduce the symmetric space
$\r{S}_n\subset\r{Res}_{k/k'}(\r{GL}_{n,k})$ defined by the equation
$ss^{\tau}=\b{1}_n$, hence
 \begin{align*}
 \r{S}_n(k')=\1 s\in\r{GL}_n(k)\;|\;ss^{\tau}=\b{1}_n \2.
 \end{align*}
We have an isomorphism $\r{GL}_{n,k}/\r{GL}_{n,k'}\cong\r{S}_n$
given by $g\T gg^{\tau,-1}$. Define a linear map
$\sigma:\c{H}(\r{GL}_n(\d{A}))\R\c{H}(\r{S}_n(\d{A}'))$ by
 \begin{align}\label{4sigma}
 \(\sigma(F)\)(gg^{\tau,-1})=\intl_{\r{GL}_n(\d{A}')}F(g\tilde{g})\r{d}\tilde{g}
 \end{align}
and let $F_s=\sigma(\tilde{F}_s)$.

Combining these two operations together, we get
 \begin{align}\label{4di3}
 \eqref{4di2}=&\sum_{\zeta\in
 \r{S}_n(k')}\intl_{\r{GL}_{m}(k')\B\r{GL}_m(\d{A}')}\intl_{U_{1^r,m+1,1^r}(k)\B
 U_{1^r,m+1,1^r}(\d{A})}
 F_s(g_2^{-1}\underline{u}^{-1}\zeta
 \underline{u}^{\tau}g_2)\underline{\psi}(\underline{u}^{-1})
 |\det g_2|_{\d{A}}^{s-\frac{1}{2}}\eta(\det
 g_2)\r{d}\underline{u}\r{d}g_2
 \end{align}

Similar to the unitary case, we introduce the following
$k'$-algebraic group
 \begin{align*}
 \b{H}:=\r{Res}_{k/k'}(U_{1^r,m+1,1^r})\rtimes\r{GL}_{m,k'}
 \end{align*}
which acts on $\r{S}_n$ in the following way: for any $k'$-algebra
$R$, $\b{h}=\b{h}(\underline{u};g)$ with $\underline{u}\in
U_{1^r,m+1,1^r}(R\otimes k)$, $g\in\r{GL}_m(R)$ and
$s\in\r{S}_n(R)$, we define a right action
$[s]\b{h}:=g^{-1}\underline{u}^{-1}s\underline{u}^{\tau}g$. We also
denote by $[\r{S}_n(k')]/\b{H}(k')$ the set of $k'$-orbits under
this action. We define a character (also denoted by
$\underline{\psi}$) of $\b{H}(\d{A}')$ by
 \begin{align*}
 \underline{\psi}(\b{h})=\underline{\psi}(\b{h}(\underline{u}g))=\underline{\psi}(\underline{u}^{-1})
 \end{align*}
and $\det\b{h}:=\det g$. Then
 \begin{align}\label{4di4}
 \eqref{4di3}=&\sum_{\zeta\in
 [\r{S}_n(k')]/\b{H}(k')}\intl_{\b{H}(\d{A}')/\r{Stab}_{\zeta}^{\b{H}}(k')}
 F_s([\zeta]\b{h})
 \underline{\psi}(\b{h})
 |\det\b{h}|_{\d{A}}^{s-\frac{1}{2}}\eta(\det
 \b{h})\r{d}\b{h} \notag\\
 =&:\sum_{\zeta\in
 [\r{S}_n(k')]/\b{H}(k')}\c{J}_{\zeta}(s;F_s)=:\c{J}(s;F_s).
 \end{align}

We denote by $\r{S}_n(k')_{\r{reg}}$ the set of all regular
$k'$-elements which will be defined in Section \ref{sec4co},
Definition \ref{def4reg}. In particular, the $\b{H}$-stabilizer
$\r{Stab}_{\zeta}^{\b{H}}$ is trivial for
$\zeta\in\r{S}_n(k')_{\r{reg}}$ by Proposition \ref{prop4orbit} and
the corresponding term $\c{O}(s;F_s,\zeta):=\c{J}_{\zeta}(s;F_s)$ is
a weighted orbital integral. If $F_s=\otimes_{v'}F_{s,v'}$ is
factorizable, then
 \begin{align*}
 \c{O}(s;F_s,\zeta)=\prod_{v'\in\d{M}_{k'}}\c{O}(s;F_{s,v'},\zeta)
 \end{align*}
where
 \begin{align*}
 \c{O}(s;F_{s,v'},\zeta)=\intl_{\b{H}_{v'}}
 F_{s,v'}([\zeta]\b{h}_{v'})\underline{\psi}_{v'}(\b{h}_{v'})
 \eta_{v'}(\det\b{h}_{v'})|\det\b{h}_{v'}|_{v'}^{s-\frac{1}{2}}\r{d}\b{h}_{v'}.
 \end{align*}

In summary,
 \begin{align*}
 \c{J}(s;F_s)=&\c{J}_{\r{reg}}(s;F_s)+\c{J}_{\r{irr}}(s;F_s)\\
 :=&\sum_{\zeta\in[\r{S}_n(k')_{\r{reg}}]/\b{H}(k')}\c{J}_{\zeta}(s;F_s)+
 \c{J}_{\r{irr}}(s,F_s)\\
 =&\sum_{\zeta\in[\r{S}_n(k')_{\r{reg}}]/\b{H}(k')}\prod_{v'\in\d{M}_{k'}}\c{O}(s;F_{s,v'},\zeta)
 +\c{J}_{\r{irr}}(s;F_s).
 \end{align*}
When $s=\frac{1}{2}$, the terms involving $|\det|$ disappear and we
discard $s$ in all notations in this case. In particular,
 \begin{align*}
 \c{J}_{\zeta}(F)=\c{O}(F,\zeta)=\intl_{\b{H}(\d{A}')}
 F([\zeta]\b{h})\underline{\psi}(\b{h})\eta(\det\b{h})\r{d}\b{h}
 \end{align*}
which should be compared with
 \begin{align*}
 \c{J}_{\zeta'}(f)=\c{O}(f,\zeta')=\intl_{\b{H}'(\d{A}')}
 f([\zeta']\b{h}')\underline{\psi'}(\b{h}')\r{d}\b{h}'
 \end{align*}
assuming that $\zeta$ and $\zeta'$ are both regular.\\

\begin{rem}
If the original functions $F_n=\otimes_vF_{n,v}$ and $F_m=\otimes_v
F_{m,v}$ are factorizable, then
$\tilde{F}_s=\otimes_v\tilde{F}_{s,v}$ and
$F_s=\otimes_{v'}F_{s,v'}$ are also factorizable. If for some
(finite) place $v'$, $F_{m,v'}$ has the property that $\1 |\det
g|_{v'}\;|\;F_{m,v'}(g)\neq0\2$ is a singleton, then
$F_{s,v'}=F_{v'}$ are independent of $s$. In particular, this is the
case for almost all $v'$.\\
\end{rem}

We will introduce the notion of matching orbits in the next section.
It is expected that we have enough pairs $(F,f)$ such that if
$\zeta\leftrightarrow\zeta'$ are regular and match, then
 \begin{align*}
 \c{O}(F,\zeta)=\c{O}(f,\zeta').
 \end{align*}
The above relation should also be true place by place. In
particular, let us consider the case where $v'\in\d{M}_{k'}$ splits
into two places $v_{\bullet},v_{\circ}\in\d{M}_k$. Then we may
identify $\r{S}_{n,v'}$ with the set of pairs
$(g_{\bullet},g_{\circ})\in\r{GL}_{n,v_{\bullet}}\times\r{GL}_{n,v_{\circ}}$
with $g_{\bullet}g_{\circ}=\b{1}_n$, hence with $\r{GL}_{n,v'}$ by
$(g_{\bullet},g_{\circ})\T g_{\bullet}$. Then $F_{v'}$ becomes a
function on $\r{GL}_{n,v'}$ and
 \begin{align*}
 \c{O}(F_{v'},\zeta)=\intl_{\r{GL}_{m,v'}}\iintl_{\(U_{1^r,m+1,1^r,v'}\)^2}F_{v'}
 (g^{-1}\underline{u}_{\bullet}^{-1}\zeta\underline{u}_{\circ}g)
 \underline{\underline{\psi}}'(\underline{u}_{\bullet}^{-1}\underline{u}_{\circ})
 \r{d}\underline{u}_{\bullet}\r{d}\underline{u}_{\circ}\r{d}g
 \end{align*}
for the generic character
\begin{align}\label{4psi'}
 \underline{\underline{\psi}}'(\underline{u})=\psi'\(j\(u_{1,2}+\cdots+u_{r-1,r}+n_{r,0}
 +\beta_0n^*_{0,r}+u^*_{r,r-1}+\cdots+u^*_{2,1}\)\)
 \end{align}
where $\jmath=(j,-j)$.

On the other hand, we may identify $\r{U}_{n,v'}$ with the pairs
$(g_{\bullet},g_{\circ})$ such that
$g_{\circ}=w_{\beta'_{\circ}}^{-1}\:^{\r{t}}g_{\bullet}^{-1}w_{\beta'_{\circ}}$,
and with $\r{GL}_{n,v'}$. Here
 \begin{align*}
 w_{\beta'_{\circ}}=\[\begin{array}{ccc}
                 &  & w_r \\
                 & \beta'_{\circ} &  \\
                w_r &  &
              \end{array}
 \]
 \end{align*}
where $\beta'=(\beta'_{\bullet},\beta'_{\circ})$. Then $f_{v'}$
becomes a function on $\r{GL}_{n,v'}$ and
 \begin{align*}
 \c{O}(f_{v'},\zeta')=\intl_{\r{GL}_{m,v'}}\iintl_{\(U_{1^r,m+1,1^r,v'}\)^2}f_{v'}
 (g'^{-1}\underline{u}'^{-1}_{\bullet}\zeta'\underline{u}'_{\circ}g')
 \underline{\underline{\psi}}'(\underline{u}'^{-1}_{\bullet}\underline{u}'_{\circ})
 \r{d}\underline{u}'_{\bullet}\r{d}\underline{u}'_{\circ}\r{d}g'.
 \end{align*}

Moreover, in this case, that $\zeta$ and $\zeta'$ match exactly
means that $\zeta=\zeta'\in\r{GL}_{n,v'}$. Hence for
 \begin{align*}
 f_{v'}(g)=F_{v'}(g)=\intl_{\r{GL}_{n,v'}}\tilde{F}_{v_{\bullet}}(g\tilde{g})
 \tilde{F}_{v_{\circ}}(\tilde{g})\r{d}\tilde{g},
 \end{align*}
we have
 \begin{align}\label{4sms}
 \c{O}(F_{v'},\zeta)=\c{O}(f_{v'},\zeta)
 \end{align}
for all $\zeta$ regular.\\

\subsection{Matching of orbits and functions}
\label{sec4co}

Let $k'$ be a number field or a local field and $k/k'$ possibly
split. We say two elements
$\beta_1,\beta_2\in\r{Her}_m(k/k')^{\times}$ are \emph{similar},
denoted by $\beta_1\sim\beta_2$ if $\exists g\in\r{GL}_{m}(k)$ such
that $\beta_2=\:^{\r{t}}g^{\tau}\beta_1g$. We denote
$[\r{Her}_m(k/k')^{\times}]$ the set of similarity classes. We write
$W=W^{\beta}$, $V=V^{\beta}$ if the representing matrix of $W$ is in
the class $\beta$, and $\r{U}_m^{\beta}$ (resp. $\r{U}_n^{\beta}$,
$\b{H}^{\beta}$) for $\r{U}(W^{\beta})$ (resp. $\r{U}(V^{\beta})$,
$\b{H}'$). We define
 \begin{align*}
 \epsilon(\beta)=\eta\((-1)^{\frac{m(m-1)}{2}}\det\beta\)\in\{\pm1\}
 \end{align*}
to be the $\epsilon$-factor.\\

We first define the notion of pre-regular orbits. Recall that we
have the action of $\b{H}$ (resp. $\b{H}'$), and hence its maximal
unipotent subgroup $\r{Res}_{k/k'}U_{1^r,m+1,1^r}$ (resp.
$\(U'_{1^r,m+1}\)^2$), on $\r{S}_n$ (resp. $\r{U}^{\beta}_n$).

\begin{defn}
An element $\zeta\in\r{S}_n(k')$ (resp.
$\zeta^{\beta}\in\r{U}^{\beta}_n(k')$) is called \emph{pre-regular}
if its stabilizer under the action of
$\r{Res}_{k/k'}U_{1^r,m+1,1^r}$ (resp. $\(U'_{1^r,m+1}\)^2$) is
trivial.\\
\end{defn}

Let us start with the symmetric space $\r{S}_n$. Let $\b{B}$ be the
Borel subgroup of $\r{GL}_n$ consisting of upper-triangular matrices
and $\b{A}\cong(\r{GL}_1)^n$ be the maximal torus consisting of
diagonal matrices. Let $\b{W}_n$ be the Weyl group of $\r{GL}_n$
which is isomorphic to the group of $n$-permutations. We identity
elements in $\b{W}_n$ with permutation matrices, and hence identity
$\b{W}_n$ with a subgroup of $\r{GL}_n(k)$. Moreover, let
$\b{W}_n^{\r{S}}\subset\b{W}_n$ be the subgroup consisting of
elements whose square is $\b{1}_n$. Let $\b{P}$ be a standard
parabolic subgroup of $\r{GL}_{n,k}$ whose unipotent radical is
$\b{U}$. We also choose a standard Levi subgroup $\b{M}$ of it
consisting of matrices with diagonal blocks. The group
$\r{Res}_{k/k'}\b{P}$ acts on $\r{S}_n$ from right by
$[s]p=p^{-1}sp^{\tau}$. First, we have the following lemma.

 \begin{lem}\label{lem4weyl}
 An element $\zeta\in\r{S}_n(k')$ has trivial stabilizer under the action of $\b{U}(k)\subset\b{P}(k)$
 if and only if its orbit intersects $[\b{w}]\b{M}(k)$, where $\b{w}=w_n$ is the longest element
 in $\b{W}_n$. Moreover, the intersection contains at most $1$
 element.
 \end{lem}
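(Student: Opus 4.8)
The plan is to work throughout with the identification $\r{S}_n(k')\cong\r{GL}_n(k)/\r{GL}_n(k')$, $\zeta=gg^{\tau,-1}$, under which the right action $[s]p=p^{-1}sp^{\tau}$ of $\r{Res}_{k/k'}\b{P}$ and of its subgroups $\b{U},\b{M}$ becomes left translation $g\mapsto p^{-1}g$ on cosets. First I would fix once and for all an element $h\in\r{GL}_n(k)$ with $hh^{\tau,-1}=w_n$; this exists by Hilbert 90, because $w_n^{\tau}=w_n=w_n^{-1}$. Then $[\b{w}]\b{M}(k)$ corresponds to the set $\b{M}(k)h\,\r{GL}_n(k')$, the $\b{U}(k)$-orbit of $\zeta$ to $\b{U}(k)g\,\r{GL}_n(k')$, and the $\b{U}(k)$-stabilizer of $\zeta$ to $\b{U}(k)\cap g\,\r{GL}_n(k')\,g^{-1}$. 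Since $\b{P}=\b{M}\ltimes\b{U}$, the orbit meets $[\b{w}]\b{M}(k)$ iff $g\in\b{P}(k)h\,\r{GL}_n(k')$, so the lemma becomes the pair of assertions: (i) $g\in\b{P}(k)h\,\r{GL}_n(k')$ exactly when $\b{U}(k)\cap g\,\r{GL}_n(k')\,g^{-1}=\{1\}$; (ii) in that case $\b{U}(k)g\,\r{GL}_n(k')\cap\b{M}(k)h\,\r{GL}_n(k')$ is a single $\r{GL}_n(k')$-coset.

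The implication ``orbit meets $[\b{w}]\b{M}(k)$ $\Rightarrow$ trivial stabilizer'', together with (ii), I would settle directly on $\r{S}_n$. For $m\in\b{M}(k)$ the $\b{U}(k)$-stabilizer of $[\b{w}]m=m^{-1}w_nm^{\tau}$ is trivial: putting $u_1=mum^{-1}\in\b{U}(k)$, the stabilizing equation collapses to $u_1^{-1}w_nu_1^{\tau}=w_n$, i.e.\ $u_1=w_nu_1^{\tau}w_n$; but $u_1^{\tau}\in\b{U}(k)$ while $w_n\b{U}(k)w_n$ is the unipotent radical of the opposite parabolic $w_n\b{P}w_n$, so $u_1\in\b{U}(k)\cap w_n\b{U}(k)w_n=\{1\}$. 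Hence $\zeta$ has trivial $\b{U}(k)$-stabilizer whenever its orbit meets $[\b{w}]\b{M}(k)$, which is the ``$\Leftarrow$'' in (i); and for (ii), two points $u_1^{-1}\zeta u_1^{\tau}$, $u_2^{-1}\zeta u_2^{\tau}$ of the orbit lying in $[\b{w}]\b{M}(k)$ satisfy $u_2u_1^{-1}\in\b{U}(k)\cap g\,\r{GL}_n(k')\,g^{-1}=\{1\}$ and so coincide.

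The real content is the converse in (i): if $\zeta$ has trivial $\b{U}(k)$-stabilizer then $g\in\b{P}(k)h\,\r{GL}_n(k')$; equivalently $\zeta$ is $\b{P}(k)$-twisted conjugate to $w_n$; equivalently the flags $F^{0}_{\bullet}$ (the standard flag of type $\b{P}$) and $\zeta F^{0}_{\bullet}$ are in the generic relative position, that of $(F^{0}_{\bullet},w_nF^{0}_{\bullet})$. This is where I expect the main obstacle. One direction is standard: if $F^{0}_{\bullet}$ and $\zeta F^{0}_{\bullet}$ are in that position the two flags have no common nontrivial unipotent stabilizer, so a fortiori $\b{U}(k)\cap\zeta\b{U}(k)\zeta^{-1}=\{1\}$, and the stabilizer $\b{U}(k)\cap g\,\r{GL}_n(k')\,g^{-1}$ — which consists of the $u\in\b{U}(k)$ with $u=\zeta u^{\tau}\zeta^{-1}$ — lies in it and hence is trivial. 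The substantial step is the contrapositive: a non-generic position, i.e.\ some intersection $F^{0}_i\cap\zeta F^{0}_j$ strictly larger than the generic bound, must produce a nontrivial $u\in\b{U}(k)$ with $u=\zeta u^{\tau}\zeta^{-1}$. I would establish this by an explicit induction on $n$, equivalently on the length of the flag of $\b{P}$: peel off the first block of the flag, pivot $\zeta$ by a suitable $u\in\b{U}(k)$ to normalize the corresponding rows and columns of $\zeta$, and use $\zeta^{\tau}=\zeta^{-1}$ to keep the remaining part of $\zeta$ in the shape needed to invoke the inductive hypothesis; the leftover transversality is then forced and the representative in $[\b{w}]\b{M}(k)$ drops out. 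Reducing first to the Borel case $\b{P}=\b{B}$ — where this becomes a concrete non-degeneracy condition on $\zeta$ (non-vanishing of a chain of ``anti-diagonal'' minors, equivalently $\zeta$ in the big cell $\b{B}w_n\b{B}$) — and then passing to general $\b{P}$ via the classification of $\r{GL}_n(k')$-orbits on flags in terms of $\b{W}_n^{\r{S}}$, is what streamlines the bookkeeping; this inductive pivot and the parabolic reduction are the parts that require genuine work.
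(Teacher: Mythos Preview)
Your easy direction and the uniqueness are fine and essentially match the paper. The gap is in the hard direction. You assert that ``$\zeta$ is $\b{P}(k)$-twisted-conjugate to $w_n$'' (your (b)) is equivalent to ``$F^0_\bullet$ and $\zeta F^0_\bullet$ are in generic relative position'' (your (c)), i.e.\ $\zeta\in\b{P}(k)w_n\b{P}(k)$. The implication (b)$\Rightarrow$(c) is trivial since $p^{\tau}\in\b{P}(k)$, but (c)$\Rightarrow$(b) is not: from $\zeta=p_1w_np_2$ with $p_1,p_2\in\b{P}(k)$ and $\zeta^{\tau}=\zeta^{-1}$ you cannot simply arrange $p_2=p_1^{-\tau}$. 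This is precisely where the Galois constraint must be exploited, and it is essentially the content of the lemma. Consequently your ``substantial step'' --- proving that non-generic position forces a nontrivial $u$ with $u=\zeta u^{\tau}\zeta^{-1}$ --- even if carried out, yields only ``trivial stabilizer $\Rightarrow$ (c)'', not ``trivial stabilizer $\Rightarrow$ (b)''. (Note also that your ``one direction is standard'' paragraph just reproves the easy direction already done.) Your final sentence hints that the induction will directly produce a representative in $[\b{w}]\b{M}(k)$; if so the flag discussion is a red herring and the entire argument lives in that induction, which is left as a sketch.

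The paper takes a shorter path by citing Flicker's Bruhat decomposition $\r{S}_n(k')=\coprod_{w\in\b{W}_n^{\r{S}}}[w]\b{B}(k)$, so that every $\b{U}(k)$-orbit already contains a representative $[w]m$ with $w\in\b{W}_n^{\r{S}}$ an involution and $m\in\b{M}(k)$. The stabilizer condition then becomes the purely combinatorial statement $\{u\in\b{U}(k):wuw=u^{\tau}\}=\{1\}$, and the paper argues by induction on $n$: analyze the transposition through $1$ in $w$, conjugate by a suitable element of $\b{W}_{\b{M}}$ to push $(1,n)$ into $w$ when possible, and otherwise exhibit an explicit $u\neq 1$ supported on two entries with $wuw=u=u^{\tau}$. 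Flicker's decomposition is exactly what bridges the ordinary Bruhat cell and the twisted $\b{P}$-orbit --- the step you treat as an unproved equivalence.
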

 \begin{proof}
 By \cite[Proposition 3]{Fl92}, we have a Bruhat decomposition for
 $\r{S}_n(k')$:
  \begin{align*}
  \r{S}_n(k')=\coprod_{w\in\b{W}^{\r{S}}_n}[w]\b{B}(k).
  \end{align*}
 It implies that for general $\b{P}$, we have
  \begin{align*}
  \r{S}_n(k')=\bigcup_{w\in\b{W}^{\r{S}}_n}[w]\b{P}(k)=
  \bigcup_{w\in\b{W}^{\r{S}}_n}[w]\b{M}(k)\b{U}(k).
  \end{align*}
 Hence in any
 $\b{U}(k)$-orbit, there is a representative of the form $[w]m$. We
 assume that $\zeta=[w]m=m^{-1}wm^{\tau}$. Then its stabilizer is
 trivial if and only if
  \begin{align}\label{4weyl}
  \1 u^{-1}wu^{\tau}=w\;|\;u\in\b{U}(k)\2=\1\b{1}_n\2.
  \end{align}
 But $u^{-1}wu^{\tau}=w$ is equivalent to $wuw=u^{\tau}$, hence if
 $w=\b{w}$ is the longest Weyl element, it will force $u=\b{1}_n$,
 and the $[w]m$ is the only point where its orbit and $[w]\b{M}(k)$
 intersect.

 Conversely, we need to show that if \eqref{4weyl} holds, then
 $w\in[\b{w}]\b{W}_{\b{M}}$, where $\b{W}_{\b{M}}\subset\b{W}_n\cap\b{M}(k)$ is the Weyl group of $\b{M}$.
 We observe that any
 $w\in\b{W}^{\r{S}}_n$ is a disjoint union of transpositions. We use
 induction on $n$. The case $n=1$ is trivial and assume this for
 $<n$. If the transposition $(1,n)$ appears in $w$, then we reduce
 to the case of $n-2$ and we are done. Otherwise $(1,a)$ will appear
 in $w$ with $1\leq a<n$. Suppose that
 $\b{M}=\r{GL}_{n_1}\times\cdots\times\r{GL}_{n_t}$ (arranged from upper-left to
 lower-right) with $n=n_1+\cdots+n_t$, $n_i>0$ and $t>1$ (otherwise, it is
 trivial). If $n-a<n_t$, then $w'=(a,n)$ is an element in
 $\b{W}_{\b{M}}\subset\b{M}(k)$. The conjugation $w'^{-1}ww'\in\b{W}^{\r{S}}_n$ will contain the
 transportation $(1,n)$ and we are done. Otherwise, $n-a\geq n_t$
 and consider the transportation $(b,n)$ in $w'$ with $1<b\leq n$.
 If $b-1<n_1$, then we can conjugate $w$ by $(1,b)\in\b{W}_{\b{M}}$
 and we are again done. The rest case is that $b-1\geq n_1$. Then we
 define an element $u\in\b{U}(k)$ whose entries are $1$ at diagonals
 and positions $(1,b),(a,n)$, $0$ elsewhere. Then $wuw=u=u^{\tau}$
 which contradicts \eqref{4weyl}.\\
 \end{proof}

Applying the above lemma to $\b{P}=P_{1^r,m+1,1^r}$ stabilizing the
flag $0\subset\1 v_1\2\subset\cdots\subset X\subset X\oplus W\oplus
E\subset X\oplus W\oplus E\oplus\1\check{v}_r\2\subset\cdots\subset
V$, which is standard according to our basis
$\{v_1,...,v_r,w_1,...,w_m,w_0,\check{v}_r,...,\check{v}_1\}$ used
in this Chapter. Since $\b{w}$ normalizes $P_{1^r,m+1,1^r}$, the
$U_{1^r,m+1,1^r}(k)$-orbit of a pre-regular element $\zeta$ must
contain a unique element of the form
 \begin{align}\label{4normal}
 \[\begin{array}{ccccccc}
      &  &  &  &  &  & t_1(\zeta) \\
      &  &  &  &  & \iddots  &  \\
      &  &  &  & t_r(\zeta) &  &  \\
      &  &  & \r{Pr}(\zeta) &  &  &  \\
      &  & t_r(\zeta)^{\tau,-1} &  &  &  &  \\
      & \iddots &  &  &  &  &  \\
     t_1(\zeta)^{\tau,-1} &  &  &  &  &  &
   \end{array}
 \]
 \end{align}
with $t_i(\zeta)\in k^{\times}$ and
$\r{Pr}(\zeta)\in\r{S}_{m+1}(k')$. We call it the \emph{normal form}
of $\zeta$.\\

Now we consider the unitary group $\r{U}_n^{\beta}$. We fix a
minimal parabolic subgroup $\b{P}^{\beta}_0$ such that its maximal
unipotent subgroup $\b{U}^{\beta}_0$ contains $U'_{1^r,m+1}$. Let
$\b{A}_0^{\beta}$ be a maximal torus inside $\b{P}^{\beta}_0$ and
$\b{W}^{\beta}_n$ the Weyl group. Let $\b{P}'$ be a general standard
parabolic subgroup of $\r{U}^{\beta}_n$ whose maximal unipotent
subgroup is $\b{U}'$ and $\b{M}'\supset\b{A}_0^{\beta}$ a Levi
subgroup. The group $\(\b{P}'\)^2$ acts on $\r{U}^{\beta}_n$ from
right by $[g](p_1,p_2)=p_1^{-1}gp_2$. We have the following lemma
similar to the case of symmetric spaces.

 \begin{lem}\label{lem4weylu}
 An element $\zeta'\in\r{U}^{\beta}_n(k')$ has trivial stabilizer
 under the action of $\(\b{U}'(k')\)^2\subset\(\b{P}'(k')\)^2$ if and only if
 its orbit intersects
 $[\b{w}^{\beta}]\(\b{M}'(k')\)^2=\b{M}'(k')\b{w}^{\beta}\b{M}'(k')$,
 where $\b{w}^{\beta}$ is the longest element in $\b{W}^{\beta}_n$.
 Moreover, the intersection contains at most one element.
 \end{lem}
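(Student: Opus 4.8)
The plan is to reduce to the ordinary Bruhat decomposition of $\r{U}_n^{\beta}$, just as the proof of Lemma \ref{lem4weyl} reduced to the twisted Bruhat decomposition of $\r{S}_n$. The starting observation is that the $(\b{U}'(k'))^2$-stabilizer of $\zeta'$ depends only on the $\b{P}'(k')$-double coset of $\zeta'$: the stabilizer is isomorphic to $\zeta'\b{U}'(k')\zeta'^{-1}\cap\b{U}'(k')$ (solve $u_1^{-1}\zeta'u_2=\zeta'$ for $u_1$ in terms of $u_2$), and since $\b{P}'$ normalizes its unipotent radical $\b{U}'$ one checks that replacing $\zeta'$ by $p_1\zeta'p_2$ with $p_i\in\b{P}'(k')$ merely conjugates this intersection by $p_1$. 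So by the Bruhat decomposition $\r{U}_n^{\beta}(k')=\coprod_w\b{P}'(k')\,\dot{w}\,\b{P}'(k')$ over double-coset representatives $w\in\b{W}_{\b{M}'}\B\b{W}_n^{\beta}/\b{W}_{\b{M}'}$, and using that sliding the unipotent parts past the Levi-and-Weyl core ($u_1m_1\dot{w}m_2u_2\mapsto m_1\dot{w}m_2$ under $(u_1,u_2^{-1})$) shows every $(\b{U}'(k'))^2$-orbit inside a cell $\b{P}'(k')\,\dot{w}\,\b{P}'(k')$ meets $\b{M}'(k')\,\dot{w}\,\b{M}'(k')=[\,\dot{w}\,](\b{M}'(k'))^2$, the lemma becomes the single assertion: $\dot{w}\b{U}'(\dot{w})^{-1}\cap\b{U}'$ is trivial if and only if $w$ lies in the top double-coset class, represented by $\b{w}^{\beta}$.

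The \emph{if} part, with the uniqueness clause, is immediate: since $\b{w}^{\beta}$ is the longest element of $\b{W}_n^{\beta}$ it sends every positive (relative) root to a negative one, so $\b{w}^{\beta}\b{U}'(\b{w}^{\beta})^{-1}$ is directed by negative root subgroups and hence meets $\b{U}'$ — directed by positive ones — only in the identity; and if $m_1\b{w}^{\beta}m_2$ and $m_1'\b{w}^{\beta}m_2'$ lie in one orbit, the identity $u_1^{-1}m_1\b{w}^{\beta}m_2u_2=m_1'\b{w}^{\beta}m_2'$ forces $m_1'^{-1}u_1^{-1}m_1\in\b{P}'(k')\cap\b{w}^{\beta}\b{P}'(k')(\b{w}^{\beta})^{-1}$, which equals $\b{M}'(k')$ because $\b{w}^{\beta}$ normalizes $\b{M}'$ (for $\r{U}_n^{\beta}$ the opposition involution $-\b{w}^{\beta}$ is trivial on the relative root system, so every standard Levi is $\b{w}^{\beta}$-stable); reading off the $\b{U}'$-component gives $u_1=1$, then $u_2=1$, so the two representatives coincide. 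The \emph{only if} part — that no other double coset has trivial $\dot{w}\b{U}'(\dot{w})^{-1}\cap\b{U}'$ — is the combinatorial heart, and I would transcribe the rank induction of Lemma \ref{lem4weyl}: decompose $w$ as a reduced word and, bookkeeping with the isotropic flag defining $\b{P}'=P'_{1^r,m+1}$ and the middle hermitian block $W\oplus E$, locate a positive root $\alpha$ of $\b{U}'$ with $w(\alpha)$ again a root of $\b{U}'$, yielding a nontrivial root subgroup inside $\dot{w}\b{U}'(\dot{w})^{-1}\cap\b{U}'$ and hence a nontrivial stabilizer for $m_1\dot{w}m_2$.

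The main obstacle I anticipate is exactly this last step, the combinatorial identification of the big cell. The reduction above is actually cleaner than in the symmetric-space case — the action here is a genuine two-sided translation rather than a $\tau$-twisted conjugation — but one still has to redo the case analysis of Lemma \ref{lem4weyl} inside the relative root system of $\r{U}_n^{\beta}$, taking care that when $k/k'$ splits $\r{U}_n^{\beta}$ is a general linear group whose ``relative'' system is of type $A$ (so $-\b{w}^{\beta}$ is the diagram flip, and the Levi-stability used above must instead be checked directly for the palindromic parabolic that is actually used), while in the genuinely unitary case the system may be of non-reduced type $BC$ (so ``directed by root subgroups'' is read with the usual non-reduced conventions). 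I expect neither point to produce a genuinely new configuration, so that the argument of Lemma \ref{lem4weyl} transcribes essentially verbatim.
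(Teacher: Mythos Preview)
Your reduction via the Bruhat decomposition to the criterion $\dot w\,\b{U}'(k')\,\dot w^{-1}\cap\b{U}'(k')=\{\b{1}_n\}$, and your handling of the ``if'' direction and of the uniqueness clause, match the paper's. The divergence is in the ``only if'' direction. You propose to transcribe the transposition-chasing induction of Lemma~\ref{lem4weyl} into the relative root system of $\r{U}_n^{\beta}$, case-splitting on whether $k/k'$ is split, and you leave this as an expectation rather than a completed argument. The paper bypasses all of this with a short root-theoretic observation that works uniformly for any reductive group and any standard parabolic: in each double coset $\b{W}_{\b{M}'}w\b{W}_{\b{M}'}$ choose the unique (maximal-length) representative $w$ satisfying $w(\alpha)<0$ and $w^{-1}(\alpha)<0$ for all $\alpha\in R^+(\b{A}_0^{\beta},\b{M}')$. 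The stabilizer triviality says $w$ sends no root of $\b{U}'$ back into the roots of $\b{U}'$; the condition on $w^{-1}$ rules out the residual possibility $w(\alpha)\in R^+(\b{A}_0^{\beta},\b{M}')$, so $w(\alpha)<0$ for every root $\alpha$ of $\b{U}'$; and the condition on $w$ then handles the remaining positive roots (those of $\b{M}'$), giving $w(\alpha)<0$ for all $\alpha\in R^+(\b{A}_0^{\beta},\r{U}_n^{\beta})$, i.e.\ $w=\b{w}^{\beta}$. No induction, no type-by-type case analysis; your anticipated split/$BC$ dichotomy simply never arises.
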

 \begin{proof}
 We have the usual Bruhat decomposition:
  \begin{align*}
  \r{U}_n^{\beta}(k')=\coprod_{w\in\b{W}_{\b{M}'}\B\b{W}^{\beta}_n/\b{W}_{\b{M}'}}\b{P}'(k')w\b{P}'(k')
  =\coprod_{w\in\b{W}_{\b{M}'}\B\b{W}^{\beta}_n/\b{W}_{\b{M}'}}\b{U}'(k')\b{M}'(k')w\b{M}'(k')\b{U}'(k').
  \end{align*}
 Hence in any $\(\b{U}'(k')\)^2$-orbit, there is a representative of
 the form $m_1wm_2$. We assume that $\zeta'=m_1wm_2$. Then its
 stabilizer is trivial if and only if
  \begin{align}\label{4uweyl}
  w\b{U}'(k')w^{-1}\cap\b{U}'(k')=\{\b{1}_n\}.
  \end{align}
 Let $R^+(\b{A}^{\beta}_0,\r{U}^{\beta}_n)$ (resp. $R^+(\b{A}^{\beta}_0,\b{M}')$) be the set of positive roots of
 $\b{A}^{\beta}_0$ (resp. in $\b{M}'$), then any double coset of
 $\b{W}_{\b{M}'}\B\b{W}^{\beta}_n/\b{W}_{\b{M}'}$ has a unique
 representative $w$ satisfying $w(\alpha)<0$ and $w^{-1}(\alpha)<0$
 for all $\alpha\in R^+(\b{A}^{\beta}_0,\b{M}')$. Assuming that $w$
 satisfies \eqref{4uweyl} and the above condition, then
 $w(\alpha)<0$ for any $\alpha\in
 R^+(\b{A}^{\beta}_0,\r{U}^{\beta}_n)$. Hence $w=\b{w}^{\beta}$.
 Conversely, if $w=\b{w}^{\beta}$, then \eqref{4uweyl} holds and the
 intersection is a singleton.\\
 \end{proof}

Applying the above lemma to $\b{P}'=P^{\beta}_{1^r,m+1}$, the
standard parabolic subgroup stabilizing the flag $0\subset\1
v_1\2\subset\cdots\subset X\subset V$. Since $\b{w}^{\beta}$
normalizes $P^{\beta}_{1^r,m+1}$, the $\(U'_{1^r,m+1}(k')\)^2$-orbit
of a pre-regular element $\zeta^{\beta}$ must contain a unique
element of the form
 \begin{align}\label{4unormal}
 \[\begin{array}{ccccccc}
      &  &  &  &  &  & t_1(\zeta^{\beta}) \\
      &  &  &  &  & \iddots  &  \\
      &  &  &  & t_r(\zeta^{\beta}) &  &  \\
      &  &  & \r{Pr}(\zeta^{\beta}) &  &  &  \\
      &  & t_r(\zeta^{\beta})^{\tau,-1} &  &  &  &  \\
      & \iddots &  &  &  &  &  \\
     t_1(\zeta^{\beta})^{\tau,-1} &  &  &  &  &  &
   \end{array}
 \]
 \end{align}
with $t_i(\zeta^{\beta})\in k^{\times}$ and
$\r{Pr}(\zeta^{\beta})\in\r{U}_{m+1}^{\beta}(k')$, where
$\r{U}_{m+1}^{\beta}=\r{U}(W^{\beta}\oplus E)$. We call it the
\emph{normal form} of $\zeta^{\beta}$.\\

\begin{rem}
There is a more intrinsic way to define the invariants $t_i$. For
$\zeta\in\r{M}_n(k)$, for $i=1,...,r$, let $\zeta[i]$ be the matrix
leaving the first $i$ columns and the last $i$ rows of $\zeta$ and
$s_i(\zeta)=\det\zeta[i]$ which is invariant under the action
$\zeta\T u\zeta u'$ for $u,u'\in U_{1^r,m+1,1^r}(k)$. Then
$\zeta\in\r{S}_n(k')$ (resp. $\zeta^{\beta}\in\r{U}^{\beta}_n(k')$)
is pre-regular if and only if $s_i(\zeta)\in k^{\times}$ (resp.
$s_i(\zeta^{\beta})\in k^{\times}$), where we view $\zeta$ (resp.
$\zeta^{\beta}$) as elements in $\r{M}_n(k)$ through the natural
inclusion $\r{S}_n\subset\r{Res}_{k/k'}\r{M}_{n,k}=\r{End}(V)$ and
$\r{U}^{\beta}_n\subset\r{Res}_{k/k'}\r{M}_{n,k}$. Moreover, the
invariants $t_i$ and $s_i$ are related by
$t_i(\zeta)=s_{i-1}(\zeta)^{\tau}s_i(\zeta)^{\tau,-1}$ ($s_0=1$) and
similar for $\zeta^{\beta}$.\\
\end{rem}

Now we are at the point to introduce regular orbits. We have natural
inclusions
$\r{S}_{m+1}\subset\r{Res}_{k/k'}\r{M}_{m+1,k}=\r{End}(W\oplus E)$
and $\r{U}^{\beta}_{m+1}\subset\r{Res}_{k/k'}\r{M}_{m+1,k}$.

 \begin{defn}\label{def4reg}
 An element $\xi\in\r{M}_{m+1}(k)$ is called \emph{regular} if it satisfies:
  \begin{itemize}
    \item $\xi$ is regular semisimple as an element of
    $\r{M}_{m+1}(k)$;
    \item the vectors $\1 w_0, \xi w_0,...,\xi^m w_0\2$ span $W\oplus
    E$;
    \item the vectors $\1 w^{\vee}_0,
    w^{\vee}_0\xi,...,w^{\vee}_0\xi^m\2$ span $W^{\vee}\oplus
    E^{\vee}$.
  \end{itemize}
 An element $\zeta\in\r{S}_n(k')$ (resp.
 $\zeta^{\beta}\in\r{U}_n^{\beta}(k')$) is called \emph{regular}, if
 it is pre-regular and the uniquely determined element
 $\r{Pr}(\zeta)\in\r{M}_{m+1}(k)$ (resp.
 $\r{Pr}(\zeta^{\beta})\in\r{U}^{\beta}_{m+1}(k)$) is regular.
 An $\b{H}$-orbit $\zeta\in[\r{S}_n(k')]/\b{H}(k')$ (resp. $\b{H}^{\beta}$-orbit
 $\zeta^{\beta}\in[\r{U}^{\beta}_n(k')]/\b{H}^{\beta}(k')$) is
 called \emph{regular} if any, hence all elements inside it are
 regular. We denote by $\r{M}_{m+1}(k)_{\r{reg}}$,
 $\r{GL}_{m+1}(k)_{\r{reg}}:=\r{M}_{m+1}(k)_{\r{reg}}\cap\r{GL}_{m+1}(k)$,
 $\r{S}_n(k')_{\r{reg}}$ (resp.
 $[\r{S}_n(k')_{\r{reg}}]/\b{H}(k')$) and
 $\r{U}_n^{\beta}(k')_{\r{reg}}$
 (resp. $[\r{U}_n^{\beta}(k')_{\r{reg}}]/\b{H}^{\beta}(k')$) the set
 of regular elements (resp. orbits).\\
 \end{defn}

To proceed, we need to recall some structural results in
\cite[Section 6]{RS07}, see also \cite{JR}, \cite{Yun09} and
\cite{Zh}. To include the whole action of $\b{H}$ (resp.
$\b{H}^{\beta}$), we need to consider the conjugation action (from
right) of $\r{GL}_{m,k'}$ (resp. $\r{U}^{\beta}_m$). We consider
more generally the conjugation action of $\r{GL}_{m,k}$. Recall that
by our choice of coordinates, the group $\r{GL}_{m,k}$ embeds into
$\r{GL}_{m+1,k}$ via
 \begin{align*}
 g\T\[\begin{array}{cc}
        g &  \\
         & 1
      \end{array}
 \].
 \end{align*}
Let $a_i(\xi)=\r{Tr}\(\bigwedge^i\xi\)$ for $1\leq i\leq m+1$ and
$b_i(\xi)=w^{\vee}_0\xi^i w_0$ for $1\leq i\leq m$. Let
$\b{T}_{\xi}=\det\[(w_0^{\vee}\xi^{i-1})(w_{j-1})\]_{i,j=1}^{m+1}$,
$\b{D}_{\xi}$ the matrix
$\[(w^{\vee}_0\xi^{i-1})(\xi^{j-1}w_0)\]_{i,j=1}^{m+1}$ and
$\Delta_{\xi}=\det\b{D}_{\xi}$. It is clear that if $\xi$ is
regular, then $\Delta_{\xi}\neq0$. Moreover, we have

 \begin{lem}
 Two regular elements $\xi$ and $\xi'$ are conjugate under
 $\r{GL}_m(k)$ if and only if $a_i(\xi)=a_i(\xi')$ and
 $b_i(\xi)=b_i(\xi')$. The $\r{GL}_m$-stabilizer of a regular
 element is trivial.
 \end{lem}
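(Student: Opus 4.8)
The plan is to reduce everything to the description in terms of the cyclic vector $w_0$. Writing $U=W\oplus E$, the point is that $\r{GL}_m(k)=\r{GL}(W)$ is exactly the subgroup of $\r{GL}(U)$ consisting of those $g$ with $gw_0=w_0$ and $w_0^{\vee}\circ g=w_0^{\vee}$, and that the $a_i(\xi)=\r{Tr}(\bigwedge^i\xi)$ are, up to sign, the coefficients of the characteristic polynomial $\chi_{\xi}$ of $\xi\in\r{M}_{m+1}(k)$; thus $a_i(\xi)=a_i(\xi')$ for all $i$ is equivalent to $\chi_{\xi}=\chi_{\xi'}$. The necessity of the conditions is then immediate: if $\xi'=g^{-1}\xi g$ with $g\in\r{GL}_m(k)$ then $\chi_{\xi'}=\chi_{\xi}$, so the $a_i$ agree, and $b_i(\xi')=w_0^{\vee}\xi'^{\,i}w_0=(w_0^{\vee}g^{-1})\xi^{i}(gw_0)=w_0^{\vee}\xi^{i}w_0=b_i(\xi)$, using $gw_0=w_0$ and $w_0^{\vee}g^{-1}=w_0^{\vee}$.

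For the converse I would use that, $\xi$ being regular, the vector $w_0$ is cyclic, i.e.\ $\{w_0,\xi w_0,\dots,\xi^{m}w_0\}$ is a basis of $U$ (this is the second condition in Definition \ref{def4reg}), and likewise $\{w_0,\xi'w_0,\dots,\xi'^{\,m}w_0\}$ is a basis. Given $a_i(\xi)=a_i(\xi')$ and $b_i(\xi)=b_i(\xi')$ for all $i$, define $g\in\r{GL}(U)$ to be the unique linear map with $g(\xi'^{\,i}w_0)=\xi^{i}w_0$ for $0\le i\le m$; it is invertible, being a bijection of bases. One checks $g\xi'=\xi g$ on the cyclic basis: for $0\le i<m$ this is tautological, while for $i=m$ it follows from $\chi_{\xi'}=\chi_{\xi}$ together with the Cayley--Hamilton identities $\chi_{\xi}(\xi)=\chi_{\xi'}(\xi')=0$. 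By construction $gw_0=w_0$, and $w_0^{\vee}(g(\xi'^{\,i}w_0))=w_0^{\vee}(\xi^{i}w_0)=b_i(\xi)=b_i(\xi')=w_0^{\vee}(\xi'^{\,i}w_0)$ for $1\le i\le m$ (the case $i=0$ being automatic since $w_0^{\vee}$ is dual to $w_0$), so $w_0^{\vee}\circ g=w_0^{\vee}$ on the basis, hence everywhere; thus $g\in\r{GL}_m(k)$ and $\xi'=g^{-1}\xi g$. The triviality of the stabilizer is the same computation with $\xi'=\xi$: if $g\in\r{GL}_m(k)$ commutes with $\xi$, then $g(\xi^{i}w_0)=\xi^{i}(gw_0)=\xi^{i}w_0$ for all $i$, and since these vectors span $U$ we conclude $g=\b{1}_{m+1}$.

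All of this is elementary linear algebra, and I do not anticipate a genuine obstacle. The only points requiring a little care are the bookkeeping between the conjugation action on $\xi$ and the induced actions on the vector $w_0$ and the covector $w_0^{\vee}$ — which is precisely what pins the invariants $b_i$ to the condition $w_0^{\vee}\circ g=w_0^{\vee}$ — and the remark that for this statement only the cyclicity of $w_0$ and Cayley--Hamilton are used, not the full strength of regular semisimplicity or of the covector-cyclicity condition in Definition \ref{def4reg} (those enter elsewhere, e.g.\ to guarantee $\Delta_{\xi}\neq 0$ and to control the orbits of the whole group $\b{H}$).
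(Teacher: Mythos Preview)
Your argument is correct. The paper does not actually prove this lemma: its proof consists solely of a citation to \cite[Proposition 6.2 \& Theorem 6.1]{RS07}. What you have written is the standard cyclic-vector argument underlying that reference, supplied in full. Your characterization of $\r{GL}_m(k)\subset\r{GL}(U)$ as the simultaneous stabilizer of $w_0$ and $w_0^{\vee}$ is exactly right, and the construction of $g$ via the two cyclic bases, with Cayley--Hamilton handling the top degree, is clean. Your closing remark is also accurate: only the cyclicity of $w_0$ (the second bullet of Definition~\ref{def4reg}) is used here, not regular semisimplicity or the covector condition; the paper's phrasing in terms of ``regular'' elements packages more than is strictly needed for this particular statement.
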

 \begin{proof}
 See \cite[Proposition 6.2 \& Theorem 6.1]{RS07} for the (equivalent version of the) first and
 second statements, respectively.\\
 \end{proof}

To include all unitary groups at the same time, we consider the set
$\f{U}_{m+1}$ of pairs $(\beta,\xi^{\beta})$ where
$\beta\in\r{Her}_m(k/k')^{\times}$ and
$\xi^{\beta}\in\r{U}^{\beta}_{m+1}(k')_{\r{reg}}:=\1
\xi^{\beta}\in\r{M}_{m+1}(k)_{\r{reg}}\;|\;\:^{\r{t}}\(\xi^\beta\)^{\tau}\beta'\xi^{\beta}=\beta'\2$,
where $\beta'$ is defined in \eqref{4beta}. The group $\r{GL}_m(k)$
acts on $\f{U}_{m+1}$ by
$(\beta,\xi^{\beta})g=(\:^{\r{t}}g^{\tau}\beta
g,g^{-1}\xi^{\beta}g)$. For
$\xi\in\r{S}_{m+1}(k')_{\r{reg}}:=\r{S}_{m+1}(k')\cap\r{M}_{m+1}(k)_{\r{reg}}$,
we denote by $\xi\Leftrightarrow(\beta,\xi^{\beta})$ if there exists
$g\in\r{GL}_m(k)$ such that $\xi=g^{-1}\xi^{\beta}g$. The following
lemma is considered in \cite{JR} and \cite{Zh}.

 \begin{lem}\label{lem4zhang}
 For $\xi\in\r{S}_{m+1}(k')_{\r{reg}}$, there exists a pair
 $(\beta,\xi^{\beta})$, unique up to the $\r{GL}_m(k)$-action, such that
 $\xi\Leftrightarrow(\beta,\xi^{\beta})$. Conversely, for every pair
 $(\beta,\xi^{\beta})\in\f{U}_{m+1}$, there exists an element
 $\xi\in\r{S}_{m+1}(k')_{\r{reg}}$, unique up to the
 $\r{GL}_m(k')$-conjugation, such that $\xi\Leftrightarrow(\beta,\xi^{\beta})$.
 \end{lem}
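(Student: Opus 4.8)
The plan is to follow the orbit‑matching computations of Jacquet--Rallis \cite{JR} and Zhang \cite{Zh}, adapting them to the extra data present here: the distinguished line $E=\LA w_0\RA$ with $\beta_0=(w_0,w_0)$ fixed, and the covector $w_0^{\vee}$. The backbone is the invariant‑theoretic picture just recorded: by the preceding lemma (following \cite{RS07}) a regular $\xi\in\r{M}_{m+1}(k)$ is determined, up to $\r{GL}_m(k)$‑conjugation, by the tuple $\(a_1(\xi),\dots,a_{m+1}(\xi),b_1(\xi),\dots,b_m(\xi)\)$ with $a_i=\r{Tr}\(\bigwedge^i\xi\)$ and $b_i=w_0^{\vee}\xi^i w_0$, and its $\r{GL}_m$‑stabilizer is trivial. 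Hence both $[\r{S}_{m+1}(k')_{\r{reg}}]/\r{GL}_m(k')$ and $\f{U}_{m+1}/\r{GL}_m(k)$ inject into the space of regular invariant tuples --- on the $\f{U}$‑side the similarity class of $\beta$ being subsumed into the invariants of $\xi^{\beta}$ --- so the lemma becomes the assertion that these two images coincide and that $\beta$ is recoverable from the tuple. In this reformulation the two existence clauses are the two inclusions of images, and the two uniqueness clauses follow formally from triviality of the regular stabilizer, applied over $k$ on the $\f{U}$‑side and over $k'$ on the $\r{S}$‑side (the invariants being $\r{GL}_m$‑conjugation invariant, with the $b_i$ and $\det\xi$ constrained on $\r{S}_{m+1}(k')$ by $\xi\xi^{\tau}=\b{1}_{m+1}$).

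To move between the two sides I would use the étale $k$‑algebra $A=k[\xi]$: since $\xi$ is regular semisimple, $U=k^{m+1}$ is free of rank one over $A$ with generator $w_0$ by cyclicity, and the cyclic‑covector hypothesis makes $(a,b)\mapsto w_0^{\vee}(ab)$ a nondegenerate symmetric $k$‑bilinear form on $A$. The equation $\xi\xi^{\tau}=\b{1}_{m+1}$ defining $\r{S}_{m+1}$ reads $\xi^{\tau}=\xi^{-1}$, so $A$ is stable under $\tau$ with $\tau(\xi)=\xi^{-1}$ (equivalently $\xi$ satisfies $t^2-(\xi+\xi^{-1})t+1=0$ over the fixed algebra $A^{\tau}$); the unitary condition $\:^{\r{t}}(\xi^{\beta})^{\tau}\beta'\xi^{\beta}=\beta'$ likewise makes $A=k[\xi^{\beta}]$ $\tau$‑stable with $\tau$ acting through an inverse‑like involution. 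Starting from $\xi\in\r{S}_{m+1}(k')_{\r{reg}}$, one manufactures a hermitian form preserved by a $\r{GL}_m(k)$‑conjugate of $\xi$ out of an intertwiner of $\xi$ with $\:^{\r{t}}\xi$: the space of such intertwiners has dimension $m+1$ over $k$, the hermitian ones form a $k'$‑rational structure on it via $\gamma_0\mapsto\:^{\r{t}}\gamma_0^{\tau}$ (here one uses $\xi^{\tau}=\xi^{-1}$), and the cyclic‑covector hypothesis together with the scaling freedom by nonzero $k'$‑scalars pin down such a $\gamma$ in the block‑diagonal shape $\r{diag}(\beta,\beta_0)$ with $E$‑block exactly $\beta_0$; this produces $(\beta,\xi^{\beta})$. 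Conversely, for $(\beta,\xi^{\beta})\in\f{U}_{m+1}$, a Hilbert 90 argument for $\r{Res}_{k/k'}\r{GL}$, carried out compatibly with the splitting $W\oplus E$ and preserving the $E$‑block $\beta_0$, trivializes $\beta'$ and conjugates $\xi^{\beta}$ by some $g\in\r{GL}_m(k)$ into $\r{S}_{m+1}(k')_{\r{reg}}$.

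The hard part will be matching the form $\beta$ --- equivalently, the surjectivity of each invariant map onto the appropriate set of tuples: one must show that the hermitian intertwiner can always be found in the right similarity class, i.e.\ that every regular tuple coming from $\r{S}_{m+1}$ is realized by a $\beta'$‑unitary element for exactly one class of $\beta$. This is a Galois‑cohomology computation comparing the discriminant class of $\beta'$ --- controlled by $\det\xi$ (of norm one) and the moments $b_i(\xi)$ --- with the class measured by $\epsilon(\beta)\in\1\pm1\2$; it is the unitary‑group counterpart of the classical matching of regular semisimple classes between $\r{GL}_{m+1}$ and its unitary forms, and it is where the substance of \cite{JR} and \cite{Zh} lies. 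A persistent secondary difficulty is keeping every step $\r{GL}_m$‑equivariant rather than $\r{GL}_{m+1}$‑equivariant: the line $E=\LA w_0\RA$ and the value $\beta_0$ must be honored throughout, which is exactly why the cyclic covector $w_0^{\vee}$ --- and not only the cyclic vector $w_0$ --- is needed, and is the reason uniqueness on the $\r{S}$‑side is only up to $\r{GL}_m(k')$‑conjugation whereas on the $\f{U}$‑side it is up to the full $\r{GL}_m(k)$‑action on pairs.
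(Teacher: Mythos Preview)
Your outline has the right ingredients---an intertwiner between $\xi$ and ${}^{\r{t}}\xi$, and Hilbert~90 for the converse---but it works at the wrong level and therefore overestimates the difficulty. You propose searching the $(m+1)$-dimensional space of $\r{GL}_{m+1}$-intertwiners for a hermitian one of block-diagonal shape, and then flag as the ``hard part'' a Galois-cohomology computation matching the similarity class of $\beta$. None of this is needed.

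The paper (following Zhang) instead uses the $\r{GL}_m(k)$-conjugator, which is \emph{unique}. Since $\xi$ and ${}^{\r{t}}\xi$ share all the invariants $a_i,b_i$ (the $b_i$ because $w_0^{\vee}A\,w_0$ is the bottom-right entry of $A$, unchanged under transpose), the preceding lemma hands you a single $g\in\r{GL}_m(k)$ with $g^{-1}\xi g={}^{\r{t}}\xi$. Uniqueness now does all the work you assign to cohomology: both $g^{\tau}$ (since $\xi,{}^{\r{t}}\xi\in\r{S}_{m+1}(k')$) and ${}^{\r{t}}g$ (transpose the relation) also conjugate $\xi$ to ${}^{\r{t}}\xi$, so $g=g^{\tau}={}^{\r{t}}g$, i.e.\ $g\in\r{Her}_m(k/k')^{\times}$ automatically. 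One line then gives ${}^{\r{t}}\xi^{\tau}g^{-1}\xi=g^{-1}$ from $\xi\xi^{\tau}=\b{1}_{m+1}$, so $\xi$ itself already sits in $\r{U}^{g^{-1}}_{m+1}(k')$ and one takes $(\beta,\xi^{\beta})=(g^{-1},\xi)$. The class of $\beta$ is produced by the construction; there is nothing left to match.

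For the converse your Hilbert~90 idea is exactly what the paper does, again routed through the unique $\r{GL}_m$-conjugator: given $(\beta,\xi)\in\f{U}_{m+1}$, take $\gamma\in\r{GL}_m(k)$ with $\gamma^{-1}\xi\gamma={}^{\r{t}}\xi$; the unitary relation gives $(\gamma\beta^{\tau})^{-1}\xi(\gamma\beta^{\tau})=\xi^{\tau,-1}$, so by triviality of the regular stabilizer $\gamma\beta^{\tau}\in\r{S}_m(k')$, hence $\gamma\beta^{\tau}=gg^{\tau,-1}$ for some $g\in\r{GL}_m(k)$, and $g^{-1}\xi g\in\r{S}_{m+1}(k')_{\r{reg}}$. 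The uniqueness clauses are, as you note, immediate from the trivial stabilizer (over $k'$ on the $\r{S}$-side, over $k$ on the $\f{U}$-side). In short, the ``hard part'' you anticipate dissolves once you drop from $\r{GL}_{m+1}$ to $\r{GL}_m$ and let the rigidity of the unique regular conjugator force hermiticity.
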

 \begin{proof}
 We first point out that two elements
 $\xi,\xi'\in\r{S}_{m+1}(k')_{\r{reg}}$ are conjugate under
 $\r{GL}_m(k)$ if and only if they are conjugate under
 $\r{GL}_m(k')$. Assume $g^{-1}\xi g=\xi'$, then $g^{-1}\xi g=g^{\tau,-1}\xi^{\tau,-1}g^{\tau}=g^{\tau,-1}\xi
 g^{\tau}$, hence $g=g^{\tau}$.

 The following proof is due to Zhang \cite{Zh}. It is easy too see that for
 $\xi\in\r{M}_{m+1}(k)_{\r{reg}}$,
 $\xi$ and $^{\r{t}}\xi$ have the same invariants $a_i,b_i$. Hence
 by the above lemma, there is a unique element $g\in\r{GL}_m(k)$
 such that
  $g^{-1}\xi g=\:^{\r{t}}\xi$
 If $\xi\in\r{S}_{m+1}(k')_{\r{reg}}$, then
 $^{\r{t}}\xi\in\r{S}_{m+1}(k')_{\r{reg}}$. Hence we have
 $g^{\tau}=g$. But we also have
  \begin{align*}
  ^{\r{t}}g\:^{\r{t}}\xi\:^{\r{t}}g^{-1}=\xi;\qquad g\:^{\r{t}}\xi
  g^{-1}=\xi
  \end{align*}
 which implies that $g=\:^{\r{t}}g$. Together, we have
 $g\in\r{Her}_m(k/k')^{\times}$. Moreover,
 $^{\r{t}}\xi^{\tau}(g^{-1})\xi=g^{-1}$. Hence
 $\xi\in\r{U}^{g^{-1}}_{m+1}(k')_{\r{reg}}$ and
 $\xi\Leftrightarrow(g^{-1},\xi)$.

 Conversely, given any $(\beta,\xi)\in\f{U}_{m+1}$, then
  \begin{align*}
  ^{\r{t}}\xi\beta^{\tau}\xi^{\tau}=\beta^{\tau}\Longrightarrow\beta^{\tau,-1}\:^{\r{t}}\xi\beta^{\tau}=\xi^{\tau,-1}.
  \end{align*}
 Moreover, there is a $\gamma\in\r{GL}_m(k)$ such that
 $\gamma^{-1}\xi\gamma=\:^{\r{t}}\xi$. We have
 $\beta^{\tau,-1}\gamma^{-1}\xi\gamma\beta^{\tau}=\xi^{\tau,-1}$, i.e,
 $\(\gamma\beta^{\tau}\)^{-1}\xi\(\gamma\beta^{\tau}\)=\xi^{\tau,-1}$. By regularity,
 $\gamma\beta^{\tau}\in\r{S}_m(k')$. Hence there exists $g\in\r{GL}_m(k)$
 such that $\gamma\beta^{\tau}=gg^{\tau,-1}$. Then
  \begin{align*}
  g^{\tau}g^{-1}\xi gg^{\tau,-1}\xi^{\tau}=\b{1}_{m+1}
  \Longrightarrow&g^{-1}\xi
  gg^{\tau,-1}\xi^{\tau}g^{\tau}=\b{1}_{m+1}
  \Longrightarrow\(g^{-1}\xi g\)\(g^{-1}\xi g\)^{\tau}=\b{1}_{m+1}
  \end{align*}
 i.e., $g^{-1}\xi g\in\r{S}_{m+1}(k')_{\r{reg}}$. The uniqueness
 part is obvious.\\
 \end{proof}

All these considerations lead to the following proposition:

 \begin{prop}\label{prop4orbit}
 (1). There is a natural bijection
  \begin{align*}
  [\r{S}_n(k')_{\r{reg}}]/\b{H}(k')\overset{\b{N}}{\longleftrightarrow}
  \coprod_{\beta\in[\r{Her}_m(k/k')^{\times}]}[\r{U}_n^{\beta}(k')_{\r{reg}}]/\b{H}^{\beta}(k').
  \end{align*}
 If $\b{N}\zeta^{\beta}=\zeta$, then we say that they
 \emph{matching} and denote by $\zeta\leftrightarrow\zeta^{\beta}$.

 (2). The set $\r{S}_n(k')_{\r{reg}}$ (resp.
 $\r{U}_n^{\beta}(k')_{\r{reg}}$) is non-empty and Zariski open in $\r{S}_n$
 (resp. $\r{U}_n^{\beta}$). Moreover, the $\b{H}$-stabilizer (resp.
 $\b{H}^{\beta}$-stabilizer) of regular $\zeta$ (resp. $\zeta^{\beta}$) is
 trivial.
 \end{prop}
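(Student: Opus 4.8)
The plan is to exploit the normal forms \eqref{4normal} and \eqref{4unormal} to peel off the unipotent parts of the actions of $\b{H}$ and $\b{H}^{\beta}$, and then to quote Lemma \ref{lem4zhang} for the leftover conjugacy problem on the small space. First I would record that a regular $\zeta\in\r{S}_n(k')$ is in particular pre-regular, so by Lemma \ref{lem4weyl} applied to $\b{P}=P_{1^r,m+1,1^r}$ its $U_{1^r,m+1,1^r}(k)$-orbit contains a unique element $\zeta_0$ in normal form \eqref{4normal}, with $t_1,\dots,t_r\in k^{\times}$ and $\r{Pr}(\zeta):=\r{Pr}(\zeta_0)\in\r{S}_{m+1}(k')$, and that regularity of $\zeta$ means precisely regularity of $\r{Pr}(\zeta)$ in the sense of Definition \ref{def4reg}. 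The Levi factor $\r{GL}_{m,k'}$ of $\b{H}$ normalizes $\r{Res}_{k/k'}U_{1^r,m+1,1^r}$, fixes the vectors $v_1,\dots,v_r,w_0,\check v_r,\dots,\check v_1$, and satisfies $g^{\tau}=g$ for $g\in\r{GL}_m(k')$; using these facts one checks that $\r{GL}_m(k')$-conjugation carries normal forms to normal forms, leaving the $t_i$ unchanged and sending $\r{Pr}$ to its $\r{GL}_m(k')$-conjugate, and hence that two normal forms lie in one $\b{H}(k')$-orbit exactly when they are $\r{GL}_m(k')$-conjugate. Passing to normal forms therefore yields bijections
\begin{align*}
[\r{S}_n(k')_{\r{reg}}]/\b{H}(k')\ &\overset{\sim}{\longrightarrow}\ (k^{\times})^r\times\bigl([\r{S}_{m+1}(k')_{\r{reg}}]/\r{GL}_m(k')\bigr),\\
[\r{U}_n^{\beta}(k')_{\r{reg}}]/\b{H}^{\beta}(k')\ &\overset{\sim}{\longrightarrow}\ (k^{\times})^r\times\bigl([\r{U}_{m+1}^{\beta}(k')_{\r{reg}}]/\r{U}_m^{\beta}(k')\bigr),
\end{align*}
the second being the identical argument via Lemma \ref{lem4weylu}, now using that $\r{U}_m^{\beta}=\r{U}(W^{\beta})$ normalizes $U'_{1^r,m+1}$ and fixes $W^{\perp}$, with $\r{U}_{m+1}^{\beta}=\r{U}(W^{\beta}\oplus E)$.

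Next I would match the two middle factors with Lemma \ref{lem4zhang}. Combined with the observation in its proof that $\r{GL}_m(k)$- and $\r{GL}_m(k')$-conjugacy agree on $\r{S}_{m+1}(k')_{\r{reg}}$, that lemma gives a bijection $[\r{S}_{m+1}(k')_{\r{reg}}]/\r{GL}_m(k')\overset{\sim}{\longrightarrow}\f{U}_{m+1}/\r{GL}_m(k)$, and projecting a pair $(\beta,\xi^{\beta})$ to $\beta$ identifies the right-hand side with $\coprod_{\beta\in[\r{Her}_m(k/k')^{\times}]}\r{U}_{m+1}^{\beta}(k')_{\r{reg}}/\r{U}_m^{\beta}(k')$, since the $\r{GL}_m(k)$-orbits on $\r{Her}_m(k/k')^{\times}$ are the similarity classes with stabilizer $\r{U}_m^{\beta}$ at $\beta$. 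Composing the three bijections while keeping the factor $(k^{\times})^r$ fixed gives the map $\b{N}$: it sends the regular $\b{H}$-orbit with normal-form data $(t_1,\dots,t_r;\r{Pr}(\zeta))$ to the regular $\b{H}^{\beta}$-orbit with data $(t_1,\dots,t_r;\xi^{\beta})$, where $(\beta,\xi^{\beta})$ is the pair attached to $\r{Pr}(\zeta)$ by Lemma \ref{lem4zhang}. Well-definedness and bijectivity of $\b{N}$ are then immediate from the uniqueness statements in Lemmas \ref{lem4weyl}, \ref{lem4weylu} and \ref{lem4zhang}; this proves (1).

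For (2), regularity of an element of $\r{S}_n(k')$ (or $\r{U}_n^{\beta}(k')$) is the conjunction of the non-vanishing of the functions $s_1,\dots,s_r$ (pre-regularity, as remarked before Definition \ref{def4reg}) with the non-vanishing of polynomial functions of $\r{Pr}$ expressing regular semisimplicity and the two cyclicity conditions (the discriminant of the characteristic polynomial and $\Delta_{\r{Pr}}$), so the regular locus is Zariski-open in $\r{S}_n$, resp. $\r{U}_n^{\beta}$; since these varieties are $k'$-rational (Cayley transform) with $\b{1}_n$ a rational point, the non-empty open regular loci contain $k'$-points, giving non-emptiness. Triviality of the stabilizer falls out of the normal-form analysis: replacing a regular element by its normal form $\zeta_0$ (stabilizers within one orbit being conjugate), if $\b{h}=(\underline{u};g)\in\b{H}(k')$ fixes $\zeta_0$ then $\underline{u}^{-1}\zeta_0\underline{u}^{\tau}=g\zeta_0g^{-1}$ lies in the unipotent orbit of $\zeta_0$ and is again in normal form, hence equals $\zeta_0$ by the uniqueness in Lemma \ref{lem4weyl}, forcing $\underline{u}=\b{1}_n$ by pre-regularity and then $g$ to centralize the regular element $\r{Pr}(\zeta_0)$, so $g=\b{1}_m$ by the lemma preceding Lemma \ref{lem4zhang} (a fortiori in the unitary case, since $\r{U}_m^{\beta}\subset\r{GL}_m(k)$); the $\b{H}^{\beta}$-case is otherwise identical. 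The step that will demand the most care is the first: verifying precisely that reduction to normal form turns the full $\b{H}$- (respectively $\b{H}^{\beta}$-) action into the clean product of $(k^{\times})^r$ with conjugation on $\r{Pr}$, with residual group exactly $\r{GL}_m(k')$, resp. $\r{U}_m^{\beta}(k')$ — once this identification is pinned down, everything else is formal.
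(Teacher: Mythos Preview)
Your proposal is correct and follows essentially the same route as the paper: reduce to normal forms via Lemmas \ref{lem4weyl} and \ref{lem4weylu}, then invoke Lemma \ref{lem4zhang} to match the middle blocks, and deduce triviality of stabilizers from uniqueness of the normal form together with the trivial-stabilizer statement for regular elements in $\r{M}_{m+1}$. The only cosmetic difference is in the non-emptiness argument for (2): the paper appeals to \cite[Section 3]{JR} combined with the exponential map, whereas you use the Cayley transform to see that $\r{S}_n$ and $\r{U}_n^{\beta}$ are $k'$-rational and hence have Zariski-dense $k'$-points --- these are the same idea in slightly different packaging.
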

 \begin{proof}
 (1). Start with an element $\zeta\in\r{S}_n(k')_{\r{reg}}$ and consider its
 normal form. We get $r$ invariants $t_1(\zeta),...,t_r(\zeta)$ and
 an element $\r{Pr}(\zeta)\in\r{S}_{m+1}(k')_{\r{reg}}$. By Lemma
 \eqref{lem4zhang}, there is a pair $(\beta,\xi^{\beta})$ such that
 $\xi=g^{-1}\xi^{\beta}g$. We fix $\beta$, then $\xi^{\beta}$ is uniquely determined up
 to the $\r{U}^{\beta}_m(k')$-conjugation. We define an element
 $\zeta^{\beta}$ by
  \begin{align*}
  \zeta^{\beta}=\[\begin{array}{ccccccc}
                     &  &  &  &  &  & t_1(\zeta) \\
                     &  &  &  &  & \iddots &  \\
                     &  &  &  & t_r(\zeta) & &  \\
                     &  &  & \xi^{\beta} &  &  &  \\
                     &  & t_r(\zeta)^{\tau,-1} &  &  &  &  \\
                     & \iddots &  &  &  &  &  \\
                    t_1(\zeta)^{\tau,-1} &  &  &  &  &  &
                  \end{array}
  \]\in\r{U}^{\beta}_n(k')_{\r{reg}}.
  \end{align*}
 By construction, $\zeta\T\zeta^{\beta}$ defines a map
 \begin{align*}
 \b{N}:[\r{S}_n(k')_{\r{reg}}]/\b{H}(k')\LR\coprod_{\beta\in[\r{Her}_m(k/k')^{\times}]}
 [\r{U}_n^{\beta}(k')_{\r{reg}}]/\b{H}^{\beta}(k')
 \end{align*}
 which is injective. The converse is similar.

 (2). The openness is due to the fact
 that the pre-regular elements in both cases correspond to the
 unique open cell in the Bruhat decomposition and the fact that the
 three conditions in Definition \ref{def4reg} are open.
 The non-emptiness is essentially exhibited in \cite[Section 3]{JR} combining with the
 exponential map. For the last
 part, we prove it for $\zeta\in\r{S}_n(k')_{\r{reg}}$ and the case
 for unitary groups is similar. We write $\zeta$ in its normal form. If
 $g^{-1}u^{-1}\zeta u^{\tau}g=\zeta$, then $u_1^{-1}g^{-1}\zeta
 gu_1^{\tau}=\zeta$ with $u_1=g^{-1}ug\in U_{1^r,m+1,1^r}(k)$. We
 have $g^{-1}\zeta g=\zeta$, $g^{-1}\r{Pr}(\zeta)g=\r{Pr}(\zeta)$ which
 implies that $g=\b{1}_n$ and hence $u=\b{1}_n$.\\
 \end{proof}

It is clear from the above discussion that the regular orbit
$\zeta\in[\r{S}_n(k')_{\r{reg}}]/\b{H}(k')$ or
$[\r{U}_n^{\beta}(k')_{\r{reg}}]/\b{H}^{\beta}(k')$ is determined by
its invariants $t_i(\zeta)$ ($i=1,...,r$),
$a_i(\zeta):=a_i(\r{Pr}(\zeta))$ ($i=1,...,m+1$) and
$b_i(\zeta):=b_i(\r{Pr}(\zeta))$ ($i=1,...,m$), and
$\zeta\leftrightarrow\zeta^{\beta}$ if and only if they have the
same invariants. We also denote
$\b{T}_{\zeta}=\b{T}_{\r{Pr}(\zeta)}$,
$\b{D}_{\zeta}=\b{D}_{\r{Pr}(\zeta)}$ and
$\Delta_{\zeta}=\Delta_{\r{Pr}(\xi)}$.\\

Now let $k'$ be a local field and $n$ odd. As suggested by the case
where $k/k'$ is split. We are going to formulate the conjecture on
the matching of functions. First, let us define a suitable
``transfer factor" $\b{t}$. Recall that we have a character
$\eta:k'^{\times}\R\d{C}^{\times}$. We define, for
$\zeta\in[\r{S}_n(k')_{\r{reg}}]/\b{H}(k')$,
 \begin{align*}
 \b{t}(\zeta)=\eta\(\b{T}_{\zeta}\cdot\(\det\r{Pr}(\zeta)\)^{-\frac{m}{2}}\)
 \end{align*}
which makes sense since $m$ is even and
$\b{T}_{\zeta}\cdot\(\det\r{Pr}(\zeta)\)^{-\frac{m}{2}}\in
k'^{\times}$.\\

 \begin{conj}[\textbf{Smooth matching}]\label{conj4sm}
 Let $k'$ be as above. Given any smooth, compactly supported function
 $F\in\c{H}(\r{S}_n(k'))$, there exist functions
 $\(f^{\beta}\)_{\beta}$ for each
 $\beta\in[\r{Her}_m(k/k')^{\times}]$ such that
  \begin{align*}
  \c{O}(F,\zeta)=\b{t}(\zeta)\c{O}(f^{\beta},\zeta^{\beta})
  \end{align*}
 for all $\zeta\in[\r{S}_n(k')_{\r{reg}}]/\b{H}(k')$ and
 $\zeta\leftrightarrow\zeta^{\beta}$. Conversely, given any
 functions $f^{\beta}\in\c{H}(\r{U}^{\beta}_n(k'))$ for each
 $\beta\in[\r{Her}_m(k/k')^{\times}]$, there exists a function
 $F\in\c{H}(\r{S}_n(k'))$ such that the above identity holds. We
 say that such $F$ and $(f^{\beta})_{\beta}$ match and denote by
 $F\leftrightarrow(f^{\beta})_{\beta}$.\\
 \end{conj}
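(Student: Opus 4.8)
The plan is to reduce the statement to a local matching of orbital integrals place by place and to dispose of the split places first. Since $\c{H}(\r{S}_n(k'))$ and each $\c{H}(\r{U}_n^{\beta}(k'))$ are spanned by factorizable functions and the integrals $\c{O}(F,\zeta)$, $\c{O}(f^{\beta},\zeta^{\beta})$ are Euler products over $\d{M}_{k'}$, it is enough to prove, for each $v'\in\d{M}_{k'}$, the corresponding identity of local orbital integrals, for all choices of the other local components. At a place $v'$ that splits in $k$ this is precisely the computation carried out above ending in \eqref{4sms}: there $\r{S}_{n,v'}\cong\r{GL}_{n,v'}$, the family $\1\r{U}_n^{\beta}\2$ collapses to the single group $\r{GL}_{n,v'}$, the invariants $t_i,a_i,b_i$ of a regular orbit coincide on the two sides, the transfer factor $\b{t}$ is identically $1$, and one simply takes $f_{v'}=F_{v'}$. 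So the split part of the conjecture is already a theorem, and for the remainder I fix a non-split (inert or ramified) place $v'$ and suppress it from the notation.

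At such a $v'$ the strategy is the standard smooth-transfer paradigm (Waldspurger; Jacquet-Rallis, whose local conjecture is now a theorem of W. Zhang in characteristic zero): realize each orbital integral as a function on the common base of invariants $t_1,\dots,t_r;a_1,\dots,a_{m+1};b_1,\dots,b_m$ furnished by Proposition \ref{prop4orbit}, and compare the two resulting function spaces. The first move is to peel off the $t_i$. In the normal forms \eqref{4normal} and \eqref{4unormal} the invariants $t_i$ sit on the $2r$ outer coordinates, outside the central block $\r{Pr}(\zeta)\in\r{S}_{m+1}(k')$ (resp. $\r{U}_{m+1}^{\beta}(k')$); integrating $F$ over the $U_X\times U_{X^*}$-part of $U_{1^r,m+1,1^r}$ against the generic character $\underline{\psi}$ (resp. over the $U'_X$-part of $U'_{1^r,m+1}$ against $\underline{\psi'}$) is a local Kloosterman/Whittaker integral---the local avatar of the global unfolding \eqref{2bs2}--\eqref{2bs5}---that only re-packages the dependence on the $t_i$ into a smooth, compactly supported weight. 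This expresses $\c{O}(F,\zeta)$ in terms of an orbital integral attached to $\r{Pr}(\zeta)$ for the pair $\r{GL}_{m+1}\supset\r{GL}_m$ in its symmetric-space form $\r{S}_{m+1}\leftrightarrow\coprod_{\beta}\r{U}_{m+1}^{\beta}$, i.e. exactly the Jacquet-Rallis situation ($r=0$ being literally the case $n=m+1$), weighted by a function of the $t_i$. One then invokes the Jacquet-Rallis smooth transfer, uniformly in the auxiliary weight, and undoes the Kloosterman integration on the unitary side to produce the family $\(f^{\beta}\)_{\beta}$; a partition-of-unity and descent argument near the non-regular locus, where the $\b{H}$-stabilizer ceases to be trivial, is what is needed to pass from transfer of orbital integrals, viewed as functions of invariants, back to transfer of Hecke functions. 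The converse direction is symmetric.

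The hardest part is twofold. First, even after this reduction the residual local problem---the $r=0$, Jacquet-Rallis case at a non-split place---is a deep piece of local harmonic analysis (handled through a relative Lie algebra, a Cayley transform, and a Fourier-transform identity), which I would use as a black box rather than reprove. Second, and specific to the present hybrid, one must check that the Kloosterman integration removing the $t_i$ is compatible with transfer on the nose, in particular that it reproduces exactly the normalization recorded in the transfer factor $\b{t}(\zeta)=\eta\(\b{T}_{\zeta}\(\det\r{Pr}(\zeta)\)^{-\frac{m}{2}}\)$: the factor $\b{T}_{\zeta}$, a determinant in the $w_0$- and $w_j$-data, should be precisely the discrepancy between the $\r{S}_{m+1}$- and $\r{U}_{m+1}^{\beta}$-normalizations produced by the unfolding, while the $\eta(\det\b{h})$-twist in $\c{O}(F,\zeta)$ and the measure normalizations have to be tracked through. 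Making the partition-of-unity step rigorous requires controlling the convergence and regularization of the weighted orbital integrals on the boundary of the regular locus; this is the technical heart of the matter, and is presumably why the statement is posed as a conjecture rather than proved here.
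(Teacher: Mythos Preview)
The statement is posed in the paper as a \emph{conjecture}, not a theorem; the paper does not attempt a proof. The only result the paper establishes about it is the immediately following Corollary: when the quadratic extension $k/k'$ is split, smooth matching holds, and this is exactly the computation culminating in \eqref{4sms}. Your final sentence correctly recognizes that the non-split case is left open.

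That said, your opening paragraph rests on a misreading. In Conjecture~\ref{conj4sm} the field $k'$ is already a \emph{local} field (see the sentence ``Now let $k'$ be a local field and $n$ odd'' just before the transfer factor is introduced). There is no product over $\d{M}_{k'}$, no notion of factorizable functions, and no Euler decomposition to invoke; the conjecture is itself the local statement you are trying to ``reduce to''. What you call ``the split part of the conjecture'' is simply the case where the local quadratic algebra $k$ is $k'\times k'$, and that is indeed the paper's Corollary.

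For the non-split local case, your outline---strip off the outer $t_i$-coordinates by a Kloosterman-type integration and reduce to the Jacquet--Rallis smooth transfer on the inner $\r{S}_{m+1}\leftrightarrow\coprod_{\beta}\r{U}^{\beta}_{m+1}$ block---is a plausible heuristic, but it is not a proof. The step you flag as ``the technical heart of the matter'' is in fact the whole difficulty: there is no established mechanism by which integrating out the $U_X\times U_{X^*}$-directions against $\underline{\psi}$ converts a test function in $\c{H}(\r{S}_n(k'))$ into a test function in $\c{H}(\r{S}_{m+1}(k'))$ with a controllable smooth weight in the $t_i$, uniformly near the non-regular locus, nor that this operation is invertible so as to recover $(f^{\beta})_{\beta}$ on $\r{U}^{\beta}_n$ from data on $\r{U}^{\beta}_{m+1}$. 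The germ expansions and partition-of-unity arguments needed to make this rigorous are precisely what remain to be done; the paper accordingly leaves the statement as a conjecture rather than supplying such an argument.
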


 \begin{cor}[of Section \ref{sec4dd}]
 If $k/k'$ is split, then the conjecture of smoothing matching
 holds.\\
 \end{cor}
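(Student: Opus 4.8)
The plan is to read the corollary off directly from the explicit orbital-integral computations at the end of Section~\ref{sec4dd}, applied to the degenerate situation $k\cong k'\times k'$ with $\tau$ the swap involution. First I would record the relevant identifications in this split case. The quadratic character $\eta$ is trivial, so $[\r{Her}_m(k/k')^{\times}]$ consists of a single similarity class $\beta$; the isomorphism $\r{GL}_{n,k}/\r{GL}_{n,k'}\cong\r{S}_n$ identifies $\r{S}_n(k')$ with $\r{GL}_n(k')$ via $(g_{\bullet},g_{\circ})\mapsto g_{\bullet}$, and, exactly as in the paragraph preceding \eqref{4psi'}, $\r{U}_n^{\beta}(k')$ is likewise identified with $\r{GL}_n(k')$. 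Under these identifications both $\b{H}$ and $\b{H}^{\beta}$ become the common group $(U_{1^r,m+1,1^r})^2\rtimes\r{GL}_m$ acting on $\r{GL}_n(k')$ by $\zeta\mapsto g^{-1}\underline{u}_1^{-1}\zeta\,\underline{u}_2\,g$: on the symmetric side one writes $\underline{u}\in\r{Res}_{k/k'}U_{1^r,m+1,1^r}(k')=U_{1^r,m+1,1^r}(k')\times U_{1^r,m+1,1^r}(k')$ as $(\underline{u}_{\bullet},\underline{u}_{\circ})$ and uses $\underline{u}^{\tau}=(\underline{u}_{\circ},\underline{u}_{\bullet})$, matching the two unipotents on the unitary side. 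Finally, both generic characters \eqref{4psi} and \eqref{4nupsi} become the character $\underline{\underline{\psi}}'$ of \eqref{4psi'} evaluated on $\underline{u}_1^{-1}\underline{u}_2$, and the Haar measures match.

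Next I would invoke the last assertions of Section~\ref{sec4dd}: under these identifications a regular orbit $\zeta$ on the symmetric-space side matches $\zeta^{\beta}$ on the unitary side precisely when $\zeta=\zeta^{\beta}$ as elements of $\r{GL}_n(k')$, and the transfer factor $\b{t}(\zeta)=\eta\(\b{T}_{\zeta}\cdot(\det\r{Pr}(\zeta))^{-m/2}\)=1$ since $\eta\equiv1$ in the split case. Consequently the two weighted orbital integrals $\c{O}(F,\zeta)$ and $\c{O}(f^{\beta},\zeta^{\beta})$ are given by literally the same integral expression, evaluated at the respective functions; this is exactly the content of \eqref{4sms}. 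Therefore, given $F\in\c{H}(\r{S}_n(k'))$, one takes $f^{\beta}\in\c{H}(\r{U}_n^{\beta}(k'))$ to be the function corresponding to $F$ under $\r{S}_n(k')\cong\r{GL}_n(k')\cong\r{U}_n^{\beta}(k')$; then $\c{O}(F,\zeta)=\c{O}(f^{\beta},\zeta^{\beta})=\b{t}(\zeta)\c{O}(f^{\beta},\zeta^{\beta})$ for all regular $\zeta\leftrightarrow\zeta^{\beta}$. Running the construction in reverse produces $F$ from any given $f^{\beta}$, so both directions of Conjecture~\ref{conj4sm} hold, with $F\leftrightarrow(f^{\beta})$ being the transport of $F$ along the identifications.

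The only point that requires genuine checking is the bookkeeping in the first step: that the group actions, the Haar measures, and the generic characters really do correspond after the $k\cong k'\times k'$ identification under the chosen bases. This is not an analytic issue — there is no convergence or regularization to worry about at this (purely local, split) level — but purely a matter of unwinding definitions, and it is precisely what Section~\ref{sec4dd} already carries out on the way to \eqref{4sms}. So I expect the proof to amount to little more than quoting those identifications, the triviality of $\b{t}$, and the orbit matching of Proposition~\ref{prop4orbit} in this degenerate case.
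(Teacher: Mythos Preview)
Your proposal is correct and follows exactly the approach the paper intends: the corollary is stated without proof precisely because it is meant to be read off from the split-place computation at the end of Section~\ref{sec4dd}, culminating in \eqref{4sms}. You have correctly unpacked the identifications (of $\r{S}_n$, $\r{U}_n^{\beta}$, the acting groups, the characters) and the triviality of $\eta$ and hence of $\b{t}$ in the split case, which is all that is needed.
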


\subsection{The fundamental lemma}
\label{sec4fl}

As usual, to establish the equality between two relative trace
formulae, one need to prove the corresponding fundamental lemma. We
now formulate our fundamental lemma. We allow $m$ to be any
nonnegative integer.\\

Let $k'$ be a non-archimedean local field and $k/k'$ a separable
quadratic field extension. There are only two non-isomorphic
hermitian spaces of dimension $m>0$ over $k$ which is distinguished
by the factor $\epsilon(\beta)$. We will use the superscript $\pm$
instead of $\beta$ for $\epsilon(\beta)=\pm1$ in the following
notations.

We now assume that $k/k'$ is an unramified field extension and
$\psi':k'\R\d{C}^1$ is an unramified character. As before, we write
$\f{o}'$ (resp. $\f{o}$) the ring of integers of $k'$ (resp. $k$).
We denote by $\r{val}:k^{\times}\R\d{Z}$ the valuation map. We also
assume that $\beta_0\in\f{o}'$. Then $W^+$ will have a self-dual
$\f{o}$-lattice $L_W$ which extends to a self-dual $\f{o}$-lattice
$L_V$ of $V^+$. The unitary group $\r{U}^+_m$ (resp. $\r{U}^+_n$) is
unramified and has a model over $\f{o}'$. The group of
$\f{o}'$-points $\r{U}^+_m(\f{o}')$ (resp. $\r{U}^+_n(\f{o}')$) is a
hyperspecial maximal subgroup of $\r{U}^+_m(k')$ (resp.
$\r{U}^+_n(k')$). We also identity $\r{GL}_n(\f{o})$ with
$\r{GL}_n(L_V)$, a hyperspecial maximal subgroup of $\r{GL}_n(k)$
and let $\r{S}_n(\f{o}'):=\r{S}_n(k')\cap\r{GL}_n(\f{o})$.

We denote by $\c{H}(\r{U}^+_n(k')/\!/\r{U}^+_n(\f{o}'))$ (resp.
$\c{H}(\r{GL}_n(k)/\!/\r{GL}_n(\f{o}))$) the spherical Hecke algebra
of $\r{U}^+_n$ (resp. $\r{GL}_{n,k}$). There is a base change map
$b:\c{H}(\r{GL}_n(k)/\!/\r{GL}_n(\f{o}))\R\c{H}(\r{U}^+_n(k')/\!/\r{U}^+_n(\f{o}'))$
and recall that we have a linear map
$\sigma:\c{H}(\r{GL}_n(k)/\!/\r{GL}_n(\f{o}))\R\c{H}(\r{S}_n(k'))$
similarly defined as \eqref{4sigma} for the local case. Moreover, we
define
 \begin{align}\label{4tfactor}
 \b{t}(\zeta)=
 \begin{cases}
 (-1)^{\r{val}\(\b{T}_{\zeta}\cdot\prod_{i=1}^rt_i(\zeta)\)} &
 m\text{ is odd;}\\
 (-1)^{\r{val}\(\b{T}_{\zeta}\)} &
 m\text{ is even.}
 \end{cases}
 \end{align}

\begin{conj}[\textbf{The fundamental lemma}]
For any element $\tilde{F}\in\c{H}(\r{GL}_n(k)/\!/\r{GL}_n(\f{o}))$,
the functions $F=\sigma(\tilde{F})$ and $(f^+,f^-)$ match, where
$f^+=b(\tilde{F})$ and $f^-=0$.

In particular, we have
 \begin{align*}
 \c{O}(\CF_{\r{S}_n(\f{o}')},\zeta)=
  \begin{cases}
  \b{t}(\zeta)\c{O}(\CF_{\r{U}^+_n(\f{o}')},\zeta^+) & \zeta\leftrightarrow\zeta^+\in\r{U}^+_n(k');\\
  0 &\zeta\leftrightarrow\zeta^-\in\r{U}^-_n(k')
  \end{cases}
 \end{align*}
where
 \begin{align*}
 \c{O}(\CF_{\r{S}_n(\f{o}')},\zeta)=&\intl_{\b{H}(k')}\CF_{\r{S}_n(\f{o}')}
 ([\zeta]\b{h})\underline{\psi}(\b{h})\eta(\det\b{h})\r{d}\b{h};\\
 \c{O}(\CF_{\r{U}^+_n(\f{o}')},\zeta^+)=&
 \intl_{\b{H}^+(k')}\CF_{\r{U}^+_n(\f{o}')}([\zeta^+]\b{h}')\underline{\psi'}(\b{h}')\r{d}\b{h}'.
 \end{align*}
It is easy to see that $\zeta\leftrightarrow\zeta^+\in\r{U}^+_n(k')$
if and only if $\r{val}\(\Delta_{\zeta}\)$ is even.\\
\end{conj}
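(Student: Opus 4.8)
The assertion really contains three sub-claims of increasing difficulty: the reduction of the Hecke-algebra statement to the unit-element case, and, within the latter, the \emph{easy half} (vanishing of $\c{O}(\CF_{\r{S}_n(\f{o}')},\zeta)$ when $\zeta\leftrightarrow\zeta^-$) and the \emph{hard half} (the identity with transfer factor when $\zeta\leftrightarrow\zeta^+$). The plan is to settle the easy half by an elementary integrality argument, check that $\b{t}(\zeta)$ is well defined on orbits, and then isolate the hard half — which in general is only a conjecture — as the genuine obstruction; only the easy half is within reach by the methods of this paper.

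\textbf{Easy half.} By the last sentence of the conjecture this reduces to showing: if $\c{O}(\CF_{\r{S}_n(\f{o}')},\zeta)\neq0$, then $\r{val}(\Delta_\zeta)$ is even. Nonvanishing of the integral produces some $\b{h}=\b{h}(\underline{u};g)\in\b{H}(k')$ with $\zeta_0:=[\zeta]\b{h}=g^{-1}\underline{u}^{-1}\zeta\underline{u}^\tau g\in\r{S}_n(\f{o}')$. Since $\zeta_0$ lies in the same $\b{H}(k')$-orbit as $\zeta$ it has the same invariants; in particular $\Delta_{\zeta_0}=\Delta_\zeta$, because $\Delta_\xi=\det\b{D}_\xi$ is the determinant of a Hankel matrix whose entries $w^\vee_0\xi^{i+j-2}w_0$ are $\r{GL}_m$-conjugation invariants (directly for $i+j-2\leq m$, and via Cayley--Hamilton beyond), hence a genuine orbit invariant. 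It then suffices to show that no point of $\r{S}_n(\f{o}')$ has $\Delta$ of odd valuation. Here I would use that, $k/k'$ being unramified, $\r{S}_n(\f{o}')=\1\gamma\gamma^{\tau,-1}\;|\;\gamma\in\r{GL}_n(\f{o})\2$ (Hilbert~90 / vanishing of $H^1$ of the residual Galois group on an $\f{o}$-lattice), so that $\zeta_0$ preserves a self-dual $\f{o}$-lattice; carrying the construction of $\r{Pr}(\zeta_0)$ out over $\f{o}$ then exhibits $\b{D}_{\zeta_0}$ as an invertible $\f{o}$-matrix up to a norm, i.e. $\Delta_{\zeta_0}\in\f{o}^\times\cdot\r{Nm}(k^\times)$. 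Geometrically this is the statement that the hermitian space $W^\beta\oplus E$ into which $\zeta$ transfers admits a self-dual lattice, forcing $\epsilon(\beta)=+1$ and hence $\r{val}(\det\beta)$ even; this is the content of Proposition \ref{prop4half}. The well-definedness of $\b{t}(\zeta)$ (that $\r{val}(\b{T}_\zeta)$, resp. $\r{val}(\b{T}_\zeta\cdot\prod_it_i(\zeta))$, is constant mod $2$ on orbits) is a similar but more bookkeeping-heavy check.

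\textbf{Hecke reduction and hard half.} Reducing the statement for an arbitrary $\tilde F\in\c{H}(\r{GL}_n(k)/\!/\r{GL}_n(\f{o}))$ to $\tilde F=\CF_{\r{GL}_n(\f{o})}$ should proceed by comparing the two sides as linear functionals on the spherical Hecke algebra: the maps $\sigma$ and $b$ are compatible with the relevant module structures, so it is enough to match a generating set — but, as in the treatment of the full spherical fundamental lemma in \cite{Yun09}, \cite{Ngo99} (and \cite{Mao93} in the smallest case), this is a substantial argument rather than a formality, and I would expect to need the geometric model below even to run it. The hard half, $\c{O}(\CF_{\r{S}_n(\f{o}')},\zeta)=\b{t}(\zeta)\c{O}(\CF_{\r{U}^+_n(\f{o}')},\zeta^+)$ for $\zeta\leftrightarrow\zeta^+$, is the real obstacle: I do not expect it to yield to direct manipulation of the $p$-adic integrals. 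The plausible routes are (i) interpreting each weighted orbital integral as a weighted point-count on an appropriate affine Springer fibre, or on a fibre of a Hitchin/Ng\^o-type map attached to the Bessel pair $\b{H}\subset\r{GL}_n\times\r{GL}_m$, and then comparing the two sides by a cohomological/perverse-sheaf argument — necessarily a hybrid of Yun's construction for Jacquet--Rallis and Ng\^o's for the Kloosterman/Flicker situation; or (ii) for special $(n,m)$, in particular $n=m$, reducing the identity to a combinatorial lattice-counting identity, which is exactly the strategy carried out in Section \ref{sec5} for $\r{U}_n\times\r{U}_n$ using \cite{Yun09}. Building such a geometric framework uniformly in $(n,m)$ is precisely why the general statement is left as a conjecture.
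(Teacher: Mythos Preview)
Your discussion of the hard half and the Hecke reduction is reasonable --- the paper indeed leaves the hard half as a conjecture and proves only the vanishing statement (Proposition~\ref{prop4half}). But your proposed proof of the easy half has a genuine gap.

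You argue that if the orbital integral is nonzero then some $\b{H}(k')$-translate $\zeta_0$ of $\zeta$ lies in $\r{S}_n(\f{o}')$, and then claim that no regular element of $\r{S}_n(\f{o}')$ can have $\r{val}(\Delta)$ odd. This second claim is false: the support of the integrand is in general \emph{not} empty when $\r{val}(\Delta_\zeta)$ is odd. Already for $r=0$, $m=1$, $n=2$ one can take $g=\bigl(\begin{smallmatrix}1&\jmath\varpi\\ \jmath&1\end{smallmatrix}\bigr)\in\r{GL}_2(\f{o})$ and $\zeta=gg^{\tau,-1}\in\r{S}_2(\f{o}')$; a direct computation gives $\Delta_\zeta=bc$ with $\r{val}(b)=1$, $\r{val}(c)=0$, so $\r{val}(\Delta_\zeta)=1$, yet $\zeta$ is regular. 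The orbital integral over $\r{GL}_1(k')$ then has support on two cosets (valuations $0$ and $1$) and vanishes by the cancellation $1+\eta(\varpi)=0$, not because the integrand is identically zero. Your lattice/Hilbert~90 heuristic breaks down because passing from $\zeta_0\in\r{S}_n(\f{o}')$ to its normal form requires acting by $U_{1^r,m+1,1^r}(k)$, which does not preserve integrality; and even when $r=0$ there is no reason for $\b{D}_{\zeta_0}$ to be invertible over $\f{o}$.

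The paper's proof of Proposition~\ref{prop4half} exploits this cancellation directly. One uses the invariance $\CF_{\r{S}_n(\f{o}')}(s)=\CF_{\r{S}_n(\f{o}')}(w\,{}^{\r t}s\,w)$ for the Weyl element $w=\r{diag}(w_r,\b{1}_{m+1},w_r)$, together with $\underline{\psi}(\underline{u})=\underline{\psi}(w\,{}^{\r t}\underline{u}^{\tau,-1}w)$. Writing $\zeta$ in normal form, $w\,{}^{\r t}\zeta\,w$ is again in normal form with the same invariants, so by (the proof of) Lemma~\ref{lem4zhang} there is a unique $h\in\r{GL}_m(k')$ with $h^{-1}\zeta h=w\,{}^{\r t}\zeta\,w$; moreover $h^{-1}$ represents the hermitian form $\beta$ to which $\zeta$ transfers, so $\eta(\det h)=\epsilon(\beta)=-1$ precisely when $\r{val}(\Delta_\zeta)$ is odd. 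The change of variables $(\underline{u},g)\mapsto(w\,{}^{\r t}\underline{u}^{\tau,-1}w,\,h\,{}^{\r t}g^{-1})$ then shows $\c{O}(\CF_{\r{S}_n(\f{o}')},\zeta)=-\c{O}(\CF_{\r{S}_n(\f{o}')},\zeta)$.
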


\begin{prop}\label{prop4half}
If $\r{val}\(\Delta_{\zeta}\)$ is odd, then
 \begin{align*}
 \c{O}(\CF_{\r{S}_n(\f{o}')},\zeta)=0.
 \end{align*}
\end{prop}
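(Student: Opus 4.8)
The plan is to exploit the fact that $\Delta_\zeta = \det\mathbf{D}_\zeta$ is, up to a nonzero constant depending only on the invariants $a_i,b_i$ and the choice of coordinates, the determinant of a Gram-type matrix attached to $\mathrm{Pr}(\zeta)$, and that the orbital integral $\c{O}(\CF_{\r{S}_n(\f{o}')},\zeta)$ carries the character $\eta\circ\det$ on the $\r{GL}_{m}(k')$-factor of $\b{H}(\d{A}')$, which has a nontrivial restriction precisely when a relevant valuation is odd. First I would unfold the definition $\c{O}(\CF_{\r{S}_n(\f{o}')},\zeta)=\int_{\b{H}(k')}\CF_{\r{S}_n(\f{o}')}([\zeta]\b{h})\underline{\psi}(\b{h})\eta(\det\b{h})\,\r{d}\b{h}$, writing $\b{h}=\b{h}(\underline{u};g)$ with $\underline u\in U_{1^r,m+1,1^r}(k)$ and $g\in\r{GL}_m(k')$, so that the integrand involves $\CF_{\r{S}_n(\f{o}')}(g^{-1}\underline u^{-1}\zeta\,\underline u^\tau g)$ times the unipotent character $\underline\psi(\underline u^{-1})$ and the sign $\eta(\det g)$.

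The key step is to produce, for each $\zeta$ in the support, a translation by an element $g_0$ in the centralizer-like part of $\r{GL}_m(k')$ (or more precisely a diagonal element scaling the basis vectors of $W$) which preserves both the condition ``$[\zeta]\b{h}\in\r{S}_n(\f{o}')$'' and the unipotent character $\underline\psi$, but multiplies $\eta(\det\b{h})$ by $\eta(\r{Nm}\,(\text{something}))\cdot$ a sign governed by $\r{val}(\Delta_\zeta)$. Concretely, because $\r{S}_n(\f{o}')$ is $\r{GL}_n(\f{o})$-conjugation stable, conjugation by an $\f{o}$-unit in the $\r{GL}_m$-block is harmless; the point is to find a substitution on the $\underline u$-variables together with a scaling that is \emph{not} a unit but whose effect on the integrability locus cancels, leaving only the change in the sign $\eta(\det g)$. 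This is exactly the kind of symmetry argument used by Jacquet--Rallis and by Yun: when $\r{val}(\Delta_\zeta)$ is odd, the lattice $\1 w_0,\xi w_0,\dots,\xi^m w_0\2$ spanned by $\r{Pr}(\zeta)$ acting on $w_0$ has a "defect" of odd valuation relative to $L_W\oplus L_E$, and one can rescale to move an $\f{o}^\times$ worth of the domain by a non-unit, forcing $\c{O}=\eta(\varpi)\cdot\c{O}=-\c{O}$, hence $\c{O}=0$.

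Carrying this out, the steps in order are: (1) unfold and reduce to an integral over $U_{1^r,m+1,1^r}(k)\times\r{GL}_m(k')$ against $\underline\psi\cdot(\eta\circ\det)$; (2) normalize $\zeta$ to its normal form \eqref{4normal}, absorbing the $t_i$ into the $\f{o}^\times$-part since $\r{val}(\Delta_\zeta)$ depends only on $\mathrm{Pr}(\zeta)$ up to even corrections — here I should check that $\r{val}(\Delta_\zeta)$ and $\r{val}(t_i)$ interact so that the parity statement is genuinely about $\Delta_\zeta$; (3) identify the involution or rescaling $\iota$ on the domain that fixes the support and the character $\underline\psi$ but negates the sign when $\r{val}(\Delta_\zeta)$ is odd — this is where I would invoke the structural results of \cite[Section 6]{RS07} on $\b{D}_\xi$ and the Gram matrix, together with the behavior of the Hecke operator $\CF_{\r{S}_n(\f{o}')}$ under dilation; (4) conclude $\c{O}=-\c{O}=0$.

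The main obstacle I expect is step (3): exhibiting the explicit rescaling and verifying that it preserves the non-unipotent integrability constraint $g^{-1}\underline u^{-1}\zeta\,\underline u^\tau g\in\r{GL}_n(\f{o})$ with $ss^\tau=\b 1_n$, while cleanly tracking its effect on $\det g$ modulo norms. The delicacy is that $\r{S}_n(\f{o}')$ is not stable under arbitrary scaling (only under $\r{GL}_n(\f{o})$-conjugation and the involution $s\mapsto s^{-1}=s^\tau$), so the rescaling has to be arranged on the \emph{orbit side}, i.e. absorbed into a change of representative $\zeta\rightsquigarrow\zeta'$ with the same invariants but shifted by a central factor whose norm has odd valuation; matching this shift against the $\eta(\det)$ twist is the crux. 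Once the parity bookkeeping is set up correctly — essentially showing $\r{val}(\Delta_\zeta)$ odd forces the orbit to meet $\r{S}_n$ only "off the self-dual lattice" by an odd amount — the vanishing is immediate.
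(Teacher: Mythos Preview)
Your overall strategy --- find a symmetry of the integration domain that preserves the integrand up to a sign $\eta(\det h)=-1$, forcing $\c{O}=-\c{O}=0$ --- is correct and matches the paper. However, the implementation you propose via rescaling or dilation is not the right mechanism, and you yourself flag step (3) as the obstacle you cannot resolve.

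The missing key idea is a \emph{transpose} symmetry rather than a scaling. Set
\begin{align*}
w=\[\begin{array}{ccc}
       &  & w_r \\
       & \b{1}_{m+1} &  \\
      w_r &  &
    \end{array}\].
\end{align*}
Then $\CF_{\r{S}_n(\f{o}')}(s)=\CF_{\r{S}_n(\f{o}')}(w\,{}^{\r{t}}s\,w)$, since $\r{S}_n(\f{o}')$ is stable under transpose and under $\r{GL}_n(\f{o})$-conjugation. Writing $\zeta$ in its normal form and applying this inside the integral, one finds that $w\,{}^{\r{t}}\zeta\,w$ is again in normal form with the same $t_i$, but with $\r{Pr}(\zeta)$ replaced by ${}^{\r{t}}\r{Pr}(\zeta)$. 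The crucial input is then the proof of Lemma~\ref{lem4zhang}: the unique $h\in\r{GL}_m(k')$ with $h^{-1}\r{Pr}(\zeta)h={}^{\r{t}}\r{Pr}(\zeta)$ is essentially the inverse of the hermitian matrix $\beta$ determining which unitary side $\zeta$ matches; since $\r{val}(\Delta_\zeta)$ odd means $\zeta\leftrightarrow\zeta^-$, one gets $\eta(\det h)=-1$. One also checks directly that $\underline{\psi}(\underline{u})=\underline{\psi}(w\,{}^{\r{t}}\underline{u}^{\tau,-1}w)$, so the change of variables $w\,{}^{\r{t}}\underline{u}^{\tau,-1}w\mapsto\underline{u}$, $h\,{}^{\r{t}}g^{-1}\mapsto g$ yields $\c{O}=-\c{O}$.

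Your rescaling idea cannot be made to work for exactly the reason you already identify: $\r{S}_n(\f{o}')$ is not stable under non-unit scalings, and there is no way to absorb such a scaling into the orbit representative while remaining in $\r{S}_n$. The paper sidesteps this entirely by using an involution that genuinely preserves $\r{S}_n(\f{o}')$, pushing the asymmetry onto the conjugating element $h$ furnished by Lemma~\ref{lem4zhang}, whose determinant carries the sign. The link you were looking for between $\r{val}(\Delta_\zeta)$ and the sign is not a lattice-defect or dilation phenomenon but precisely the identification $h\sim\beta^{-1}$ from that lemma.
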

\begin{proof}
The following is a modification of an argument in \cite{Zh}. Let
 \begin{align*}
 w=\[\begin{array}{ccc}
        &  & w_r \\
        & \b{1}_{m+1} &  \\
       w_r &  &
     \end{array}
 \],
 \end{align*}
then it is easy to see that
$\CF_{\r{S}_n(\f{o}')}(s)=\CF_{\r{S}_n(\f{o}')}(w\:^{\r{t}}sw)$. If
we write $\zeta$ in its normal form \eqref{4normal}, then
 \begin{align}\label{4half}
 &\c{O}(\CF_{\r{S}_n(\f{o}')},\zeta)\notag\\
 =&\intl_{U_{1^r,m+1,1^r}(k)}\intl_{\r{GL}_m(k')}\CF_{\r{S}_n(\f{o}')}
 \(w\:^{\r{t}}\underline{u}^{\tau}\:^{\r{t}}g\:^{\r{t}}\zeta\:^{\r{t}}g^{-1}\:^{\r{t}}\underline{u}^{-1}w\)
 \underline{\psi}(\underline{u}^{-1})\eta(\det g)\r{d}g\r{d}\underline{u}\notag\\
 =&\intl_{U_{1^r,m+1,1^r}(k)}\intl_{\r{GL}_m(k')}\CF_{\r{S}_n(\f{o}')}
 \(\(w\:^{\r{t}}\underline{u}^{\tau,-1}w\)^{-1}\:^{\r{t}}g
 \(w\:^{\r{t}}\zeta w\)\:^{\r{t}}g^{-1}\(w\:^{\r{t}}\underline{u}^{\tau,-1}w\)^{\tau}\)
 \underline{\psi}(\underline{u}^{-1})\eta(\det
 g)\r{d}g\r{d}\underline{u}.
 \end{align}
But $w\:^{\r{t}}\zeta w$ and $\zeta$ are both of normal form and
have the same invariants. In the proof of Lemma \ref{lem4zhang}, we
see that there exists $h\in\r{GL}_m(k')$ such that $w\:^{\r{t}}\zeta
w=h^{-1}\zeta h$ and $\eta(\det h)=-1$. Moreover,
$\underline{\psi}(\underline{u})=\underline{\psi}(w\:^{\r{t}}\underline{u}^{\tau,-1}w)$.
After changing variables
$w\:^{\r{t}}\underline{u}^{\tau,-1}w\T\underline{u}$,
$h\:^{\r{t}}g^{-1}\T g$, we have
 \begin{align*}
 \eqref{4half}=-\intl_{U_{1^r,m+1,1^r}(k)}\intl_{\r{GL}_m(k')}\CF_{\r{S}_n(\f{o}')}
 \(\underline{u}^{-1}g^{-1}\zeta g\underline{u}^{\tau}\)
 \underline{\psi}(\underline{u}^{-1})\eta(\det g)\r{d}g\r{d}\underline{u}
 \end{align*}
which implies that $\c{O}(\CF_{\r{S}_n(\f{o}')},\zeta)=0$.\\
\end{proof}

\section{A relative trace formula for $\r{U}_n\times\r{U}_m$: Fourier-Jacobi periods}
\label{sec5}

\subsection{Fourier-Jacobi models and periods}
\label{sec5fmp}

Let us briefly recall the definition of Fourier-Jacobi models and
periods for unitary groups in \cite{GGP}. First, let us consider the
local situation, hence $k'$ is a local field. Let $V$ be a hermitian
space over $k$ of dimension $n$ with the hermitian form $(-,-)$ and
$W\subset V$ a subspace of dimension $m$ such that the restricted
hermitian form $(-,-)|_W$ is non-degenerate. We assume that the
orthogonal complement $W^{\perp}=X\oplus X^*$ such that $X$, $X^*$
are isotropic and of dimension $r$. Hence $n=m+2r$. The hermitian
form restricted on $W$ (resp. $X\oplus X^*$) identifies $W$ (resp.
$X^*$) with $\check{W}=W^{\vee}_{\tau}$ (resp. $\check{X}$). We
denote $\r{U}(V)$ (resp. $\r{U}(W)$) the unitary group of $V$ (resp.
$W$) which is a reductive group over $k'$. Let $P'_{r,m}$ be the
parabolic subgroup of $\r{U}(V)$ stabilizing $X$ and $U'_{r,m}$ its
maximal unipotent subgroup. Then $U'_{r,m}$ fits into the following
exact sequence:
 \begin{align*}
 \xymatrix@C=0.5cm{
   0 \ar[r] & \bigwedge^2_{\tau}X \ar[r]& U'_{r,m} \ar[r]& \r{Hom}_k(W,X) \ar[r] & 0
   }.
 \end{align*}

Let $\ell'_X:X\R k$ be any nontrivial homomorphism (if exists) and
let $U'_X$ be a maximal unipotent subgroup of $\r{GL}(X)$
stabilizing $\ell'_X$. The homomorphism $\ell'_X$ induces a
homomorphism
$\bigwedge^2\ell'_X:\bigwedge^2_{\tau}X\R\bigwedge^2_{\tau}k=k^-\overset{\jmath\cdot}{\LR}k'$
and a homomorphism
 \begin{align*}
 \r{Res}_{k/k'}\ell'_X:\r{Hom}_k(W,X)\LR\r{Hom}_k(W,k)=\r{Hom}_{k'}\(\r{Res}_{k/k'}W,k'\)=\(\r{Res}_{k/k'}W\)^{\vee}
 \end{align*}
The hermitian pair $\(-,-\)$ of $W$ induces a symplectic pair
$\widetilde{\r{Tr}}\(-,-\)$ on the $2m$-dimensional $k'$-vector
space $\r{Res}_{k/k'}W$. Hence $\(\r{Res}_{k/k'}W\)^{\vee}$ is
identified with $\r{Res}_{k/k'}W$ through this pair. Again, let
$\r{H}\(\r{Res}_{k/k'}W\)$ be the Heisenberg group, then we have the
following commutative diagram:
 \begin{align}\label{5diag}
 \xymatrix{
   0  \ar[r] & \bigwedge^2_{\tau}X \ar[d]_{\bigwedge^2\ell'_X} \ar[r] & U'_{r,m} \ar[d]
   \ar[r] & \r{Hom}_k(W,X) \ar[d]_{\r{Res}_{k/k'}\ell'_X} \ar[r] & 0 \\
   0 \ar[r] & k' \ar[r] & \r{H}\(\r{Res}_{k/k'}W\) \ar[r] & \r{Res}_{k/k'}W \ar[r]& 0   }
 \end{align}
For a nontrivial character $\psi':k'\R\d{C}^1$, we have a Weil
representation $\omega'_{\psi'}$ of
$\r{H}\(\r{Res}_{k/k'}W\)\rtimes\r{Mp}\(\r{Res}_{k/k'}W\)$. If we
choose a character $\mu:k^{\times}\R\d{C}^{\times}$ such that
$\mu|_{k'^{\times}}=\eta$, we will have a splitting map
 \begin{align}\label{5mu}
 \xymatrix{
                &         \r{Mp}\(\r{Res}_{k/k'}W\) \ar[d]     \\
  \r{U}(W) \ar@{^{(}->}[ur]^{\iota_{\mu}} \ar@{^{(}->}[r]^{\iota} & \r{Sp}\(\r{Res}_{k/k'}W\)            }
 \end{align}
(cf. \cite[Section 1,2]{HKS96}). By restriction, we get a Weil
representation $\omega'_{\psi',\mu}$ of
$\r{H}\(\r{Res}_{k/k'}W\)\rtimes\r{U}(W)$, and hence a
representation of $U'_{r,m}\rtimes\r{U}(W)$ through the middle
vertical map in \eqref{5diag}. Let $\lambda':U'_X\R\d{C}^{\times}$
be a generic character. Then we define
$\nu'_{\psi',\mu}=\omega'_{\psi',\mu}\otimes\lambda'$ which is a
smooth representation of
$H':=U'_{r,m}\rtimes\(U'_X\times\r{U}(W)\)$. As before, we have an
embedding $H'\HR\r{U}(V)\times\r{U}(W)$. Then up to conjugation by
the normalizer of $H'$ in $\r{U}(V)\times\r{U}(W)$,
$\nu'_{\psi',\mu}$ is determined by $\psi'$ modulo
$\r{Nm}k^{\times}$ and $\mu$.

Let $\pi$ (resp. $\sigma$) be an irreducible admissible
representation of $\r{U}(V)$ (resp. $\r{U}(W)$). A nontrivial
element in
$\r{Hom}_{H'}\(\pi\otimes\sigma\otimes\widetilde{\nu'_{\psi',\mu}},\d{C}^{\times}\)$
is called a \emph{Fourier-Jacobi model} of $\pi\otimes\sigma$. In
particular, when $k/k'$ is split, then the Fourier-Jacobi model is
just the $(r,r)$-Fourier-Jacobi model for general linear groups
introduced in Section \ref{sec3fm}. We have the following
multiplicity one result.

\begin{theo}
Let $k$ be a non-archimedean local field of characteristic zero and
$\pi$, $\sigma$ as above. Then
$\r{dim}_{\d{C}}\r{Hom}_{H'}\(\pi\otimes\sigma\otimes\widetilde{\nu'_{\psi',\mu}},\d{C}^{\times}\)\leq1$.
\end{theo}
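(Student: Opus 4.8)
The plan is to reduce the bound to a ``basic identity'' of the kind used for general linear groups in the proposition preceding the proof of Theorem~\ref{theo3fm}, transported to unitary groups as in \cite[\S16]{GGP}. Concretely, I would express $\r{Hom}_{H'}(\pi\otimes\sigma\otimes\widetilde{\nu'_{\psi',\mu}},\d{C})$ as a space of intertwining operators for a \emph{parabolically} induced representation of $\r{U}(V)$ paired with $\pi$ and a Weil representation, and then appeal to the multiplicity one already available for that auxiliary branching problem (the same one that underlies the Bessel case recalled in Section~\ref{sec4bmp}).

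Keep the decomposition $V=X\oplus W\oplus X^*$ of Section~\ref{sec5fmp}, so that $P'_{r,m}$ is the parabolic of $\r{U}(V)$ stabilizing $X$, with Levi $\r{GL}(X)\times\r{U}(W)$. Fix an irreducible supercuspidal representation $\tau$ of $\r{GL}(X)$ and set $\r{I}:=\r{Ind}_{P'_{r,m}}^{\r{U}(V)}\(\tau\otimes\sigma\mu^{-1}\delta_W^{\frac{1}{2}}\)$. The first step is to realize the mixed model of the Weil representation of the metaplectic cover of the symplectic group of a suitable $k'$-space built from $X$, $X^*$ and $\r{Res}_{k/k'}W$ --- mimicking the $\r{GL}$ argument, but using the $\mu$-dependent splitting $\r{U}(W)\HR\r{Mp}$ of \eqref{5mu}, cf.\ \cite{HKS96} --- and restrict it along $P'_{r,m}$; this gives a $P'_{r,m}$-equivariant evaluation map and a short exact sequence of smooth $P'_{r,m}$-modules whose submodule is, up to an explicit twist, a compact induction of $\omega'_{\psi',\mu}$ from a mirabolic-type subgroup, and whose quotient is a sum of representations induced from a proper parabolic of $\r{U}(V)$ carrying $\tau$ (or an unramified twist) on its $\r{GL}$-block. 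Tensoring this sequence with $\tau\otimes\sigma\mu^{-1}\delta_W^{\frac{1}{2}}$ and inducing to $\r{U}(V)$ yields an exact sequence $0\R A\R\r{I}\R B\R0$. If $\widetilde\pi$ avoids the finitely many Bernstein components of $\r{U}(V)$ attached to $\tau$ on the $\r{GL}$-block, then $\r{Hom}_{\r{U}(V)}(B,\widetilde\pi)=\r{Ext}^1_{\r{U}(V)}(B,\widetilde\pi)=0$, so applying $\r{Hom}_{\r{U}(V)}(-,\widetilde\pi)$ reduces everything to $A$; by Gelfand--Kazhdan $\tau$ restricted to the mirabolic is a compact induction of $\lambda'$, and induction in stages then identifies $A$ with $\r{cInd}_{H'}^{\r{U}(V)}\(\sigma\otimes\widetilde{\nu'_{\psi',\mu}}\)$ up to the predicted modulus twist. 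Frobenius reciprocity gives the basic identity
 \begin{align*}
 \r{Hom}_{\r{U}(V)}\(\r{I}\otimes\pi,\omega_V\)\cong\r{Hom}_{H'}\(\pi\otimes\sigma\otimes\widetilde{\nu'_{\psi',\mu}},\d{C}\),
 \end{align*}
where $\omega_V$ denotes the Weil representation of $\r{U}(V)$ as in \cite[\S16]{GGP}.

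To conclude, choose $\tau$ satisfying the above hypothesis and, after twisting it by an unramified character, arrange that $\r{I}$ is irreducible. The left-hand side of the displayed identity is then a Fourier--Jacobi branching multiplicity for a single pair of unitary/metaplectic groups, and it is at most one by the package of \cite[\S16, Theorem~14.1]{GGP}, ultimately resting on the local uniqueness theorems of \cite{AGRS10}, \cite{SZ09} and \cite{AG09} already invoked in Section~\ref{sec4bmp}. This gives $\r{dim}_{\d{C}}\r{Hom}_{H'}\(\pi\otimes\sigma\otimes\widetilde{\nu'_{\psi',\mu}},\d{C}^{\times}\)\leq1$.

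I expect the main obstacle to be the construction in the middle paragraph: pinning down the mixed model over $\r{Res}_{k/k'}W$ together with the $\mu$-twisted splitting so that the relevant subquotient of the $P'_{r,m}$-filtration is \emph{exactly} $\nu'_{\psi',\mu}$ up to the anticipated modulus character, and then running the Bernstein-component bookkeeping for the vanishing of $\r{Hom}$ and $\r{Ext}^1$ on the quotient $B$. This bookkeeping is not a literal copy of the $\r{GL}$ case, since the Levi $\r{GL}(X)\times\r{U}(W)$ has only one $\r{GL}$-block, so the Bruhat analysis is shorter but the unitary factor $\r{U}(W)$ must be tracked throughout. The archimedean case is excluded from the statement; it would instead follow the technique of \cite{JSZ10}.
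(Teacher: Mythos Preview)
Your proposal is correct and follows exactly the strategy of \cite[\S16]{GGP}, which is what the paper's own proof consists of: a one-line citation ``See \cite[Section 14]{GGP}.'' You have simply unpacked that citation, correctly adapting the basic-identity argument (mixed-model filtration, Bernstein-component vanishing, Gelfand--Kazhdan on the $\r{GL}(X)$-factor, Frobenius reciprocity) to the unitary Levi $\r{GL}(X)\times\r{U}(W)$ and reducing to the equal-rank Fourier--Jacobi multiplicity one of \cite[Theorem~14.1]{GGP}.
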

\begin{proof}
See \cite[Section 14]{GGP}.\\
\end{proof}

Now, we discuss the global case. Let $k/k'$ be a quadratic extension
of number fields, $\psi':k'\B\d{A}'\R\d{C}^1$ nontrivial,
$\mu:k^{\times}\B\d{A}^{\times}\R\d{C}^{\times}$ such that
$\mu|_{\d{A}'^{\times}}=\eta$, and $\lambda':U'_X(k)\B
U'_X(\d{A})\R\d{C}^1$ a generic character. Then we have the pair
$(H',\nu'_{\psi',\mu})$ in the global situation. To define a global
period, we need to fix a model for the Weil representation. Let
$\b{L}\subset\(\r{Res}_{k/k'}W\)^{\vee}$ be a Lagrangian subspace.
Let $\c{S}(\b{L}(\d{A}'))$ be the space of Bruhat-Schwartz functions
on $\b{L}(\d{A}')$.

For $\phi\in\c{S}(\b{L}(\d{A}'))$, we define the theta series to be
 \begin{align*}
 \theta_{\psi',\lambda',\mu}(h',\phi)=\sum_{w\in\b{L}(k')}\lambda'(h')
 \(\omega'_{\psi',\mu}(h')\phi\)(w)
 \end{align*}
which is an automorphic form on $H'$.

Let $\pi$ (resp. $\sigma$) be an irreducible tempered representation
of $\r{U}(V)(\d{A}')$ (resp. $\r{U}(W)(\d{A}')$) which occurs with
multiplicity one in the space $\c{A}_0(\r{U}(V))$ (resp.
$\c{A}_0(\r{U}(W))$). We denote by $\c{A}_{\pi}$ (resp.
$\c{A}_{\sigma}$) the unique irreducible $\pi$ (resp.
$\sigma$)-isotypic subspace in $\c{A}_0(\r{U}(V))$ (resp.
$\c{A}_0(\r{U}(W))$).

\begin{defn}
The following absolutely convergent integral is called a
\emph{Fourier-Jacobi period} of $\pi\otimes\sigma$ (for a pair
$(H',\nu'_{\psi',\mu})$):
 \begin{align*}
 \FJ^{\nu'_{\psi',\mu}}_r(\varphi_{\pi},\varphi_{\sigma};\phi):=\intl_{H'(k')\B
 H'(\d{A}')}\varphi_{\pi}(\vep(h'))\varphi_{\sigma}(\kappa(h'))
 \theta_{\overline{\psi'},\overline{\lambda'},\mu^{-1}}(h';\phi)\r{d}h',\qquad
 \varphi_\pi\in\c{A}_{\pi},\varphi_{\sigma}\in\c{A}_{\sigma},\phi\in\c{S}(\b{L}(\d{A}'))
 \end{align*}
where $\r{d}h'$ is the Tamagawa measure on $H'(\d{A}')$. If there
exist $\varphi_{\pi}$, $\varphi_{\sigma}$, $\phi$ such that
$\FJ^{\nu'_{\psi',\mu}}_r(\varphi_{\pi},\varphi_{\sigma};\phi)\neq0$,
then we say $\pi\otimes\sigma$ has a nontrivial Fourier-Jacobi
period.
\end{defn}

It is obvious that $\FJ^{\nu'_{\psi',\mu}}_r$ defines an element in
 \begin{align*}
 \r{Hom}_{H'(\d{A}')}\(\pi\otimes\sigma\otimes\widetilde{\nu'_{\psi',\mu}},\d{C}^{\times}\)=\bigotimes_{v'\in\d{M}_{k'}}
 \r{Hom}_{H'_{v'}}\(\pi_{v'}\otimes\sigma_{v'}\otimes\widetilde{\nu'_{\psi'_{v'},\mu_{v'}}},\d{C}\).
 \end{align*}\\

We choose a basis $\1 v_1,...,v_r\2$ or $X$ as in Section
\ref{sec4bmp}. We denote by $\1\check{v}_1,...,\check{v}_r\2$ the
dual basis of $\check{X}$. We also choose a basis $\1 w_1,...,w_m\2$
of $W$. Let
$\beta=\[(w_i,w_j)\]_{i,j=1}^m\in\r{Her}_m(k/k')^{\times}$. We
identify $\r{U}(V)$ (resp. $\r{U}(W)$) with a unitary group of $n$
(resp. $m$) variables $\r{U}_n$ (resp. $\r{U}_m$) under the basis
$\{v_1,...,v_r,w_1,...,w_m,\check{v}_r,...,\check{v}_1\}$ and view
$\r{U}_m$ as a subgroup of $\r{U}_n$. Let
$U'_{1^r,m}=U'_{r,m}\rtimes U'_X$ be a unipotent subgroup of
$\r{U}_n$. Then the image of $H'(\d{A}')$ in $\r{U}_n(\d{A}')$
consists of the matrices
$h'=h'(n',b';u';g')=\underline{u}'(n',b';u')\cdot g'$ where
$g'\in\r{U}_m(\d{A}')$. Here,
 \begin{align*}
 \underline{u}'=\underline{u}'(n',b';u')=\[\begin{array}{ccc}
                                             \b{1}_r & n' &  w_r\(b'+\frac{n'n'_{\beta}}{2}\) \\
                                              & \b{1}_m & n'_{\beta}\\
                                              &  & \b{1}_r
                                           \end{array}
 \]
 \[\begin{array}{ccc}
     u' &  &  \\
      &  \b{1}_m &  \\
      &  & \check{u}'
   \end{array}
 \]\in U'_{1^r,m}(\d{A}')
 \end{align*}
where $n'\in\r{M}_{r,m}(\d{A})$, $b'\in\r{Her}^-_r(\d{A}/\d{A}')$,
$u'\in U'_X(\d{A})$; $n'_{\beta}=-\beta^{-1}\:^{\r{t}}n'^{\tau}w_r$
and $\check{u}'=w_r\:^{\r{t}}u'^{\tau,-1}w_r$. If $r>0$, let
$U^{\ddag}$ be the maximal unipotent subgroup of the parabolic
subgroup of $\r{U}(\1v_r,\check{v}_r\2\oplus W)$ stabilizing the
flag $0\subset\1 v_r\2$. Let $H^{\ddag}=U^{\ddag}\rtimes\r{U}(W)$
(resp. $H^{\ddag}=\r{U}(W)$) if $r>0$ (resp. $r=0$). Then there is a
map $H'\R H^{\ddag}$. We write $h^{\ddag}=\underline{u}^{\ddag}g'$
to be the image of $h'$ under this map. Then we have
 \begin{align*}
 \nu'_{\psi',\mu}(h')=\underline{\psi'}(\underline{u}')\omega'_{\psi',\mu}(h^{\ddag})
 =\psi'\(\widetilde{\r{Tr}}\(u'_{1,2}+\cdots+u'_{r-1,r}\)\)\omega'_{\psi',\mu}(h^{\ddag}).
 \end{align*}\\

\subsection{Decomposition of distributions}
\label{sec5dd}

This time, we start from the relative trace formula on general
linear groups. We identify $\r{GL}_{m,k}\subset\r{GL}_{n,k}$ with
$\r{GL}(W)\subset\r{GL}(V)$ and view
$\r{GL}_{n,k'}\subset\r{Res}_{k/k'}(\r{GL}_{n,k})$ (res.
$\r{GL}_{m,k'}\subset\r{Res}_{k/k'}(\r{GL}_{m,k})$) through the
basis $\1 v_1,...,v_r,w_1,...,w_m,\check{v}_r,...,\check{v}_1\2$.

Recall that the Weil representation $\omega_{\psi,\mu}$ realizes on
the space $\c{S}(W^{\vee}(\d{A}))$ where
$\psi=\psi'\circ\widetilde{\r{Tr}}$. We let
 \begin{align*}
 W^{\spadesuit}=\bigoplus_{i=1}^r k'w^{\vee}_i;\qquad
 W^{\heartsuit}=\bigoplus_{i=1}^r k^-w^{\vee}_i;\qquad
 W^{\clubsuit}=\bigoplus_{i=1}^r k'w_i;\qquad
 W^{\diamondsuit}=\bigoplus_{i=1}^rk^-w_i
 \end{align*}
which are vector spaces over $k'$. Then $W=W^{\clubsuit}\oplus
W^{\diamondsuit}$, $W^{\vee}=W^{\spadesuit}\oplus W^{\heartsuit}$.
We also let $W^{\dag}=W^{\spadesuit}\oplus W^{\clubsuit}$ be a
vector space over $k'$. We define a linear map from
$\c{S}(W^{\vee}(\d{A}))$ to $\c{S}(W^{\dag}(\d{A}'))$ by
$\Phi\T\Phi^{\dag}$, where
 \begin{align*}
 \Phi^{\dag}(w^{\spadesuit},w^{\clubsuit})=\intl_{W^{\heartsuit}(\d{A'})}
 \Phi(w^{\spadesuit},w^{\heartsuit})\psi\(w^{\heartsuit}(w^{\clubsuit})\)\r{d}w^{\heartsuit}
 \end{align*}
for $\Phi\in\c{S}(W^{\vee}(\d{A}))$ and the self-dual measure on
$W^{\heartsuit}(\d{A}')$, which is an isomorphism.

If $r>0$, let $U^{\dag}$ be the maximal unipotent subgroup of the
parabolic subgroup of $\r{GL}(\1v_r,\check{v}_r\2\oplus W)$
stabilizing the flag $0\subset\1 v_r\2\subset\1 v_r\2\oplus W$. Let
$H^{\dag}=U^{\dag}\rtimes\r{GL}_m$ (resp. $H^{\dag}=\r{GL}_m$) if
$r>0$ (resp. $r=0$). Then there is a map $H\R H^{\dag}$. We write
$h^{\dag}=\underline{u}^{\dag}g=\underline{u}^{\dag}
\(n^{\spadesuit},n^{\heartsuit},n^{\clubsuit},n^{\diamondsuit},b^{\dag}\)g$
to be the image of $h$ under this map, where
 \begin{align*}
 \underline{u}^{\dag}
\(n^{\spadesuit},n^{\heartsuit},n^{\clubsuit},n^{\diamondsuit},b^{\dag}\)=\[
\begin{array}{ccc}
  1 & n^{\spadesuit}+n^{\heartsuit} & b^{\dag} \\
   & \b{1}_m & \begin{array}{c}
                 n^{\clubsuit} \\
                 + \\
                 n^{\diamondsuit}
               \end{array}
    \\
   &  & 1
\end{array}
\]
 \end{align*}
with $n^{\spadesuit}\in\r{M}_{1,m}(k')$,
$n^{\heartsuit}\in\r{M}_{1,m}(k^-)$,
$n^{\clubsuit}\in\r{M}_{m,1}(k')$,
$n^{\diamondsuit}\in\r{M}_{m,1}(k^-)$.

We define a representation
$\omega^{\dag}_{\overline{\psi},\overline{\mu}}$ of $H(\d{A})$ on
$\c{S}(W^{\dag}(\d{A}'))$ by
 \begin{align*}
 \omega^{\dag}_{\overline{\psi},\overline{\mu}}(h)\Phi^{\dag}
 =\(\omega_{\overline{\psi},\overline{\mu}}(h)\Phi\)^{\dag}.
 \end{align*}
It is easy to see that
$\omega^{\dag}_{\overline{\psi},\overline{\mu}}$ factors through
$H^{\dag}$ and
 \begin{align}\label{5omega}
 \(\omega^{\dag}_{\overline{\psi},\overline{\mu}}(h^{\dag})\Phi^{\dag}\)(w^{\spadesuit},w^{\clubsuit})=\eta(\det
 g)\overline{\psi}\(b^{\dag}+w^{\spadesuit}n^{\diamondsuit}+n^{\heartsuit}w^{\clubsuit}\)
 \Phi^{\dag}\((w^{\spadesuit}+n^{\spadesuit})g,g^{-1}(w^{\clubsuit}-n^{\clubsuit})\)
 \end{align}
Moreover, we have the Poisson summation formula:
 \begin{align*}
 \sum_{w^{\dag}\in W^{\dag}(k')}\Phi^{\dag}(w^{\dag})=\sum_{w^{\flat}\in
 W^{\vee}(k)}\Phi(w^{\flat}).
 \end{align*}\\

Until the end of this section, we assume that $n$ is odd, hence $m$
is also odd. Since the other case is similar and will lead to the
same fundamental lemma, we omit it in the following discussion. We
proceed exactly as in Section \ref{sec4dd} and take $\mu$ to be the
one used in \eqref{5mu} which is unitary. Let
$F_n\in\c{H}(\r{GL}_n(\d{A}))$, $F_m\in\c{H}(\r{GL}_m(\d{A}))$ and
$\Phi\in\c{S}(W^{\vee}(\d{A}))$. We introduce the following
distribution
 \begin{align*}
 \c{J}^{\mu}_{\Pi,\Sigma}(s;F_n\otimes
 F_m;\Phi):=\sum\FJ^{\nu_{\mu}}_{r,r}(s;\rho(F_n)\varphi_{\Pi},\rho(F_m)\varphi_{\Sigma};\Phi)
 \overline{\c{P}(\varphi_{\Pi})\c{P}(\varphi_{\Sigma})}
 \end{align*}
where the sum is taken over orthonormal bases of $\c{A}_{\Pi}$ and
$\c{A}_{\Sigma}$.

We associate to $F_n\otimes F_m$ a kernel function
$\c{K}_{F_n\otimes F_m}(g_1,g_2;g_3,g_4)$ and consider the
distribution
 \begin{align}\label{5di1}
 \c{J}^{\mu}(s;F_n\otimes
 F_m;\Phi)=&\intl_{Z'_m(\d{A}')\r{GL}_m(k')\B\r{GL}_m(\d{A}')}\intl_{Z'_n(\d{A}')\r{GL}_n(k')\B\r{GL}_n(\d{A}')}
 \intl_{H(k)\B H(\d{A})}\notag\\
 &\c{K}_{F_n\otimes
 F_m}(\vep(h),\kappa(h);g_1,g_2)\theta_{\overline{\psi},\overline{\lambda},\overline{\mu}}(h,\Phi)
 |\det h|_{\d{A}}^{s-\frac{1}{2}}\r{d}h\r{d}g_1\r{d}g_2.
 \end{align}
Proceeding similarly as in \eqref{4di1}, we have
 \begin{align}\label{5di2}
 \eqref{5di1}=&\intl_{g_2}\intl_{g_1}\intl_{U_{1^r,m,1^r}(k)\B
 U_{1^r,m,1^r}(\d{A})}\intl_{\r{GL}_m(\d{A})}\sum_{\zeta\in
 \r{GL}_n(k)}\notag\\
 &F_n(g^{-1}g_2^{-1}\underline{u}^{-1}\zeta
 g_1)F_m(g^{-1})\theta_{\overline{\psi},\overline{\lambda},\overline{\mu}}(\underline{u}g_2g,\Phi)|\det
 g|_{\d{A}}^{s-\frac{1}{2}}|\det
 g_2|_{\d{A}}^{s-\frac{1}{2}}\r{d}g\r{d}\underline{u}\r{d}g_1\r{d}g_2\notag\\
 =&\intl_{\r{GL}_m(k')\B\r{GL}_m(\d{A}')}\intl_{U_{1^r,m,1^r}(k)\B
 U_{1^r,m,1^r}(\d{A})}\intl_{\r{GL}_m(\d{A})}\sum_{\zeta\in
 \r{S}_n(k')}\notag\\
 &\sigma(F_n)(g^{-1}g_2^{-1}\underline{u}^{-1}\zeta
 \underline{u}^{\tau}g_2g^{\tau})F_m(g^{-1})\theta_{\overline{\psi},\overline{\lambda},\overline{\mu}}
 (\underline{u}g_2g,\Phi)|\det g|_{\d{A}}^{s-\frac{1}{2}}|\det
 g_2|_{\d{A}}^{s-\frac{1}{2}}\r{d}g\r{d}\underline{u}\r{d}g_2.
 \end{align}
Unfolding $U_{1^r,m,1^r}(k)$, we have
 \begin{align}\label{5di3}
 \eqref{5di2}=&\sum_{\zeta\in[\r{S}_n(k')]/U_{1^r,m,1^r}(k)}
 \intl_{\r{GL}_m(k')\B\r{GL}_m(\d{A}')}\intl_{\r{Stab}_{\zeta}^{U_{1^r,m,1^r}}(k')\B
 U_{1^r,m,1^r}(\d{A})}\intl_{\r{GL}_m(\d{A})}\notag\\
 &\sigma(F_n)(g^{-1}g_2^{-1}([\zeta]\underline{u})g_2g^{\tau})F_m(g^{-1})
 \theta_{\overline{\psi},\overline{\lambda},\overline{\mu}}(\underline{u}g_2g,\Phi)|\det
 g|_{\d{A}}^{s-\frac{1}{2}}|\det
 g_2|_{\d{A}}^{s-\frac{1}{2}}\r{d}g\r{d}\underline{u}\r{d}g_2\notag\\
 =&\sum_{\zeta\in[\r{S}_n(k')]/U_{1^r,m,1^r}(k)}\intl_{\r{Stab}_{\zeta}^{U_{1^r,m,1^r}}(k')\B
 U_{1^r,m,1^r}(\d{A})}\intl_{\r{GL}_m(k')\B\r{GL}_m(\d{A}')}\intl_{\r{GL}_m(\d{A})}\notag\\
 &\sigma(F_n)(g^{-1}([g_2^{-1}\zeta g_2]\underline{u})g^{\tau})F_m(g^{-1})
 \theta_{\overline{\psi},\overline{\lambda},\overline{\mu}}
 \(g_2,\omega_{\overline{\psi},\overline{\mu}}(\underline{u}g)\Phi\)
 \underline{\psi}(\underline{u}^{-1})|\det g|_{\d{A}}^{s-\frac{1}{2}}|\det
 g_2|_{\d{A}}^{s-\frac{1}{2}}\r{d}g\r{d}g_2\r{d}\underline{u}.
 \end{align}

But we have
 \begin{align*}
 &\theta_{\overline{\psi},\overline{\lambda},\overline{\mu}}
 \(g_2,\omega_{\overline{\psi},\overline{\mu}}(\underline{u}g)\Phi\)
 =\sum_{w^{\flat}\in
 W^{\vee}(k)}\(\omega_{\overline{\psi},\overline{\mu}}(g_2)
 \omega_{\overline{\psi},\overline{\mu}}(\underline{u}g)\Phi\)(w^{\flat})\\
 =&\sum_{w^{\dag}\in
 W^{\dag}(k')}\(\omega_{\overline{\psi},\overline{\mu}}^{\dag}(g_2)
 \(\omega_{\overline{\psi},\overline{\mu}}(\underline{u}g)\Phi\)^{\dag}\)(w^{\dag})\\
 =&\sum_{x\in\r{M}_{1,m}(k')}\sum_{y\in\r{M}_{m,1}(k')}
 \eta(\det
 g_2)\(\omega_{\overline{\psi},\overline{\mu}}(\underline{u}g)\Phi\)^{\dag}(xg_2,g_2^{-1}y).
 \end{align*}

To proceed, we introduce a $k'$-variety
 \begin{align*}
 \r{S}_{n,m}=\r{S}_n\times\r{M}_{1,m,k'}\times\r{M}_{m,1,k'}.
 \end{align*}

As before, we let
$\b{H}=\r{Res}_{k/k'}(U_{1^r,m,1^r})\rtimes\r{GL}_{m,k'}$ which acts
on $\r{S}_{n,m}$ in the following way: for any $k'$-algebra $R$,
$\b{h}=\b{h}(\underline{u},g)$ with $\underline{u}\in
U_{1^r,m,1^r}(R\otimes k)$, $g\in\r{GL}_m(R)$ and
$[s,x,y]\in\r{S}_{n,m}(R)$, we define a right action
$[s,x,y]\b{h}=[g^{-1}\underline{u}^{-1}su^{\tau}g,xg,g^{-1}y]$. We
also define
$\underline{\psi}(\b{h})=\underline{\psi}(\underline{u}^{-1})$ and
$\det\b{h}=\det g$. Then
 \begin{align}\label{5di4}
 \eqref{5di3}=&\sum_{[\zeta,x,y]\in[\r{S}_{n,m}(k')]/\b{H}(k')}
 \intl_{\r{Stab}_{[\zeta,x,y]}^{\b{H}}(k')\B\b{H}(\d{A}')}\intl_{\r{GL}_m(\d{A})}\notag\\
 &\sigma(F_n)(g^{-1}[\zeta]\b{h}g^{\tau})F_m(g^{-1})
 \(\omega^{\dag}_{\overline{\psi},\overline{\mu}}(\b{h}g)\Phi^{\dag}\)(x,y)
 \underline{\psi}(\b{h})|\det\b{h}|^{s-\frac{1}{2}}_{\d{A}}|\det
 g|_{\d{A}}^{s-\frac{1}{2}}\r{d}g\r{d}\b{h}\notag\\
 =&:\sum_{[\zeta,x,y]\in[\r{S}_{n,m}(k')]/\b{H}(k')}
 \c{J}^{\mu}_{[\zeta,x,y]}(s;F_n\otimes F_m;\Phi).
 \end{align}

We denote by $\r{S}_{n,m}(k')_{\r{reg}}$ the set of all regular
$k'$-elements which will be defined in Section \ref{sec5co}. In
particular, the $\b{H}$-stabilizer $\r{Stab}_{[\zeta,x,y]}^{\b{H}}$
is trivial for $[\zeta,x,y]$ regular and the corresponding term
\begin{align*}
\c{O}_{\mu}(s;F_n\otimes
F_m;\Phi,[\zeta,x,y]):=\c{J}^{\mu}_{[\zeta,x,y]}(s;F_n\otimes
F_m;\Phi)
\end{align*}
is a weighted orbital integral. If $F_n=\otimes_{v'}F_{n,v'}$,
$F_m=\otimes_{v'}F_{m,v'}$ and $\Phi=\otimes_{v'}\Phi_{v'}$ are
factorizable, then
 \begin{align*}
 \c{O}_{\mu}(s;F_n\otimes
F_m;\Phi,[\zeta,x,y])=\prod_{v'\in\d{M}_{k'}}\c{O}_{\mu_{v'}}(s;F_{n,v'}\otimes
F_{m,v'};\Phi_{v'},[\zeta,x,y])
 \end{align*}
where the local orbital integrals are defined similarly as in
\eqref{5di4}. In summary, we have
 \begin{align*}
&\c{J}^{\mu}(s;F_n\otimes
F_m;\Phi)-\c{J}^{\mu}_{\r{irr}}(s;F_n\otimes F_m;\Phi)\\
=&\sum_{[\zeta,x,y]\in[\r{S}_{n,m}(k')_{\r{reg}}]/\b{H}(k')}
\prod_{v'\in\d{M}_{k'}}\c{O}_{\mu_{v'}}(s;F_{n,v'}\otimes
F_{m,v'};\Phi_{v'},[\zeta,x,y]).
 \end{align*}

When $s=\frac{1}{2}$, we discard $s$ in all notations. In
particular,
 \begin{align}\label{5di5}
 &\c{J}_{[\zeta,x,y]}^{\mu}(F_n\otimes F_m;\Phi)=\c{O}_{\mu}(F_n\otimes
 F_m;\Phi,[\zeta,x,y]) \notag\\
 =&\intl_{\b{H}(\d{A}')}\intl_{\r{GL}_m(\d{A})}\sigma(F_n)(g^{-1}[\zeta]\b{h}g^{\tau})F_m(g^{-1})
 \(\omega^{\dag}_{\overline{\psi},\overline{\mu}}(\b{h}g)\Phi^{\dag}\)(x,y)
 \underline{\psi}(\b{h})\r{d}g\r{d}\b{h}
 \end{align}
when $[\zeta,x,y]$ is regular.\\

Now we describe the relative trace formula for unitary groups. Let
$f_n\in\c{H}(\r{U}_n(\d{A}'))$, $f_m\in\c{H}(\r{U}_m(\d{A}'))$ and
$\phi_i\in\c{S}(\b{L}(\d{A}'))$ for $i=1,2$. We introduce a
distribution
 \begin{align*}
 \c{J}^{\psi',\mu}_{\pi,\sigma}(f_n\otimes
 f_m;\phi_1\otimes\phi_2):=\sum\FJ^{\nu'_{\psi',\mu}}_r(\rho(f_n)\varphi_{\pi},\rho(f_m)\varphi_{\sigma};\phi_1)
 \overline{\FJ^{\nu'_{\psi',\mu}}_r(\varphi_{\pi},\varphi_{\sigma};\phi_2)}
 \end{align*}
where the sum is taken over orthonormal bases of $\c{A}_{\pi}$ and
$\c{A}_{\sigma}$. As usual, we have a kernel function
$\c{K}_{f_n\otimes f_m}$. Let
 \begin{align}\label{5du1}
 &\c{J}^{\psi',\mu}(f_n\otimes f_m;\phi_1\otimes\phi_2)\notag\\
 :=&\iintl_{\(H'(k')\B
 H'(\d{A}')\)^2}\c{K}_{f_n\otimes f_m}(\vep(h'_1),\kappa(h'_1);\vep(h'_2),\kappa(h'_2))
 \theta_{\overline{\psi'},\overline{\lambda'},\overline{\mu}}(h'_1,\phi_1)
 \overline{\theta_{\overline{\psi'},\overline{\lambda'},\overline{\mu}}(h'_2,\phi_2)}\r{d}h'_1\r{d}h'_2\notag\\
 =&\iintl_{\(H'(k')\B
 H'(\d{A}')\)^2}\c{K}_{f_n\otimes f_m}(\vep(h'_1),\kappa(h'_1);\vep(h'_2),\kappa(h'_2))
 \theta_{\overline{\psi'},\overline{\lambda'},\overline{\mu}}(h'_1,\phi_1)
 \theta_{\psi',\lambda',\mu}\(h'_2,\overline{\phi_2}\)\r{d}h'_1\r{d}h'_2.
 \end{align}

Collapsing the summation over $\xi'$ and changing variable
$g'^{-1}_2g'_1\T g'_1$, then
 \begin{align}\label{5du3}
 \eqref{5du1}=&\intl_{\r{U}_m(k')\B\r{U}_m(\d{A}')}\iintl_{\(U'_{1^r,m}(k')\B
 U'_{1^r,m}(\d{A}')\)^2}\intl_{\r{U}_m(\d{A}')}\sum_{\zeta'\in\r{U}_n(k')}\notag\\
 &f_n(g'^{-1}_1g'^{-1}_2\underline{u}'^{-1}_1\zeta'\underline{u}'_2g'_2)f_m(g'^{-1}_1)
 \theta_{\overline{\psi'},\overline{\lambda'},\overline{\mu}}\(\underline{u}'_1g'_2,
 \omega'_{\overline{\psi'},\overline{\mu}}(g'_1)\phi_1\)\theta_{\psi',\lambda',\mu}
 \(\underline{u}'_2g'_2,\overline{\phi_2}\)\r{d}g'_1\r{d}\underline{u}'_1\r{d}\underline{u}'_2\r{d}g'_2.
 \end{align}

Recall that we define an $k'$-algebraic group $\b{H}'$ in Section
\ref{sec4dd} which acts on
$\r{U}_{n,m}:=\r{U}_n\times\r{Res}_{k/k'}\r{M}_{1,m,k}$ in the
following way: for any $k'$-algebra $R$,
$\b{h}'=\b{h}'(\underline{u}'_1,\underline{u}'_2,g')\in\b{H}'(R)$
and $[g,z]\in\r{U}_n(R)\times\r{M}_{1,m}(R\otimes k)$, we define the
right action
$[g,z]\b{h}'=[g'^{-1}\underline{u}'^{-1}_1g\underline{u}'_2g',zg']$.
We also define
 \begin{align*}
 \underline{\psi'}(\b{h}')=\underline{\psi'}\(\b{h}'(\underline{u}'_1,\underline{u}'_2;g')\)
 :=\underline{\psi'}(\underline{u}'^{-1}_1\underline{u}'_2)
 \end{align*}
and $\det\b{h}'=\det g'$.

There is a map $\b{H}'\R\b{H}^{\ddag}$, where
 \begin{align*}
 \b{H}^{\ddag}=H^{\ddag}\underset{\r{U}_m}{\times}H^{\ddag}
 \end{align*}
(resp. $\r{U}_m$) if $r>0$ (resp. $r=0$). We denote by
$\b{h}^{\ddag}=\b{h}^{\ddag}(\underline{u}_1^{\ddag},\underline{u}_2^{\ddag};g')$
the image of $\b{h}'=\b{h}'(\underline{u}'_1,\underline{u}'_2;g')$
under the above map, where
 \begin{align*}
 \underline{u}^{\ddag}_i=\underline{u}_i^{\ddag}(n^{\ddag}_i,b^{\ddag}_i)=\[
 \begin{array}{ccc}
   1 & n^{\ddag}_i & b^{\ddag}_i-\frac{n_i^{\ddag}\beta^{-1}\:^{\r{t}}n_i^{\ddag,\tau}}{2} \\
    & \b{1}_m & -\beta^{-1}\:^{\r{t}}n^{\ddag,\tau}_i \\
    &  & 1
 \end{array}
 \]
 \end{align*}
for $n_i^{\ddag}\in W^{\vee}(\d{A})$ and $b_i^{\ddag}\in\d{A}^-$.\\

\begin{lem}
Let $\omega^{\ddag}_{\overline{\psi'}}$ be the tensor product
$\omega'_{\overline{\psi'},\overline{\mu}}\otimes\omega'_{\psi',\mu}$
viewed as a smooth representation of $\b{H}'$, then it factors
through $\b{H}'\R\b{H}^{\ddag}$. We have an intertwining isomorphism
 \begin{align*}
 -^{\ddag}:\(\c{S}(\b{L}(\d{A}'))\)^{\otimes2}
 \LR\c{S}\(\(\r{Res}_{k/k'}W\)^{\vee}(\d{A}')\)=\c{S}(W^{\vee}(\d{A}))
 \end{align*}
such that:\\
(1) For $\phi\in\c{S}(W^{\vee}(\d{A}))$,
 \begin{align}\label{5omegau}
 &\(\omega^{\ddag}_{\overline{\psi'}}(\b{h}')\phi\)(z)=
 \(\omega^{\ddag}_{\overline{\psi'}}(\b{h}^{\ddag})\phi\)(z)
 =\(\omega^{\ddag}_{\overline{\psi'}}
 \(\b{h}^{\ddag}(\underline{u}_1^{\ddag},\underline{u}_2^{\ddag};g')\)\phi\)(z)\notag\\
 =&\overline{\psi}\(b_1^{\ddag}-b_2^{\ddag}+z\beta^{-1}
 \frac{^{\r{t}}n_2^{\ddag,\tau}-\:^{\r{t}}n_1^{\ddag,\tau}}{2}\)
 \phi\(\(z+\frac{n_1^{\ddag}+n_2^{\ddag}}{2}\)g'\)
 \end{align}
which does not depend on $\mu$, hence justifying the notation.\\
(2) We have
 \begin{align*}
 \theta_{\overline{\psi'},\overline{\lambda'},\overline{\mu}}\(\underline{u}'_1g',
 \phi_1\)\theta_{\psi',\lambda',\mu}
 \(\underline{u}'_2g',\phi_2\)=\underline{\psi'}(\b{h}')\sum_{z\in
 W^{\vee}(k)}\(\omega^{\ddag}_{\overline{\psi'}}(\b{h}')\(\phi_1\otimes\phi_2\)^{\ddag}\)(z)
 \end{align*}
where $\b{h}'=\b{h}'(\underline{u}'_1,\underline{u}'_2,g')$.
\end{lem}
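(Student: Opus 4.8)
The plan is to reduce both parts to the doubling of the Heisenberg--Weil representation. First I would record the structural facts. The representation $\omega'_{\psi',\mu}$ is (the pullback to $H'$ of) a representation of $\r{H}\(\r{Res}_{k/k'}W\)\rtimes\r{U}(W)$ obtained from the Weil representation of $\r{H}\(\r{Res}_{k/k'}W\)\rtimes\r{Mp}\(\r{Res}_{k/k'}W\)$ via the splitting $\iota_{\mu}$ of \eqref{5mu}, and hence factors through the map $H'\to H^{\ddag}$; likewise $\omega'_{\overline{\psi'},\overline{\mu}}$ (with central character $\overline{\psi'}$ and splitting $\iota_{\mu^{-1}}$) factors through $H'\to H^{\ddag}$. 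Therefore $\omega^{\ddag}_{\overline{\psi'}}=\omega'_{\overline{\psi'},\overline{\mu}}\otimes\omega'_{\psi',\mu}$ factors through $\b{H}'\to\b{H}^{\ddag}=H^{\ddag}\underset{\r{U}_m}{\times}H^{\ddag}$, this being the fibre product over $\r{U}_m$ of the two copies of $H'\to H^{\ddag}$. Moreover, since $\psi'$ and $\overline{\psi'}$ are inverse characters, the centre of $\r{H}\(\r{Res}_{k/k'}W\)$ acts trivially on the tensor product, so it descends to a representation of $\r{Res}_{k/k'}W\rtimes\r{U}(W)$ realized on functions on a Lagrangian of the doubled symplectic space, i.e. it is of the expected ``doubling'' type.

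Next I would construct $-^{\ddag}$ and establish (1). Writing $\c{S}(\b{L}(\d{A}'))^{\otimes2}=\c{S}\(\(\b{L}\oplus\b{L}\)(\d{A}')\)$, I would set up the doubled symplectic space $\r{Res}_{k/k'}W\oplus\(\r{Res}_{k/k'}W\)^{-}$ so that $\b{L}\oplus\b{L}$ is its obvious ``tensor'' Lagrangian while $W^{\vee}$ (the $k$-dual of $W$, a $2m$-dimensional $k'$-space) sits as the ``doubling'' Lagrangian, and then define $-^{\ddag}$ to be the adèlic partial Fourier transform (self-dual measures, character $\psi'$) intertwining the corresponding Schrödinger models --- the unitary counterpart of the map $\Phi\mapsto\Phi^{\dag}$ appearing in \eqref{5omega}. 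Formula \eqref{5omegau} then follows by a direct computation: substitute the explicit Schrödinger-model formulas for $\omega'_{\overline{\psi'},\overline{\mu}}\(\underline{u}_1^{\ddag}g'\)$ and $\omega'_{\psi',\mu}\(\underline{u}_2^{\ddag}g'\)$ into the tensor product, conjugate by the partial Fourier transform, and read off the displayed phase; in particular the $\mu$-factors cancel, which justifies writing $\omega^{\ddag}_{\overline{\psi'}}$ without a $\mu$. This is the only step requiring genuine bookkeeping, and it is the main obstacle: $-^{\ddag}$ must be chosen so that \emph{simultaneously} the metaplectic $2$-cocycles cancel, the $\mu$-dependence of the two splittings cancels, the phase in \eqref{5omegau} comes out exactly as stated, and --- crucially for part (2) --- Poisson summation carries the lattice sum over $\(\b{L}\oplus\b{L}\)(k')$ to the one over $W^{\vee}(k)$.

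Finally, for (2) I would expand each theta series as a sum over $\b{L}(k')$. Since $\lambda'$ agrees with $\underline{\psi'}$ as a character on $U'_{1^r,m}$, the prefactor $\overline{\lambda'}\(\underline{u}'_1\)\lambda'\(\underline{u}'_2\)$ is exactly $\underline{\psi'}(\b{h}')$; pulling it out and using that $\omega'_{\psi',\mu}$ and $\omega'_{\overline{\psi'},\overline{\mu}}$ factor through $H^{\ddag}$ (so that only $\underline{u}_i^{\ddag}$ enter), the product of the two theta series becomes $\underline{\psi'}(\b{h}')\sum_{(w_1,w_2)\in\(\b{L}\oplus\b{L}\)(k')}\bigl[\(\omega'_{\overline{\psi'},\overline{\mu}}\otimes\omega'_{\psi',\mu}\)(\b{h}^{\ddag})(\phi_1\otimes\phi_2)\bigr](w_1,w_2)$. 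Applying the Poisson summation formula through the partial Fourier transform built into $-^{\ddag}$ rewrites this lattice sum as $\sum_{z\in W^{\vee}(k)}\(\omega^{\ddag}_{\overline{\psi'}}(\b{h}')(\phi_1\otimes\phi_2)^{\ddag}\)(z)$, which is the assertion.
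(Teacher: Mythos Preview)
Your approach is essentially the same as the paper's: the paper simply cites \cite[Proposition 2.2 (i), (ii)]{HKS96} for the intertwining isomorphism, notes that $-^{\ddag}$ can be realized as an explicit partial Fourier transform between the two Schr\"odinger models, and then observes that (1) follows by direct computation and (2) by the Poisson summation formula --- exactly the ingredients you have identified. Your write-up is more detailed than the paper's two-sentence proof, but the underlying strategy (doubling, change of Lagrangian via partial Fourier transform, cancellation of the $\mu$-dependence, Poisson summation) is identical.
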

\begin{proof}
The isomorphism is by \cite[Proposition 2.2 (i), (ii)]{HKS96}.
Actually one can construct an explicit intertwining operator by a
(partial) Fourier transform, which implies (1) and (2) by the
Poisson summation formula.
\end{proof}

By the above lemma and repeating the process in \eqref{5di2},
\eqref{5di3}, \eqref{5di4}, we have
 \begin{align}\label{5du4}
 \eqref{5du3}=&\sum_{[\zeta',z]\in[\r{U}_{n,m}(k')]/\b{H}'(k')}
 \intl_{\r{Stab}_{[\zeta',z]}^{\b{H}'}(k')\B\b{H}'(\d{A}')}\intl_{\r{U}_m(\d{A}')}\notag\\
 &f_n(g'^{-1}[\zeta']\b{h}')f_m(g'^{-1})\(\omega^{\ddag}_{\overline{\psi'}}(\b{h}')
 \(\omega'_{\overline{\psi'},\overline{\mu}}(g')\phi_1\otimes\overline{\phi_2}\)^{\ddag}\)(z)
 \underline{\psi'}(\b{h}')\r{d}g'\r{d}\b{h}'\notag\\
 =&:\sum_{[\zeta',z]\in[\r{U}_{n,m}(k')]/\b{H}'(k')}\c{J}^{\psi',\mu}_{[\zeta',z]}(f_n\otimes
 f_m;\phi_1\otimes\phi_2).
 \end{align}

We denote by $\r{U}_{n,m}(k')_{\r{reg}}$ the set of all regular
$k'$-elements which will be defined in Section \ref{sec5co}. In
particular, the $\b{H}'$-stabilizer $\r{Stab}_{[\zeta',z]}^{\b{H}'}$
is trivial for $[\zeta',z]$ regular and the corresponding term
\begin{align*}
\c{O}_{\psi',\mu}(f_n\otimes
f_m;\phi\otimes\phi_2,[\zeta',z]):=\c{J}^{\psi',\mu}_{[\zeta',z]}(f_n\otimes
f_m;\phi_1\otimes\phi_2)
\end{align*}
is a weighted orbital integral. If $f_n=\otimes_{v'}f_{n,v'}$,
$f_m=\otimes_{v'}f_{m,v'}$ and $\phi_i=\otimes_{v'}\phi_{i,v'}$ are
factorizable, then
 \begin{align*}
 \c{O}_{\psi',\mu}(f_n\otimes
f_m;\phi_1\otimes\phi_2,[\zeta',z])=\prod_{v'\in\d{M}_{k'}}\c{O}_{\psi'_{v'},\mu_{v'}}(f_{n,v'}\otimes
f_{m,v'};\phi_{1,v'}\otimes\phi_{2,v'},[\zeta',z])
 \end{align*}
where the local orbital integrals are defined similarly as in
\eqref{5du4}. In summary, we have
 \begin{align*}
&\c{J}^{\psi',\mu}(f_n\otimes
f_m;\phi_1\otimes\phi_2)-\c{J}^{\psi',\mu}_{\r{irr}}(f_n\otimes
f_m;\phi_1\otimes\phi_2)\\
=&\sum_{[\zeta',z]\in[\r{U}_{n,m}(k')_{\r{reg}}]/\b{H}'(k')}
\prod_{v'\in\d{M}_{k'}}\c{O}_{\psi'_{v'},\mu_{v'}}(f_{n,v'}\otimes
f_{m,v'};\phi_{1,v'}\otimes\phi_{2,v'},[\zeta',z]).
 \end{align*}

In general, if $\phi=\sum_i\phi_1^{(i)}\otimes\phi_2^{(i)}$ is a
finite sum, then we simply define the distribution $\c{J}$ and the
weighted orbital integral $\c{O}$ as the sum of the corresponding
terms defined above.\\

\subsection{Matching of orbits and functions}
\label{sec5co}

We first consider the following orbital problem which is a little
bit different from the
Jacquet-Rallis case, the one considered in Section \ref{sec4co}.\\

We define the affine space
$\f{M}_m:=\r{M}_m\times\r{M}_{1,m}\times\r{M}_{m,1}$.

 \begin{defn}\label{def5reg}
 An element $[\xi,x,y]\in\f{M}_m(k)$ is called \emph{regular} if it satisfies:
  \begin{itemize}
    \item $\xi$ is regular semisimple as an element of $\r{M}_m(k)$;
    \item the vectors $\1 x, x\xi,...,x\xi^{m-1}\2$ span the $k$-vector space $\r{M}_{1,m}(k)$;
    \item the vectors $\1 y,\xi y,...,\xi^{m-1}y\2$ span the $k$-vector space $\r{M}_{m,1}(k)$.
  \end{itemize}
 Let $a_i([\xi,x,y])=\r{Tr}\(\bigwedge^i\xi\)$ for $1\leq i\leq
 m$, $b_i([\xi,x,y])=x\xi^iy$ for $0\leq i\leq m-1$,
 \begin{align}\label{5bt}
  \b{T}_{[\xi,x,y]}=\det\[\begin{array}{c}
                          x \\
                          x\xi \\
                          \vdots\\
                          x\xi^{m-1}
                        \end{array}
  \],
 \end{align}
 $\b{D}_{[\xi,x,y]}$ be
 the matrix $\[x\xi^{i+j-2}y\]_{i,j=1}^m$ and
 $\Delta_{[\xi,x,y]}=\det\b{D}_{[\xi,x,y]}$. It is clear that
 $\Delta_{[\xi,x,y]}\neq0$ if $[\xi,x,y]$ is regular.
 The group $\r{GL}_m$ acts on $\f{M}_m$ from right side by $[\xi,x,y]g=[g^{-1}\xi
 g,xg,g^{-1}y]$ and $a_i$, $b_i$, $\b{D}_{[\xi,x,y]}$ and
 $\Delta_{[\xi,x,y]}$ are all invariants under this action. We
 denote by $\f{M}_m(k)_{\r{reg}}$ the set of all regular elements.
 \end{defn}

We have
 \begin{lem}
 Two regular elements $[\xi,x,y]$ and $[\xi',x',y']$ are in the same
 $\r{GL}_m(k)$-orbit if and only if they have the same invariants
 $a_i$ and $b_i$. The $\r{GL}_m$-stabilizer of a regular element is
 trivial.
 \end{lem}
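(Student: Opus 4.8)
The plan is to run the standard ``companion matrix'' argument, parallel to the Rallis--Schiffmann computation \cite[Proposition 6.2 \& Theorem 6.1]{RS07} invoked for the earlier lemma, but now with $\r{GL}_m$ acting diagonally on the triple $[\xi,x,y]$. The ``only if'' implication is immediate from the definitions, since $a_i$ and $b_i$ are manifestly invariant under $[\xi,x,y]\T[g^{-1}\xi g,xg,g^{-1}y]$.

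For the ``if'' implication, to a regular $[\xi,x,y]$ I attach the matrix $P=P_{[\xi,x,y]}\in\r{M}_m(k)$ whose $i$-th row is $x\xi^{i-1}$ for $1\le i\le m$; the regularity condition on $x,x\xi,\dots,x\xi^{m-1}$ says precisely that $\det P=\b{T}_{[\xi,x,y]}\neq0$, so $P\in\r{GL}_m(k)$. Two observations. First, by Cayley--Hamilton $\xi$ annihilates its characteristic polynomial, whose coefficients are, up to sign, the $a_i=\r{Tr}(\bigwedge^i\xi)$; hence $x\xi^m$ is a fixed linear combination of $x,x\xi,\dots,x\xi^{m-1}$ with coefficients depending only on $a_1,\dots,a_m$, and consequently there is a matrix $C=C(a_1,\dots,a_m)$ (a companion-type matrix, independent of $x$) with $P\xi=CP$. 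Second, $Py$ is the column vector with entries $b_0([\xi,x,y]),\dots,b_{m-1}([\xi,x,y])$. Now let $[\xi',x',y']$ be a second regular element with the same $a_i$ and $b_i$, with associated invertible matrix $P'$, and put $g=P^{-1}P'\in\r{GL}_m(k)$. Reading off the first rows gives $xg=x'$; since $C$ depends only on the common $a_i$, the relations $P\xi=CP$ and $P'\xi'=CP'$ yield $g^{-1}\xi g=P'^{-1}CP'=\xi'$; and since $Py$ and $P'y'$ are the same vector of $b_i$'s, $g^{-1}y=P'^{-1}(Py)=P'^{-1}(P'y')=y'$. Thus $[\xi,x,y]g=[\xi',x',y']$, so they lie in the same $\r{GL}_m(k)$-orbit.

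For triviality of the stabilizer, suppose $g\in\r{GL}_m(k)$ fixes a regular $[\xi,x,y]$; then $xg=x$ and $\xi g=g\xi$, so an immediate induction gives $x\xi^ig=x\xi^i$ for all $i\ge0$. Since $x,x\xi,\dots,x\xi^{m-1}$ span $\r{M}_{1,m}(k)$, right multiplication by $g$ is the identity on $\r{M}_{1,m}(k)$, hence $g=\b{1}_m$ (one could argue symmetrically using $g^{-1}y=y$ and the span of the $\xi^iy$).

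The argument is essentially linear-algebraic bookkeeping, and I do not expect a genuine obstacle; the single point that deserves a line of care is the identity $P\xi=CP$ with $C$ depending only on the $a_i$, which is exactly Cayley--Hamilton combined with the fact that the $a_i=\r{Tr}(\bigwedge^i\xi)$ are the elementary symmetric functions of the eigenvalues of $\xi$, so that they determine its characteristic polynomial.
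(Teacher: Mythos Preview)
Your proof is correct and is precisely the companion-matrix argument that the paper defers to \cite[Proposition 6.2 \& Theorem 6.1]{RS07}; the paper's own proof is just that citation, so you have simply spelled out the details rather than outsourcing them. One minor remark: your argument uses only the second regularity condition (the span of $x,x\xi,\dots,x\xi^{m-1}$), not the semisimplicity of $\xi$ or the condition on the $\xi^iy$, which is fine since the lemma's hypothesis is stronger than what is actually needed.
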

 \begin{proof}
 See \cite[Proposition 6.2 \& Theorem 6.1]{RS07} for the first and
 second statements, respectively.\\
 \end{proof}

We define two more spaces:
 \begin{align*}
 \f{S}_m&=\1[\xi,x,y]\in\f{M}_m(k)_{\r{reg}}\;|\;\xi\in\r{S}_m(k'),x\in\r{M}_{1,m}(k'),y\in\r{M}_{m,1}(k')\2;\\
 \f{U}^{\natural}_m&=\1[\beta;\xi^{\beta},z,z^*]\;|\;\beta\in\r{Her}_m(k/k')^{\times},
 [\xi^{\beta},z,z^*]\in\f{M}_m(k)_{\r{reg}},
 \xi^{\beta}\in\r{U}^{\beta}_m(k'),z^*=\beta^{-1}\:^{\r{t}}z^{\tau}\2
 \end{align*}
where $\r{U}^{\beta}_m=\r{U}(W^{\beta})$. For $\f{U}^{\natural}_m$,
we also define a right $\r{GL}_m(k)$-action by
$[\beta;\xi^{\beta},z,z^*]g=[^{\r{t}}g^{\tau}\beta g;g^{-1}\xi
g,zg,g^{-1}z^*]$. For $[\xi,x,y]\in\f{S}_m$, we denote by
$[\xi,x,y]\Leftrightarrow[\beta;\xi^{\beta},z,z^*]$ if there exists
$g\in\r{GL}_m(k)$ such that $[\xi,x,y]=[\xi^{\beta},z,z^*]g$. We
have the following lemma which is similar to Lemma \ref{lem4zhang}:

 \begin{lem}\label{lem5zhang}
 For $[\xi,x,y]\in\f{S}_m$, there exists an element
 $[\beta;\xi^{\beta},z,z^*]\in\f{U}^{\natural}_m$, unique up to the $\r{GL}_m(k)$-action, such that
 $[\xi,x,y]\Leftrightarrow[\beta;\xi^{\beta},z,z^*]$. Conversely, for
 any $[\beta;\xi^{\beta},z,z^*]\in\f{U}^{\natural}_m$, there exists an element
 $[\xi,x,y]\in\f{S}_m$, unique up to the
 $\r{GL}_m(k')$-action, such that $[\xi,x,y]\Leftrightarrow[\beta;\xi^{\beta},z,z^*]$.
 \end{lem}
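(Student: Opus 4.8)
The plan is to mimic the proof of Lemma \ref{lem4zhang} almost verbatim, the difference being that we are now tracking a triple $[\xi,x,y]$ rather than a single element $\xi$, and the roles of "$\xi$ and $^{\r{t}}\xi$ have the same invariants'' must be replaced by "$[\xi,x,y]$ and $[^{\r{t}}\xi,{}^{\r{t}}y,{}^{\r{t}}x]$ have the same invariants''. First I would record the elementary observation that for $[\xi,x,y]\in\f{M}_m(k)_{\r{reg}}$ the transposed triple $[^{\r{t}}\xi,{}^{\r{t}}y,{}^{\r{t}}x]$ is again regular and has the same $a_i$ and $b_i$: indeed $a_i(^{\r{t}}\xi)=a_i(\xi)$, and $b_i([^{\r{t}}\xi,{}^{\r{t}}y,{}^{\r{t}}x])={}^{\r{t}}y\,{}^{\r{t}}\xi^i\,{}^{\r{t}}x={}^{\r{t}}(x\xi^i y)=x\xi^i y=b_i([\xi,x,y])$. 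By the uniqueness part of the preceding lemma there is therefore a unique $\gamma\in\r{GL}_m(k)$ with $\gamma^{-1}\xi\gamma={}^{\r{t}}\xi$, $x\gamma={}^{\r{t}}y$ and $\gamma^{-1}y={}^{\r{t}}x$.

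Next, as in Lemma \ref{lem4zhang}, I would first prove that two elements of $\f{S}_m$ that are $\r{GL}_m(k)$-conjugate are already $\r{GL}_m(k')$-conjugate: if $[\xi,x,y]g=[\xi',x',y']$ with all data $\tau$-fixed, then applying $\tau$ gives $[\xi,x,y]g^{\tau}=[\xi',x',y']$ as well, so by triviality of stabilizers $g=g^{\tau}$, i.e. $g\in\r{GL}_m(k')$. Then for the forward direction: given $[\xi,x,y]\in\f{S}_m$, apply the above to get $\gamma$ with $\gamma^{-1}\xi\gamma={}^{\r{t}}\xi$ etc. Since $\xi,{}^{\r{t}}\xi\in\r{S}_m(k')$ one computes $\gamma^{\tau}=\gamma$ (from $\gamma^{-1}\xi\gamma=\gamma^{\tau,-1}\xi^{\tau,-1}\gamma^{\tau}=\gamma^{\tau,-1}\xi\gamma^{\tau}$ and triviality of stabilizers applied to $\xi$ alone, which is legitimate since $\xi$ is regular semisimple — more carefully, use that $[\xi,x,y]$ is regular), and symmetrically ${}^{\r{t}}\gamma=\gamma$ by running the transpose argument, so $\gamma\in\r{Her}_m(k/k')^{\times}$. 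Setting $\beta=\gamma^{-1}$ one checks $^{\r{t}}\xi^{\tau}\beta\xi=\beta$, so $\xi\in\r{U}^{\beta}_m(k')$; and the relation $x\gamma={}^{\r{t}}y$ rewrites as $z^*:=\beta^{-1}\,{}^{\r{t}}z^{\tau}$ with $z=x$, matching the defining equation of $\f{U}^{\natural}_m$, so $[\xi,x,y]\Leftrightarrow[\beta^{-1};\xi,x,{}^{\r{t}}x]$ — I would double-check the $\tau$'s here, since $x\in\r{M}_{1,m}(k')$ means $x^\tau=x$ and the equation $z^*=\beta^{-1}\,{}^{\r{t}}z^{\tau}$ collapses accordingly. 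Uniqueness up to $\r{GL}_m(k)$ follows from the uniqueness of $\gamma$.

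For the converse direction, given $[\beta;\xi^{\beta},z,z^*]\in\f{U}^{\natural}_m$, I would follow the last paragraph of the proof of Lemma \ref{lem4zhang}: from $^{\r{t}}\xi^{\beta,\tau}\beta^{\tau}\xi^{\beta,\tau}=\beta^{\tau}$ deduce $\beta^{\tau,-1}\,{}^{\r{t}}\xi^{\beta}\beta^{\tau}=\xi^{\beta,\tau,-1}$; pick $\delta\in\r{GL}_m(k)$ with $\delta^{-1}\xi^{\beta}\delta={}^{\r{t}}\xi^{\beta}$ (plus $z\delta={}^{\r{t}}z^*$, $\delta^{-1}z^*={}^{\r{t}}z$, all coming from equality of invariants of the regular triples $[\xi^{\beta},z,z^*]$ and $[^{\r{t}}\xi^{\beta},{}^{\r{t}}z^*,{}^{\r{t}}z]$); conclude $(\delta\beta^{\tau})^{-1}\xi^{\beta}(\delta\beta^{\tau})=\xi^{\beta,\tau,-1}$ so by regularity $\delta\beta^{\tau}\in\r{S}_m(k')$; write $\delta\beta^{\tau}=gg^{\tau,-1}$ for some $g\in\r{GL}_m(k)$ and check $g^{-1}\xi^{\beta}g\in\r{S}_m(k')$ exactly as in the cited proof, while $zg,g^{-1}z^*\in\r{M}_{\ast}(k')$ follows by combining the intertwining relations for $z,z^*$ with $g g^{\tau,-1}=\delta\beta^{\tau}$. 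Uniqueness up to $\r{GL}_m(k')$-conjugation is then immediate, using the first paragraph's rigidity. The main obstacle I anticipate is purely bookkeeping: keeping the $\tau$-conjugations, transposes, and the asymmetry between $z$ and $z^*$ (versus the symmetric situation $x,y$ in $\f{S}_m$) consistent throughout, and making sure that "regular'' for the triple — not merely regular semisimplicity of $\xi$ — is invoked exactly where triviality of the $\r{GL}_m$-stabilizer is needed; there is no new idea beyond Lemma \ref{lem4zhang} and the preceding invariant-theory lemma.
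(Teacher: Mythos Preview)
Your proposal is correct and follows essentially the same route as the paper's own proof: both arguments transport the proof of Lemma~\ref{lem4zhang} to the triple setting by using that $[\xi,x,y]$ and $[{}^{\r{t}}\xi,{}^{\r{t}}y,{}^{\r{t}}x]$ share invariants, extracting a unique intertwiner $\gamma$, and showing $\gamma\in\r{Her}_m(k/k')^{\times}$ via the symmetry/$\tau$-fixedness argument, with the converse handled by the Hilbert~90 trick $\gamma\beta^{\tau}=gg^{\tau,-1}$. Your only slips are notational (in the forward direction the produced element should read $[\beta;\xi,x,y]$ with $\beta=\gamma^{-1}$, not $[\beta^{-1};\xi,x,{}^{\r{t}}x]$), and your own caveat is on point: the equality $\gamma=\gamma^{\tau}$ must be deduced from uniqueness of the intertwiner for the \emph{full regular triple}, not from $\xi$ alone.
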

 \begin{proof}
 We first point out that $[\xi,x,y],[\xi',x',y']\in\f{S}_m$ are
 conjugate under $\r{GL}_m(k)$ if and only if they are conjugate under $\r{GL}_m(k')$. Assume
 that $[\xi,x,y]g=[\xi',x',y']$, then $g^{-1}\xi g=\xi'$ implies
 that $g^{\tau,-1}\xi g^{\tau}=\xi'$; $xg=x'$ implies that
 $xg^{\tau}=x'$; $g^{-1}y=y'$ implies that $g^{\tau,-1}y=y'$, hence
 $g=g^{\tau}$.

 It is easy to see that for $[\xi,x,y]\in\f{M}(k)_{\r{reg}}$,
 $[\xi,x,y]$ and $[^{\r{t}}\xi,\:^{\r{t}}y,\:^{\r{t}}x]$ have the
 same invariants, hence there is a unique $g\in\r{GL}_{m}(k)$ such
 that $g^{-1}\xi g=\:^{\r{t}}\xi$, $xg=\:^{\r{t}}y$,
 $g^{-1}y=\:^{\r{t}}x$. Now if $[\xi,x,y]\in\f{S}_m$,
 then
 $[^{\r{t}}\xi,\:^{\r{t}}y,\:^{\r{t}}x]\in\f{S}_m$,
 which implies that $g=g^{\tau}$. Moreover, we have $g=\:^{\r{t}}g$
 and $^{\r{t}}\xi^{\tau}\(g^{-1}\)\xi=g^{-1}$. Hence $g^{-1}\in\r{Her}_m(k/k')^{\times}$
 and $\xi\in\r{U}^{g^{-1}}_m(k')$. We
 also have $y=\(g^{-1}\)^{-1}\:^{\r{t}}x^{\tau}$
 which means that $[g^{-1};\xi,x,y]\in\f{U}^{\natural}_m$ and
 $[\xi,x,y]\Leftrightarrow[\jmath
 g^{-1};\xi,x,y]$.

 Conversely, given any $[\beta;\xi,z,z^*]\in\f{U}^{\natural}_m$.
 Since $^{\r{t}}\xi\beta^{\tau}\xi^{\tau}=\beta^{\tau}$,
 $\beta^{\tau,-1}\:^{\r{t}}\xi\beta^{\tau}=\xi^{\tau,-1}$, we have
  \begin{align*}
  [^{\r{t}}\xi,\:^{\r{t}}z^*,\:^{\r{t}}z]\beta^{\tau}=[\xi^{\tau,-1},z^{\tau},(z^*)^{\tau}]
  \end{align*}
 since $z^*=\beta^{-1}\:^{\r{t}}z^{\tau}$. Moreover, since
 $[\xi,z,z^*]$ and $[^{\r{t}}\xi,\:^{\r{t}}z^*,\:^{\r{t}}z]$ have
 the same invariants, there exists $\gamma\in\r{GL}_m(k)$ such that
 $[\xi,z,z^*]\gamma=[^{\r{t}}\xi,\:^{\r{t}}z^*,\:^{\r{t}}z]$,
 hence
  \begin{align*}
  [\xi,z,z^*]\(\gamma\beta^{\tau}\)=[\xi^{\tau,-1},z^{\tau},(z^*)^{\tau}]
  \end{align*}
 which implies that $\gamma\beta^{\tau}\in\r{S}_m(k')$, hence
 $\gamma\beta^{\tau}=gg^{\tau,-1}$ for some $g\in\r{GL}_m(k)$. Then
  \begin{align*}
  \(g^{-1}\xi g\)\(g^{-1}\xi g\)^{\tau}=\b{1}_m
  \end{align*}
 and $zgg^{\tau,-1}=z^{\tau}$ implies $zg=(zg)^{\tau}$;
 $g^{\tau}g^{-1}z^*=(z^*)^{\tau}$ implies
 $g^{-1}z^*=\(g^{-1}z^*\)^{\tau}$. In all, $[\xi,z,z^*]g=[g^{-1}\xi
 g,zg,g^{-1}z^*]\in\f{S}_m$. The uniqueness is obvious.\\
 \end{proof}

Recall that we denote $[\r{Her}_m(k/k')^{\times}]$ the set of
similarity classes. We write $W=W^{\beta}$, $V=V^{\beta}$ if the
representing matrix of $W$ is in the class $\beta$, and
$\r{U}_m^{\beta}$ (resp. $\r{U}_n^{\beta}$, $\r{U}_{n,m}^{\beta}$,
$\b{H}^{\beta}$) for $\r{U}(W^{\beta})$ (resp. $\r{U}(V^{\beta})$, $
\r{U}(V^{\beta})\times\r{Res}_{k/k'}\r{M}_{1,m,k}$, $\b{H}'$). We
have

\begin{defn}
An element $[\zeta,x,y]\in\r{S}_{n,m}(k')$ (resp.
$[\zeta^{\beta},z]\in\r{U}^{\beta}_{n,m}(k')$) is called
\emph{pre-regular} if the stabilizer of $\zeta$ (resp.
$\zeta^{\beta}$) under the action of $\r{Res}_{k/k'}U_{1^r,m,1^r}$
(resp. $\(U'_{1^r,m}\)^2$) is trivial.\\
\end{defn}

Applying Lemma \ref{lem4weyl} to $\b{P}=P_{1^r,m,1^r}$, the
$U_{1^r,m,1^r}(k)$-orbit of $\zeta$ for which $[\zeta,x,y]$ is
pre-regular must contain a unique element of the form
\eqref{4normal} with $t_i(\zeta)\in k^{\times}$ and
$\r{Pr}(\zeta)\in\r{S}_{m}(k')$. We call the triple
$[\r{Pr}(\zeta),x,y]\in\f{M}_m(k)$ the \emph{normal form} of
$[\zeta,x,y]$.

Applying Lemma \ref{lem4weylu} to $\b{P}'=P^{\beta}_{1^r,m}$, the
$\(U'_{1^r,m}\)^2$-orbit of $\zeta^{\beta}$ for which
$[\zeta^{\beta},z]$ is pre-regular must contain a unique element of
the form \eqref{4unormal} with $t_i(\zeta^{\beta})\in k^{\times}$
and $\r{Pr}(\zeta^{\beta})\in\r{U}^{\beta}_{m}(k')$. We call the
quadruple
$[\beta,\r{Pr}(\zeta^{\beta}),z,z^*]\in\r{Her}_m(k/k')^{\times}\times\f{M}_m(k)$
the \emph{normal form} of $[\zeta^{\beta},z]$.\\

\begin{defn}
An element $[\zeta,x,y]\in\r{S}_{n,m}(k')$ (resp.
$[\zeta^{\beta},z]\in\r{U}^{\beta}_{n,m}(k')$) is called
\emph{regular} if it is pre-regular and its normal form
$[\r{Pr}(\zeta),x,y]\in\f{S}_m$ (resp.
$[\beta;\r{Pr}(\zeta^{\beta}),z,z^*]\in\f{U}^{\natural}_m$). We have
the notions $\r{S}_{n,m}(k')_{\r{reg}}$,
$\r{U}_{n,m}^{\beta}(k')_{\r{reg}}$ for the sets of regular elements.\\
\end{defn}

As before, we have the following proposition whose proof we omit.

\begin{prop}\label{prop5orbit}
 (1). There is a natural bijection
  \begin{align*}
  [\r{S}_{n,m}(k')_{\r{reg}}]/\b{H}(k')
  \overset{\b{N}}{\longleftrightarrow}
  \coprod_{\beta\in[\r{Her}_m(k/k')^{\times}]}[\r{U}_{n,m}^{\beta}(k')_{\r{reg}}]/\b{H}^{\beta}(k').
  \end{align*}
 If $\b{N}[\zeta^{\beta},z]=[\zeta,x,y]$, then we say that they
 \emph{match} and denote by $[\zeta,x,y]\leftrightarrow[\zeta^{\beta},z]$.

 (2). The set $\r{S}_{n,m}(k')_{\r{reg}}$ (resp.
 $\r{U}_{n,m}^{\beta}(k')_{\r{reg}}$) is non-empty and Zariski open in
 $\r{S}_{n,m}$ (resp. $\r{U}_{n,m}^{\beta}$). Moreover, the $\b{H}$-stabilizer (resp.
 $\b{H}^{\beta}$-stabilizer) of regular $[\zeta,x,y]$ (resp. $[\zeta^{\beta},z]$) is
 trivial.\\
\end{prop}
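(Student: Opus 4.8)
The plan is to follow the same strategy that proved Proposition \ref{prop4orbit}, reducing everything to the structural lemmas about $\f{M}_m$, $\f{S}_m$, $\f{U}^{\natural}_m$ that were just established. For part (1), I would start with a regular element $[\zeta,x,y]\in\r{S}_{n,m}(k')_{\r{reg}}$. By Lemma \ref{lem4weyl} applied to $\b{P}=P_{1^r,m,1^r}$ (noting that $\b{w}=w_n$ normalizes this parabolic), the $U_{1^r,m,1^r}(k)$-orbit of $\zeta$ contains the unique normal form \eqref{4normal}, which extracts invariants $t_1(\zeta),\dots,t_r(\zeta)\in k^{\times}$ and the normal form $[\r{Pr}(\zeta),x,y]\in\f{S}_m$. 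By Lemma \ref{lem5zhang}, there is a quadruple $[\beta;\r{Pr}(\zeta)^{\beta},z,z^*]\in\f{U}^{\natural}_m$, unique up to $\r{GL}_m(k)$-action, with $[\r{Pr}(\zeta),x,y]\Leftrightarrow[\beta;\r{Pr}(\zeta)^{\beta},z,z^*]$. Fixing the similarity class of $\beta$, the pair $(\r{Pr}(\zeta)^{\beta},z)$ is then determined up to $\r{U}^{\beta}_m(k')$-conjugation; I then reassemble these data into the element $[\zeta^{\beta},z]\in\r{U}^{\beta}_{n,m}(k')_{\r{reg}}$ by placing $\r{Pr}(\zeta)^{\beta}$ and $z$ into the anti-diagonal shape \eqref{4unormal} with the same $t_i$. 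This defines the map $\b{N}^{-1}$ on orbits; one checks it is well-defined (independent of representatives), injective, and the converse construction (starting from $[\zeta^{\beta},z]$, reading off its normal form, applying the converse direction of Lemma \ref{lem5zhang}) provides the inverse. The matching relation $[\zeta,x,y]\leftrightarrow[\zeta^{\beta},z]$ is then precisely the statement that both sides have the same invariants $t_i$, $a_i$, $b_i$.

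For part (2), the openness assertions follow exactly as in Proposition \ref{prop4orbit}: pre-regular elements correspond to the open Bruhat cell (by Lemma \ref{lem4weyl} and Lemma \ref{lem4weylu}), and the three regularity conditions in Definition \ref{def5reg} are each Zariski-open; non-emptiness is inherited from the non-emptiness exhibited in \cite[Section 3]{JR} together with the exponential map, or more directly from the explicit normal forms. For the triviality of the stabilizer, I would argue as in the proof of Proposition \ref{prop4orbit}: writing $[\zeta,x,y]$ in normal form, if $\b{h}=\b{h}(\underline{u};g)$ fixes it then conjugating by $g$ reduces to the case where the $U_{1^r,m,1^r}$-part fixes a normal-form element, which forces $\underline{u}=\b{1}_n$ by the pre-regularity, and then $g$ fixes $\r{Pr}(\zeta)$, $x$, $y$ simultaneously, hence $g=\b{1}_m$ by the triviality of the $\r{GL}_m$-stabilizer of a regular element of $\f{M}_m$.

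The one genuinely new bookkeeping point, compared to the Bessel (Jacquet--Rallis) case, is the presence of the extra vector data $x,y$ (resp. $z$, $z^*$) and the constraint $z^*=\beta^{-1}\,{}^{\r{t}}z^{\tau}$ linking the two vectors on the unitary side; the main obstacle will be checking that the correspondence $[\xi,x,y]\Leftrightarrow[\beta;\xi^{\beta},z,z^*]$ of Lemma \ref{lem5zhang} is genuinely compatible with the embedding of these vector spaces into the anti-diagonal blocks of $\r{S}_n$ versus $\r{U}^{\beta}_n$ — i.e., that the ``$\r{Pr}$'' operation and the ``forget the $t_i$'' operation commute with $\Leftrightarrow$ in the way needed. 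Since this is a routine but slightly tedious unwinding of the matrix forms \eqref{4normal}, \eqref{4unormal} and the actions defined in Section \ref{sec5dd}, and since the substantive content is already contained in Lemmas \ref{lem4weyl}, \ref{lem4weylu}, \ref{lem5zhang}, I would simply state that the proof is parallel to that of Proposition \ref{prop4orbit} and omit the details, as the paper does.
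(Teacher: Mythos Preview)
Your proposal is correct and follows exactly the approach the paper intends: the paper itself omits the proof entirely, stating only that it is parallel to Proposition~\ref{prop4orbit}, and your outline faithfully spells out that parallel argument using Lemmas~\ref{lem4weyl}, \ref{lem4weylu}, and \ref{lem5zhang} in place of Lemma~\ref{lem4zhang}.
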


It is clear that the regular orbit
$[\zeta,x,y]\in[\r{S}_{n,m}(k')_{\r{reg}}]/\b{H}(k')$ (resp.
$[\zeta^{\beta},z]\in[\r{U}_{n,m}^{\beta}(k')_{\r{reg}}]/\b{H}^{\beta}(k')$)
is determined by its invariants $t_i(\zeta)$ (resp.
$t_i(\zeta^{\beta})$) ($i=1,...,r$),
$a_i([\zeta,x,y]):=a_i([\r{Pr}(\zeta),x,y])$ (resp.
$a_i([\zeta^{\beta},z]):=a_i([\r{Pr}(\zeta^{\beta}),z,z^*])$)
($i=1,...,m$) and $b_i([\zeta,x,y]):=b_i([\r{Pr}(\zeta),x,y])$
(resp. $b_i([\zeta^{\beta},z]):=b_i([\r{Pr}(\zeta^{\beta}),z,z^*])$)
($i=0,...,m-1$), and $[\zeta,x,y]\leftrightarrow[\zeta^{\beta},z]$
if and only if they have the same invariants. We also denote by
$\b{T}_{[\zeta,x,y]}=\b{T}_{[\r{Pr}(\zeta),x,y]}$,
$\b{D}_{[\zeta,x,y]}=\b{D}_{[\r{Pr}(\zeta),x,y]}$ and
$\Delta_{[\zeta,x,y]}=\Delta_{[\r{Pr}(\zeta),x,y]}$.\\

As at the end of Section \ref{sec4dd}, we would like to compare the
weighted orbital integral $\c{O}_{\mu_{v'}}(F_{n,v'}\otimes
F_{m,v'};\Phi_{v'},[\zeta,x,y])$ and
$\c{O}_{\psi'_{v'},\mu_{v'}}(f_{n,v'}\otimes
f_{m,v'};\phi_{1,v'}\otimes\phi_{2,v'},[\zeta',z])$ when $v'$ splits
into two places $v_{\bullet}$ and $v_{\circ}$ of $k$.

As before, we identify $\r{S}_{n,v'}$ with $\r{GL}_{n,v'}$ (resp.
$\r{U}_{n,v'}$ with $\r{GL}_{n,v'}$) and $\r{S}_{n,m,v'}$ with
$\r{GL}_{n,v'}\times\r{M}_{1,m,v'}\times\r{M}_{m,1,v'}$ (resp.
$\r{U}_{n,m,v'}$ with $\r{GL}_{n,v'}\times\(\r{M}_{1,m,v'}\)^2$).
Then
$[\zeta,x,y]\leftrightarrow[\zeta,(x,\:^{\r{t}}y\beta_{\circ})]$
where $\beta=(\beta_{\bullet},\beta_{\circ})$.

By \eqref{5omega} and \eqref{5di5}, we have
 \begin{align}\label{5split}
 &\;\c{O}_{\mu_{v'}}(F_{n,v'}\otimes
 F_{m,v'};\Phi_{v'},[\zeta,x,y])\notag\\
 =&\intl_{\r{GL}_{m,v'}}\iintl_{\(U_{1^r,m,1^r,v'}\)^2}\iintl_{\(\r{GL}_{m,v'}\)^2}
 \sigma(F_{n,v'})(g_{\bullet}^{-1}g^{-1}\underline{u}_{\bullet}^{-1}\zeta\underline{u}_{\circ}gg_{\circ})
 F_{m,v_{\bullet}}(g_{\bullet}^{-1})F_{m,v_{\circ}}(g_{\circ}^{-1})\notag\\
 &\(\omega_{\overline{\psi},\overline{\mu}}(g_{\bullet},g_{\circ})\Phi_{v'}\)^{\dag}
 \(\(x+\frac{(n_r)_{\bullet}+(n_r)_{\circ}}{2}\)g,
 g^{-1}\(y-\frac{\(n^*_r\)_{\bullet}+\(n^*_{r}\)_{\circ}}{2}\)\)\notag\\
 &\underline{\underline{\psi}}'\(\underline{u}_{\bullet}^{-1}\underline{u}_{\circ}\)
 \overline{\psi'}\(j\(\(b_{r,r}\)_{\bullet}-\(b_{r,r}\)_{\circ}+x\frac{\(n^*_r\)_{\bullet}-\(n^*_{r}\)_{\circ}}{2}
 +\frac{(n_r)_{\bullet}-(n_r)_{\circ}}{2}y\)\)
 \r{d}g_{\bullet}\r{d}g_{\circ}\r{d}\underline{u}_{\bullet}\r{d}\underline{u}_{\circ}\r{d}g
 \end{align}
where
 \begin{align*}
 \underline{\underline{\psi}}'(\underline{u})=\psi'\(j\(u_{1,2}+\cdots+u_{r-1,r}+u^*_{r,r-1}+\cdots+u^*_{2,1}\)\);
 \qquad\jmath=(j,-j)
 \end{align*}
and
 \begin{align*}
 \underline{u}=&\[
 \begin{array}{ccccccccc}
   1 & u_{1,2} & \ddots  & n_{1,1} & \cdots & n_{1,m} & b_{1,r} & \cdots & b_{1,1} \\
    & \ddots & u_{r-1,r} & \vdots &  & \vdots         & \vdots &  & \vdots \\
                  &  & 1 & n_{r,1} & \cdots & n_{r,m} & b_{r,r} & \cdots & b_{r,1} \\
    &                &   & 1 &  &                     & n^*_{1,r} & \cdots & n^*_{1,1} \\
    &                &   &   & \ddots &               & \vdots &  & \vdots \\
    &                &   &   &     & 1                & n^*_{r,r} & \cdots & n^*_{r,1} \\
    &  &  &  &  &                                     & 1 & u^*_{r,r-1} & \ddots \\
    &  &  &  &  &                                     &  & \ddots & u^*_{2,1} \\
    &  &  &  &  &                                     &  &  & 1
 \end{array}
 \];\\
 n_r=&\[\begin{array}{ccc}
         n_{r,1} & \cdots & n_{r,r}
       \end{array}
 \];\qquad
 n^*_r=\:^{\r{t}}\[\begin{array}{ccc}
                     n^*_{1,r} & \cdots & n^*_{r,r}
                   \end{array}
 \].
 \end{align*}

On the other hand, by \eqref{5omegau} and \eqref{5du4}, we have
 \begin{align}\label{5splitu}
 &\;\c{O}_{\psi'_{v'},\mu_{v'}}(f_{n,v'}\otimes
 f_{m,v'};\phi_{1,v'}\otimes\phi_{2,v'},[\zeta,(x,\:^{\r{t}}y\beta_{\circ})])\notag\\
 =&\intl_{\r{GL}_{m,v'}}\iintl_{\(U_{1^r,m,1^r,v'}\)^2}\intl_{\r{GL}_{m,v'}}
 f_{n,v'}(g_{\bullet}^{-1}g^{-1}\underline{u}_{\bullet}^{-1}\zeta\underline{u}_{\circ}g)
 f_{m,v'}(g_{\bullet}^{-1})\notag\\
 &\(\(\omega'_{\overline{\psi'},\overline{\mu}}(g_{\bullet})\phi_{1,v'}\)
 \otimes\overline{\phi_{2,v'}}\)^{\ddag}
 \(\(x+\frac{(n_r)_{\bullet}+(n_r)_{\circ}}{2}\)g,
 \:^{\r{t}}\(y-\frac{\(n^*_r\)_{\bullet}+\(n^*_{r}\)_{\circ}}{2}\)\:^{\r{t}}g^{-1}\beta_{\circ}\)\notag\\
 &\underline{\underline{\psi}}'\(\underline{u}_{\bullet}^{-1}\underline{u}_{\circ}\)
 \overline{\psi'}\(j\(\(b_{r,r}\)_{\bullet}-\(b_{r,r}\)_{\circ}+x\frac{\(n^*_r\)_{\bullet}-\(n^*_{r}\)_{\circ}}{2}
 +\frac{(n_r)_{\bullet}-(n_r)_{\circ}}{2}y\)\)
 \r{d}g_{\bullet}\r{d}\underline{u}_{\bullet}\r{d}\underline{u}_{\circ}\r{d}g.
 \end{align}

To compare \eqref{5split} and \eqref{5splitu}, we need to invoke the
original models of the Weil representations. In particular
 \begin{align*}
 \(\omega_{\overline{\psi},\overline{\mu}}(g_{\bullet},g_{\circ})\Phi_{v'}\)^{\dag}(x,y)=
 \mu(\det g_{\bullet}^{-1}g_{\circ})|\det g_{\bullet}g_{\circ}|_{v'}^{\frac{1}{2}}
 \intl_{\r{M}_{1,m,v'}}\Phi_{v'}\((x+z)g_{\bullet},(x-z)g_{\circ}\)\psi'(jzy)\r{d}z
 \end{align*}
where $\mu=(\mu,\mu^{-1})$ with abuse of notation. If we identify
$\b{L}_{v'}$ with $\r{M}_{1,m,v_{\bullet}}$ (possibly through a
Fourier transform), we have
 \begin{align*}
 \(\(\omega'_{\overline{\psi'},\overline{\mu}}(g_{\bullet})\phi_{1,v'}\)
 \otimes\overline{\phi_{2,v'}}\)^{\ddag}(x,\:^{\r{t}}y\beta_{\circ})=
 \mu(\det g_{\bullet}^{-1})|\det
 g_{\bullet}|_{v'}^{\frac{1}{2}}\intl_{\r{M}_{1,m,v'}}
 \phi_{1,v'}\((x+z)g_{\bullet}\)\overline{\phi_{2,v'}}(x-z)\psi'(jzy)\r{d}z.
 \end{align*}

Now if we suppose that $\Phi_{v'}=\Phi_{\bullet}\otimes\Phi_{\circ}$
is factorizable with respect to the two components. Let
$F_{m,v_{\bullet}}=f_{m,v'}$, $\Phi_{\bullet}=\phi_{1,v'}$ and
assume that the following function on
$(h,w)\in\r{GL}_{n,v'}\times\r{M}_{1,m,v'}$:
 \begin{align*}
 \intl_{\r{GL}_{m,v'}}\sigma(F_{n,v'})(hg_{\circ})F_{m,v_{\circ}}(g_{\circ}^{-1})\Phi_{\circ}(wg_{\circ})
 \mu(\det g_{\circ})|\det g_{\circ}|_{v'}^{\frac{1}{2}}\r{d}g_{\circ}
 \end{align*}
is equal to $\sum_i
f_{n,v'}^{(i)}\otimes\overline{\phi^{(i)}_{2,v'}}$, then
 \begin{align*}
 \c{O}_{\mu_{v'}}(F_{n,v'}\otimes
 F_{m,v'};\Phi_{\bullet}\otimes\Phi_{\circ},[\zeta,x,y])=\sum_i
 \c{O}_{\psi'_{v'},\mu_{v'}}(f^{(i)}_{n,v'}\otimes
 f_{m,v'};\phi_{1,v'}\otimes\phi_{2,v'}^{(i)},[\zeta,(x,\:^{\r{t}}y\beta_{\circ})]),
 \end{align*}
hence the smooth matching (of functions) holds!\\

\begin{rem}
As we see in the above calculation, there are several differences in
the case of Fourier-Jacobi periods:
 \begin{itemize}
   \item The data of test functions involve Schwartz functions on the groups as well as on the
   linear spaces;
   \item Even for the split case, smooth matching is not obvious and we need to choose
   linear combination of functions to make corresponding weighted orbital
   integrals equal.
 \end{itemize}

For almost all split places $v'$ where everything is unramified and
the test functions are the characteristic functions on corresponding
maximal compact subgroups or lattices, then the two orbital
integrals are equal.

Inspired by the split case, we conjecture that, similar to
Conjecture \ref{conj4sm}, the smooth matching of functions holds for
all places $v'$. We omit the explicit form of this conjecture in the
current case.\\
\end{rem}

\subsection{The fundamental lemma}
\label{sec5fl}

We now state the fundamental lemma for the Fourier-Jacobi periods.
We use all the notations in the beginning of Section \ref{sec4fl},
except that we define
 \begin{align}\label{5tfactor}
 \b{t}([\zeta,x,y])=
 \begin{cases}
 (-1)^{\r{val}\(\b{T}_{[\zeta,x,y]}\cdot\prod_{i=1}^rt_i(\zeta)\)} &
 m\text{ is even;}\\
 (-1)^{\r{val}\(\b{T}_{[\zeta,x,y]}\)} &
 m\text{ is odd.}
 \end{cases}
 \end{align}
For simplicity, we only consider the fundamental lemma for unit
elements which is

 \begin{conj}[\textbf{The fundamental lemma}]\label{conj5fl}
 Assume that $k/k'$, $\psi'$, $\mu$ are unramified and
 $\jmath\in\f{o}$. Then we have
  \begin{align*}
  &\c{O}_{\mu}(\CF_{\r{S}_n(\f{o}')};\CF_{\r{M}_{1,m}(\f{o}')}\otimes\CF_{\r{M}_{m,1}(\f{o}')},[\zeta,x,y])\\
  =&\begin{cases}
  \b{t}([\zeta,x,y])\c{O}_{\psi',\mu}(\CF_{\r{U}^+_n(\f{o}')};\CF_{\r{M}_{1,m}(\f{o})},[\zeta^+,z])
  & [\zeta,x,y]\leftrightarrow[\zeta^+,z]\in\r{U}^+_{n,m}(k');\\
  0 &[\zeta,x,y]\leftrightarrow[\zeta^-,z]\in\r{U}^-_{n,m}(k')
  \end{cases}
 \end{align*}
where
 \begin{align*}
 \c{O}_{\mu}(\CF_{\r{S}_n(\f{o}')};\CF_{\r{M}_{1,m}(\f{o}')}\otimes\CF_{\r{M}_{m,1}(\f{o}')},[\zeta,x,y])
 =&\intl_{\b{H}(k')}\CF_{\r{S}_n(\f{o}')}([\zeta]\b{h})
 \(\omega^{\dag}_{\overline{\psi},\overline{\mu}}(\b{h})\(\CF_{\r{M}_{1,m}(\f{o}')}\otimes\CF_{\r{M}_{m,1}(\f{o}')}\)\)
 (x,y)\underline{\psi}(\b{h})\r{d}\b{h};\\
 \c{O}_{\psi',\mu}(\CF_{\r{U}^+_n(\f{o}')};\CF_{\r{M}_{1,m}(\f{o})},[\zeta^+,z])
 =&\intl_{\b{H}^+(k')}\CF_{\r{U}^+_n(\f{o}')}([\zeta^+]\b{h}')
 \(\omega^{\ddag}_{\overline{\psi'},\overline{\mu}}(\b{h}')\CF_{\r{M}_{1,m}(\f{o})}\)(z)
 \underline{\psi'}(\b{h}')\r{d}\b{h}'.
 \end{align*}
It is easy to see that
$[\zeta,x,y]\leftrightarrow[\zeta^+,z]\in\r{U}^+_{n,m}(k')$ if and
only if $\r{val}\(\Delta_{[\zeta,x,y]}\)$ is even.

In particular, when $n=m$, the above orbital integrals become the
following much simpler ones
 \begin{align*}
 \c{O}_{\mu}(\CF_{\r{S}_n(\f{o}')};\CF_{\r{M}_{1,n}(\f{o}')}\otimes\CF_{\r{M}_{n,1}(\f{o}')},[\zeta,x,y])
 =&\intl_{\r{GL}_n(k')}\CF_{\r{S}_n(\f{o}')}(g^{-1}\zeta g)
 \CF_{\r{M}_{1,n}(\f{o}')}(xg)\CF_{\r{M}_{n,1}(\f{o}')}(g^{-1}y)\eta(\det g)\r{d}g;\\
 \c{O}_{\psi',\mu}(\CF_{\r{U}^+_n(\f{o}')};\CF_{\r{M}_{1,n}(\f{o})},[\zeta^+,z])
 =&\intl_{\r{U}^+_n(k')}\CF_{\r{U}^+_n(\f{o}')}(g'^{-1}\zeta^+g')
 \CF_{\r{M}_{1,n}(\f{o})}(zg')\r{d}g'.
 \end{align*}\\
 \end{conj}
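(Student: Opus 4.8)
\textbf{Proof proposal for Conjecture \ref{conj5fl}, the ``easy half'' ($\epsilon(\beta)=-1$ case).}
The plan is to establish the vanishing statement --- namely that
$\c{O}_{\mu}(\CF_{\r{S}_n(\f{o}')};\CF_{\r{M}_{1,m}(\f{o}')}\otimes\CF_{\r{M}_{m,1}(\f{o}')},[\zeta,x,y])=0$
whenever $\r{val}(\Delta_{[\zeta,x,y]})$ is odd --- by the same kind of self-cancellation argument used in Proposition \ref{prop4half}, suitably adapted to incorporate the Schwartz-function data on the two linear spaces $\r{M}_{1,m}$ and $\r{M}_{m,1}$. First I would introduce the element
 \begin{align*}
 w=\[\begin{array}{ccc}
        &  & w_r \\
        & \b{1}_m &  \\
       w_r &  &
     \end{array}
 \],
 \end{align*}
which conjugates the symmetric space and the group $U_{1^r,m,1^r}$ in a way compatible with the transpose-inverse involution, and check that $\CF_{\r{S}_n(\f{o}')}(s)=\CF_{\r{S}_n(\f{o}')}(w\:^{\r{t}}sw)$ and that $\underline{\psi}$ is preserved under the substitution $w\:^{\r{t}}\underline{u}^{\tau,-1}w\mapsto\underline{u}$. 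The main new ingredient relative to the Bessel case is that one must also track how the mixed-model Weil operator $\omega^{\dag}_{\overline{\psi},\overline{\mu}}$ and the characteristic functions $\CF_{\r{M}_{1,m}(\f{o}')}\otimes\CF_{\r{M}_{m,1}(\f{o}')}$ transform under the same substitution; the explicit formula \eqref{5omega}, together with the self-duality of these lattices under the self-dual measure and the fact that $\jmath\in\f{o}$, should show that this data is carried to itself up to the sign $\eta(\det g)$.

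The heart of the argument is then to write $[\zeta,x,y]$ in its normal form --- so that $\r{Pr}(\zeta)\in\r{S}_m(k')_{\r{reg}}$ and the triple $[\r{Pr}(\zeta),x,y]$ lies in $\f{S}_m$ --- and invoke the analogue of Lemma \ref{lem5zhang}: since $[\r{Pr}(\zeta),x,y]$ and its ``transpose'' $[\:^{\r{t}}\r{Pr}(\zeta),\:^{\r{t}}y,\:^{\r{t}}x]$ have the same invariants $a_i,b_i$, there is a unique $h\in\r{GL}_m(k')$ conjugating one into the other, and one computes $\eta(\det h)$ in terms of $\r{val}(\Delta_{[\zeta,x,y]})$. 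Concretely, $\det h$ is read off from the change-of-basis matrix between the two bases $\{x,x\xi,\dots\}$ and $\{y,\xi y,\dots\}$ (up to $\b{T}$ and $\b{D}$ type determinants), and the parity of $\r{val}(\Delta_{[\zeta,x,y]})=\r{val}(\det\b{D}_{[\zeta,x,y]})$ forces $\eta(\det h)=-1$ precisely in the case at hand. After the change of variables $w\:^{\r{t}}\underline{u}^{\tau,-1}w\mapsto\underline{u}$ and $h\:^{\r{t}}g^{-1}\mapsto g$, the orbital integral is carried to $-1$ times itself, giving the vanishing. For the positive-definite case (and the reduction to the combinatorial identity of \cite{Yun09} needed for the full statement when $n=m$), I would then follow Section \ref{sec5nn}; when $n=m$ the orbital integrals degenerate to the simpler ones displayed at the end of the conjecture, and the argument above specializes directly, with $\b{T}_{[\zeta,x,y]}$ the determinant of the Vandermonde-type matrix \eqref{5bt}.

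The step I expect to be the main obstacle is the bookkeeping around the Weil-representation data: unlike the Bessel situation of Proposition \ref{prop4half}, where only $\CF_{\r{S}_n(\f{o}')}$ and the additive character are involved, here one must verify that the partial-Fourier-transform intertwining operator defining $\Phi\mapsto\Phi^{\dag}$ (and the lemma in Section \ref{sec5dd} identifying $\omega^{\dag}$) is compatible with the $w$-conjugation, so that $\bigl(\omega^{\dag}_{\overline{\psi},\overline{\mu}}(\b{h})(\CF_{\r{M}_{1,m}(\f{o}')}\otimes\CF_{\r{M}_{m,1}(\f{o}')})\bigr)(x,y)$ really does transform by exactly the sign $\eta(\det g)$ and nothing more --- in particular that no spurious power of the residue characteristic or extra Gauss-sum factor appears, which is where the unramifiedness of $\psi'$ and $\mu$ and the hypothesis $\jmath\in\f{o}$ get used. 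Once this compatibility is in hand, the cancellation is formal.
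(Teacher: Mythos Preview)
Your approach coincides with the paper's for the cases the paper actually settles, but you are overclaiming the general case. The paper's Proposition \ref{prop5half} is stated and proved only for $n=m$ or $m=0$; its proof is exactly the transpose--$w$-conjugation trick from Proposition \ref{prop4half} together with the trivial observation $\CF_{\r{M}_{1,n}(\f{o}')}(x)=\CF_{\r{M}_{n,1}(\f{o}')}(\:^{\r{t}}x)$, which is precisely your argument specialized to $r=0$. In that regime there is no unipotent factor in $\b{H}$, hence no Weil-representation operator $\omega^{\dag}_{\overline{\psi},\overline{\mu}}$ in the integrand at all, and the ``bookkeeping'' you worry about is vacuous. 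So for $n=m$ your proposal is correct and essentially identical to the paper's.

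The gap is the general case $r>0$. Immediately after Proposition \ref{prop5half} the paper writes: ``It seems that, when $n>m$, the above vanishing result is not that easy to prove.'' In other words, the very compatibility you isolate as ``the main obstacle'' --- that $\bigl(\omega^{\dag}_{\overline{\psi},\overline{\mu}}(\b{h})(\CF_{\r{M}_{1,m}(\f{o}')}\otimes\CF_{\r{M}_{m,1}(\f{o}')})\bigr)(x,y)$ transforms under $\underline{u}\mapsto w\:^{\r{t}}\underline{u}^{\tau,-1}w$ by exactly the sign $\eta(\det g)$ --- is not established in the paper and is not expected to be a routine verification. The point is that the map $H\to H^{\dag}$ extracting $(n^{\spadesuit},n^{\heartsuit},n^{\clubsuit},n^{\diamondsuit},b^{\dag})$ from $\underline{u}$ and the polarization underlying $\Phi\mapsto\Phi^{\dag}$ are not manifestly equivariant for the transpose--$w$ involution; the phase in \eqref{5omega} mixes $k'$- and $k^-$-components of the unipotent data asymmetrically, so the symmetry that makes Proposition \ref{prop4half} work does not obviously persist. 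Your proposal correctly identifies this as the crux, but the assertion ``once this compatibility is in hand, the cancellation is formal'' presumes exactly what the paper leaves open. You should restrict your claim to $n=m$ (and $m=0$), where it matches the paper, and flag the $r>0$ vanishing as genuinely unresolved here.
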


 \begin{prop}\label{prop5half}
  Let $n=m$ or $m=0$, if $\r{val}\(\Delta_{[\zeta,x,y]}\)$ is odd, then
  \begin{align*}
  \c{O}_{\mu}(\CF_{\r{S}_n(\f{o}')};\CF_{\r{M}_{1,n}(\f{o}')}\otimes\CF_{\r{M}_{n,1}(\f{o}')},[\zeta,x,y])=0.
  \end{align*}
 \end{prop}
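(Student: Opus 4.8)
The plan is to mimic the argument in the proof of Proposition \ref{prop4half}, transporting the ``reflection by the long Weyl element'' trick to the present (possibly relative, possibly trivial) setting. The key observation is again that the characteristic function $\CF_{\r{S}_n(\f{o}')}$ is invariant under $s\mapsto w\:^{\r{t}}sw$ for the involution $w=\left[\begin{smallmatrix} & & w_r \\ & \b{1}_{m} & \\ w_r & & \end{smallmatrix}\right]$ (when $m=n$ this $w$ is just $\b{1}_n$, so the statement there is the one handled in \cite{JY92,Zh} and needs no Weyl element at all — only the transpose and the conjugacy result of Lemma \ref{lem5zhang}). First I would write $[\zeta,x,y]$ in its normal form (so $\zeta$ is of the shape \eqref{4normal} with the middle block $\r{Pr}(\zeta)\in\r{S}_m(k')$ and $[\r{Pr}(\zeta),x,y]\in\f{S}_m$), and insert the identity $\CF_{\r{S}_n(\f{o}')}(g^{-1}\zeta g\cdots)=\CF_{\r{S}_n(\f{o}')}(w\:^{\r{t}}(\cdots)w)$ inside the orbital integral. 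Taking the transpose turns $g^{-1}\underline{u}^{-1}\zeta\underline{u}^{\tau}g$ into $\:^{\r{t}}g\,\:^{\r{t}}\underline{u}^{\tau}\,\:^{\r{t}}\zeta\,\:^{\r{t}}\underline{u}^{-1}\,\:^{\r{t}}g^{-1}$, and sandwiching by $w$ converts this (after the substitutions $w\:^{\r{t}}\underline{u}^{\tau,-1}w\mapsto\underline{u}$, which preserves both $U_{1^r,m,1^r}$ and the character $\underline{\psi}$ by the relation $\underline{\psi}(\underline{u})=\underline{\psi}(w\:^{\r{t}}\underline{u}^{\tau,-1}w)$) into an orbital integral for $w\:^{\r{t}}\zeta w$, which again is in normal form with the same invariants $t_i$, $a_i$, $b_i$.

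The heart of the matter is then Lemma \ref{lem5zhang}: since $[\r{Pr}(\zeta),x,y]$ and $[\,^{\r{t}}\r{Pr}(\zeta),\:^{\r{t}}y,\:^{\r{t}}x\,]$ (equivalently, the normal form of $w\:^{\r{t}}\zeta w$ with its data) lie in $\f{S}_m$ and share all invariants, there is a unique $h\in\r{GL}_m(k)$ conjugating one to the other, and the argument in the proof of Lemma \ref{lem5zhang} (reading off $\gamma\beta^{\tau}=gg^{\tau,-1}$) shows $h$ can be taken in $\r{GL}_m(k')$ with $\gamma\beta^{\tau}\in\r{S}_m(k')$. The point is to extract, from that construction, the sign $\eta(\det h)=(-1)^{\r{val}(\det h)}$: the hypothesis $\r{val}(\Delta_{[\zeta,x,y]})$ odd should force $\r{val}(\det h)$ odd, i.e. $\eta(\det h)=-1$. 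Concretely, $\det h$ and $\Delta_{[\zeta,x,y]}=\det\b{D}_{[\zeta,x,y]}$ differ by a norm from $k^{\times}$ (the conjugation is visible over $k$, and the $k'$-structure only enters through $\r{Her}_m$), so their valuations have the same parity; this is exactly the statement ``$[\zeta,x,y]\leftrightarrow[\zeta^+,z]$ iff $\r{val}(\Delta_{[\zeta,x,y]})$ even'' already asserted in Conjecture \ref{conj5fl}. After the further change of variables $h\:^{\r{t}}g^{-1}\mapsto g$ in the outer $\r{GL}_m$-integral, the weight $\eta(\det g)$ picks up precisely the extra factor $\eta(\det h)=-1$, while the Weil-representation factor $\omega^{\dag}_{\overline{\psi},\overline{\mu}}(\b{h})$ evaluated against $\CF_{\r{M}_{1,n}(\f{o}')}\otimes\CF_{\r{M}_{n,1}(\f{o}')}$ is symmetric under the substitution (using that $\psi$, $\mu$, $\jmath$ are unramified and $\f{o}$ is $\:^{\r{t}}(-)^{\tau}$-stable, so the Fourier-transform formulas in \eqref{5omega} respect the lattices). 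Hence the integral equals its own negative and vanishes.

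The main obstacle I anticipate is bookkeeping rather than conceptual: one must check that \emph{all} the auxiliary data — the vector variables $x,y$ (or $z$ when $m=0$, where they are absent and the statement collapses to Proposition \ref{prop4half}), the Heisenberg/Weil factor $\omega^{\dag}_{\overline{\psi},\overline{\mu}}$, and the generic character $\underline{\psi}$ — transform correctly under the composite ``transpose $+$ conjugate by $w$ $+$ conjugate by $h$'' so that the only net effect is the sign $-1$. For the Weil factor this means verifying that $\bigl(\omega^{\dag}_{\overline{\psi},\overline{\mu}}(\b{h})(\CF\otimes\CF)\bigr)(x,y)$ is carried to $\bigl(\omega^{\dag}_{\overline{\psi},\overline{\mu}}(\b{h})(\CF\otimes\CF)\bigr)(\:^{\r{t}}y,\:^{\r{t}}x)$ under the transpose, which is immediate from formula \eqref{5omega} since $\widehat{\CF_{\r{M}(\f{o})}}=\CF_{\r{M}(\f{o})}$ up to a unit under the self-dual (= $\f{o}$-) measure and the character is unramified; the $b^{\dag}$-term and the two cross terms $w^{\spadesuit}n^{\diamondsuit}$, $n^{\heartsuit}w^{\clubsuit}$ swap roles but contribute the same $\overline{\psi}$-value. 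Once these compatibilities are in place, the cancellation is forced exactly as in Proposition \ref{prop4half}. I would present the case $m=0$ first (where it is literally Proposition \ref{prop4half}), then the case $n=m$ in detail, since there $w=\b{1}_n$ and the whole argument reduces to: transpose, apply Lemma \ref{lem5zhang} to get $h\in\r{GL}_n(k')$ with $g^{-1}\zeta g\mapsto\:^{\r{t}}\zeta$ and $\eta(\det h)=-1$, change variables, conclude the integral is its own negative.
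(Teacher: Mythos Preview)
Your proposal is correct and follows essentially the same approach as the paper's own proof, which simply says ``similar to Proposition~\ref{prop4half}, noticing that $\CF_{\r{M}_{1,n}(\f{o}')}(x)=\CF_{\r{M}_{n,1}(\f{o}')}(\:^{\r{t}}x)$.'' Your elaboration on the unipotent part and the Weil factor $\omega^{\dag}_{\overline{\psi},\overline{\mu}}$ is more than is needed here: the proposition only treats the two extreme cases $n=m$ (where $r=0$, $w=\b{1}_n$, $\b{H}=\r{GL}_{n,k'}$, and the integral is the simplified one displayed at the end of Conjecture~\ref{conj5fl}) and $m=0$ (where there is no $x,y$ and the statement is literally Proposition~\ref{prop4half}); in both cases the only new ingredient beyond Proposition~\ref{prop4half} is precisely the transpose symmetry of the row/column characteristic functions that you identify.
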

 \begin{proof}
 It follows from the similar argument in the proof of Proposition
 \ref{prop4half} by noticing that
 $\CF_{\r{M}_{1,n}(\f{o}')}(x)=\CF_{\r{M}_{n,1}(\f{o}')}(\:^{\r{t}}x)$.\\
 \end{proof}

It seems that, when $n>m$, the above vanishing result is not that
easy to proof.\\

\subsection{Proof of the fundamental lemma for $\r{U}_n\times\r{U}_n$}
\label{sec5nn}

In this section, we prove the fundamental lemma for the case $n=m$
in positive characteristics. The proof uses a similar idea of
relating orbiting integrals to certain problems of counting
lattices, following \cite{Yun09}, and then reduces to an identity
which has already been proved by Yun. In other word, the fundamental
lemmas for both the case $n=m$ and $n=m+1$
(the Jacquet-Rallis case) will be implied by the same identity.\\

Let $k'$ be a $p$-adic local field with ring of integers $\f{o}'$,
uniformizer $\varpi$, and $q=|\f{o}'/\varpi\f{o}'|$. Let $k/k'$ be
an unramified quadratic field extension with ring of integers
$\f{o}$ and $0\neq\jmath\in\f{o}$ with $\jmath^{\tau}=-\jmath$. Let
$\r{val}$ be the valuation on $k^{\times}$ normalized such that
$\r{val}(\varpi)=1$. For two full rank $\f{o}$-lattice
$\Lambda_1,\Lambda_2$ in some finite dimensional $k$-vector space
$V$, we define
 \begin{align*}
 \r{leng}_{\f{o}}(\Lambda_1:\Lambda_2):=\r{leng}_{\f{o}}(\Lambda_1/\Lambda_1\cap\Lambda_2)-
 \r{leng}_{\f{o}}(\Lambda_2/\Lambda_1\cap\Lambda_2)
 \end{align*}
where for $\Lambda_1\supset\Lambda_2$,
 \begin{align*}
 \r{leng}_{\f{o}}(\Lambda_1:\Lambda_2)=\frac{|\Lambda_1/\Lambda_2|}{|\f{o}/\varpi\f{o}|}
 =\frac{|\Lambda_1/\Lambda_2|}{q^2}.
 \end{align*}
We have also the same notion for $\f{o}'$ (but of course replacing
$q^2$ by $q$).\\

Let us first review some constructions in \cite[Section 2]{Yun09}.
Let $n\geq 1$, for any pair $(a,b)$ with $a=(a_i),b=(b_i)\in\f{o}^n$
and $a_n\in\f{o}^{\times}$, we define an $\f{o}$-algebra
 \begin{align*}
 \b{R}_{a,\f{o}}=\f{o}[t,t^{-1}]/(t^n-a_1t^{n-1}+\cdots+(-1)^na_n).
 \end{align*}
Hence $Z'_a:=\r{Spec}\:\b{R}_{a,\f{o}}$ is a subscheme of
$\r{Spec}\:\f{o}\times\d{G}_{\r{m}}$ which is finite flat over
$\r{Spec}\:\f{o}$ of degree $n$. Let
$\b{R}_{a,\f{o}}^{\vee}=\r{Hom}_{\f{o}}(\b{R}_{a,\f{o}},\f{o})$ and
define an element $\b{b}\in\b{R}_{a,\f{o}}^{\vee}$ from the datum
$b$ by the formula
 \begin{align}\label{5bb}
 \b{b}:\b{R}_{a,\f{o}}&\LR \f{o}\\
 t^i&\T b_i,\qquad i=0,...,n-1.
 \end{align}
It induces an $\b{R}_{a,\f{o}}$-linear homomorphism
$\gamma'_{a,b}:\b{R}_{a,\f{o}}\R\b{R}_{a,\f{o}}^{\vee}$ by the
pairing
 \begin{align*}
 \b{R}_{a,\f{o}}\otimes\b{R}_{a,\f{o}}&\LR\f{o}\\
 (u,v)&\T\b{b}(uv).
 \end{align*}
Let $\vartheta$ be the involution on
$\r{Res}_{\f{o}/\f{o}'}\(\r{Spec}\:\f{o}\times\d{G}_{\r{m}}\)$ which
is the product of $\tau$ on $\r{Spec}\:\f{o}$ and the involution
$t\T t^{-1}$ on $\d{G}_{\r{m}}$. The subscheme invariant under
$\vartheta$ is the unitary group (scheme) $\r{U}_{1,\f{o}/\f{o}'}$
over $\r{Spec}\:\f{o}'$.

Recall that
$\r{S}_n(\f{o}')=\1\zeta\in\r{M}_n(\f{o}')\;|\;\zeta\zeta^{\tau}=\b{1}_n\2$.
For
$[\zeta,x,y]\in\r{S}_n(\f{o}')\times\r{M}_{1,n}(\f{o}')\times\r{M}_{n,1}(\f{o}')$,
let $a_i=a_i([\zeta,x,y])=\r{Tr}\(\bigwedge^i\zeta\)$ and
$b_i=b_i([\zeta,x,y])=x\zeta^iy$ be the invariants. Then the
corresponding subscheme $Z'_a$ is stable under $\vartheta$, hence
determining a subscheme $Z_a$ of $\r{U}_{1,\f{o}/\f{o}'}$, finite
flat of degree $n$ over $\r{Spec}\:\f{o}'$. Let $\b{R}_a$ be the
coordinate ring of $Z_a$ which is a finite flat $\f{o}'$-algebra of
rank $n$ such that $\b{R}_a\otimes_{\f{o}'}\f{o}=\b{R}_{a,\f{o}}$.
The $\b{R}_{a,\f{o}}$-linear map $\gamma'_{a,b}$ descends to an
$\b{R}_a$-linear map $\gamma_{a,b}:\b{R}_a\R\b{R}_a^{\vee}$, where
$\b{R}_a^{\vee}=\r{Hom}_{\f{o}'}(\b{R}_a,\f{o}')$.

Let us assume that $\Delta_{[\zeta,x,y]}\neq0$, then the image of
$\gamma_{a,b}$ is co-finite in $\b{R}_a^{\vee}$. If we identify
$\b{R}_a$ as a submodule of $\b{R}_a^{\vee}$, then
$\r{leng}_{\f{o}'}(\b{R}_a^{\vee}:\b{R}_a)=\r{val}\(\Delta_{[\zeta,x,y]}\)$.
For each $0\leq i\leq\r{val}\(\Delta_{[\zeta,x,y]}\)$, let
 \begin{align*}
 \b{M}_{i,a,b}:=\1\b{R}_a\text{-lattices }\Lambda\;|\;\b{R}_a\subset\Lambda\subset\b{R}_a^{\vee}
 \text{ and }\r{leng}_{\f{o}'}(\b{R}_a^{\vee}:\Lambda)=i\2.
 \end{align*}
We remark that for the orbital integral
$\c{O}_{\mu}(\CF_{\r{S}_n(\f{o}')};\CF_{\r{M}_{1,m}(\f{o}')}\otimes\CF_{\r{M}_{m,1}(\f{o}')},[\zeta,x,y])$
to be nonzero, there must be an element locating in
$\r{S}_n(\f{o}')\times\r{M}_{1,n}(\f{o}')\times\r{M}_{n,1}(\f{o}')$
in the $\r{GL}_n(k')$-orbit of $[\zeta,x,y]$. Then we have

\begin{prop}\label{prop5count}
Let
$[\zeta,x,y]\in\r{S}_n(\f{o}')\times\r{M}_{1,n}(\f{o}')\times\r{M}_{n,1}(\f{o}')$
be regular and hence such that
$\r{val}\(\Delta_{[\zeta,x,y]}\)\geq0$, then the orbital integral
 \begin{align*}
 \c{O}_{\mu}(\CF_{\r{S}_n(\f{o}')};\CF_{\r{M}_{1,m}(\f{o}')}\otimes\CF_{\r{M}_{m,1}(\f{o}')},[\zeta,x,y])
 =(-1)^{\r{val}\(\b{T}_{[\zeta,x,y]}\)}\sum_{i=0}^{\r{val}\(\Delta_{[\zeta,x,y]}\)}(-1)^i\left|\b{M}_{i,a,b}\right|
 \end{align*}
where $\b{T}_{[\zeta,x,y]}$ is defined in \eqref{5bt}.
\end{prop}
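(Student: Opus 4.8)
The plan is to follow Yun's strategy of rewriting the orbital integral as a weighted count of lattices. First I would reduce the integral to a sum over cosets: since $\CF_{\r{S}_n(\f{o}')}$, $\CF_{\r{M}_{1,n}(\f{o}')}$, $\CF_{\r{M}_{n,1}(\f{o}')}$ and $\eta(\det-)$ are all invariant under right translation by $\r{GL}_n(\f{o}')$, the integrand descends to $\r{GL}_n(k')/\r{GL}_n(\f{o}')$, which we parametrize by $\f{o}'$-lattices $\Lambda=g\f{o}'^n\subseteq k'^n$. A coset contributes exactly when $\zeta$ stabilizes the $\f{o}$-lattice $\Lambda\otimes_{\f{o}'}\f{o}\subseteq k^n$ (this is $\CF_{\r{S}_n(\f{o}')}(g^{-1}\zeta g)=1$, using $\zeta\in\r{GL}_n(\f{o})$ and $\zeta^{\tau}=\zeta^{-1}$), $y\in\Lambda$, and $x$ lies in the dual lattice $\{x'\in\r{M}_{1,n}(k')\;|\;x'\Lambda\subseteq\f{o}'\}$; its weight is $\eta(\det g)=(-1)^{\r{leng}_{\f{o}'}(\f{o}'^n:\Lambda)}$ because $k/k'$ is unramified. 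Regularity of $[\zeta,x,y]$, in particular $\Delta_{[\zeta,x,y]}\neq 0$, bounds these $\Lambda$ between two fixed lattices, so the sum is finite.

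Next I would translate this into the language of the $\f{o}'$-algebra $\b{R}_a$. Since $[\zeta,x,y]$ is regular, $y$ is a cyclic vector and $x$ a cyclic covector for $\zeta$ on $k^n$, giving an isomorphism $k^n\cong\b{R}_{a,\f{o}}\otimes_{\f{o}}k$ sending $y\mapsto 1$, under which $\zeta$ is multiplication by $t$ and the functional $x$ becomes $\b{b}$ of \eqref{5bb}; the relation $\zeta^{\tau}=\zeta^{-1}$ forces the conjugation involution on $k^n$ to correspond to $\vartheta$, so that $k'^n\cong\b{R}_a\otimes_{\f{o}'}k'$ and the standard pairing becomes $(u,v)\mapsto\b{b}(uv)$. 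Under this dictionary, $\tau$-stable $\zeta$-stable $\f{o}$-lattices $\Lambda\otimes\f{o}$ correspond precisely to $\b{R}_a$-lattices $\Lambda$ in $\b{R}_a\otimes k'$; the conditions $y\in\Lambda$ and $x\in\Lambda^{\vee}$ become $\b{R}_a\subseteq\Lambda$ and $\b{b}(\Lambda)\subseteq\f{o}'$, which (using $\b{R}_a$-stability of $\Lambda$) is equivalent to $\b{R}_a\subseteq\Lambda\subseteq\gamma_{a,b}^{-1}(\b{R}_a^{\vee})$. Partitioning by $i:=\r{leng}_{\f{o}'}(\b{R}_a^{\vee}:\Lambda)$, which runs from $0$ to $\r{val}(\Delta_{[\zeta,x,y]})$, recovers exactly the sets $\b{M}_{i,a,b}$.

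Finally I would match the sign. Writing $\r{leng}_{\f{o}'}(\f{o}'^n:\Lambda)=\r{leng}_{\f{o}'}(\f{o}'^n:\b{R}_a^{\vee})+\r{leng}_{\f{o}'}(\b{R}_a^{\vee}:\Lambda)$, the second term is $i$, so it suffices to check that the constant $\r{leng}_{\f{o}'}(\f{o}'^n:\b{R}_a^{\vee})$ is congruent to $\r{val}(\b{T}_{[\zeta,x,y]})$ modulo $2$. This is a direct determinant computation in the cyclic-vector coordinates: $\b{R}_a$ corresponds to the lattice generated by the columns $\zeta^{j-1}y$, and one bookkeeps the relation $\det\b{D}_{[\zeta,x,y]}=\b{T}_{[\zeta,x,y]}\cdot\det[\,\zeta^{j-1}y\,]_{j=1}^{n}$ together with $\r{leng}_{\f{o}'}(\b{R}_a^{\vee}:\b{R}_a)=\r{val}(\Delta_{[\zeta,x,y]})$. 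Combining the three steps yields $\c{O}_{\mu}(\CF_{\r{S}_n(\f{o}')};\CF_{\r{M}_{1,n}(\f{o}')}\otimes\CF_{\r{M}_{n,1}(\f{o}')},[\zeta,x,y])=(-1)^{\r{val}(\b{T}_{[\zeta,x,y]})}\sum_{i=0}^{\r{val}(\Delta_{[\zeta,x,y]})}(-1)^i|\b{M}_{i,a,b}|$, as claimed.

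The main obstacle is Step 2–3: verifying that the conjugation involution on $k^n$ is genuinely compatible with $\vartheta$ on $\b{R}_{a,\f{o}}\otimes k$, so that the two $\f{o}'$-structures and the dual-lattice conditions match up cleanly, and then pinning down the transfer-factor sign $(-1)^{\r{val}(\b{T})}$ precisely. Once Proposition \ref{prop5count} is in hand, the fundamental lemma for $\r{U}_n\times\r{U}_n$ follows by computing the unitary-side orbital integral in the same way and invoking the combinatorial identity for $\sum_i(-1)^i|\b{M}_{i,a,b}|$ already established in \cite{Yun09}.
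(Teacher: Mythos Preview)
Your proposal is correct and follows essentially the same route as the paper: rewrite the orbital integral as a signed count of $\f{o}'$-lattices $L$ with $\zeta L\subset \f{o}L$, $y\in L$, $x\in L^{\vee}$, then transport via the cyclic-vector isomorphism $t^i\mapsto \zeta^i y$ to $\b{R}_a$-lattices sandwiched between $\b{R}_a$ and $\b{R}_a^{\vee}$. The obstacle you flag is handled in the paper simply by noting that this map descends to $k'$ (because $y\in\r{M}_{n,1}(k')$ and $\zeta^{\tau}=\zeta^{-1}$), and the sign in fact comes out as an exact equality $\r{leng}_{\f{o}'}\!\big(\b{R}_a^{\vee}:\b{y}^{-1}(\f{o}'^n)\big)=\r{val}\!\big(\b{T}_{[\zeta,x,y]}\big)$, not merely a congruence mod~$2$.
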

\begin{proof}
Let $\b{V}=\r{M}_{n,1}(\f{o}')$ be the $\f{o'}$-module and we
identity $\b{V}^{\vee}$ with $\r{M}_{1,n}(\f{o}')$ by matrix
multiplication. Then $V=\b{V}(k):=\b{V}\otimes_{\f{o}'}k$. Recall
that
 \begin{align}\label{5recall}
 \c{O}_{\mu}(\CF_{\r{S}_n(\f{o}')};\CF_{\r{M}_{1,n}(\f{o}')}\otimes\CF_{\r{M}_{n,1}(\f{o}')},[\zeta,x,y])
 =&\intl_{\r{GL}_n(k')}\CF_{\r{S}_n(\f{o}')}(g^{-1}\zeta g)
 \CF_{\r{M}_{1,n}(\f{o}')}(xg)\CF_{\r{M}_{n,1}(\f{o}')}(g^{-1}y)\eta(\det
 g)\r{d}g
 \end{align}
and the measure is the one such that $\r{GL}_n(\f{o}')$ gets volume
$1$. We define
 \begin{align*}
 X^i_{[\zeta,x,y]}&:=\1 g\in\r{GL}_n(k')/\r{GL}_n(\f{o}')\;|\;g^{-1}\zeta g\in\r{S}_n(\f{o}'),
 xg\in\b{V}^{\vee},g^{-1}y\in\b{V},\r{val}\(\det g\)=i\2;\\
 Y^i_{[\zeta,x,y]}&:=\1\f{o}'\text{-lattice }L\subset\b{V}(k')\;|\;\zeta
 L\subset\f{o}L,x\in L^{\vee},y\in L,\r{leng}_{\f{o}'}(L:\b{V})=i\2
 \end{align*}
where $L^{\vee}=\1 v\in\b{V}^{\vee}(k')\;|\;vv'\in\f{o'}\text{ for
any }v'\in L\2$. There is a bijection
$X^i_{[\zeta,x,y]}\overset{\sim}{\LR}Y^i_{[\zeta,x,y]}$ given by
$g\T g\b{V}$.

On the other hand, we define an $k$-linear map
 \begin{align*}
 \b{y}':\b{R}_a(k)&\LR\b{V}(k)\\
 t^i&\T \zeta^iy
 \end{align*}
which is bijective since $[\zeta,x,y]$ is regular. Here,
$\b{R}_a(k)$ is the underlying $k$-module of the $k$-algebra
$\b{R}_a\otimes_{\f{o}'}k$. By the definition of $\gamma'_{a,b}$,
the following $k$-linear map
 \begin{align*}
 \b{x}'=\gamma'_{a,b}\circ\(\b{y}'^{\vee}\)^{-1}:\b{R}_a(k)\overset{\gamma'_{a,b}}{\LR}\b{R}_a^{\vee}(k)
 \overset{\(\b{y}'^{\vee}\)^{-1}}{\LR}\b{V}^{\vee}(k)
 \end{align*}
is given by $t^i\T x\zeta^i$. It is clear that $\b{y}'$ (resp.
$\b{x}'$) descends to a $k'$-linear map
$\b{y}:\b{R}_a(k')\LR\b{V}(k')$ (resp.
$\b{x}:\b{R}_a(k')\LR\b{V}^{\vee}(k')$).

For any $L\in Y^i_{[\zeta,x,y]}$, we claim that
$\b{y}^{-1}(L)\in\b{M}_{\r{val}\(\b{T}_{[\zeta,x,y]}\)-i,a,b}$. In
fact, $\b{y}^{-1}(L)$ is a lattice stable under $\b{R}_a$ by the
construction. Hence we only need to show that
$\b{R}_a\subset\b{y}^{-1}(L)\subset\b{R}^{\vee}_a$. Since $y\in L$,
$\b{y}^{-1}(y)=1_{\b{R}_a}\in\b{y}^{-1}(L)$, where $1_{\b{R}_a}$ is
the identity element of the algebra $1_{\b{R}_a}$. Hence
 \begin{align*}
 \b{R}_a=\b{R}_a\cdot1_{\b{R}_a}\subset\b{R}_a\cdot\b{y}^{-1}(L)\subset\b{y}^{-1}(L).
 \end{align*}
Similarly, since $x\in L^{\vee}$,
$\b{x}^{-1}(x)=1_{\b{R}_a}\in\b{x}^{-1}(L^{\vee})=\(\b{y}^{-1}(L)\)^{\vee}$.
But $\(\b{y}^{-1}(L)\)^{\vee}$ is also stable under $\b{R}_a$ which
implies that
 \begin{align*}
  \b{R}_a=\b{R}_a\cdot1_{\b{R}_a}\subset\b{R}_a\cdot\(\b{y}^{-1}(L)\)^{\vee}\subset\(\b{y}^{-1}(L)\)^{\vee}
 \end{align*}
i.e., $\b{y}^{-1}(L)\subset\b{R}_a^{\vee}$. Moreover
 \begin{align*}
 \r{leng}_{\f{o}'}(\b{R}_a^{\vee}:\b{y}^{-1}(L))=\r{leng}_{\f{o}'}(\b{R}_a^{\vee}:\b{y}^{-1}(\b{V}))-
 \r{leng}_{\f{o}'}(L:\b{V})=\r{val}\(\b{T}_{[\zeta,x,y]}\)-i.
 \end{align*}

Conversely, for any
$\Lambda\in\b{M}_{\r{val}\(\b{T}_{[\zeta,x,y]}\)-i,a,b}$, let
$L=\b{y}(\Lambda)$. Then
$\b{R}_{a,\f{o}}\cdot\Lambda=\(\b{R}_a\cdot\Lambda\)\otimes\f{o}\subset\Lambda\otimes\f{o}$,
hence $\zeta L\subset\f{o}L$. The fact $\b{R}_a\subset\Lambda$
implies that $y=\b{y}\(1_{\r{R}_a}\)\in L$ and the fact
$\Lambda\subset\b{R}_a^{\vee}$ implies that
$x=\b{x}\(1_{\b{R}_a}\)\in L^{\vee}$. The length part is clear.
Hence, we prove the claim.

Then
 \begin{align*}
 \eqref{5recall}=&\sum_{i=0}^{\r{val}\(\Delta_{[\zeta,x,y]}\)}(-1)^i\left|X^i_{[\zeta,x,y]}\right|
 =\sum_{i=0}^{\r{val}\(\Delta_{[\zeta,x,y]}\)}(-1)^i\left|Y^i_{[\zeta,x,y]}\right|\\
 =&(-1)^{\r{val}\(\b{T}_{[\zeta,x,y]}\)}\sum_{i=0}^{\r{val}\(\Delta_{[\zeta,x,y]}\)}(-1)^i\left|\b{M}_{i,a,b}\right|.
 \end{align*}\\
\end{proof}

Now we consider the orbital integral on the unitary group. We fix an
element $\beta^+\in\r{Her}_n(k)\cap\r{GL}_n(\f{o})$ which defines a
unitary group scheme $\r{U}^+_{n,\f{o}/\f{o}'}$ whose generic fibre
$\r{U}^+_{n,\f{o}/\f{o}'}\times k'\cong\r{U}^+_n$ viewed as a
subgroup of $\r{Res}_{k/k'}\r{GL}_{n,k}$. Now we simply view
$\r{U}^+_n$ as defined over $\f{o}'$. For
$[\zeta^+,z]\in\r{U}^+_n(\f{o}')\times\r{M}_{1,n}(\f{o})$, we recall
that $z^*=\(\beta^+\)^{-1}\:^{\r{t}}z^{\tau}\in\r{M}_{n,1}(\f{o})$
and the invariants $(a,b)\in\f{o}^{2n}$. Then the corresponding
subscheme $Z'_a=\r{Spec}\:\b{R}_{a,\f{o}}$ is also stable under
$\vartheta$, hence determining a subscheme $Z_a$ of
$\r{U}_{1,\f{o}/\f{o}'}$. Let $\b{R}_a$ be the coordinate ring of
$Z_a$. We have an $\b{R}_{a,\f{o}}$-linear map
$\gamma'_{a,b}:\b{R}_{a,\f{o}}\R\b{R}_{a,\f{o}}^{\vee}$ identifying
$\b{R}_{a,\f{o}}$ as a submodule of $\b{R}_{a,\f{o}}^{\vee}$.

We define the following morphism between $k$-modules
 \begin{align*}
 \b{z}^*:\b{R}_a(k)&\LR\b{V}(k)\\
 t^i&\T\(\zeta^+\)^iz^*
 \end{align*}
which is an isomorphism if $[\zeta^+,z]$ is regular, which we will
assume in the following discussion. The hermitian form on $\b{V}(k)$
defined by $\beta^+$ induces the following hermitian form on
$\b{R}_a(k)$ through $\b{z}^*$:
 \begin{align*}
 (u,v)_{\b{R}}=\b{b}\(uv^{\vartheta}\)
 \end{align*}
where $\b{b}:\b{R}_a(k)\R k$ is defined in \eqref{5bb}.

For an $\f{o}$-lattice $\Lambda^+\subset\b{R}_a(k)$, the dual
lattice under $(-,-)_{\b{R}}$ is the $\f{o}$-lattice
 \begin{align*}
 \(\Lambda^+\)^{\vee}:=\1
 v\in\b{R}_a(k)\;|\;(v,\Lambda^+)_{\b{R}}\subset\f{o}\2.
 \end{align*}
We call $\Lambda^+$ self-dual if $\(\Lambda^+\)^{\vee}=\Lambda^+$.
We define
 \begin{align*}
 \b{N}_{a,b}:=\1\text{self-dual }R_{a,\f{o}}\text{-lattice }\Lambda^+\;|\;
 \b{R}_{a,\f{o}}\subset\Lambda^+\subset\b{R}_{a,\f{o}}^{\vee}\2.
 \end{align*}
We remark that for the orbital integral
$\c{O}_{\psi',\mu}(\CF_{\r{U}^+_n(\f{o}')};\CF_{\r{M}_{1,n}(\f{o})},[\zeta^+,z])$
to be nonzero, there must be an element locating in
$\r{U}^+_n(\f{o}')\times\r{M}_{1,n}(\f{o})$ in the
$\r{U}^+_n(k')$-orbit of $[\zeta^+,z]$. Then we have

\begin{prop}\label{prop5countu}
Let $[\zeta^+,z]\in\r{U}^+_n(\f{o}')\times\r{M}_{1,n}(\f{o})$ be
regular and hence such that $\r{val}\(\Delta_{[\zeta^+,z]}\)\geq0$
is even, then the orbital integral
 \begin{align*}
 \c{O}_{\psi',\mu}(\CF_{\r{U}^+_n(\f{o}')};\CF_{\r{M}_{1,n}(\f{o})},[\zeta^+,z])
 =\left|\b{N}_{a,b}\right|.
 \end{align*}
\end{prop}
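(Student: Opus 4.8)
The plan is to run the argument of Proposition~\ref{prop5count} on the unitary side, replacing the count of $\f{o}'$-lattices in $\b{V}(k')$ by a count of self-dual $\f{o}$-lattices in $\b{V}(k)$, and then transporting this count to the module $\b{R}_a(k)$ by means of $\b{z}^*$. Since $n=m$, the orbital integral in question is
$$
\c{O}_{\psi',\mu}(\CF_{\r{U}^+_n(\f{o}')};\CF_{\r{M}_{1,n}(\f{o})},[\zeta^+,z])
=\intl_{\r{U}^+_n(k')}\CF_{\r{U}^+_n(\f{o}')}(g'^{-1}\zeta^+g')\CF_{\r{M}_{1,n}(\f{o})}(zg')\r{d}g',
$$
with the Haar measure giving $\r{U}^+_n(\f{o}')$ volume $1$; there is neither an $\eta(\det{-})$ factor nor a length grading, because $\det g'$ lies in the norm-one torus and hence has valuation $0$ in the unramified case. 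First I would rewrite this integral as $|X_{[\zeta^+,z]}|$, where
$$
X_{[\zeta^+,z]}:=\1 g'\in\r{U}^+_n(k')/\r{U}^+_n(\f{o}')\;|\;g'^{-1}\zeta^+g'\in\r{U}^+_n(\f{o}'),zg'\in\r{M}_{1,n}(\f{o})\2.
$$

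Recall that $\r{U}^+_n(\f{o}')$ is the stabilizer of the self-dual $\f{o}$-lattice $\b{V}:=\r{M}_{n,1}(\f{o})$ inside $\b{V}(k)$ equipped with the hermitian form attached to $\beta^+$. Since $k/k'$ is unramified, $\r{U}^+_n(k')$ acts transitively on the set of self-dual $\f{o}$-lattices in $\b{V}(k)$ (all such lattices are isometric), so the map $g'\T g'\b{V}$ identifies $X_{[\zeta^+,z]}$ with
$$
Y_{[\zeta^+,z]}:=\1\text{self-dual }\f{o}\text{-lattice }L\subset\b{V}(k)\;|\;\zeta^+L=L,z\in L^{\vee}\2;
$$
here $g'^{-1}\zeta^+g'\in\r{U}^+_n(\f{o}')$ translates into $\zeta^+(g'\b{V})=g'\b{V}$, the condition $zg'\in\r{M}_{1,n}(\f{o})$ into $z\in(g'\b{V})^{\vee}$, and the integrality of $g'^{-1}z^*$ is then automatic because $z^*=(\beta^+)^{-1}\:^{\r{t}}z^{\tau}$ with $\beta^+\in\r{GL}_n(\f{o})$ and $g'\in\r{U}^+_n$.

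Next I would transport $Y_{[\zeta^+,z]}$ along the $k$-linear isomorphism $\b{z}^*:\b{R}_a(k)\overset{\sim}{\LR}\b{V}(k)$, $t^i\T(\zeta^+)^iz^*$, which is bijective by regularity of $[\zeta^+,z]$. Under $\b{z}^*$, multiplication by $\zeta^+$ becomes multiplication by $t$, so $\zeta^+$-stable lattices correspond to $\b{R}_{a,\f{o}}$-stable lattices; and, by the computation recorded just before the statement, the hermitian form on $\b{V}(k)$ pulls back to $(u,v)_{\b{R}}=\b{b}(uv^{\vartheta})$, so that self-dual lattices correspond to self-dual lattices for the respective forms. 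A short computation gives $\LA z^*,v\RA=z\cdot v$, hence for a self-dual $L$ one has $z^*\in L$ if and only if $z\in L^{\vee}$; when this holds, $\Lambda^+:=(\b{z}^*)^{-1}(L)$ is an $\b{R}_{a,\f{o}}$-stable self-dual lattice containing $1_{\b{R}_a}$, so $\b{R}_{a,\f{o}}\subseteq\Lambda^+=(\Lambda^+)^{\vee}\subseteq\b{R}_{a,\f{o}}^{\vee}$, that is, $\Lambda^+\in\b{N}_{a,b}$; the converse is identical. This yields a bijection $Y_{[\zeta^+,z]}\overset{\sim}{\LR}\b{N}_{a,b}$, and therefore $\c{O}_{\psi',\mu}(\CF_{\r{U}^+_n(\f{o}')};\CF_{\r{M}_{1,n}(\f{o})},[\zeta^+,z])=|X_{[\zeta^+,z]}|=|\b{N}_{a,b}|$, as claimed.

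The computations are routine once this dictionary is in place; the step that requires genuine care is the passage from $X$ to $Y$, namely checking that conjugation-integrality of $\zeta^+$ is exactly $\zeta^+$-stability of $g'\b{V}$, that the single condition $zg'\in\r{M}_{1,n}(\f{o})$ already encodes membership in $(g'\b{V})^{\vee}$ and forces the companion condition on $z^*$, and that $\r{U}^+_n(k')$ is indeed transitive on self-dual lattices so that $\r{U}^+_n(\f{o}')$ is precisely the stabilizer and no orbit is missed. Unlike Proposition~\ref{prop5count}, no sign or alternating sum appears, which is accounted for by the vanishing of $\r{val}(\det g')$ on the unitary side; compatibility with the hypothesis that $\r{val}(\Delta_{[\zeta^+,z]})$ be even is then automatic, since $\b{N}_{a,b}$ is empty unless a self-dual lattice fits between $\b{R}_{a,\f{o}}$ and $\b{R}_{a,\f{o}}^{\vee}$, which forces that colength to be even.
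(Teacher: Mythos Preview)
Your proposal is correct and follows essentially the same route as the paper: rewrite the orbital integral as a count over $\r{U}^+_n(k')/\r{U}^+_n(\f{o}')$, identify this with self-dual $\f{o}$-lattices in $\b{V}(k)$ via $g'\T g'\b{V}$, and transport to $\b{N}_{a,b}$ along $\b{z}^*$. The only cosmetic difference is that the paper phrases the lattice condition as $z^*\in L^+$ rather than $z\in L^{\vee}$ (your observation that these are equivalent via $z^*=(\beta^+)^{-1}\,^{\r{t}}z^{\tau}$ and $g'\in\r{U}^+_n$ is exactly what makes the two formulations agree), and the paper leaves implicit the transitivity of $\r{U}^+_n(k')$ on self-dual lattices that you spell out.
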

\begin{proof}
Recall that
 \begin{align}\label{5recallu}
 \c{O}_{\psi',\mu}(\CF_{\r{U}^+_n(\f{o}')};\CF_{\r{M}_{1,n}(\f{o})},[\zeta^+,z])
 =&\intl_{\r{U}^+_n(k')}\CF_{\r{U}^+_n(\f{o}')}(g'^{-1}\zeta^+g')
 \CF_{\r{M}_{1,n}(\f{o})}(zg')\r{d}g'
 \end{align}
and the measure is the one such that $\r{U}^+_n(\f{o}')$ gets volume
$1$. We define
 \begin{align*}
 X_{[\zeta^+,z]}&=\1g'\in\r{U}^+_n(k')/\r{U}^+_n(\f{o}')\;|\;g'^{-1}\zeta^+g'\in\r{U}^+_n(\f{o}'),
 g^{-1}z^*\in\b{V}(\f{o})\2;\\
 Y_{[\zeta^+,z]}&=\1\text{self-dual }\f{o}\text{-lattice }
 L^+\subset\b{V}(k)\;|\;\zeta^+L^+\subset\f{o}L^+,
 z^*\in L^+\2.
 \end{align*}
There is a bijection
$X_{[\zeta^+,z]}\overset{\sim}{\LR}Y_{[\zeta^+,z]}$ given by $g'\T
g'\b{V}(\f{o})$. We have another bijection
$Y_{[\zeta^+,z]}\overset{\sim}{\LR}\b{N}_{a,b}$ given by
$L^+\T\(z^*\)^{-1}\(L^+\)$ whose proof is similar to that in
Proposition \ref{prop5count}. Hence we have
 \begin{align*}
 \eqref{5recallu}=\left|X_{[\zeta^+,z]}\right|=\left|Y_{[\zeta^+,z]}\right|=\left|\b{N}_{a,b}\right|.
 \end{align*}\\
\end{proof}

\begin{theo}\label{theo5fl}
If $\r{char}(k)=p>\r{max}\1 n,2\2$, the fundamental lemma for
$\r{U}_n\times\r{U}_n$ (cf. Conjecture \ref{conj5fl}), i.e., the
following identity
  \begin{align*}
  &\c{O}_{\mu}(\CF_{\r{S}_n(\f{o}')};\CF_{\r{M}_{1,n}(\f{o}')}\otimes\CF_{\r{M}_{n,1}(\f{o}')},[\zeta,x,y])\\
  =&\begin{cases}
  (-1)^{\r{val}\(\b{T}_{[\zeta,x,y]}\)}\c{O}_{\psi',\mu}(\CF_{\r{U}^+_n(\f{o}')};\CF_{\r{M}_{1,n}(\f{o})},[\zeta^+,z])
  & [\zeta,x,y]\leftrightarrow[\zeta^+,z]\in\r{U}^+_{n,n}(k');\\
  0 &[\zeta,x,y]\leftrightarrow[\zeta^-,z]\in\r{U}^-_{n,n}(k')
  \end{cases}
  \end{align*}
holds for $[\zeta,x,y]$ regular.
\end{theo}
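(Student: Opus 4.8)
The plan is to combine the two lattice-counting identities already established in Proposition \ref{prop5count} and Proposition \ref{prop5countu} with a purely combinatorial identity proved by Yun in \cite{Yun09}. First I would dispose of the easy direction: when $[\zeta,x,y]\leftrightarrow[\zeta^-,z]\in\r{U}^-_{n,n}(k')$, this is precisely the case $\r{val}(\Delta_{[\zeta,x,y]})$ odd, and the vanishing $\c{O}_{\mu}(\CF_{\r{S}_n(\f{o}')};\CF_{\r{M}_{1,n}(\f{o}')}\otimes\CF_{\r{M}_{n,1}(\f{o}')},[\zeta,x,y])=0$ has already been proved in Proposition \ref{prop5half}. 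So the content is the first case, where $\r{val}(\Delta_{[\zeta,x,y]})$ is even and $[\zeta,x,y]\leftrightarrow[\zeta^+,z]$.

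By Proposition \ref{prop5count} the left-hand side equals
\[
(-1)^{\r{val}(\b{T}_{[\zeta,x,y]})}\sum_{i=0}^{\r{val}(\Delta_{[\zeta,x,y]})}(-1)^i\,|\b{M}_{i,a,b}|,
\]
while by Proposition \ref{prop5countu} the quantity $\c{O}_{\psi',\mu}(\CF_{\r{U}^+_n(\f{o}')};\CF_{\r{M}_{1,n}(\f{o})},[\zeta^+,z])$ equals $|\b{N}_{a,b}|$. Since matching orbits $[\zeta,x,y]\leftrightarrow[\zeta^+,z]$ share the same invariants $(a,b)$, and since $\b{T}_{[\zeta,x,y]}$ depends only on those invariants (indeed $\b{T}_{[\zeta^+,z]}=\b{T}_{[\zeta,x,y]}$, up to a unit, as both are computed from $a$ and $b$), the theorem reduces to the identity
\[
\sum_{i=0}^{\r{val}(\Delta_{a,b})}(-1)^i\,|\b{M}_{i,a,b}|=|\b{N}_{a,b}|,
\]
where on the left we count $\b{R}_a$-lattices $\Lambda$ with $\b{R}_a\subset\Lambda\subset\b{R}_a^{\vee}$ weighted by the parity of their colength, and on the right we count the self-dual ones among them. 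This is exactly the combinatorial statement about lattices in a finite flat $\f{o}'$-algebra equipped with a hermitian-type symmetric bilinear form that appears in \cite[Section 2]{Yun09}; I would cite it directly, after checking that the algebra $\b{R}_a$, the module $\b{R}_a^{\vee}$, and the pairing $(u,v)_{\b{R}}=\b{b}(uv^{\vartheta})$ here are literally the objects Yun works with. The hypothesis $\r{char}(k)=p>\max\{n,2\}$ enters to guarantee separability of the characteristic polynomial (so that the relevant schemes are étale over the generic point and the regularity conditions translate cleanly into non-degeneracy of the pairing), matching the running hypotheses in \cite{Yun09}.

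The steps, in order, are: (1) restate the easy half via Proposition \ref{prop5half} to settle the $\r{U}^-$ case; (2) invoke Proposition \ref{prop5count} to rewrite the symmetric-space orbital integral as an alternating sum of $|\b{M}_{i,a,b}|$; (3) invoke Proposition \ref{prop5countu} to rewrite the unitary orbital integral as $|\b{N}_{a,b}|$; (4) observe that matching orbits have identical invariants $(a,b)$ and hence identical $\b{R}_a$, $\b{R}_a^{\vee}$, pairing, and transfer factor sign; (5) quote the combinatorial identity of \cite{Yun09} to conclude. The main obstacle I expect is step (5), or rather the bookkeeping feeding into it: one must make sure the normalization of the pairing $(u,v)_{\b{R}}$ coming from $\b{b}$ and $\vartheta$ agrees exactly with Yun's, that the self-duality condition defining $\b{N}_{a,b}$ is the one appearing in his identity (rather than, say, a twisted or shifted version), and that the parametrization of the two hermitian space classes $\epsilon(\beta)=\pm1$ corresponds correctly to the parity of $\r{val}(\Delta_{a,b})$ — a point already flagged after Conjecture \ref{conj5fl}. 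Once these identifications are in place the theorem follows formally; there is no new geometric input beyond \cite{Yun09}, which is the whole point of the reduction.
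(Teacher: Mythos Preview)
Your proposal is correct and follows essentially the same route as the paper: dispose of the $\r{U}^-$ case via Proposition~\ref{prop5half}, reduce the $\r{U}^+$ case via Propositions~\ref{prop5count} and~\ref{prop5countu} to the lattice identity $\sum_i(-1)^i|\b{M}_{i,a,b}|=|\b{N}_{a,b}|$, and then invoke Yun's result. The paper handles the bookkeeping you flag in step~(5) not by checking that the objects here are \emph{literally} Yun's, but by quoting \cite[Proposition~2.6.1]{Yun09} to produce an explicit $\f{o}'$-algebra isomorphism $\rho:R_{\widetilde{a}}\overset{\sim}{\to}\b{R}_a$ (with $\widetilde{a}_i,\widetilde{b}_i\in\jmath^i\f{o}'$) intertwining the pairings, and then applying \cite[Corollary~2.7.2]{Yun09} to $R_{\widetilde{a}}$; the hypothesis $p>\max\{n,2\}$ is used exactly where you expect, to ensure $\b{R}_a\otimes k$ is \'etale.
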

\begin{proof}
The second case of the above identity has already been proved in
Proposition \ref{prop5half}. But Proposition \ref{prop5count} leads
to another proof by noticing that, first
$\r{val}\(\Delta_{[\zeta,x,y]}\)$ is odd and $[\zeta,x,y]$ satisfies
the assumption in that proposition (otherwise, the orbital integral
will be $0$ automatically), and second $\Lambda \T\Lambda^{\vee}$
induces a bijection
$\b{M}_{i,a,b}\overset{\sim}{\LR}\b{M}_{\r{val}\(\Delta_{[\zeta,x,y]}\)-i,a,b}$.

For the first part, we need to prove the identity
 \begin{align}\label{5compare}
 \sum_{i=0}^{\r{val}\(\Delta_{a,b}\)}(-1)^i\left|\b{M}_{i,a,b}\right|
 =\left|\b{N}_{a,b}\right|
 \end{align}
assuming that $[\zeta,x,y]$ (resp. $[\zeta^+,z]$) satisfies the
assumption in Proposition \ref{prop5count} (resp. \ref{prop5countu})
and they have the same invariants $(a,b)$. Here, $\Delta_{a,b}$ is
the determinant of the map $\gamma'_{a,b}$ under the basis $\1
1,t,...,t^{n-1}\2$ of $\b{R}_a(k)$ and the dual basis of
$\b{R}^{\vee}_a(k)$ and hence
$\r{val}\(\Delta_{a,b}\)=\r{val}\(\Delta_{[\zeta,x,y]}\)$. We remark
that the equality \eqref{5compare} is a property purely of the
$\f{o}'$-algebra $\b{R}_a$ and the $\b{R}_a$-linear map
$\gamma_{a,b}:\b{R}_a\R\b{R}_a^{\vee}$.

By the argument in \cite[Proposition 2.6.1]{Yun09}, we can find
$\widetilde{a}_i,\widetilde{b}_i\in\jmath^i\f{o}'$ and an
isomorphism of $\f{o}'$-algebras
$\rho:R_{\widetilde{a}}\overset{\sim}{\LR}\b{R}_a$ such that the
following diagram is commutative:
 \begin{align*}
 \xymatrix{
   R_{\widetilde{a}} \ar[d]^{\wr}_{\rho} \ar[r]^{\gamma_{\widetilde{a},\widetilde{b}}}
   & R_{\widetilde{a}}^{\vee}  \\
   \b{R}_a \ar[r]^{\gamma_{a,b}} & \b{R}_a^{\vee}   \ar[u]^{\wr}_{\rho^{\vee}}}
 \end{align*}
where the algebra $R_{\widetilde{a}}$ and the map
$\gamma_{\widetilde{a},\widetilde{b}}$ are defined in \cite[Remark
2.2.5]{Yun09}. Since $[\zeta,x,y]$ is regular which means in
particular that $\b{R}_a\otimes k$ is an \'{e}tale $k$-algebra,
hence $R_{\widetilde{a}}\otimes k$ is also \'{e}tale. Applying the
fundamental result \cite[Corollary 2.7.2]{Yun09} (and also their
notations) to $R_{\widetilde{a}}$ and
$\gamma_{\widetilde{a},\widetilde{b}}$, we have that
 \begin{align*}
 \sum_{i=0}^{\r{val}\(\Delta_{\widetilde{a},\widetilde{b}}\)}(-1)^i
 \left|M^{\r{loc}}_{i,\widetilde{a},\widetilde{b}}\right|
 =\left|N^{\r{loc}}_{\widetilde{a},\widetilde{b}}\right|
 \end{align*}
for $\r{char}(k)=p>\r{max}\1 n,2\2$, which implies \eqref{5compare} immediately.\\
\end{proof}

\begin{rem}
If $\r{char}(k)=0$ and $p$ is sufficiently large with respect to
$n$, the transfer principle in \cite[Appendix]{Yun09} should also
apply to our case and hence imply the fundamental lemma in
characteristic $0$.\\
\end{rem}

\section{Appendix: A brief summary on local Whittaker integrals}
\label{sec6}

In the appendix, we summarize some facts about certain integrals of
local Whittaker functions which will be used in this paper. All the
results are contained in \cite{JS81}, \cite{JPSS83}, \cite{JS90},
\cite{CPS04} and \cite{J09}.\\

Let $k$ be a local field, $\psi:k\R\d{C}^1$ be a nontrivial
character. We denote $|\;|=|\;|_k$, $\r{M}_{r,s}=\r{M}_{r,s}(k)$.
Let $\pi$ (resp. $\sigma$) be an irreducible admissible
representation of $\r{GL}_n=\r{GL}_n(k)$ (resp.
$\r{GL}_m=\r{GL}_m(k)$). Let
$\c{W}(\psi)=\r{Ind}_{U_{1^n}}^{\r{GL}_n}(\psi)$ be the space of all
smooth functions $W(g)$ on $\r{GL}_n$ satisfying
$W(ug)=\underline{\psi}(u)W(g):=\psi(u_{1,2}+\cdots+u_{n-1,n})W(g)$
for all $u=(u_{ij})\in U_{1^n}$, the group of upper-triangular
matrices with all $1$ on the diagonal. It is a smooth representation
of $\r{GL}_n$ by right translation. Let $V_{\pi}$ be the space where
$\pi$ realizes. If $k$ is archimedean, then we take $V_{\pi}$ as the
canonical Casselman-Wallach completion of the corresponding
Harish-Chandra module of $\pi$. A fundamental theorem of
Gelfand-Kazhdan and Shalika posits that there is at most one
$\r{GL}_n$-equivariant map, up to a constant multiple, from
$V_{\pi}$ to $\c{W}(\psi)$. If it exists, then we say $\pi$ is
\emph{generic}. Being generic is independent of $\psi$ we choose.
Same arguments apply to $\sigma$. In what follows, we will assume
that $\pi$ and $\sigma$ are generic. We denote by $\c{W}(\pi,\psi)$
(resp. $\c{W}(\sigma,\overline{\psi})$) the nontrivial image of
$V_{\pi}$ (resp. $V_{\sigma}$) in $\c{W}(\psi)$ (resp.
$\c{W}(\overline{\psi})$). Moreover,
 \begin{align*}
 \c{W}(\widetilde{\pi},\overline{\psi})=\1\left.\widetilde{W}(g):=W(w_n\:^{\r{t}}g^{-1})\;\right|\;
 W\in\c{W}(\pi,\psi)\2
 \end{align*}
where $w_n=\[\begin{array}{cc}
                & 1 \\
               w_{n-1} &
             \end{array}
\]$ is the longest Weyl element of $\r{GL}_n$. Moreover, we let
$e_m=[0,...,0,1]\in\r{M}_{1,m}$ and $w_{n,m}=\[\begin{array}{cc}
                                                                             \b{1}_m &  \\
                                                                              &
                                                                              w_{n-m}
                                                                           \end{array}
\]$.\\

Let $W\in\c{W}(\pi,\sigma)$, $W^-\in\c{W}(\sigma,\overline{\psi})$
and $\Phi\in\c{S}(\r{M}_{1,m})$, we consider the following kinds of
integrals
 \begin{itemize}
   \item For $n>m$ and $0\leq r\leq n-m-1$,
    \begin{align}\label{6wb}
    \Psi_r(s;W,W^-)
    =\intl_{U_{1^m}\B\r{GL}_m}\intl_{\r{M}_{r,m}}W\(\[\begin{array}{ccc}
                                g & 0 & 0 \\
                            x & \b{1}_r & 0 \\
                            0 & 0 & \b{1}_{n-m-r}
                          \end{array}
    \]\)W^-(g)|\det
    g|^{s-\frac{n-m}{2}}\r{d}x\r{d}g
    \end{align}
   \item For $n>m$ and $1\leq r\leq n-m$,
     \begin{align}\label{6wf}
 \Psi_r(s;W,W^-;\Phi)=\intl_{U_{1^m}\B\r{GL}_m}
 \intl_{\r{M}_{r-1,m}}\intl_{\r{M}_{1,m}}
 W\(\[\begin{array}{cccc}
                            g & 0 & 0 & 0 \\
                            x & \b{1}_{r-1} & 0 & 0 \\
                            y & 0 & 1 & 0 \\
                            0 & 0 & 0 & \b{1}_{n-m-r}
                          \end{array}
 \]\)W^-(g)\Phi(y)|\det g|^{s-\frac{n-m}{2}}\r{d}y\r{d}x\r{d}g
 \end{align}
   \item For $n\geq m$,
     \begin{align}\label{6wf0}
 \Psi_0(s;W,W^-;\Phi)
 =\intl_{U_{1^m}\B\r{GL}_m}W\(\[\begin{array}{cc}
                                                                     g & 0 \\
                                                                     0 &
                                                                     \b{1}_{n-m}
                                                                   \end{array}
 \]\)W^-(g)\Phi(e_mg)|\det
 g|^{s-\frac{n-m}{2}}\r{d}g
 \end{align}
 \end{itemize}

We denote by
 \begin{align*}
 \c{I}_r(\pi\times\sigma)=&\1 \Psi_r(s;W,W^-)\;|\;W\in\c{W}(\pi,\sigma),
 W^-\in\c{W}(\sigma,\overline{\psi})\2;\qquad 0\leq r\leq n-m-1\\
 \c{I}_r^{\natural}(\pi\times\sigma)=&\1 \Psi_r(s;W,W^-;\Phi)\;|\;W\in\c{W}(\pi,\sigma),
 W^-\in\c{W}(\sigma,\overline{\psi}),\Phi\in\c{S}(\r{M}_{1,m})\2;\qquad 0\leq r\leq n-m
 \end{align*}
which are linear spaces over $\d{C}$.\\

\begin{prop}[\cite{JPSS83}, \cite{JS90}, \cite{CPS04},
\cite{J09}]\label{prop6jpss} We only state the following results for
$k$ non-archimedean, while the statement for $k$ archimedean can be
found, for example, in \cite[Section 2]{J09}. Let $\pi$ and $\sigma$
be as above and
$\chi_{\sigma}$ the central character of $\sigma$, then:\\
(1) Each element in $\c{I}_r(\pi\times\sigma)$ and
$\c{I}_r^{\natural}(\pi\times\sigma)$ is absolutely convergent for
$\Re(s)$ large and has a meromorphic continuation to the entire
complex plane;\\
(2) There exists a unique function $L(s,\pi\times\sigma)$ of the
form $P(q^{-s})^{-1}$ where $P\in\d{C}[X]$ and $q$ is the
cardinality of the residue field of $k$, such that
 \begin{align*}
 \c{I}_r(\pi\times\sigma)=\c{I}^{\natural}_r(\pi\times\sigma)=L(s,\pi\times\sigma)\d{C}[q^{-s},q^s]
 \end{align*}
for any $r$; in particular, for any $r$ and $s_0\in\d{C}$, there
exist $W$, $W^-$ and possibly $\Phi$ such that
$\Psi_r(s;W,W^-)/L(s,\pi\times\sigma)|_{s=s_0}$ or
$\Psi_r(s;W,W^-;\Phi)/L(s,\pi\times\sigma)|_{s=s_0}$ is in $\d{C}^{\times}$.\\
(3) There is a factor $\epsilon(s,\pi\times\sigma,\psi)$, only
depending on $\pi$, $\sigma$ and $\psi$, of the form $cq^{-fs}$ such
that
 \begin{align*}
 \frac{\Psi_{n-m-1-r}(1-s;\widetilde{W},\widetilde{W^-})}{L(1-s,\widetilde{\pi}\times\widetilde{\sigma})}=
 \chi_{\sigma}(-1)^{n-1}\epsilon(s,\pi\times\sigma,\psi)\frac{\Psi_r(s;W,W^-)}{L(s,\pi\times\sigma)}
 \end{align*}
for $n>m$ and
 \begin{align}\label{6natural}
 \frac{\Psi_{n-m-r}(1-s;\widetilde{W},\widetilde{W^-};\widehat{\Phi})}
 {L(1-s,\widetilde{\pi}\times\widetilde{\sigma})}=
 \chi_{\sigma}(-1)^{n-1}\epsilon(s,\pi\times\sigma,\psi)\frac{\Psi_r(s;W,W^-;\Phi)}{L(s,\pi\times\sigma)}
 \end{align}
for $n\geq m$, where $\widehat{\Phi}$ is the $\psi$-Fourier
transform of $\Phi$:
 \begin{align*}
 \widehat{\Phi}(y)=\intl_{\r{M}_{1,m}}\Phi(x)\psi(x\:^{\r{t}}y)\r{d}x
 \end{align*}
with the self-dual measure $\r{d}x$.
\end{prop}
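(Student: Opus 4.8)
The plan is to deduce everything from the classical case $r=0$ together with the references collected at the head of the appendix. First I would treat $r=0$: when $n>m$ the integral \eqref{6wb} with $r=0$ is exactly the local $\r{GL}_n\times\r{GL}_m$ Rankin--Selberg integral of Jacquet--Piatetskii-Shapiro--Shalika, and \eqref{6wf0} (the case $n=m$, or $n>m$ with the Bruhat--Schwartz function inserted) is its companion with a theta-type section; for both, assertions (1), (2), (3) with $L(s,\pi\times\sigma)$ the greatest common divisor of the local integrals and $\epsilon(s,\pi\times\sigma,\psi)=cq^{-fs}$ the attached $\gamma$-factor are precisely the content of \cite{JPSS83} in the non-archimedean case and of \cite{JS90}, \cite{J09} (in the Casselman--Wallach framework) in the archimedean case.

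Next I would reduce general $r$ to $r=0$. The key point is that the extra unipotent integration over $x\in\r{M}_{r,m}$ against the trivial character can be collapsed one row at a time by inserting the Fourier expansion of $W$ along the complementary root directions; this is exactly the chain of manipulations \eqref{2bs2}--\eqref{2bs6} (for integrals of type \eqref{6wb}) and \eqref{3fs2}--\eqref{3fs4}, \eqref{3fs5}--\eqref{3fs7} (for types \eqref{6wf}, \eqref{6wf0}) carried out globally in Chapters \ref{sec2} and \ref{sec3}, now performed locally and term by term. Iterating shows that each element of $\c{I}_r(\pi\times\sigma)$ is an element of $\c{I}_0^{\natural}(\pi\times\sigma)$ and conversely, so all the spaces $\c{I}_r(\pi\times\sigma)$ and $\c{I}_r^{\natural}(\pi\times\sigma)$ coincide and equal $\c{I}_0^{\natural}(\pi\times\sigma)=L(s,\pi\times\sigma)\d{C}[q^{-s},q^s]$; the bookkeeping for these ``exotic'' integrals, including absolute convergence for $\Re(s)$ large via gauge estimates on Whittaker functions, is carried out in \cite{CPS04} and \cite{J09}. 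This gives (1) and (2).

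For the functional equation (3) with general $r$ I would apply the outer automorphism $\iota(g)=\:^{\r{t}}g^{-1}$ and conjugate by $w_{n,m}$, exactly as in \eqref{2fe1}: for $n>m$ this carries a level-$r$ integral (with $r+r^*=n-m-1$) to a level-$r^*=n-m-1-r$ integral with $W,W^-$ replaced by $\widetilde{W},\widetilde{W^-}$, and combining this with the $r=0$ local functional equation of \cite{JPSS83} yields the stated identity, the factor $\chi_{\sigma}(-1)^{n-1}\epsilon(s,\pi\times\sigma,\psi)$ being inherited from the $r=0$ theory; the variant \eqref{6natural} for $n\geq m$ is identical after replacing $\Phi$ by its $\psi$-Fourier transform $\widehat{\Phi}$, which is precisely the transformation produced by the $w_{n,m}$-conjugation on the Schwartz section.

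The hard part is part (2): showing that the span is exactly the fractional ideal $L(s,\pi\times\sigma)\d{C}[q^{-s},q^s]$ for every admissible $r$, not merely contained in it. The inclusion ``$\subseteq$'' requires bounding the poles of $\Psi_r$ by those of $L(s,\pi\times\sigma)$, while the reverse inclusion requires exhibiting, for any prescribed $s_0\in\d{C}$, choices of $W$, $W^-$ (and $\Phi$) for which $\Psi_r(s;W,W^-)/L(s,\pi\times\sigma)$ is a nonzero constant at $s_0$; both rest on the asymptotic expansions of Whittaker functions near the torus, and in the archimedean case on the continuity of the integrals in the Casselman--Wallach category. All of these are established in the cited references, so the role of this appendix is to record the precise reduction above rather than to reprove them.
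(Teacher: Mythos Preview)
Your overall shape is right---cite the references and isolate what is genuinely new---but you have both over-complicated the argument and introduced a gap in the reduction step.

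The paper's proof is much shorter than yours. The references \cite{JPSS83}, \cite{CPS04} already treat the integrals $\Psi_r(s;W,W^-)$ for \emph{all} $0\le r\le n-m-1$, not just $r=0$; the equality $\c{I}_r(\pi\times\sigma)=L(s,\pi\times\sigma)\d{C}[q^{\pm s}]$ and the first functional equation in (3) are established there directly (via the Kirillov model and derivatives), with no reduction to $r=0$ needed. Likewise \eqref{6natural} for $n=m$ is the classical $\r{GL}_n\times\r{GL}_n$ case in \cite{JPSS83}. So (1), (2), and the first half of (3) require nothing beyond citation. The \emph{only} thing the paper actually proves is \eqref{6natural} when $n>m$, and it does so by a single trick: set
\[
W_1=\intl_{\r{M}_{m,1}}\rho\(\[\begin{array}{cccc}\b{1}_m&&u&\\&\b{1}_r&&\\&&1&\\&&&\b{1}_{n-m-r-1}\end{array}\]\)W\cdot\widehat{\Phi}(-\:^{\r{t}}u)\,\r{d}u\in\c{W}(\pi,\psi),
\]
check that $\Psi_r(s;W_1,W^-)=\Psi_r(s;W,W^-;\Phi)$, apply the already-known non-$\natural$ functional equation to $W_1$, and verify by Fourier inversion that on the dual side $\Psi_{n-m-1-r}(s;\rho(w_{n,m})\widetilde{W_1},\widetilde{W^-})=\Psi_{n-m-r}(s;\rho(w_{n,m})\widetilde{W},\widetilde{W^-};\widehat{\Phi})$.

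The gap in your version is the claim that the chain \eqref{2bs2}--\eqref{2bs6} can be ``performed locally and term by term'' to collapse $\Psi_r$ to $\Psi_0$. Those global manipulations hinge on the Fourier inversion formula on $(k\backslash\d{A})^m$---the sum over $\epsilon_i\in k$ in \eqref{2bs2}---which has no local counterpart; locally there is no compact quotient to expand along, and the integrals $\Psi_r$ for different $r$ are not related by a change of variables. The same objection applies to your plan for (3): the identity \eqref{2fe1} is a global change of variables using $\varphi_\pi(g)=\varphi_\pi(g^\iota)$, a symmetry that Whittaker functions do not enjoy, so it cannot by itself produce the local functional equation. What \emph{does} work locally is exactly the $W_1$ construction above, which absorbs $\Phi$ into the Whittaker datum and reduces $\Psi_r^{\natural}$ to $\Psi_r$ rather than to $\Psi_0$.
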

\begin{proof}
The proof of these statements can be found in the literature
mentioned above, except \eqref{6natural} when $n>m$. For
completeness, we will give a proof of \eqref{6natural} when $n>m$
below, following \cite{JPSS83}.

By the functional equation, we can assume that $0\leq r<n-m$. We let
 \begin{align*}
 W_1=\intl_{\r{M}_{m,1}}\rho\(\[\begin{array}{cccc}
                                  \b{1}_m &  & u &  \\
                                   & \b{1}_r &  &  \\
                                   &  & 1 &  \\
                                   &  &  & \b{1}_{n-m-r-1}
                                \end{array}
 \]\)W\widehat{\Phi}(-\:^{\r{t}}u)\r{d}u
 \end{align*}
which is in $\c{W}(\pi,\psi)$. Then
$\Psi_r(s;W_1,W^-)=\Psi_r(s;W,W^-;\Phi)$. To prove \eqref{6natural},
we only need to prove that
$\Psi_{n-m-1-r}(s;\rho(w_{n,m})\widetilde{W_1},\widetilde{W^-})
=\Psi_{n-m-r}(s;\rho(w_{n,m})\widetilde{W},\widetilde{W^-};\widehat{\Phi})$
which is true by the Fourier inverse formula.\\
\end{proof}

When $k$ is archimedean or the representations $\pi$ and $\sigma$
are unramified, then the representation $\pi\boxtimes\sigma$, hence
its $L$-factor $L(s,\pi\boxtimes\sigma)$ and $\epsilon$-factor
$\epsilon(s,\pi\boxtimes\sigma,\psi)$ are defined using the
Langlands parameter. We have the following.

\begin{prop}[\cite{JS81}, \cite{JS90}, \cite{CPS04},
\cite{J09}]\label{prop6unr} (1) If $k$ is archimedean, then
$L(s,\pi\times\sigma)=L(s,\pi\boxtimes\sigma)$ and
$\epsilon(s,\pi\times\sigma,\psi)=\epsilon(s,\pi\boxtimes\sigma,\psi)$;\\
(2) If $k$ is non-archimedean with $\f{o}$ its ring of integers, let
$\pi$ (resp. $\sigma$) be an unramified representation associated to
a semisimple conjugacy class $A_{\pi}\in\r{GL}_n(\d{C})$ (resp.
$A_{\sigma}\in\r{GL}_m(\d{C})$). Let $W_{\circ}$ (resp.
$W^-_{\circ}$) be the unique $\r{GL}_n(\f{o})$- (resp.
$\r{GL}_m(\f{o})$-) fixed Whittaker functions such that
$W(\b{1}_n)=1$ (resp. $W^-(\b{1}_m)=1$) and $\Phi_{\circ}$ be the
characteristic function of $\r{M}_{1,m}(\f{o})$, then
 \begin{align*}
 \Psi_r(s;W_{\circ},W^-_{\circ})=\Psi_r(s;W_{\circ},W^-_{\circ};\Phi_{\circ})
 =\det\(1-q^{-s}A_{\pi}\otimes A_{\sigma}\)^{-1}=
 L(s,\pi\times\sigma)=L(s,\pi\boxtimes\sigma)
 \end{align*}
for any possible $r$.
\end{prop}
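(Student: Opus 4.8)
The plan is to obtain both parts by reduction to the local Rankin--Selberg theory for $\r{GL}_n\times\r{GL}_m$ recorded in Proposition \ref{prop6jpss} and in \cite{JS81,JPSS83,JS90,CPS04,J09}; the only genuine content is to match those normalizations with the Langlands-parameter definitions. For part (1), with $k$ archimedean, the factors $L(s,\pi\times\sigma)$ and $\epsilon(s,\pi\times\sigma,\psi)$ --- defined through the local integrals $\Psi_r$ and $\Psi_r(\cdots;\Phi)$ exactly as in the archimedean analogue of Proposition \ref{prop6jpss} (see \cite[Section 2]{J09}) --- are identified there with the Artin-type factors $L(s,\pi\boxtimes\sigma)$ and $\epsilon(s,\pi\boxtimes\sigma,\psi)$ attached to the archimedean Langlands parameters of $\pi$ and $\sigma$. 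So part (1) is a citation, not a computation.

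For part (2) let $k$ be non-archimedean and let $A_\pi\in\r{GL}_n(\d{C})$, $A_\sigma\in\r{GL}_m(\d{C})$ be the Satake parameters, with eigenvalues $x_1,\dots,x_n$ and $y_1,\dots,y_m$. \emph{Step 1 (reduction to $r=0$).} A standard manipulation --- the Iwasawa decomposition on $\r{GL}_m$, the right $\r{GL}_n(\f{o})$-invariance of $W_\circ$, and the fact that the $\r{M}_{r,m}$-integral of the unramified Whittaker function localizes to integral matrices, so that the outer matrix in \eqref{6wb} may be replaced by its block-diagonal part --- gives $\Psi_r(s;W_\circ,W^-_\circ)=\Psi_0(s;W_\circ,W^-_\circ;\Phi_\circ)$ for every admissible $r$. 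This is the unramified instance of the $r$-independence in Proposition \ref{prop6jpss}(2), and is carried out in \cite[Section 2]{JPSS83}.

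\emph{Step 2 (evaluation of the $r=0$ integral).} Applying the Iwasawa decomposition of $\r{GL}_m$ to \eqref{6wf0} with $\Phi=\Phi_\circ$, the integral collapses to a sum over dominant coweights $\lambda$ of $\r{GL}_m$; the condition $\Phi_\circ(e_m g)=1$ together with the Casselman--Shalika support constrain $\lambda$ to be a partition with at most $m$ parts, and the Casselman--Shalika formula evaluates $W_\circ$ and $W^-_\circ$ at the corresponding torus points as the Schur polynomials $s_\lambda(A_\pi)$ and $s_\lambda(A_\sigma)$ (meaning $s_\lambda$ applied to the eigenvalues) times explicit half-modulus factors. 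A short bookkeeping of those factors against the measure on $U_{1^m}\B\r{GL}_m$ and the twist $|\det g|^{s-\frac{n-m}{2}}$ shows all the powers of $\varpi$ combine into $q^{-s|\lambda|}$, so that
\begin{align*}
\Psi_0(s;W_\circ,W^-_\circ;\Phi_\circ)=\sum_{\lambda}s_\lambda(A_\pi)\,s_\lambda(A_\sigma)\,q^{-s|\lambda|},
\end{align*}
the sum running over all partitions, those with more than $m$ parts contributing $0$.

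\emph{Step 3 (Cauchy identity and conclusion).} By the Cauchy identity $\sum_{\lambda}t^{|\lambda|}s_\lambda(A_\pi)s_\lambda(A_\sigma)=\prod_{i,j}(1-t\,x_iy_j)^{-1}$ with $t=q^{-s}$, the right-hand side of Step 2 equals $\prod_{i,j}(1-q^{-s}x_iy_j)^{-1}=\det\(1-q^{-s}A_\pi\otimes A_\sigma\)^{-1}$, which is $L(s,\pi\boxtimes\sigma)$ by definition. Together with Step 1 this gives the first three equalities of the claim for every admissible $r$. For the last equality $L(s,\pi\times\sigma)=L(s,\pi\boxtimes\sigma)$: we have exhibited $\det\(1-q^{-s}A_\pi\otimes A_\sigma\)^{-1}$ inside $\c{I}_r(\pi\times\sigma)$, so the generator $L(s,\pi\times\sigma)$ divides it, and the reverse divisibility is the standard fact --- Proposition \ref{prop6jpss}(2) and \cite{JPSS83,CPS04} --- that the unramified local integral already realizes the full $L$-factor. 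The only places requiring care are the localization/support argument in Step 1 and the modulus-and-measure bookkeeping producing $q^{-s|\lambda|}$ in Step 2; both are routine and are done in \cite[Sections 2--3]{JPSS83} and \cite{JS81}, so one need only transcribe them in the present conventions.
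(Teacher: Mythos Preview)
Your proposal is correct and follows essentially the same approach as the paper: part (1) is a citation, and for part (2) one cites the $r=0$ unramified computation in \cite{JS81} and then reduces general $r$ to it by the manipulations of \cite{JPSS83}/Proposition~\ref{prop6jpss}. The paper's own proof is even terser --- it just says the $r=0$ case is in \cite{JS81} and ``the rest will follow easily as in Proposition~\ref{prop6jpss}'' --- whereas you unpack the \cite{JS81} computation via Casselman--Shalika and the Cauchy identity; but this is exactly what the cited computation amounts to, not a different route. One small remark: your Step~1 only explicitly reduces $\Psi_r(s;W_\circ,W^-_\circ)$ to $\Psi_0(s;W_\circ,W^-_\circ;\Phi_\circ)$, and you should also note that the same smearing trick from the proof of Proposition~\ref{prop6jpss} (with $\widehat{\Phi_\circ}=\Phi_\circ$ and $W_\circ$ right-$\r{GL}_n(\f{o})$-invariant, so $W_1=W_\circ$) handles $\Psi_r(s;W_\circ,W^-_\circ;\Phi_\circ)$ for $r\geq1$ as well.
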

\begin{proof}
In (2), the proof for the integral $\Psi_0(s;W_{\circ},W^-_{\circ})$
when $n>m$, and for the integral
$\Psi_0(s;W_{\circ},W^-_{\circ};\Phi_{\circ})$ when $n=m$ can be
found in \cite{JS81}. The rest will follow easily as in Proposition
\ref{prop6jpss}.\\
\end{proof}

\begin{cor}\label{6cor}
(1) When $n>m$, the Whittaker integral \eqref{6wb} defines a nonzero
element in $\r{Hom}_H(\pi\otimes\sigma,\nu)$, hence an
$(r,n-m-1-r)$-Bessel model for $\pi$, $\sigma$ generic, by choosing
a suitable basis like in Section
\ref{sec2bi}.\\
(2) When $n>m$ and $r>0$ (resp. $n\geq m$ and $r=0$), the Whittaker
integral \eqref{6wf} (resp. \eqref{6wf0}) (with $\sigma$ replaced by
$\sigma\otimes\mu^{-1}$) defines a nonzero element in
$\r{Hom}_H(\pi\otimes\sigma\otimes\widetilde{\nu_{\mu}},\d{C})$,
hence an $(r,n-m-r)$- (resp. $(0,n-m)$-) Fourier-Jacobi model for
$\pi$, $\sigma$ generic, by choosing a suitable basis like in
Section \ref{sec3fi}.
\end{cor}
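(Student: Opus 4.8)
The plan is to exhibit the required nonzero $H$-invariant functional explicitly as the local Rankin--Selberg/Whittaker integral, and to extract both its equivariance and its nonvanishing from the two preceding propositions. For part (1), realize $\pi$ and $\sigma$ on their Whittaker models $\c{W}(\pi,\psi)$ and $\c{W}(\sigma,\overline\psi)$, and choose the basis of $V$ exactly as in Section \ref{sec2bi}, so that the image of $H$ in $\r{GL}(V)\times\r{GL}(W)=\r{GL}_n\times\r{GL}_m$ consists of the matrices $h=\underline u\,g$ of the shape \eqref{2huge} and $\nu(h)=\underline\psi(\underline u)\,|\det g|_k^{(r-r^*)/2}$, with $r^*=n-m-1-r$. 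Define $B_s(W,W^-):=\Psi_r(s;W,W^-)$ by \eqref{6wb}. The assertion to prove is that, for every holomorphic point $s$, the map $W\otimes W^-\mapsto B_s(W,W^-)$ lies in $\r{Hom}_H(\pi\otimes\sigma,\ \nu\otimes(|\det|_k^{s-\frac12}\circ\kappa))$; specializing to $s=\frac12$ then yields an element of $\r{Hom}_H(\pi\otimes\sigma,\nu)$.

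The $H$-equivariance of $B_s$ I would verify by a direct computation which is simply the local, single-place counterpart of the chain \eqref{2bs1}--\eqref{2bs6}. Writing $H=U_{1^r,m+1,1^{r^*}}\rtimes\r{GL}(W)$: equivariance under $\r{GL}(W)$ is the classical Rankin--Selberg transformation law of Jacquet--Piatetskii-Shapiro--Shalika, the powers of $|\det|_k$ and $\delta_W$ combining to produce exactly $\nu|_{\r{GL}(W)}=\delta_W^{-1/2}$; equivariance under $U_{1^r,m+1,1^{r^*}}$ splits into the part sitting inside the full upper-triangular unipotent $U_{1^n}$ --- reproduced verbatim by the Whittaker transformation law of $W$, which in our coordinates gives back the characters $\lambda$ and $\psi\circ\ell$ --- and the part translating the integration variable $x\in\r{M}_{r,m}$, absorbed by translation-invariance of $\r{d}x$ and leaving a phase that merges into $\underline\psi$. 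One could instead invoke the factorization $\c{B}^{\nu}_{r,r^*}(s;\varphi_\pi,\varphi_\sigma)=\prod_v\Psi_{v,r}(s;W_v,W_v^-)$ established in Section \ref{sec2bi} together with the factorization of $\r{Hom}_{H(\d{A})}$, but the direct check is cleaner and, in particular, does not presuppose the existence statement of Theorem \ref{theo2bm}, for which this corollary is used.

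For the nonvanishing, Proposition \ref{prop6jpss}(2) provides $W\in\c{W}(\pi,\psi)$ and $W^-\in\c{W}(\sigma,\overline\psi)$ with $\Psi_r(s;W,W^-)/L(s,\pi\times\sigma)$ a nonzero constant at $s=\frac12$; since $L(s,\pi\times\sigma)^{-1}$ is a polynomial in $q^{-s}$ (so $L$ has no zeros), for such data $B_{1/2}(W,W^-)=\Psi_r(\frac12;W,W^-)$ is finite and nonzero whenever $L(s,\pi\times\sigma)$ is regular at $\frac12$ (e.g.\ $\pi,\sigma$ tempered), and otherwise one replaces the value by the leading Laurent coefficient of $\Psi_r(s;W,W^-)$ at $s=\frac12$, which still lies in $\r{Hom}_H(\pi\otimes\sigma,\nu)$ because $|\det\kappa(h)|_k^{s-\frac12}=1+O(s-\tfrac12)$. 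In the unramified setting Proposition \ref{prop6unr}(2) makes this concrete: $\Psi_r(s;W_\circ,W^-_\circ)=L(s,\pi\times\sigma)$ for the normalized spherical vectors. In either case $B_{1/2}$ is a nonzero element of $\r{Hom}_H(\pi\otimes\sigma,\nu)$, giving the desired $(r,n-m-1-r)$-Bessel model.

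Part (2) is handled identically, now realizing $\nu_\mu$ on $\c{S}(W^\vee)\cong\c{S}(\r{M}_{1,m})$ and using the integral \eqref{6wf} for $r>0$, resp.\ \eqref{6wf0} for $r=0$ (which also covers $n=m$), applied with $\sigma$ replaced by $\sigma\otimes\mu^{-1}$, so that $(W,W^-,\Phi)\mapsto\Psi_r(\tfrac12;W,W^-;\Phi)$ becomes the sought trilinear form in $\r{Hom}_H(\pi\otimes\sigma\otimes\widetilde{\nu_\mu},\d{C})$; the template for the equivariance computation is now \eqref{3fs1}--\eqref{3fs7} and \eqref{3rs1}--\eqref{3rs2}, and nonvanishing comes from the Fourier--Jacobi half of Proposition \ref{prop6jpss}(2) (resp.\ Proposition \ref{prop6unr}(2)). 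I expect the one genuinely delicate step to be the equivariance check in this Fourier--Jacobi case: one must follow how the Heisenberg translations and the $\r{GL}(W)$-Weil action act on the Schwartz argument $\Phi$ and confirm they reproduce the explicit model formulas dictated by $\widetilde{\nu_\mu}$ (the local analogues of \eqref{5omega}, \eqref{5omegau}), after which the remaining bookkeeping is routine. Choosing the bases as in Sections \ref{sec2bi} and \ref{sec3fi} finally identifies these functionals with the $(r,n-m-r)$- and $(0,n-m)$-Fourier--Jacobi models.
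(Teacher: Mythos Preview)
Your proposal is correct and is essentially the argument the paper has in mind; note that the paper gives no explicit proof of this corollary at all, treating it as an immediate consequence of Propositions \ref{prop6jpss} and \ref{prop6unr} together with the local analogues of the unfolding computations \eqref{2bs1}--\eqref{2bs6} and \eqref{3fs1}--\eqref{3fs7}. You have simply written out what the paper leaves implicit: the $H$-equivariance check (local shadow of the global unfolding) and the nonvanishing from the fact that the fractional ideal $\c{I}_r(\pi\times\sigma)$ equals $L(s,\pi\times\sigma)\d{C}[q^{-s},q^s]$, with your leading-Laurent-coefficient remark correctly handling the possibility of a pole of $L$ at $s=\tfrac12$.
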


\addcontentsline{toc}{section}{References}

{\small

}

\end{document}